\newtheorem{theorem}{Theorem}
\newtheorem{lemma}[theorem]{Lemma}
\newtheorem{prop}[theorem]{Proposition}
\newtheorem{claim}[theorem]{Claim}
\newtheorem{obs}[theorem]{Observation}
\newtheorem{problem}[theorem]{Problem}
\theoremstyle{definition}
\newtheorem{definition}[theorem]{Definition}
\newtheorem*{spanning}{The spanning algorithm}
\newenvironment{clmproof}[1]{\begin{proof}[Proof of Claim~\ref{#1}]\let\qednow\qedsymbol\renewcommand{\qedsymbol}{}}{\qednow \end{proof}}
\numberwithin{theorem}{section}
\setlist[itemize]{leftmargin=1cm}
\setlist[enumerate]{leftmargin=1.5cm}
\renewcommand{\leq}{\leqslant}
\renewcommand{\geq}{\geqslant}
\renewcommand{\le}{\leqslant}
\renewcommand{\ge}{\geqslant}
\renewcommand{\to}{\rightarrow}
\let\epsilon\varepsilon
\let\eps\varepsilon
\def\H{\mathbb{H}}
\def\N{\mathbb{N}}
\def\P{\mathbb{P}}
\def\R{\mathbb{R}}
\def\Z{\mathbb{Z}}
\def\A{\mathcal{A}}
\def\C{\mathcal{C}}
\def\D{\mathcal{D}}
\def\EE{\mathcal{E}}
\def\F{\mathcal{F}}
\def\K{\mathcal{K}}
\def\n{\mathcal{N}}
\def\PP{\mathcal{P}}
\def\S{\mathcal{S}}
\def\U{\mathcal{U}}
\def\stab{\mathcal{S}}
\def\hier{\mathcal{H}}
\def\root{v_{\text{root}}}
\def\ispan{I^\times}
\def\<{\langle}
\def\>{\rangle}
\def\0{\mathbf{0}}
\def\ih{\mathrm{IH}}
\def\edge{\partial}
\def\top{\mathrm{top}}
\def\bottom{\mathrm{bottom}}
\title{The sharp threshold for the Duarte model}
\author[B. Bollob\'as \and H. Duminil-Copin \and R. Morris \and P.J. Smith]{B\'ela Bollob\'as \and Hugo Duminil-Copin \and Robert Morris \and Paul Smith}
\address{Department of Pure Mathematics and Mathematical Statistics, Wilberforce Road, Cambridge, CB3 0WA, UK, and Department of Mathematical Sciences, University of Memphis, Memphis, TN 38152, USA, and London Institute for Mathematical Sciences, 35a South Street, London, W1K 2XF, UK}
\email{b.bollobas@dpmms.cam.ac.uk}
\address{D\'epartement de Math\'ematiques, Universit\'e de Gen\`eve, 2-4 Rue du Li\`evre, Gen\`eve, Switzerland}
\email{hugo.duminil@unige.ch}
\address{IMPA, Estrada Dona Castorina 110, Jardim Bot\^anico, Rio de Janeiro, 22460-320, Brazil}
\email{rob@impa.br}
\address{Department of Pure Mathematics and Mathematical Statistics, Wilberforce Road, Cambridge, CB3 0WA, UK}
\email{p.j.smith@dpmms.cam.ac.uk}
\thanks{B.B.\ is partially supported by NSF grant DMS~1301614 and MULTIPLEX grant no.~317532, H.D.\ by a grant from the Swiss FNS and the NCCR SwissMap (also funded by the Swiss FNS), R.M.\ by CNPq (Proc.~479032/2012-2 and Proc.~303275/2013-8), and P.S.\ by a CNPq bolsa PDJ}
\date{\today}
\subjclass[2010]{Primary 60K35; Secondary 60C05}
\keywords{bootstrap percolation, monotone cellular automata, duarte model, critical probability, sharp threshold}
\begin{document}

\begin{abstract}
The class of critical bootstrap percolation models in two dimensions was recently introduced by Bollob\'as, Smith and Uzzell, and the critical threshold for percolation was determined up to a constant factor for all such models by the authors of this paper. Here we develop and refine the techniques introduced in that paper in order to determine a sharp threshold for the Duarte model. This resolves a question of Mountford from 1995, and is the first result of its type for a model with drift.
\end{abstract}

\maketitle

%\tableofcontents

%\newpage

\section{Introduction}

In this paper we resolve a 20 year old problem of Mountford~\cite{Mount} by determining the sharp threshold for a particular monotone cellular automaton related to the classical $2$-neighbour bootstrap percolation model. This model was first studied by Duarte~\cite{Duarte}, and is the most fundamental model for which a sharp threshold had not yet been determined. Indeed, our main theorem is the first  result of its type for a critical bootstrap model that exhibits `drift', and is an important step towards a complete understanding of sharp thresholds in two-dimensional bootstrap percolation. 

We will begin by stating our main result, and postpone a discussion of the background and history to Section~\ref{sec:history}. The \emph{Duarte model}\footnote{See Section~\ref{sec:modifiedDuarte} for a discussion of the closely-related \emph{modified Duarte model}.} is defined as follows. Let
\[
\D := \Big\{ \big\{ (-1,0), (0,1) \big\}, \big\{ (-1,0), (0,-1) \big\}, \big\{ (0,1), (0,-1) \big\} \Big\},
\]
denote the collection of 2-element subsets of $\big\{ (-1,0), (0,1), (0,-1) \big\}$, and let $\Z_n^2$ denote the two-dimensional discrete torus. Given a set $A\subset \Z_n^2$ of initially \emph{infected} sites, set $A_0 = A$, and define for each $t \geq 0$, 
\[
A_{t+1} := A_t \cup \big\{ x \in \Z_n^2 \,:\, x + X \subset A_t \text{ for some } X \in \D \big\}.
\]
Thus, a site $x$ becomes infected at time $t+1$ if the translate by $x$ of one of the sets of $\D$ is already entirely infected at time $t$, and infected sites remain infected forever. The set of eventually infected sites is called the \emph{closure} of $A$, and is denoted by $[A]_\D := \bigcup_{t \geq 0} A_t$. We say that $A$ \emph{percolates} if $[A]_\D = \Z_n^2$.

Let us say that a set $A \subset \Z_n^2$ is \emph{$p$-random} if each of the sites of $\Z_n^2$ is included in $A$ independently with probability $p$, and denote the corresponding probability measure by $\P_p$. The  \emph{critical probability} is defined to be
\begin{equation}\label{def:pc}
p_c(\Z_n^2,\D) := \inf \Big\{ p \in [0,1] \,:\, \P_p\big( [A]_\D = \Z_n^2 \big) \geq 1/2 \Big\};
\end{equation}
that is, the value of $p$ at which percolation becomes likely.

Schonmann~\cite{Sch3} proved that the critical probability of the Duarte model on the plane $\Z^2$ is 0, and Mountford~\cite{Mount} determined $p_c(\Z_n^2,\D)$ up to a constant factor. Here we determine the following sharp threshold.

\begin{theorem}\label{thm:Duarte}
$$p_c\big( \Z_n^2,\D \big) = \bigg( \frac{1}{8} + o(1) \bigg) \frac{(\log \log n)^2}{\log n}$$
as $n \to \infty$.
\end{theorem}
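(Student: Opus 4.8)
The plan is to prove matching upper and lower bounds on $p_c(\Z_n^2,\D)$, each of the form $\big(\tfrac18+o(1)\big)(\log\log n)^2/\log n$, using the geometry of \emph{droplets} adapted to the Duarte dynamics. Because the Duarte model has drift (infection spreads freely downward and upward along a column once two of the three neighbours below/beside a site are infected, but only slowly to the left), the relevant growing objects are not squares but tall thin rectangles: a rectangle of width $k$ and height $h$ is "internally spanned" and can grow leftwards by one column provided that column contains a suitably long run of infections, of length roughly $\log(h)$ on the relevant scale. This means that to advance the droplet from width $j$ to width $j+1$ one pays a probabilistic cost governed by the probability that a column of the current height contains no gap of length $\gtrsim \tfrac{1}{2}\log(1/p)$ — itself an event of probability decaying like $\exp(-h\cdot p^{\Theta(\log(1/p))})$ — and the optimisation of $\sum_j$ of the logarithms of these costs against the total width needed ($\sim 1/p$ columns to cross a critical droplet) is what produces the $(\log\log n)^2/\log n$ scaling with the constant $1/8$.

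\medskip

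For the \textbf{upper bound} I would run a hierarchical / multi-scale argument in the spirit of the Bollob\'as--Smith--Uzzell framework and its refinement by the authors: show that with probability bounded away from zero a "critical droplet" — a rectangle of dimensions roughly $e^{\Theta(1/p)}$ in height and $\Theta(1/p)$ in width — is internally spanned, and then show such a droplet grows to fill $\Z_n^2$ with high probability. The internal spanning estimate is proved by building the droplet column by column: starting from a single infected site, one repeatedly appends a new leftmost column, and the key lemma is that a column of height $h$ contains a run of $L:=\big(\tfrac12+o(1)\big)\log_{1/p}h$ consecutive infected sites (enough to trigger the leftward step at the next scale) with probability $\ge h p^{L}$ up to lower-order corrections. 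Multiplying these costs over the $\Theta(1/p)$ columns, taking logarithms, and optimising the growth profile $h_j$ of the heights yields that the droplet is internally spanned with probability $\exp\big(-(1+o(1))\cdot\text{(something)}/p\big)$; one then checks that the number of possible locations for such a droplet in $\Z_n^2$ — roughly $n^2$ — beats this cost precisely when $p \ge \big(\tfrac18+o(1)\big)(\log\log n)^2/\log n$, i.e. when $\log(1/p)\approx\log\log n$ and the exponent balances $\log n$.

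\medskip

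For the \textbf{lower bound} I would use a variational / entropy argument: if $p$ is slightly below the claimed threshold, then with high probability no internally spanned droplet reaches the critical size, because the expected number of "minimal spanning hierarchies" producing a large droplet is $o(1)$. The heart of this is a counting bound showing that any droplet of width $w$ and height $h$ that is internally spanned forces the existence, in the configuration, of a sequence of $\sim w$ columns each containing a long infected run, and then a union bound over the (polynomially many in $n$) possible droplet positions and the (controlled) number of hierarchy shapes. The delicate point is getting the \emph{constant} right: one must show that the cost of crossing each successive column is essentially $p^{(1/2+o(1))\log_{1/p}h}$ with no loss, and that the optimal height profile along the hierarchy is the one saturating the $1/8$ constant — this requires a careful Laplace-type optimisation and matching the extremal-run probabilities to first order, which is exactly where the constant $1/8 = (1/2)\cdot(1/2)\cdot(1/2)$ emerges (one factor $1/2$ from the run-length needed per step, one from the $\log\log n$ appearing squared, one from the width being $\sim 1/p$ rather than $\sim\log(1/p)/p$).

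\medskip

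The main obstacle, as in all sharp-threshold bootstrap results, is the lower bound constant: proving that the hierarchy method loses \emph{nothing} asymptotically, i.e. that one cannot do better than the column-by-column growth. Concretely, the hard part will be establishing a sharp "extremal" estimate for the probability that a long column has a long clean run, uniform over the range of heights that actually occur in critical hierarchies, together with the combinatorial fact that splitting a droplet (rather than growing it monotonically) never beats the straightforward strategy — this is where the drift asymmetry must be exploited carefully, since unlike the symmetric models one cannot appeal to isoperimetry alone and must instead control the one-dimensional last-passage-type quantity that governs leftward progress.
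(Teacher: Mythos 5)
Your overall plan --- build a critical droplet column-by-column for the upper bound, and count hierarchies for the lower bound --- is in the right spirit, but it misses the one idea that makes the Duarte sharp threshold genuinely hard, and it would not close the gap as written.

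The central problem is your choice of \emph{rectangular} droplets. You propose tracking tall thin rectangles and arguing that "to advance the droplet from width $j$ to width $j+1$" one pays a certain cost, optimising over a height profile $h_j$. This is essentially the strategy of~\cite{BDMS} and~\cite{Mount}, and it is known to lose a non-trivial constant: a rectangle has two free parameters (width and height), so the union bound over hierarchies is over far too many objects, and the entropy of choosing the profile $(h_j)$ swamps the probabilistic gain. The paper's key innovation --- stated explicitly in the introduction --- is the use of \emph{non-polygonal Duarte droplets}, regions bounded above and below by translates of the curve $f(x)=\tfrac{1}{2p}\log\bigl(1+\tfrac{\eps^3 px}{\log 1/p}\bigr)$ (Definition~\ref{def:droplet}). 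This curve hard-codes the \emph{typical} height--width relationship into the geometry, so a droplet is determined (up to $p^{O(1)}$ translates, see Lemma~\ref{lem:numdroplets}) by a single size parameter, its height. That reduction from two parameters to one is precisely what makes the hierarchy union bound tight to first order; without it, the lower bound you sketch cannot reach the constant $1/8$.

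A second gap: you do not address how to bound the probability that a hierarchy \emph{seed} is internally spanned. In your multi-scale argument the seeds have height $\Theta\bigl((p\log 1/p)^{-1}\bigr)$, and unlike in the $2$-neighbour model there is no elementary estimate for such droplets in the Duarte model --- a Duarte droplet of that height is already very long, and Aizenman--Lebowitz-type counting alone does not give the needed exponent $p^{(1-o(1))h/2}$. The paper handles this with the \emph{method of iterated hierarchies} (Section~\ref{sec:small}): one proves the bound $\ih(k)$ for droplets up to height $p^{-(2/3)^k}(\log 1/p)^{-1}$ by downward induction on $k$, using each level's bound to control the seeds of the next. Your proposal has nothing playing this role, and the "union bound over hierarchy shapes" step would not close without it. Finally, your heuristic for $1/8$ is not the right decomposition: the constant arises as $\tfrac14\cdot\tfrac12$, where $\tfrac14 = \int_0^1\tfrac{1-c}{2}\,\textup{d}c$ comes from the Laplace optimisation over the growth profile (the cost of a vertical step of size $2$ at relative height $c$ is $p^{1-c}$) and the extra $\tfrac12$ from the $n^2$ translates of the critical droplet; the three factors of $\tfrac12$ you describe do not correspond to the actual sources. (A cosmetic point: the Duarte model with $\D=\{\{(-1,0),(0,1)\},\{(-1,0),(0,-1)\},\{(0,1),(0,-1)\}\}$ grows to the \emph{right}, not the left.)
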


The constant $1/8$ in the theorem arises from the typical growth of a `droplet' in the following way. A droplet of height $(c/p) \log(1/p)$ has width about $p^{-1-c}$, which implies that the `cost' of each vertical step of size 2 is roughly $p^{1-c}$. Integrating the logarithm of this function, we obtain $\int_0^1 \frac{1 - c}{2}\, \textup{d}c = 1/4$. The final factor of 2 is due to there being roughly $n^2$ droplets in $\Z_n^2$. For more details, see Section~\ref{sec:upper}.

Our proof of Theorem~\ref{thm:Duarte} relies heavily on the techniques introduced in~\cite{BDMS}, where we proved a weaker result in much greater generality (see Theorem~\ref{thm:BDMS}, below). The key innovation of this paper is the use of non-polygonal `droplets' (see Section~\ref{sec:tools}), which seem to be necessary for the proof, and significantly complicate the analysis. In particular, we will have to work very hard in order to obtain sufficiently strong bounds on the probabilities of suitable `crossing events' (see Section~\ref{sec:crossings}). On the other hand, by encoding the growth using a single variable, these droplets somewhat simplify some other aspects of the proof.  

\subsection{Background and motivation}\label{sec:history}

The study of bootstrap processes on graphs goes back over 35 years to the work of Chalupa, Leath and Reich~\cite{CLR}, and numerous specific models have been considered in the literature. Motivated by applications to statistical physics, for example the Glauber dynamics of the Ising model~\cite{FSS,MGlauber} and kinetically constrained spin models~\cite{CMRT}, the underlying graph is often taken to be $d$-dimensional, and the initial set $A$ is usually chosen randomly. The most extensively-studied of these processes is the classical `$r$-neighbour model' in $d$ dimensions, in which a vertex of $\Z^d$ becomes infected as soon as it acquires at least $r$ already-infected nearest neighbours. The sharp threshold for this model in full generality was obtained by Balogh, Bollob\'as, Duminil-Copin and Morris~\cite{BBDM} in 2012, building on a series of earlier partial results in~\cite{BBM3d,CC,CM,Hol,AL,Sch1}. Their result stated that
\[
p_c\big( \Z_n^d,\n_r^d \big) = \bigg( \frac{\lambda(d,r) + o(1)}{\log_{(r-1)} n} \bigg)^{d-r+1}
\]
as $n \to \infty$, for some explicit constant $\lambda(d,r)$, where the left-hand side is defined as in~\eqref{def:pc}, except replacing $\D$ by $\n_r^d$, the collection of the $\binom{2d}{r}$ $r$-element subsets of the neighbourhood of $\0$ in $\Z^d$. The special case $d=r=2$, a result analogous to Theorem~\ref{thm:Duarte} for the 2-neighbour model in $\Z^2$, was obtained by Holroyd~\cite{Hol} in 2003, who showed that in fact $\lambda(2,2) = \pi^2/18$.

More recently, a much more general family of bootstrap-type processes was introduced and studied by Bollob\'as, Smith and Uzzell~\cite{BSU}. To define this family in two dimensions, let $\U = \{X_1,\ldots,X_m\}$ be a finite collection of finite subsets of $\Z^2 \setminus \{\textbf{0}\}$, and replace $\D$ by $\U$ in each of the definitions above. The key discovery of~\cite{BSU} was that the family of such monotone cellular automata can be partitioned into three classes, each with completely different behaviour. Roughly speaking, one of the following holds:
\begin{itemize}
\item $\U$ is `supercritical' and has polynomial critical probability.\vspace{0.1cm}
\item $\U$ is `critical' and has poly-logarithmic critical probability.\vspace{0.1cm}
\item $\U$ is `subcritical' and has critical probability bounded away from zero. %\vspace{0.1cm} 
\end{itemize}
We remark that the first two statements were proved in~\cite{BSU}, and the third by Balister, Bollob\'as, Przykucki and Smith~\cite{BBPS}. Note that both the Duarte model and the 2-neighbour model are critical, while the 1-neighbour model is supercritical and the 3-neighbour model is subcritical.\footnote{There also exist many non-trivial examples of supercritical and subcritical models.}

For critical models, much more precise bounds were obtained recently by the authors of this paper~\cite{BDMS}. Since this paper should be seen as a direct descendent of that work, we will spend a little time developing the definitions necessary for the statement of the main theorem of~\cite{BDMS}. 

\begin{definition}
For each $u \in S^1$, let $\H_u := \{x \in \Z^2 : \< x,u \> < 0 \}$ denote the discrete half-plane whose boundary is perpendicular to $u$. Given $\U$, define
$$\stab = \stab(\U) = \big\{ u \in S^1 : [\H_u]_\U = \H_u \big\}.$$
The model $\U$ is said to be \emph{critical} if there exists a semicircle in $S^1$ that has finite intersection with $\stab$, and if every open semicircle in $S^1$ has non-empty intersection with $\stab$.
\end{definition}
 
We call the elements of $\stab$ \emph{stable directions}. Note that for the Duarte model 
$$\stab(\D) = \big\{ u \in S^1 : \theta(u) \in \{0\} \cup [\pi/2,3\pi/2] \big\},$$ 
where $\theta(u)$ is the (canonical) angle of $u$ in radians. Thus the open semicircle $(-\pi/2,\pi/2)$ contains exactly one stable direction, and every other open semicircle contains an infinite number of stable directions. The next definition allows us to distinguish between different types of stable direction.

\begin{definition}\label{de:alpha}
Given a rational direction $u \in S^1$, the \emph{difficulty} of $u$ is
\[
\alpha(u) :=
\begin{cases}
\min\big\{ \alpha^+(u), \alpha^-(u) \big\} &\text{if } \alpha^+(u) < \infty \text{ and } \alpha^-(u)<\infty \\
\hfill \infty \hfill & \text{otherwise,}
\end{cases}
\]
where $\alpha^+(u)$ (respectively $\alpha^-(u)$) is defined to be the minimum (possibly infinite) cardinality of a set $Z \subset \Z^2$ such that $[\H_u \cup Z]_\U$ contains infinitely many sites of the line $\ell_u := \{x \in \Z^2 : \< x,u \> = 0 \}$ to the right (resp. left) of the origin.
\end{definition}

Writing $u^+$ for the isolated element of $\S(\D)$ (so $\theta(u^+) = 0$), we have $\alpha(u^+) = 1$ and $\alpha(u) = \infty$ for every $u \in \S(\D) \setminus \{u^+\}$. More precisely, writing $u^*$ for the element of $S^1$ with $\theta(u^*) = \pi/2$, we have 
$$\alpha^+(u^*) = \alpha^-(-u^*) = 1 \qquad \textup{ and} \qquad \alpha^-(u^*) = \alpha^+(-u^*) = \infty,$$
and $\alpha^+(u) = \alpha^-(u) = \infty$ for every $u \in \S(\D) \setminus \{u^+,u^*,-u^*\}$. Writing $\C$ for the collection of open semicircles of $S^1$, we define the \emph{difficulty of $\U$} to be
\begin{equation}\label{eq:alphaU}
\alpha = \alpha(\U) := \min_{C \in \C} \, \max_{u \in C} \, \alpha(u),
\end{equation}
so $\alpha(\D) = 1$. The final definition we need is as follows.

\begin{definition}\label{de:balanced}
A critical update family $\U$ is \emph{balanced} if there exists a closed semicircle $C$ such that $\alpha(u) \leq \alpha$ for all $u\in C$. It is said to be \emph{unbalanced} otherwise.
\end{definition}

Note that $\D$ is unbalanced, since every closed semicircle in $S^1$ contains a point of infinite difficulty. The main theorem of~\cite{BDMS} was as follows.

\begin{theorem}\label{thm:BDMS}
Let $\U$ be a critical two-dimensional bootstrap percolation update family and let $\alpha=\alpha(\U)$.
\begin{enumerate}
\item If $\U$ is balanced, then
\[
p_c\big( \Z_n^2,\U \big) = \Theta \bigg( \frac{1}{\log n} \bigg)^{1/\alpha}.
\]
\item If $\U$ is unbalanced, then
\[
p_c\big( \Z_n^2,\U \big) = \Theta \bigg( \frac{(\log \log n)^2}{\log n} \bigg)^{1/\alpha}.
\]
\end{enumerate}
\end{theorem}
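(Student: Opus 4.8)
The plan is to prove the matching upper and lower bounds on $p_c$ separately, working within the geometric framework standard in this area. The first step, shared by both directions, is structural: following Bollob\'as, Smith and Uzzell one fixes a finite set $\T$ of rational ``quasi-stable'' directions --- containing every stable direction of finite difficulty and chosen so that the update family propagates controllably between consecutive elements of $\T$ --- and works with \emph{droplets}, convex polygons with sides perpendicular to the directions of $\T$; every region reachable by the dynamics lies in a bounded dilate of such a polygon. The parameter $\alpha$ enters through the following rule of thumb: to advance the boundary of a droplet by one lattice step across a side perpendicular to $u$ one needs a ``helping set'' of about $\alpha(u)$ extra infected sites near that side, so the probability of a successful advance across a side of length $\ell$ is roughly $1 - \exp(-cp^{\alpha(u)}\ell)$. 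The cheapest directions cost $\alpha$, and whether a direction of cost exceeding $\alpha$ can be avoided along an entire \emph{closed} semicircle --- so that a droplet can grow ``all the way round'' without an expensive step --- is precisely the balanced/unbalanced dichotomy.

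\textbf{Upper bound on $p_c$.} To show that $p$ equal to a large constant times the stated threshold forces percolation with high probability, I would run an explicit growth construction and then take a union bound over the $\Theta(n^2)$ translates. The core estimate is $\P_p\big(D_{\mathrm{crit}}\text{ internally spanned}\big) \geq \exp(-\Theta(g(p)))$ for a suitable ``critical droplet'' $D_{\mathrm{crit}}$, where $g(p) = p^{-\alpha}$ in the balanced case and $g(p) = p^{-\alpha}(\log(1/p))^2$ in the unbalanced case. In the balanced case the droplet is grown from a single site through the $\Theta(p^{-\alpha})$ lattice scales up to side-length $\sim p^{-\alpha}$, using only directions of cost $\leq \alpha$ (available along a closed semicircle by hypothesis), so that the advance from side-length $j$ to $j+1$ costs a product over those directions of factors $1 - \exp(-cp^{\alpha}j)$; the sum $\sum_{j \leq p^{-\alpha}}\log(1 - \exp(-cp^{\alpha}j))$ is then $-\Theta(p^{-\alpha})$, exactly as in Holroyd's treatment of the $2$-neighbour model, while past scale $p^{-\alpha}$ the droplet grows essentially for free. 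In the unbalanced case a droplet cannot be closed up across the hard direction cheaply, so one interleaves growth in the ``easy'' directions --- those on an open semicircle of cost $\leq \alpha$ --- with single steps across the hard direction, arranging that the droplet is long by the time each hard step is attempted; through $\Theta(p^{-\alpha}\log(1/p))$ hard steps the relevant side lengthens, so the step-probabilities rise from $\sim p^{\Theta(1)}$ to $\Theta(1)$ and their logarithms sum to $\Theta(p^{-\alpha}(\log(1/p))^2)$, the easy growth being of lower order. In either case the probability that \emph{some} translate of $D_{\mathrm{crit}}$ is internally spanned is at least $1 - \big(1 - e^{-\Theta(g(p))}\big)^{\Theta(n^2)}$, which tends to $1$ once $g(p) \lesssim \log n$; since a spanned critical droplet then grows to engulf the torus, this is percolation, and $g(p) \lesssim \log n$ unwinds to $p \gtrsim (1/\log n)^{1/\alpha}$ and $p \gtrsim \big((\log\log n)^2/\log n\big)^{1/\alpha}$ respectively.

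\textbf{Lower bound on $p_c$.} To show that $p$ equal to a small constant times the threshold makes percolation unlikely, I would use the method of hierarchies. An Aizenman--Lebowitz-type lemma gives that if $A$ percolates on $\Z_n^2$ then, at the critical scale $k = k(p)$ (which is $p^{-\alpha}$ times a power of $\log(1/p)$), some translate of a droplet of size in $[k,Ck]$ is internally spanned, so $\P_p(\text{percolation}) \leq n^2 k^{O(1)}\,\P_p\big(D\text{ internally spanned}\big)$ for a fixed such $D$. One bounds the latter by decomposing a witnessing configuration into a \emph{hierarchy} --- a rooted tree of nested droplets whose branchings record the merger of two internally spanned sub-droplets and whose edges record single-droplet growth across a ``band'' perpendicular to a direction of $\T$ --- and summing over all hierarchies. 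Each hierarchy contributes the product of its edge (band-crossing) probabilities, and the pivotal input is a \emph{crossing lemma}: the probability that the infection crosses a band perpendicular to $u$ of length $\ell$ and bounded width is at most $\exp(-cp^{\alpha(u)}\ell)$, with the understanding that when $\alpha(u) = \infty$ a crossing can occur only via an exceptionally unlikely nucleation-and-spread event inside the band. Optimising the product over the geometry of the hierarchy, the balanced case yields $\exp(-\Theta(p^{-\alpha}))$ from the critical directions, while in the unbalanced case every admissible route is forced into a costly crossing at each of $\Theta(\log(1/p))$ stages, producing the extra factor $(\log(1/p))^2$. The number of hierarchies over which one sums is at most $\exp\big(O((\log(1/p))^2)\big)$ when the hierarchy's scales are taken to grow geometrically --- negligible against $\exp(\Theta(g(p)))$ --- so the bound survives, and $n^2 k^{O(1)}\exp(-\Theta(g(p))) \to 0$, i.e.\ $g(p) \gg \log n$, yields $p \leq c/(\log n)^{1/\alpha}$ and $p \leq c\big((\log\log n)^2/\log n\big)^{1/\alpha}$.

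\textbf{Main obstacle.} The hard part is the crossing lemma in the form demanded by the lower bound: a sufficiently sharp upper bound on the probability that the infection propagates across a long thin band perpendicular to a stable direction, uniform in the band's dimensions, and in particular when that direction has large or infinite difficulty. When $\alpha(u) = \infty$ the band cannot be crossed by ``local'' help at all, so a crossing requires a rare internal nucleation followed by spreading, and the cost of this must be quantified against the band's shape and then fed cleanly into the hierarchy sum --- tightly enough to match the upper-bound construction up to constants. A secondary difficulty is making the balanced/unbalanced dichotomy rigorous at the level of hierarchies: turning ``the droplet is forced into an expensive crossing at every scale'' into a precise statement about which hierarchies are admissible and how their weights behave, and calibrating the hierarchy's scales so that the combinatorial count remains subdominant.
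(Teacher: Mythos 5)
First, a point of reference: the paper you are reading does not prove this statement. Theorem~\ref{thm:BDMS} is quoted verbatim from~\cite{BDMS} as background, and the present paper only refines those techniques for the single case of the Duarte model. So the comparison below is against the strategy of~\cite{BDMS} as it is recalled and redeployed here (quasi-stable directions and droplets, the Aizenman--Lebowitz extraction of a critical droplet, hierarchies \`a la Holroyd, and crossing lemmas). At that level your outline is the right one, and your identification of the crossing lemma for directions of large or infinite difficulty as the central analytic obstacle is accurate.

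There is, however, a genuine gap in your lower-bound argument, and it is precisely the gap that~\cite{BDMS} had to invent a new method to close. Your union bound over hierarchies requires two things simultaneously: (i) the number of hierarchies must be subdominant, which forces the seeds (the leaves of the hierarchy) to have size at least roughly $p^{-\alpha}$ divided by a power of $\log(1/p)$ --- see Lemma~\ref{lem:numberofH}, where the hierarchy count is $\exp\big(O(\ell\, h(D) t^{-1}\log (1/p))\big)$, not the $\exp\big(O((\log(1/p))^2)\big)$ you assert; and (ii) one must have a strong enough bound on the probability that each such seed is internally spanned. For the $2$-neighbour model a seed of that size is handled by a trivial counting argument, but for a general critical family --- in particular an unbalanced one with drift, where droplets at height $\sim p^{-\alpha}$ are already extremely long --- no direct bound is available. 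Your proposal silently assumes one. The resolution in~\cite{BDMS}, which the present paper makes explicit in Section~\ref{sec:tools} (Definition~\ref{def:ih} and Lemmas~\ref{lem:basecase} and~\ref{lem:indstep}), is the \emph{method of iterated hierarchies}: one proves the bound $\P_p(\ispan(D))\le p^{(1-\eps_k)h(D)/2}$ by induction on the height scale $p^{-(2/3)^k}$, using an extremal lemma (every internally spanned droplet of height $h$ contains $\ge h/2$ infected sites, Lemma~\ref{lem:extremal}) for the base case at very small scales, and a fresh hierarchy argument --- whose seeds are controlled by the previous induction step --- for each scale increment. Without this bootstrapping your argument cannot get started: either the seeds are too small and the hierarchy count swamps the estimate, or they are large enough and you have no bound for them. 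A secondary omission is that your crossing heuristic $1-\exp(-cp^{\alpha(u)}\ell)$ must also absorb the possibility of ``saver'' nucleations inside the band (cf.\ Definition~\ref{def:partition}), whose probability is again bounded only via the inductive hypothesis; so the crossing lemma and the seed bound are entangled, not separate difficulties.
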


By the remarks above, it follows from Theorem~\ref{thm:BDMS} that $p_c\big( \Z_n^2,\D \big) = \Theta\big( \frac{(\log \log n)^2}{\log n} \big)$, as was first proved by Mountford~\cite{Mount}. Sharp thresholds  (that is, upper and lower bounds which differ by a factor of $1 + o(1)$) are known in some special cases. For example, Duminil-Copin and Holroyd~\cite{DH} obtained such a result for symmetric, balanced, threshold models (that is, balanced models in which $\U$ consists of the $r$-subsets of some centrally symmetric set), and Duminil-Copin and van Enter~\cite{DE} determined the sharp threshold for the unbalanced model with update rule $\A$ consisting of the 3-subsets of $\big\{ (-2,0), (-1,0), (0,1), (0,-1), (1,0), (2,0) \big\}$, proving that
$$p_c\big( \Z_n^2,\A \big) = \bigg( \frac{1}{12} + o(1) \bigg) \frac{(\log \log n)^2}{\log n}$$
as $n \to \infty$. This was, until now, the only sharp threshold result known for an unbalanced critical bootstrap process in two dimensions.

The key property which makes the process with update rule $\A$ easier to deal with than the Duarte model is symmetry, in particular the fact that $\alpha^+(u^*) = \alpha^-(u^*) = 2$. As a result of this symmetry, the droplets are rectangles, and there is a natural way to partition vertical growth into steps of size one. The Duarte model also exhibits symmetry, but of a weaker kind: there exists a set of four pairwise-opposite stable directions. Theorem~\ref{thm:Duarte} is the first result of its kind for a model (balanced or unbalanced) that only exhibits this weaker notion of symmetry. 

The proof of Theorem~\ref{thm:Duarte} follows in outline that of Theorem~\ref{thm:BDMS} in the case of unbalanced `drift' models (that is, models for which $\alpha(u^*) = \alpha(-u^*) = \infty$), with a few important differences. In particular, we will use the `method of iterated hierarchies' (see Section~\ref{sec:tools}), but the droplets we use to control the growth will \emph{not} be polygons. Instead, they will grow upwards as they grow rightwards; crucially, however, in a \emph{deterministic} fashion. This means that their size will depend on only one parameter (their height), rather than two, as in the case of a rectangle. As noted above, this has the pleasantly surprising consequence of simplifying some of the analysis, although (rather less surprisingly) its overall effect is to significantly increase the number of technical difficulties that will need to be overcome, as we shall see in Sections~\ref{sec:tools} and~\ref{sec:crossings}. This is the first time that non-polygonal droplets have been used in bootstrap percolation, and we consider this innovation to be the key contribution of this paper.  

The rest of this paper is organised as follows. We begin in the next section by giving the (relatively easy) proof of the upper bound in Theorem~\ref{thm:Duarte}. Then, in Section~\ref{sec:tools}, we prepare for the proof of the lower bound by defining precisely the droplet described above, by stating a number of other key definitions, and by recalling some fundamental definitions from~\cite{BDMS} and~\cite{Hol}. The most technical part of the paper is Section~\ref{sec:crossings}, in which we prove precise bounds on the probability that a droplet grows to `span' a slightly larger droplet. In Section~\ref{sec:small} we use the `method of iterated hierarchies' to bound the probability that relatively small droplets are internally spanned, and in Section~\ref{sec:large} we deduce the corresponding bound for large droplets, and hence complete the proof of Theorem~\ref{thm:Duarte}. Finally, in Section~\ref{sec:open}, we discuss possible extensions of our techniques to more general two-dimensional processes, and the (much harder) problem of extending these methods to higher dimensions.

\section{The upper bound}\label{sec:upper}

The upper bound in Theorem~\ref{thm:Duarte} is relatively straightforward. We will prove the following proposition, which easily implies it (the deduction is given at the end of the section). Given a rectangle $R$ with sides parallel to the axes, let $\edge(R)$ denote its right-hand side. 

\begin{prop}\label{prop:upper}
For every $\eps > 0$, there exists $p_0(\eps) > 0$ such that the following holds. Let $0 < p \le p_0(\eps)$, set $a = 1 / p^{5}$ and $b = 1 / p^{3}$, and let $R$ be a rectangle of width $a$ and height $b$. Then 
\[
\P_p\Big( \edge(R) \subset [R \cap A] \Big) \ge \exp\Bigg( - \frac{1+\eps}{4p} \left( \log \frac{1}{p}  \right)^2 \Bigg).
\]
\end{prop}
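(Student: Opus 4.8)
The plan is to build an explicit infection path that, starting from the initially infected sites of $R \cap A$, fills the entire rectangle $R$, and to lower-bound the probability that this path succeeds. The natural strategy for the Duarte model is to grow a `droplet' upward from the bottom edge of $R$ to the top, where the droplet at each stage is a sub-rectangle spanning the full width $a$. The key geometric fact is that, given a fully infected horizontal strip of width $w$, the two infected sites required by an update rule involving $(0,1)$ together with one of $(-1,0)$ or $(0,-1)$ let us infect the row two above, provided we first seed that intermediate row appropriately; concretely, to advance the top of the droplet by a vertical distance $2$ we need roughly one new infected site somewhere in a window of width $\sim w$, and then the horizontal rules $\{(0,1),(0,-1)\}$ and $\{(-1,0),(0,1)\}$ propagate the infection across the row. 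Since the droplet we are growing always has width $a = p^{-5}$, which is far larger than the window size needed (this is the point of choosing $a$ polynomially large), each vertical double-step succeeds with probability at least $\approx 1 - (1-p)^{ca} $, but more to the point we want the stronger bound that we can \emph{find} the needed seed with probability at least $\approx p \cdot a^{O(1)}$; carefully, one shows that the cost of lifting the droplet top from height $h$ to height $h+2$ is at least $p \cdot \mathrm{poly}(1/p)$ and, crucially, that the relevant width at height $h$ only needs to be polynomial in $1/p$ rather than growing, so no width-doubling penalty accrues. First I would state and prove a single-double-step lemma of this shape, then multiply over the $b/2 = p^{-3}/2$ steps needed to traverse the height.

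More precisely, the heart of the estimate is that the product over all $\sim b/2$ double-steps of the per-step success probability is at least $\exp(-(1+\eps)(\log(1/p))^2/(4p))$. Here the factor $1/(4p)$ must emerge from the arithmetic: if each step costs about $p^{1-c}$ for a droplet of relative height parameter $c$, and $c$ ranges over $[0,1]$ as the height ranges over $[0, (1/p)\log(1/p)]$, then $\sum \log(\text{cost}) \approx \frac{1}{p}\int_0^1 (1-c)\log(1/p)\,\mathrm{d}c \cdot (\text{something})$, matching the heuristic $\int_0^1 \frac{1-c}{2}\,\mathrm{d}c = 1/4$ quoted in the introduction. So in fact the growth should \emph{not} be at constant width $a$; rather, I would let the droplet width grow with height according to the deterministic rule width $\approx p^{-1-c}$ at height $c \cdot (1/p)\log(1/p)$, exactly as foreshadowed by the non-polygonal droplets of Section~\ref{sec:tools}. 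The computation then splits as: (i) grow from a single site up to a droplet of height $H := (1/p)\log(1/p)$ and width $\sim 1/p^2$, paying the integral cost $\exp(-(1+\eps/2)(\log(1/p))^2/(4p))$; (ii) once the droplet has width $\ge 1/p^2$, each further vertical double-step is essentially free (success probability $1 - o(1)$ per step, or at worst contributing a negligible factor), so one can freely inflate to width $a$ and height $b$. Thus I would prove (i) via the per-step lemma and a Riemann-sum estimate, and (ii) via a crude second-moment or union-bound argument showing wide short droplets grow with overwhelming probability.

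The technical heart — and the step I expect to be the main obstacle — is step (i): proving that a single infected site grows to a height-$H$ droplet with probability at least $\exp(-(1+\eps)(\log(1/p))^2/(4p))$, with the constant $1/4$ sharp. This requires (a) choosing the deterministic width profile $w(h)$ correctly so that the horizontal rules really do propagate across each new row given a single seed in a window of width $\Theta(w(h))$ — this uses the specific structure of $\D$, in particular that $\{(0,1),(0,-1)\}$ fills vertical gaps and $\{(-1,0),(0,1)\}$ pushes left — (b) showing the per-step seeding events can be taken independent (by working with disjoint sets of sites, one horizontal strip per step), so that probabilities genuinely multiply, and (c) controlling the Riemann sum $\sum_{h} \log(1/\P(\text{step at height } h))$ to within a $(1+\eps)$ factor of $\frac{1}{4p}(\log(1/p))^2$, which amounts to an elementary but delicate estimate of $\sum_{j} \log\big((1-p)^{-w(jH_0)}\big)$ type sums, or rather $\sum_j w(jH_0)^{-1}$-type sums, where the dominant contribution comes from the regime where $w$ is still small. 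Once the right width profile is in hand, (b) is routine (disjointness of strips) and the deduction of the Proposition's exact statement (with constant rectangle dimensions $a = p^{-5}$, $b = p^{-3}$) from the profile version is a short monotonicity argument: $[R \cap A] \supseteq [\text{(grown droplet)} \cap A]$ and the grown droplet contains $\edge(R)$ once it has reached full height and width.
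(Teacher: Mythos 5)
Your strategy is essentially the same as the paper's: grow a droplet in a staircase profile of width $\approx p^{-1-c}$ at height $c\cdot p^{-1}\log\frac1p$, evaluate the Riemann sum to obtain the integral $\int_0^1\frac{1-c}{2}\,\mathrm{d}c=\frac14$ and hence the main cost $\exp\bigl(-\frac{1+o(1)}{4p}(\log\frac1p)^2\bigr)$, and then inflate to the full rectangle~$R$. This is exactly what the paper does with the rectangles $R_0,\dots,R_k$ and Lemmas~\ref{lem:upper:basicstep} and~\ref{lem:upper:Rzero}.

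However, your step~(ii) has a genuine gap. You assert that once the droplet has width $\geq p^{-2}$ and height $\approx p^{-1}\log\frac1p$, ``each further vertical double-step is essentially free (success probability $1-o(1)$ per step, or at worst contributing a negligible factor), so one can freely inflate to width $a$ and height $b$''. This fails in either order. Growing upward by $\sim p^{-3}$ rows at width $p^{-2}$ requires $\sim p^{-3}/2$ double-steps, each needing a seed in a disjoint horizontal strip of width only about $2p$, so the per-step success probability is $\approx 2p^2$, not $1-o(1)$, and the product is roughly $\exp\bigl(-p^{-3}\log\frac1p\bigr)$, far smaller than the target bound. Growing rightward from width $p^{-2}$ to width $p^{-5}$ at height $p^{-1}\log\frac1p$ requires a seed in every new column; each column fails with probability $\approx p$, so the success probability is $\approx(1-p)^{p^{-5}}\approx e^{-p^{-4}}$, again far too small. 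The fix, which the paper carries out with the auxiliary rectangles $\hat R_1,\hat R_1',\hat R_2$, is to interleave modest horizontal and vertical increments: right to width $p^{-2-\eps}$ (cost $e^{-O(1/p)}$), then up to height $p^{-1-\eps/2}$ (now the per-step strips are genuinely wide), then right to width $p^{-5}$ (now the height is polynomially large, so each new column contains a seed with overwhelming probability), then up to height $p^{-3}$. Without some such alternating scheme, the deduction of the Proposition from your step~(i) does not go through.
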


The growth structure we use to prove Proposition~\ref{prop:upper} is illustrated in Figure~\ref{fig:upper}. We will define rectangles $R_0,\ldots,R_k$, where $k:=1/\eps$, and bound the probability that $R_0\subset [R_0\cap A]$, % internally filled not defined
and that $R_0$ then grows to infect the other rectangles in turn. (Note that if $1/\eps$ is not an integer then we may replace $\eps$ by $1/\lceil 1/\eps\rceil$.)

\begin{figure}[ht]
  \centering
  \begin{tikzpicture}[>=latex]
    \draw (-0.2,0) rectangle (0,1) (0,0) rectangle (2,1) (2,0) rectangle (5.5,2) (5.5,0) rectangle (12,3);
    \draw [densely dashed] (0,1) -- (0,2) -- (2,2) -- (2,3) -- (5.5,3) -- (5.5,4) -- (12,4);
    \draw (12,0) -- (12.2,0) (12,3) -- (12,4) -- (12.2,4);
    \draw [densely dashed] (12.3,0) -- (12.5,0) (12.3,4) -- (12.5,4);
    \node at (1,0.5) {$R_1$};
    \node at (3.75,1) {$R_2$};
    \node at (8.75,1.5) {$R_3$};
%    \draw (-0.2,0.5) -- (-0.5,0.5) node [left] {$R_0$};
    \node at (-0.6,0.5) {$R_0$};
    \draw [<->] (2.3,2) -- node [right] {$h$} (2.3,3);
    \draw [<->] (0,-0.3) -- node [below] {$w_1$} (2,-0.3);
    \draw [<->] (2,-0.3) -- node [below] {$w_2$} (5.5,-0.3);
    \draw [<->] (5.5,-0.3) -- node [below] {$w_3$} (12,-0.3);
  \end{tikzpicture}
  \caption{Our proof of the upper bound of Theorem~\ref{thm:Duarte} shows that one (asymptotically) optimal route to percolation of $\Z_n^2$ is, somewhere in the torus, for infection to spread in the manner depicted in the figure. From $R_0$ infection spreads rightwards through $R_1$, then upwards from $R_1$ to $R_1'$ (which is the union of $R_1$ and the dashed region above), then rightwards again into $R_2$, and so on.}\label{fig:upper}
\end{figure}
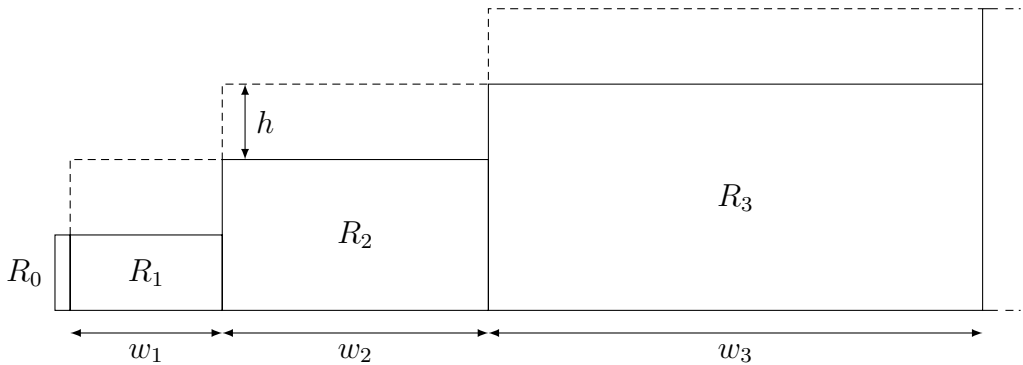

Let us denote the discrete rectangle with opposite corners $(a,b)$ and $(c,d)$ by
\[
R\big((a,b),(c,d)\big) := \big\{ (x,y) \in \Z^2 \,:\, a \leq x \leq c \text{ and } b \leq y \leq d \big\}.
\] 
Assume that $\eps > 0$ and $0 < p < p_0(\eps)$ are both sufficiently small, and set
\[
h := \frac{\eps}{p}\log\frac{1}{p} \qquad \text{and} \qquad w_i := p^{-1-i\eps}
\]
for each $i \in [k]$. We define
\[
R_0 := R_0' := R\Big((0,0),\big(0,h\big)\Big)
\]
and, for each $i \in [k]$,
\[
R_i := R\Bigg( \bigg(1+\sum_{j=1}^{i-1} w_j \, , \, 0 \bigg), \, \bigg(\sum_{j=1}^i w_j \, , \, ih \bigg) \Bigg)
\]
and
\[
R_i' := R\Bigg( \bigg(1+\sum_{j=1}^{i-1} w_j \, , \, 0 \bigg), \, \bigg(\sum_{j=1}^i w_j \, , \, (i+1)h \bigg) \Bigg),
\]
Thus the $R_i$ are rectangles whose heights grow linearly and widths exponentially in $i$, and consecutive rectangles are adjacent. The rectangle $R_i'$ contains $R_i$ and has height equal to that of $R_{i+1}$. The set-up is depicted in Figure~\ref{fig:upper}.

We first prove the following easy lemma. % in the proof of Proposition~\ref{prop:upper}.

\begin{lemma}\label{lem:upper:basicstep}
For each $i \in [k]$,
$$\P_p\big( \edge(R_i') \subset [ \edge(R'_{i-1}) \cup (R_i' \cap A) ] \big) \ge e^{-2/p} \cdot p^{(1-i\eps+\eps^2)h/2}.$$

\end{lemma}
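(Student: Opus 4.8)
The plan is to work inside $R_i'$, to isolate an explicit ``good event'' $G$ measurable with respect to $A\cap R_i'$ on which the deterministic growth sketched in Figure~\ref{fig:upper} takes place, and then to bound $\P_p(G)$ from below by the claimed quantity. Write $x_0:=\sum_{j<i}w_j$, so that $\edge(R_{i-1}')$ is the column $\{x_0\}\times[0,ih]$, while $R_i'$ is the union of the $w_i$ columns of height $(i+1)h$ with $x$-coordinates $x_0+1,\dots,x_0+w_i$; recall also that $h=\frac{\eps}{p}\log\frac1p$ and $w_i=p^{-1-i\eps}$. We shall use two elementary consequences of the Duarte rule (a site is infected by its neighbours exactly when at least two of its west, north and south neighbours are infected): (i) if a column is entirely infected from its base up to height $H$ and the column immediately to its right contains an infected site at a height $\le H$, then that right-hand column becomes infected from its base up to height $H$; and (ii) if $(x,y)$ and $(x,y+2)$ are infected, then so is $(x,y+1)$.

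Put $s:=\lceil h/2\rceil$ and split $\{1,\dots,w_i\}$ into $s$ consecutive blocks $B_1<\dots<B_s$, each of size $\lfloor w_i/s\rfloor$ or $\lceil w_i/s\rceil$ (note $w_i/s$ is polynomially large in $1/p$ whereas $h$ is only logarithmic). Let $G=G_a\cap G_b$, where $G_a$ is the event that $\{x_0+m\}\times[1,ih]$ meets $A$ for every $m\in\{1,\dots,w_i\}$, and $G_b$ is the event that for every $j\in\{1,\dots,s\}$ there is some $m\in B_j$ with $(x_0+m,\,ih+2j)\in A$; for each $j$ fix a witness $m_j\in B_j$, and note $m_1<\dots<m_s$ since the blocks are ordered. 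I claim that on $G$ one has $\edge(R_i')\subseteq[\edge(R_{i-1}')\cup(R_i'\cap A)]_\D$. Sweeping from left to right, set $T(m):=ih+2\,|\{j:m_j\le m\}|$; one proves by induction on $m$ that column $x_0+m$ becomes infected from its base up to height at least $T(m)$. Indeed, by $G_a$ that column contains an infected site at height $\le ih\le T(m-1)$, so by (i) it reaches at least the height $T(m-1)$ of its left neighbour (the base case being the given column $x_0$, infected up to $T(0)=ih$); and if moreover $m=m_j$, then, being infected up to $T(m_j-1)=ih+2j-2$ and containing the infected site $(x_0+m_j,ih+2j)\in A$, it also infects $(x_0+m_j,ih+2j-1)$ by (ii) and hence reaches height $ih+2j=T(m_j)$. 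Since $T(w_i)=ih+2s\ge(i+1)h$, the rightmost column $\edge(R_i')$ is infected over its entire height.

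It remains to estimate $\P_p(G)$. The events $G_a$ and $G_b$ depend on $A$ at the disjoint sets of heights $[1,ih]$ and $\{ih+2,ih+4,\dots\}$ respectively, so they are independent, and likewise the $s$ events defining $G_b$ are mutually independent. For $G_a$ each column contributes a factor $1-(1-p)^{ih}\ge 1-p^{i\eps}$ (using $(1-p)^{ih}\le e^{-p\,ih}=p^{i\eps}$), so, since $\log(1-t)\ge-2t$ for $t\le\tfrac12$,
\[
\P_p(G_a)\ \ge\ (1-p^{i\eps})^{w_i}\ \ge\ \exp(-2w_ip^{i\eps})\ =\ e^{-2/p}.
\]
For $G_b$, each block event has probability $1-(1-p)^{|B_j|}\ge\tfrac12 p|B_j|\ge\tfrac14\,p\,w_i/s$ (valid since $p|B_j|\to0$, so $1-e^{-x}\ge x/2$ applies), and $p\,w_i=p^{-i\eps}$, whence
\[
\P_p(G_b)\ \ge\ \Big(\tfrac14\,p^{-i\eps}/s\Big)^{s}\ \ge\ p^{(1-i\eps+\eps^2)h/2}.
\]
The last inequality reduces, after cancelling $p^{-i\eps s}$, taking $s$-th roots and using $s\asymp\frac{\eps}{2p}\log\frac1p$, to $\tfrac14/s\ge p^{1+\eps^2}$, i.e.\ to $p^{-\eps^2}\ge(\mathrm{const})\cdot\log\frac1p$, which holds for all sufficiently small $p$. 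Multiplying the two estimates gives $\P_p(G)\ge e^{-2/p}\,p^{(1-i\eps+\eps^2)h/2}$, as required.

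The crux is getting the exponent in the bound for $\P_p(G_b)$ right: placing all $s$ ``bridge'' sites in a single column would yield only $p^{h/2}$, which is too weak. One must spread them over $\Theta(w_i)$ candidate columns --- one per block --- so as to gain a combinatorial factor of order $\binom{w_i}{s}$; this is precisely the assertion that a vertical advance of $2$ at ``height level $c=i\eps$'' costs only $p^{1-c}$, because the droplet there has width $p^{-1-c}$. The point demanding the most care is checking that this spreading stays compatible with the purely left-to-right deterministic growth: that every column still acquires an activating site, that the witnesses $m_j$ occur in increasing order (here forced by the block structure), and that each bridge fires only once the column beneath it has been filled up to the appropriate height.
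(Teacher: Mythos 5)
Your proof is correct and takes essentially the same approach as the paper's: decompose the target event into an independent product of (i) the event that every column of $R_i$ contains an initially infected site at height at most $ih$, yielding the factor $e^{-2/p}$, and (ii) the event that the droplet climbs two rows at a time by finding a fresh $A$-site in each of roughly $h/2$ horizontal blocks of width $\Theta(w_i/h)$, yielding $p^{(1-i\eps+\eps^2)h/2}$. Your version is a little more explicit than the paper's (you spell out the deterministic left-to-right sweep and the increasing witnesses $m_1<\cdots<m_s$, which the paper leaves to Figure~2), but the ideas, the block decomposition, the use of the two-out-of-three Duarte rules, and the probability estimates all match.
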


\begin{proof}
Note first that, since a single infected site in each column is sufficient for horizontal growth, we have
%\begin{equation}\label{eq:upper:Ri}
$$\P_p\big( R_i \subset [ \edge(R'_{i-1}) \cup (R_i \cap A) ] \big) \geq \big( 1 - (1-p)^{ih} \big)^{w_i} \geq \big( 1-p^{i\eps} \big)^{w_i} \geq e^{-2/p},$$
%\end{equation}
since $p^\eps$ is sufficiently small. Now suppose that $R_i$ is already completely infected, and observe that a single element of $A$ in the row two above $R_i$ causes all elements to its right in these two rows to become infected (see Figure~\ref{fig:upper2}). Note that the probability of finding at least one site of $A$ in a collection of $w_i/h$ sites is
\[
1 - (1-p)^{w_i/h} \ge 1 - \exp\bigg( - \frac{p^{1-i\eps}}{\eps \log(1/p)} \bigg) \ge p^{1-i\eps+\eps^2},
\]
since $\eps p^{\eps^2} \log(1/p) < 1/2$. It follows that  
%\begin{equation}\label{eq:upper:Ri'}
$$\P_p\big( \edge(R_i') \subset [ R_i \cup (R_i' \cap A) ] \big) \geq p^{(1-i\eps+\eps^2)h/2},$$
%\end{equation}
as required.
\end{proof}

\begin{figure}[ht]
  \centering
  \begin{tikzpicture}[>=latex,scale=1.3]
    \draw (0,0) -- (0,1) -- (8.3,1) (8,0) -- (8,3.4) -- (8.3,3.4) -- (8.3,0);
    \draw [densely dashed] (0,1.3) -- (8,1.3) (0,1.6) -- (8,1.6);
    \draw [densely dashed] (2,1.9) -- (8,1.9) (2,2.2) -- (8,2.2);
    \draw [densely dashed] (4,2.5) -- (8,2.5) (4,2.8) -- (8,2.8);
    \draw [densely dashed] (6,3.1) -- (8,3.1) (6,3.4) -- (8,3.4);
    \draw [densely dashed] (0,1) -- (0,1.6) (2,1.6) -- (2,2.2) (4,2.2) -- (4,2.8) (6,2.8) -- (6,3.4);
    \draw (1.2,1.3) rectangle (1.5,1.6);
    \draw (2.5,1.9) rectangle (2.8,2.2);
    \draw (4.9,2.5) rectangle (5.2,2.8);
    \draw (7.4,3.1) rectangle (7.7,3.4);
    \node [draw,cross out,inner sep=0pt,minimum size=0.15cm] at (1.35,1.45) {};
    \node [draw,cross out,inner sep=0pt,minimum size=0.15cm] at (2.65,2.05) {};
    \node [draw,cross out,inner sep=0pt,minimum size=0.15cm] at (5.05,2.65) {};
    \node [draw,cross out,inner sep=0pt,minimum size=0.15cm] at (7.55,3.25) {};
%    \draw (1.2,1) -- (1.2,1.6) -- (8,1.6);
    \node at (4,0.5) {$R_i$};
    \draw (8.15,1.7) -- ++(0.5,0) node [right] {$\edge(R_i')$};
    \draw [<->] (0,2.5) -- node [above] {$w_i/h$} (2,2.5);
  \end{tikzpicture}
  \caption{Upwards growth through $R_i'$. With $R_i$ and the four marked sites already infected, the whole of $\edge(R_i')$ becomes infected.}\label{fig:upper2}
\end{figure}
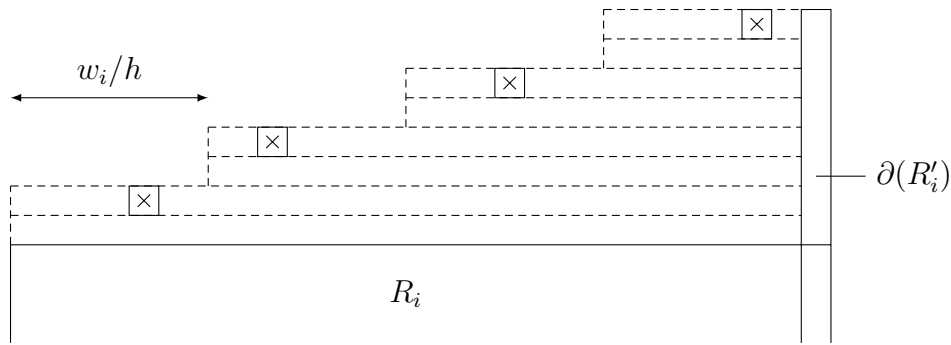

Now set $\hat{w}:=w_1+\dots+w_k$ and
\[
\hat{R}_0 := R\Bigg( (0,0) \, , \, \bigg( \hat{w} \, , \, \frac{1+\eps}{p}\log\frac{1}{p} \bigg) \Bigg). 
\]
The next lemma follows easily from Lemma~\ref{lem:upper:basicstep}. 

\begin{lemma}\label{lem:upper:Rzero}
We have
\[
\P_p\big( \edge(\hat{R}_0) \subset [ \hat{R}_0\cap A ] \big) \ge \exp \Bigg( -\frac{1+2\eps}{4p}\bigg(\log\frac{1}{p}\bigg)^2 \Bigg).
%p^{(1 + \eps^2)(k+1)h/2}.
\]
\end{lemma}

\begin{proof}
Note that $\edge(\hat{R}_0) = \edge(R_k')$, and that 
$$\P_p\big( R_0 \subset [R_0 \cap A] \big) \geq p^{\lfloor h/2 \rfloor + 1},$$
since if every second element of $R_0$ is in $A$ then $R_0 \subset [R_0 \cap A]$. Therefore
\[
\P_p\big( \edge(\hat{R}_0) \subset [ \hat{R}_0\cap A ] \big) \geq p^{\lfloor h/2 \rfloor + 1} \cdot \prod_{i=1}^k \P_p\big( \edge(R_i') \subset [ \edge(R'_{i-1}) \cup (R_i' \cap A) ] \big).
\]
By Lemma~\ref{lem:upper:basicstep}, the right-hand side is at least
$$p^{\lfloor h/2 \rfloor + 1} e^{-2k/p} \prod_{i=1}^k p^{(1-i\eps+\eps^2)h/2} \ge e^{-2k/p} \big( p^{(k+1)h/2} \big)^{1 - \eps k / 2 + \eps^2} \ge p^{(1 + 3\eps^2)h(k+1)/4},$$
since $p$ is sufficiently small and $\eps k = 1$. Recalling that $h = \frac{\eps}{p}\log\frac{1}{p}$, and noting that $(1 + 3\eps^2)(1 + \eps) < 1 + 2\eps$ since $\eps$ is sufficiently small, the claimed bound follows.
\end{proof}

We can now easily complete the proof of Proposition~\ref{prop:upper}. Indeed, once we have infected $\edge(\hat{R}_0)$ it is relatively easy to grow $p^{-2-\eps}$ steps to the right, then $p^{-1-\eps/2}$ steps upwards, then $p^{-5}$ steps right, and finally $p^{-3}$ steps up. For completeness we spell out the details below.

\begin{proof}[Proof of Proposition~\ref{prop:upper}]
Recall that $R = R\big[ (0,0), (p^{-5},p^{-3}) \big]$. We claim that 
\begin{equation}\label{eq:prop:upper:claim}
\P_p\Big( \edge(R) \subset \big[ \edge(\hat{R}_0) \cup (R \cap A) \big] \Big) \ge e^{-O(1/p)}.
\end{equation}
In order to prove~\eqref{eq:prop:upper:claim}, we will need to define three more rectangles. First, set
\[
\hat{R}_1 = R\Bigg( ( \hat{w}+1 , 0 )  \, , \, \bigg( \hat{w} + p^{-2-\eps} \, , \, \frac{1+\eps}{p}\log\frac{1}{p} \bigg) \Bigg),
\]
and observe that 
\[
\P_p\Big( \hat{R}_1 \subset \big[ \edge(\hat{R}_0) \cup (\hat{R}_1 \cap A) \big] \Big) \ge \Big( 1 - (1-p)^{h(\hat{R}_1)} \Big)^{w(\hat{R}_1)} \ge e^{-O(1/p)},
\]
since $\exp\big( - p \cdot h(\hat{R}_1) \big) = p^{-(1+\eps)}$ and $p^{-(1+\eps)} \cdot w(\hat{R}_1) = 1/p$. Next, set  
\[
\hat{R}_1' = R\Big( ( \hat{w}+1 , 0 )  \, , \, \big( \hat{w} + p^{-2-\eps} \, , \, p^{-1-\eps/2} \big) \Big),
\]
and observe that
\[
\P_p\Big( \edge\hat{R}'_1 \subset \big[ \hat{R}_1 \cup (\hat{R}'_1 \cap A) \big] \Big) \ge \Big( 1 - (1-p)^{w(\hat{R}_1) / h(\hat{R}'_1)} \Big)^{h(\hat{R}'_1)/2} > \frac{1}{2}
\]
since $\exp\big( - p \cdot w(\hat{R}_1) / h(\hat{R}'_1) \big) = \exp( - p^{-\eps/2} ) \ll p^2$ and $h(\hat{R}'_1) \ll p^{-2}$. Finally, set
\[
\hat{R}_2 = R\Big( \big( \hat{w} + p^{-2-\eps} + 1 \, , \, 0 \big)  \, , \, \big( p^{-5} \, , \, p^{-1-\eps/2} \big) \Big),
\]
and observe that
\[
\P_p\Big( \hat{R}_2 \subset \big[ \edge(\hat{R}'_1) \cup (\hat{R}_2 \cap A) \big] \Big) \ge \Big( 1 - (1-p)^{h(\hat{R}_2)} \Big)^{w(\hat{R}_2)} > \frac{1}{2}
\]
since $\exp\big( - p \cdot h(\hat{R}_2) \big) \ll p^{-5}$ and $w(\hat{R}_2) \le 1/p^5$, and
\[
\P_p\Big( \edge(R) \subset \big[ \hat{R}_2 \cup (R \cap A) \big] \Big) \ge \Big( 1 - (1-p)^{w(\hat{R}_2) / h(R)} \Big)^{h(R)/2} > \frac{1}{2}
\]
since $\exp\big( - p \cdot w(\hat{R}_2) / h(R) \big) \ll p^3$ and $h(R) = p^{-3}$. This proves~\eqref{eq:prop:upper:claim}, and, together with Lemma~\ref{lem:upper:Rzero}, it follows that
\[
\P_p\big( \edge R \subset [ R\cap A ] \big) \geq \exp \Bigg( -\frac{1+3\eps}{4p}\bigg(\log\frac{1}{p}\bigg)^2 \Bigg).
\]
Since $\eps$ was arbitrary, the proposition follows.
\end{proof}

Finally, let us deduce the upper bound of Theorem~\ref{thm:Duarte} from Proposition~\ref{prop:upper}.

\begin{proof}[Proof of the upper bound of Theorem~\ref{thm:Duarte}]
Fix $\lambda > 1/8$, and set
\begin{equation}\label{eq:pdef}
p = \frac{\lambda (\log \log n)^2}{\log n}.
\end{equation}
We will show that, with high probability as $n \to \infty$, a $p$-random subset $A \subset \Z_n^2$ percolates. Observe first that $\Z_n^2$ contains $\Omega\big( p^8 n^2 \big)$ disjoint translates of the rectangle $R = R\big[ (0,0), (p^{-5},p^{-3}) \big]$. Since
\begin{equation}\label{eq:upperfinal}
\exp\Bigg( - \frac{1+\eps}{4p} \left( \log \frac{1}{p} \right)^2 \Bigg) \ge \exp\bigg( - \frac{1+\eps}{4\lambda} \cdot \log n \bigg) \ge n^{-2+\eps}
\end{equation}
if $\eps > 0$ is sufficiently small, it follows from Proposition~\ref{prop:upper} that, with high probability, there exists such a translate with $\edge(R) \subset [R \cap A]$.

To complete the proof, simply observe that with probability at least
\[
1 - 2 n^2 \big( 1 - p \big)^{1 / p^3} \ge 1 - \frac{1}{n},
\]
there does not exist a (horizontal or vertical) line of $1/p^3$ consecutive sites of $\Z_n^2$ that contains no element of $A$. But if this holds then the set $\edge(R) \cup A$ clearly percolates in $\Z_n^2$, and so we are done.
\end{proof}

\section{Droplets, spanning, and iterated hierarchies}\label{sec:tools}

\subsection{Droplets and the growth of infected regions}\label{sec:droplets}

We are now ready to start the main part of the proof of Theorem~\ref{thm:Duarte}: the proof of the lower bound on the critical probability. We begin by formally introducing the curved droplets we shall use to control the growth of an infection. This will then allow us to state the key result (Proposition~\ref{prop:lower}) we need in the lead up to Theorem~\ref{thm:Duarte}. Later, in Section~\ref{sec:spanning}, we establish certain deterministic facts about `internally spanned droplets' (see Definition~\ref{def:ispan} below), and in Section~\ref{sec:hierarchies} we briefly recall the definitions and properties we shall need for the `method of iterated hierarchies'.

We begin by defining a \emph{droplet}. The definition is quite subtle, and is chosen both to reflect the typical growth of the infected set, and to facilitate our proof of Theorem~\ref{thm:Duarte}. For simplicity, we will work in $\Z^2$ (and $\R^2$) throughout this section, though all of the definitions and lemmas below can be easily extended to the setting of $\Z_n^2$.

\begin{definition}\label{def:droplet}
Given $\eps > 0$ and $p > 0$, a \emph{Duarte region} $D^* \subset \R^2$ is a set of the form
\begin{equation}\label{eq:Dstar}
D^* = (a,b) + \big\{ (x,y) \in \R^2 : 0 \le x \le w, \, |y| \le f(x) \big\},
\end{equation}
for some $a,b,w \in \R$, where $f \colon [0,\infty) \to [0,\infty)$ is the function
\[
f(x) := \frac{1}{2p}\log\left(1+\frac{\eps^3 px}{\log 1/p}\right).
\]
A \emph{Duarte droplet} (or simply, as we shall usually say, a \emph{droplet}) $D \subset \Z^2$ is the intersection of a Duarte region with $\Z^2$. Thus, $D$ is a Duarte droplet if and only if there exists a Duarte region $D^*$ such that $D = D^* \cap \Z^2$.
\end{definition}

Let us make an easy but important observation.

\begin{obs}\label{obs:DofK}
Given a bounded set $U\subset\R^2$, there is a (unique) minimal Duarte region $D^*(U)$ containing $U$.
\end{obs}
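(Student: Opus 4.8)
The plan is to pin down exactly when a Duarte region contains $U$, reduce the existence and uniqueness of a smallest such region to a one-variable optimization, and settle that optimization by a strict-concavity argument. Throughout I would work with $\phi:=f^{-1}\colon[0,\infty)\to[0,\infty)$, the inverse of the strictly increasing function $f$, so that $\phi(t)=\frac{\log(1/p)}{\eps^3 p}\big(e^{2pt}-1\big)$; the only features used are that $\phi$ is continuous, strictly increasing, strictly convex and vanishes at $0$. Since Duarte regions are closed subsets of $\R^2$, replacing $U$ by its closure does not change which Duarte regions contain it, so I may assume $U$ compact; the case $U=\emptyset$ being trivial, assume $U\neq\emptyset$ and put $\rho:=\max\{u:(u,v)\in U\}$.

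The first step is the elementary observation that, since $|y|\le f(x)\iff\phi(|y|)\le x$, a Duarte region with parameters $(a,b,w)$ (necessarily $w\ge 0$) contains $U$ if and only if
\[
a+w\ge\rho\qquad\text{and}\qquad a\le A(b):=\inf_{(u,v)\in U}\big(u-\phi(|v-b|)\big);
\]
because $\phi\ge 0$, the constraint $a\le u$ for all $(u,v)\in U$ is then automatic and $A(b)\le\rho$, so at least one Duarte region contains $U$. The second step is that the area of such a region equals $2\int_0^{w}f(x)\,\mathrm{d}x$, which is strictly increasing in $w\ge 0$. Hence a Duarte region containing $U$ has least area exactly when $a+w=\rho$, $a=A(b)$, and $b$ is chosen to maximise $A$; and the whole statement then reduces to the claim that $A$ attains its maximum at a \emph{unique} point $b^*$. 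Granting that, $D^*(U)$ is the Duarte region with parameters $\big(A(b^*),b^*,\rho-A(b^*)\big)$: it contains $U$, it is the unique Duarte region of least area containing $U$, and since any Duarte region strictly contained in it would still contain $U$ while having strictly smaller area, it is also minimal with respect to inclusion.

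The remaining step, which I expect to be the crux, is uniqueness of the maximiser of $A$. Continuity of $A$ is clear, and $A(b)\le u_0-\phi(|v_0-b|)\to-\infty$ as $|b|\to\infty$ for any fixed $(u_0,v_0)\in U$, so $A$ is coercive and its supremum is attained. For uniqueness I would prove $A$ strictly concave: given $b_1\neq b_2$, use compactness of $U$ to choose $(u_0,v_0)\in U$ attaining the infimum defining $A\big(\tfrac12(b_1+b_2)\big)$, and combine the triangle inequality with the strict convexity and monotonicity of $\phi$ to obtain $\phi\big(\big|v_0-\tfrac12(b_1+b_2)\big|\big)<\tfrac12\phi(|v_0-b_1|)+\tfrac12\phi(|v_0-b_2|)$, whence
\[
A\big(\tfrac12(b_1+b_2)\big)=u_0-\phi\big(\big|v_0-\tfrac12(b_1+b_2)\big|\big)>\tfrac12\big(u_0-\phi(|v_0-b_1|)\big)+\tfrac12\big(u_0-\phi(|v_0-b_2|)\big)\ge\tfrac12 A(b_1)+\tfrac12 A(b_2).
\]
Thus $A$ cannot be maximised at two distinct points, and $b^*$ is unique. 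The subtlety worth flagging is exactly this last point: an infimum of strictly concave functions need not be strictly concave, and it is the passage to the compact set $\overline U$ — turning the infimum into a minimum attained at some $(u_0,v_0)$ — that makes the argument work. Everything else, namely the reformulation of containment via $\phi$, the area monotonicity, and the extraction of $D^*(U)$, is routine.
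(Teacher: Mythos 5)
The paper states this as an observation and gives no proof, so there is no argument to compare against directly. Your proof is correct and complete under the reading that $D^*(U)$ is the Duarte region of minimum area (equivalently minimum width $w$, equivalently minimum height $2f(w)+1$) containing $U$. The reformulation of containment via $\phi=f^{-1}$ into the two scalar constraints $a+w\ge\rho$ and $a\le A(b)$ is clean, the reduction to uniqueness of the maximiser of $A$ is exactly the right move, and you correctly identify the one delicate point: an infimum of strictly concave functions is only concave in general, and you need compactness (so the infimum defining $A$ at the midpoint is attained at some $(u_0,v_0)\in\overline U$) to extract strictness. One small remark on the strictness step: when $|v_0-b_1|=|v_0-b_2|$ (i.e.\ $v_0=(b_1+b_2)/2$), the strict inequality comes from the triangle inequality being strict together with $\phi$ strictly increasing, rather than from strict convexity; when $|v_0-b_1|\ne|v_0-b_2|$, it is strict convexity that supplies it. Your phrasing covers both, but it is worth being conscious of which ingredient carries the strictness in which case.

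One caveat about the statement itself, which does not affect your argument but is worth recording: the minimum-area Duarte region is inclusion-minimal (your final sentence), but it is generally \emph{not} contained in every Duarte region containing $U$, and it need not be the only inclusion-minimal one. For instance, with $U=\{(0,0),(1,0)\}$, the minimum-area region $D_1^*$ has source $(0,0)$ and width $1$, while the region $D_2^*$ with source $\big(-\phi(1/2),\,1/2\big)$ and width $1+\phi(1/2)$ also contains $U$ and is inclusion-minimal, yet $D_1^*\not\subset D_2^*$: the point $\big(1,-f(1)\big)$ lies in $D_1^*$ but not in $D_2^*$, since by strict concavity of $f$ one has $f\big(1+\phi(1/2)\big)-f(1)<f\big(\phi(1/2)\big)-f(0)=1/2$. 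So ``the unique minimal Duarte region'' must be read as ``the unique Duarte region of least width/height/area''. This is exactly the notion you construct, and it is exactly what the paper uses downstream: for example, the chain $D^*\supset D^*(D_1^*\cup D_2^*)\supset D^*(D_1\cup D_2)$ in the proof of Lemma~\ref{lem:subadd} is really a chain of height inequalities rather than literal set inclusions, and your characterisation is precisely what makes those inequalities valid.
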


If $K\subset\Z^2$ is finite, we define the \emph{minimal droplet containing $K$} to be $D(K) := D^*(K) \cap \Z^2$. Notice that $K \subset D(K)$ and that $D$ is the identity function on droplets.

Observation~\ref{obs:DofK} allows us to make the following definitions. Given a bounded set $U\subset\R^2$ and $a,b,w$ such that the right-hand side of~\eqref{eq:Dstar} is $D^*(U)$, we define the \emph{height} and \emph{width} of $U$ by $h(U) := 2f(w)+1$ and $w(U) := w$, respectively. We call the point $(a,b)$ the \emph{source} of $U$. Letting
\[
c := \sup \{ x \in \R \,:\, (x,y)\in U \text{ for some } y \in \R \},
\]
we write
\[
\edge(U) := \big\{ (c,y) \in U \,:\, y\in\R \big\}.
\]
Informally we think of $\edge(U)$ as being the right-hand side of $U$.\footnote{This generalizes the definition of $\edge(R)$ for a rectangle $R$, given in Section~\ref{sec:upper}.}
%The \emph{height} and \emph{width} of $D$ are defined by $h(D) := 2f(w)+1$ and $w(D) = w$ (not uniquely defined!), respectively. We call the point $(a,b)$ the \emph{source} of $D$.
We can now make another easy but important observation, the proof of which is immediate from the convexity of $f$.

\begin{obs}\label{obs:edges}
If $D_1^*$ and $D_2^*$ are Duarte regions such that $\edge(D_1^*) \subset D_2^*$, then $D_1^* \subset D_2^*$.
\end{obs}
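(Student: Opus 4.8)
The plan is to argue directly from the parametrisation in Definition~\ref{def:droplet}, using only two elementary properties of the profile function: $f$ is increasing, and $f$ is concave (equivalently $f'$ is decreasing; this is what makes the underlying regions convex). Write $D_i^* = (a_i,b_i) + \{(x,y) : 0 \le x \le w_i,\ |y| \le f(x)\}$ for $i = 1,2$, so that $\edge(D_1^*)$ is the vertical segment $\{a_1+w_1\}\times[b_1-f(w_1),\,b_1+f(w_1)]$.

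First I would translate the hypothesis $\edge(D_1^*)\subset D_2^*$ into inequalities on the six parameters. Since $\edge(D_1^*)$ is a non-empty vertical segment lying in $D_2^*$, its $x$-coordinate must satisfy $a_2 \le a_1+w_1 \le a_2+w_2$, and its vertical extent must fit inside the corresponding slice of $D_2^*$, which gives
\[
|b_1-b_2| + f(w_1) \le f\big(a_1+w_1-a_2\big).
\]
Because $f$ is increasing, this already forces $a_1+w_1-a_2 \ge w_1$, i.e. $d := a_1-a_2 \ge 0$; so the source of $D_1^*$ lies weakly to the right of that of $D_2^*$.

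Next I would take an arbitrary point $(a_1+x,\,b_1+y)\in D_1^*$, with $0\le x\le w_1$ and $|y|\le f(x)$, and verify the two membership conditions for $D_2^*$. Horizontally, $a_2 \le a_1 \le a_1+x$ (using $d\ge 0$ and $x\ge 0$) and $a_1+x \le a_1+w_1 \le a_2+w_2$, so $0 \le a_1+x-a_2 \le w_2$. Vertically, I must show $|b_1+y-b_2| \le f(a_1+x-a_2) = f(x+d)$; bounding the left-hand side by $|b_1-b_2| + f(x)$ and invoking the inequality from the previous step, it suffices to prove
\[
f(w_1+d) - f(w_1) \le f(x+d) - f(x) \qquad (0 \le x \le w_1).
\]
This is exactly the assertion that $t \mapsto f(t+d) - f(t) = \int_t^{t+d} f'(s)\,\mathrm{d}s$ is non-increasing, which is immediate from concavity of $f$ (i.e. $f'$ decreasing) together with $d\ge 0$; applying it with $t = x \le w_1$ finishes the proof.

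I do not expect any serious obstacle here. The only step requiring a little care is the first one — correctly extracting from "$\edge(D_1^*)\subset D_2^*$" the relation between the two sources, and in particular the inequality $a_1\ge a_2$ — since once that is in hand the remaining verification is a two-line computation resting solely on the monotonicity and concavity of $f$. If desired, one can replace the explicit point-chasing by the observation that a Duarte region is determined by, and is "right-justified" relative to, its edge, but the calculation above is already short enough that this reformulation is not needed.
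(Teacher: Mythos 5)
Your proof is correct, and it fills in exactly the argument the paper omits (the paper only remarks that the observation ``is immediate from the convexity of $f$''). Your reduction of the hypothesis to the inequality $|b_1-b_2|+f(w_1)\le f(a_1+w_1-a_2)$, the deduction that $d:=a_1-a_2\ge 0$, and the final step via the non-increasingness of $t\mapsto f(t+d)-f(t)$ are all sound; note that you are right that $f$ is concave as a function (so $f'$ is decreasing and the Duarte region is convex), even though the paper's Observation~\ref{obs:f}(b) loosely writes ``$f$ is convex''.
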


It is worth noting that the reason for defining Duarte regions as well as (Duarte) droplets, and for defining heights and widths of droplets in terms of regions, is that if one were to define everything discretely then certain key lemmas below would be false. For example, it would be more natural to define $D(K)$ to be the smallest droplet containing $K$, but if one were to do that then Lemma~\ref{lem:subadd} would be false. (It would be true with `$+2$' in place of `$+1$', but that would be too weak for the application in Lemma~\ref{lem:extremal}.)

One disadvantage of defining droplets in this way is that it makes the following lemma non-trivial.

\begin{lemma}\label{lem:numdroplets}
There are at most $w^{O(1)}$ droplets $D$ such that the source of $D$ belongs to $(0,1] \times (0,1]$ and the $x$-coordinate of the elements of $\edge(D)$ is equal to $w$.
\end{lemma}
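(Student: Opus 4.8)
The plan is to parametrise droplets by a small number of real quantities and then show that, up to the equivalence of having the same intersection with $\Z^2$, only polynomially many choices of these quantities are possible once the source is confined to a unit box and the right $x$-coordinate is fixed. A Duarte region is determined by the triple $(a,b,w)$ in~\eqref{eq:Dstar}: the source $(a,b)$ and the width $w$. So a droplet $D = D^* \cap \Z^2$ is determined by $(a,b,w)$, but many triples give the same $D$. I would first reduce $w$: since the right-hand side of $D$ sits at $x$-coordinate $a+w$, and we are told this equals a fixed value (call it $w$, slightly abusing the notation of the statement) and $a \in (0,1]$, the width parameter ranges over an interval of length $1$. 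The point is that two regions with the same source but widths differing by less than roughly $1$ may still have different intersections with $\Z^2$ near the right edge, so $w$ genuinely matters; however, the \emph{boundary curve} $y = \pm f(x-a)$ only needs to be pinned down to within the spacing of the integer lattice in order to determine which lattice points lie inside.

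The key step is therefore a counting/discretisation argument for the boundary curve. Fix the source $x$-coordinate $a \in (0,1]$; the relevant range of the right endpoint is a fixed value, so $w = (\text{right }x) - a$ lies in an interval of length $1$. For each integer column $x_0$ with $a \le x_0 \le a+w$, the set of lattice points of $D$ in that column is exactly $\{(x_0,y) : y \in \Z,\ |y-b| \le f(x_0-a)\}$, i.e. it is an interval of integers determined by the single real number $b + f(x_0-a)$ (and $b - f(x_0-a)$, but by symmetry about $y=b$ these carry the same information once $b$ is known mod $1$). Thus $D$ is completely determined by: the fractional part of $b$ (which controls, together with $f$, where each horizontal strip is cut); the integer $\lfloor b \rfloor$ — but translating $b$ by an integer just translates $D$ vertically, and since we are counting droplets as subsets, only finitely many of these are "new" for a fixed height, and in any case the height is $O(w)$ because $f(w) = O(w)$, so there are $O(w)$ relevant vertical positions; and the data of the function $x \mapsto \lfloor b + f(x-a)\rfloor$ on the $O(w)$ integer columns, which is a monotone integer-valued step function taking values in a range of size $O(f(w)) = O(\log w \cdot p^{-1})$... — here I would instead argue more economically: $D$ is determined once I know, for each integer column, the height of the strip, and consecutive column-heights differ by a bounded amount controlled by $f' \le \eps^3/(2\log(1/p)) \le 1$, so the step function is determined by its starting value ($O(w)$ choices, since it lies in $[0, f(w)]$ and $f(w) = O(w)$) together with the set of columns at which it increments. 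The cleanest route is: the curve $y = f(x)$ is convex and increasing with $f(0)=0$ and $f'$ bounded, so the "staircase" $\{(x_0, \lfloor b+f(x_0-a)\rfloor) : x_0 \in \Z \cap [a,a+w]\}$ is a monotone lattice path of horizontal extent $O(w)$ and vertical extent $O(f(w)) = O(w)$, and two droplets with the same pair of top and bottom staircases coincide.

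Putting this together: the number of droplets with source in $(0,1]\times(0,1]$ and right $x$-coordinate equal to $w$ is at most the number of monotone lattice staircases of horizontal length $\ell := \lceil w \rceil$ and vertical rise at most $r := \lceil f(w) \rceil + 1$, which is at most $\binom{\ell+r}{r} \cdot (\text{vertical offset choices})$. Since $f(w) = \frac{1}{2p}\log(1+\eps^3 pw/\log(1/p)) \le \frac{1}{2p}\cdot \frac{\eps^3 p w}{\log(1/p)} = O(w)$, both $\ell$ and $r$ are $O(w)$, giving a bound of $2^{O(w)}$ — which is \emph{too weak}; I need $w^{O(1)}$. So the staircase cannot be arbitrary: it is the lower staircase of the \emph{specific} convex curve $f$, translated vertically by $b$ and horizontally by $a$, with $(a,b)$ ranging over a unit box. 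A translate of a fixed convex curve by a continuous parameter in a bounded set produces only $O(\ell^2)$ distinct lattice staircases (each of the $\le \ell$ "corners" of the staircase can flip at one of $O(\ell)$ threshold values of the translation parameter, or more carefully: as $(a,b)$ moves continuously, the staircase changes only when the curve passes through a lattice point, and there are $O(\ell \cdot r) = O(w^2)$ lattice points in the relevant window, each contributing one change), so the total is $w^{O(1)}$. \textbf{The main obstacle} is exactly this last point: controlling the number of distinct lattice-point configurations cut out by a vertically-and-horizontally translated copy of the fixed convex graph of $f$, and in particular getting a polynomial (not exponential) bound — this requires using the rigidity of $f$ (it is one fixed function, not an arbitrary convex curve) rather than a crude enumeration of monotone paths. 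I expect the clean statement to be: as the source ranges over $(0,1]^2$ with the right edge fixed, the droplet changes only finitely many times, at the $O(w^2)$ values of the source parameters at which $\partial D^*$ crosses a lattice point, whence the count $w^{O(1)}$.
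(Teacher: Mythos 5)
Your approach is genuinely different from the paper's and is the right idea, but the last step is not actually carried out, and as stated it has a dimension error. The paper argues combinatorially: it observes that (since $f' < 1$) each column contributes a binary choice for $\top(D)$, so $\{\top(D) : D \in \A\}$ embeds in $\PP[w]$; it then shows this family is a \emph{bi-chain} (using exactly the fact you identify, that two translates of the graph of the strictly convex $f$ cross at most once), and invokes a Sauer--Shelah-type bound (Lemma~\ref{lem:hypercube}) to get $w^{O(1)}$. Your route is geometric: partition the parameter space into cells on which $D$ is constant, each cell bounded by the loci where $\partial D^*$ passes through a lattice point, and count cells. Both hinge on the same rigidity of $f$, so the comparison is: the paper trades the cell count for a clean set-system lemma, while your version would require an arrangement bound.

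The gap is in the final count. You write that the droplet changes ``at the $O(w^2)$ values of the source parameters at which $\partial D^*$ crosses a lattice point.'' But the source $(a,b)$ is a \emph{two}-dimensional parameter (and, since the right-hand $x$-coordinate of $D$ is an integer while $a+w$ need not be, the width $w$ of the minimal region is a third free parameter), so ``$\partial D^*$ passes through a lattice point'' is a codimension-one locus, not an isolated value. You therefore cannot conclude ``$O(w^2)$ changes''; you need to bound the number of \emph{cells} in the arrangement of the $O(w^2)$ hypersurfaces $\{b + f(x_0-a) = k\}$ and $\{b - f(x_0-a) = k\}$. That bound is obtainable — using strict convexity of $f$, any two such curves in the $(a,b)$-plane meet in at most one point, so the planar arrangement has $O(w^4)$ faces, and the extra parameter $w$ adds only another polynomial factor — but this is precisely the step you flag as the ``main obstacle'' and leave to an ``I expect the clean statement to be\dots''. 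Until that cell count is actually established, the bound $w^{O(1)}$ has not been proved. A secondary issue: you should also check why it suffices to control $D$ by the two staircases $\top(D), \bottom(D)$ alone (the paper states this explicitly and you implicitly rely on it), and confirm that $f(w) = O(w)$ with an absolute implied constant (your computation does give $f(w) \le \eps^3 w / (2\log(1/p)) \le w$, so this is fine, but it needs to be said since the $O(1)$ in the lemma must be uniform in $p$ and $\eps$).
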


The proof of the lemma is a simple consequence of the following extremal result for set systems. Let us say\footnote{We write $\PP[n]$ for the power set of $[n]$.} that a set $\F \subset \PP[n]$ is a \emph{bi-chain} if it has the following property: for every distinct $A,B \in \F$, there exists $k \in [n]$ such that the following two conditions hold: (a) $A \cap \{1,\dots,k\}$ is a subset of $B \cap \{1,\dots,k\}$, or vice-versa, and (b) $A \cap \{k+1,\dots,n\}$ is a subset of $B \cap \{k+1,\dots,n\}$, or vice-versa.

\begin{lemma}\label{lem:hypercube}
Let $\F \subset \PP[n]$ be a bi-chain. Then $|\F| \leq n^{O(1)}$.
\end{lemma}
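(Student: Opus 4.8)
I want to bound the number of bi-chains in $\PP[n]$, which I will do by a recursive argument on the "split point" $k$. The key observation is that the defining property of a bi-chain can be rephrased as follows: for every pair $A,B \in \F$, there is a coordinate $k = k(A,B)$ such that, restricting to the first $k$ coordinates, one of $A,B$ is contained in the other, and restricting to the last $n-k$ coordinates, again one is contained in the other. Thus a bi-chain is "a chain on a prefix and a chain on the complementary suffix", but the prefix length is allowed to depend on the pair.

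\textbf{Main steps.} First I would isolate the notion of a \emph{chain} in the usual sense: a family $\G \subset \PP[m]$ which is totally ordered by inclusion has size at most $m+1$. Next, for a fixed $k$, consider the equivalence-type relation on $\F$ given by "$A \sim_k B$ if $A$ and $B$ agree below $k$ (i.e.\ $A \cap [k] = B \cap [k]$) \emph{and} are comparable above $k$". The plan is to show that, for each $k \in \{0,1,\dots,n\}$, the sub-family of pairs whose split point is exactly $k$ is highly structured: if I fix a prefix $S \subseteq [k]$, the sets in $\F$ with that prefix and whose mutual split points are $k$ form a chain on $\{k+1,\dots,n\}$, hence there are at most $n-k+1$ of them per prefix. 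The difficulty is that prefixes are not themselves restricted to a chain a priori, so a naive count gives $2^k \cdot (n-k+1)$, which is exponential. To get around this I would instead argue recursively: let $g(n)$ be the maximum size of a bi-chain in $\PP[n]$. Split $\F$ according to whether a distinguished coordinate — say coordinate $1$ — lies in the set or not, giving $\F_0$ and $\F_1$. Within $\F_0$ (all sets avoiding $1$) the bi-chain property is inherited on $\{2,\dots,n\}$, so $|\F_0| \le g(n-1)$, and similarly $|\F_1|\le g(n-1)$; this only yields $g(n) \le 2\,g(n-1) \le 2^n$, still too weak. So the real idea must be sharper.

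\textbf{The sharper idea.} I expect the correct approach is to observe that a bi-chain, viewed through the split structure, is a union of at most $n+1$ "sub-chains" indexed by the split coordinate in a \emph{coherent} way. Concretely: fix a linear order on $\F$ by, say, the lexicographic order on the first coordinates. For consecutive elements $A,B$ in this order, they have a split point $k(A,B)$; I claim the sequence of prefixes $A \cap [k]$ is forced to be nested in a controlled fashion, so that walking along the order only $O(n)$ "new" prefix-bits can ever be introduced. This is the step I expect to be the main obstacle — making precise the sense in which the bi-chain, despite allowing pair-dependent split points, cannot "branch" too much. Once that is established, each of the $O(n)$ phases contributes a genuine chain (size $O(n)$), giving $|\F| = n^{O(1)}$, indeed likely $|\F| = O(n^2)$. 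I would close by checking the base cases ($n=0,1$ trivially) and noting that the polynomial exponent is absolute, which is all Lemma~\ref{lem:numdroplets} requires. An alternative, possibly cleaner route I would try in parallel: induct on $n$ by conditioning simultaneously on coordinates $1$ and $n$ (four classes), and show that the "mixed" classes (sets containing exactly one of $1,n$) are forced to be chains outright because their split point can be pinned down, leaving only the two "pure" classes to recurse on — this would give a recursion like $g(n) \le 2\,g(n-2) + O(n)$, hence $g(n) = O(n^2)$.
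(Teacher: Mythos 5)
Neither of your two routes actually gets to the end, and both gaps are real. The main route stops exactly at the step that \emph{is} the lemma: you claim that after lex-ordering $\F$ the prefixes ``cannot branch too much,'' and then you write that making this precise ``is the step I expect to be the main obstacle.'' That is not a small gap to be filled in later --- the entire content of the statement is that a bi-chain, despite having pair-dependent split points, cannot branch, and you have offered no mechanism for it. Your fallback recursion contains a concrete false claim. You assert that the two ``mixed'' classes (sets containing exactly one of $1,n$) must be chains under inclusion, so that $g(n)\le 2g(n-2)+O(n)$. Take $n=4$, $A=\{1,2\}$, $B=\{1,3\}$: both contain $1$, neither contains $4$, and $A,B$ are incomparable, yet $\{A,B\}$ is a bi-chain (with split $k=2$ one has $B\cap\{1,2\}\subset A\cap\{1,2\}$ and $A\cap\{3,4\}\subset B\cap\{3,4\}$). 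So $\F_{10}$ is not a chain, the ``mixed'' term is not $O(n)$, and the recursion degenerates back to the useless $g(n)\le 4\,g(n-2)$.

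The paper avoids any decomposition into chains. Its first proof fixes a cardinality class: if $A\ne B$ are in $\F$ with $|A|=|B|$, the bi-chain condition forces the two inclusions at the split $k$ to point in opposite directions, so $B\setminus A\subset\{1,\dots,k\}$ while $A\setminus B\subset\{k+1,\dots,n\}$ (or vice versa), and since $|A\setminus B|=|B\setminus A|$ this makes $\sum_{a\in A}a\ne\sum_{b\in B}b$. Thus within each of the $n+1$ cardinality classes the element-sums are distinct, and the sums take $O(n^2)$ values, giving $|\F|\le n^{O(1)}$. Its second proof shows that no $3$-set $\{i<j<k\}$ can be shattered by $\F$ (one checks directly that traces $\{i,k\}$ and $\{j\}$ are incompatible with the bi-chain condition for every possible split), and then invokes Sauer--Shelah to get $|\F|=O(n^2)$. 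Both are injectivity/VC arguments rather than structural decompositions; either could replace the missing step in your plan outright, without the lex-order machinery.
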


\begin{proof}
If $A,B \in \F$ are distinct and have the same cardinality, then without loss of generality we may assume that $A \cap \{1,\dots,k\} \subset B \cap \{1,\dots,k\}$ and $B \cap \{k+1,\dots,n\} \subset A \cap \{k+1,\dots,n\}$. This implies that the sum of the elements of $A$ is strictly greater than the sum of the elements of $B$. So summing over the possible sizes of $|A|$, we have $|\F| \leq n^3$.

Alternatively, one may note that the bi-chain condition implies no set $T \subset [n]$ of size 3 is shattered\footnote{A set $T$ is said to be \emph{shattered} by $\F$ if every subset of $T$ can be obtained as an intersection $A \cap T$, for some $A \in \F$.} by $\F$. To see this, suppose $T = \{i,j,k\}$ is such a set, with $i < j < k$. Then there exist $A,B \in \F$ such that $A \cap T = \{i,k\}$ and $B \cap T = \{j\}$, which contradicts the condition. Hence, by the Sauer--Shelah Theorem~\cite{Sauer,Shelah}, we must have $|\F| \leq O(n^2)$. (Note this is optimal up to the constant factor.)\footnote{The first proof given here is due to Paul Balister and the second is due to Bhargav Narayanan. The authors would like to thank both for bringing these proofs to our attention.} 
\end{proof}

\begin{proof}[Proof of Lemma~\ref{lem:numdroplets}]
Firstly, given a droplet $D$, let $\top(D)$ be the set containing the topmost site of each column of $D$, and similarly define $\bottom(D)$. It is easy to see that a droplet $D$ is uniquely determined by the set $\top(D) \cup \bottom(D)$.

Let $\A$ be the set of droplets $D$ whose source is contained in the unit square $(0,1] \times (0,1]$ and such that the $x$-coordinate of the elements of $\edge(D)$ is $w$. For each $D \in \A$, there are (at most) $2^w$ possibilities for $\top(D)$, since there are only 2 choices for the element of $\top(D)$ at each $x$ coordinate (this is because $f'(x) < 1$ for all $x \geq 0$). Thus there is a natural bijection between the set $\A_\top := \big\{ \top(D) : D \in \A \big\}$ and a subset $\F$ of $\PP[n]$ (the power set of $\{1,\dots,n\}$). Moreover, $\F$ is a bi-chain. This is because any two translations of the curve $\big\{(x,f(x)) : x \geq 0\big\}$ intersect in at most one point. Hence, by Lemma~\ref{lem:hypercube}, we have $|\A_\top| = |\F| \leq w^{O(1)}$. Defining $\A_\bottom$ similarly, it follows that $|\A| \leq |\A_\top| \cdot |\A_\bottom| \leq w^{O(1)}$.
\end{proof}

Let us briefly collect together a few simple facts about $f$, which we shall use repeatedly throughout the paper.

\begin{obs}\label{obs:f}
The function $f$ has the following properties for all $\eps > 0$ and $p > 0$:
\begin{itemize}
\item[$(a)$] $f$ is strictly increasing on $[0,\infty)$.
\item[$(b)$] $f'$ is strictly decreasing (and hence $f$ is convex) on $[0,\infty)$.
\item[$(c)$] $f'(x) = \eps^3(2\log 1/p)^{-1} e^{-2pf(x)}$.
\item[$(d)$] If $f(x) \leq 1/4p$ then
\[
\frac{\eps^3}{4\log 1/p} \leq f'(x) \leq \frac{\eps^3}{2\log 1/p}.
\]
%\item $f(x)\leq 1/px$ for all $x>0$.
\end{itemize}
\end{obs}

Next, let us record a few conventions, also to be used throughout the paper:
\begin{itemize}
\item $\eps>0$ is an arbitrary and sufficiently small constant, and $p>0$ is sufficiently small depending on $\eps$, with $p\to 0$ as $n\to\infty$.
\item Constants implicit in $O(\cdot)$ notation (and its variants) are absolute: they do not depend on $p$, $n$, $\eps$, $k$, or any other parameter.
\item $A$ denotes a $p$-random subset of $\Z_n^2$.
\item $[K] := [K]_\D$ for $K \subset \Z_n^2$ (or $K \subset \Z^2$).
\end{itemize}

The following key definition is based on an idea first introduced in~\cite{BBM3d,BBDM}.\footnote{We emphasize this definition does not correspond to the use of the term `internally spanned' in much of the older literature, where it was used to mean that $[D \cap A]_\D = D$.}

\begin{definition}\label{def:ispan}
A droplet $D$ is said to be \emph{internally spanned} if there exists a set $L \subset [D \cap A]$ that is connected in the graph $\Z^2$, and such that $D=D(L)$. We write $\ispan(D)$ for the event that $D$ is internally spanned.
\end{definition}

%In Lemma~\ref{lem:DS:def} of Section~\ref{sec:spanning} we shall show that the smallest droplet containing a set $S$ is well-defined.

We can now state the key intermediate result in the proof of Theorem~\ref{thm:Duarte}.

\begin{prop}\label{prop:lower}
For every $\eps > 0$, there exists $p_0(\eps) > 0$ such that the following holds. If  $0 < p \le p_0(\eps)$, and $D$ is a droplet with
\[
h(D) \leq \frac{1-\eps}{p} \log \frac{1}{p},
\]
then
\begin{equation}\label{eq:lower}
\P_p\big( \ispan(D) \big) \leq p^{(1 - \eps) h(D) / 4}.
\end{equation}
\end{prop}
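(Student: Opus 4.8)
The plan is to prove Proposition~\ref{prop:lower} by the standard bootstrap-percolation strategy pioneered in~\cite{BBM3d,BBDM}: bound the probability that a droplet $D$ is internally spanned using the \emph{method of iterated hierarchies} (recalled in Section~\ref{sec:hierarchies}), which reduces the estimate on $\P_p(\ispan(D))$ to a product of probabilities of `crossing events' — the events that a droplet grows to span a slightly larger droplet — over the edges of a hierarchy, together with a combinatorial factor bounding the number of hierarchies. The key input will be the precise crossing bounds established in Section~\ref{sec:crossings}, which should say (roughly) that a droplet $D'$ grows rightwards and upwards to span $D''$ with probability at most $\exp\big(-c\,(\textrm{number of vertical steps})\cdot\log(1/p)\big)$, matching (up to $1+o(\eps)$ factors) the upper-bound computation in Section~\ref{sec:upper}.

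Concretely, I would proceed as follows. First, fix a droplet $D$ with $h(D)\le\frac{1-\eps}{p}\log\frac1p$; by Observation~\ref{obs:f}(d) this keeps us in the regime where $f'$ is essentially constant ($\approx\eps^3/(2\log 1/p)$), so the width of $D$ is at most roughly $p^{-1-O(\eps^2)}$ and all `crossing' estimates from Section~\ref{sec:crossings} are available with their main-term exponents. Next, if $\ispan(D)$ holds then by the spanning algorithm / hierarchy machinery there is a (good, satisfied) hierarchy $\hier$ for $D$, and $\P_p(\ispan(D))$ is at most $\sum_{\hier}\prod_{e}\P_p(\textrm{crossing at }e)$ summed over admissible hierarchies. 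I would then bound the number of hierarchies: the number of vertices is bounded in terms of $h(D)$ and the `seed size', and by Lemma~\ref{lem:numdroplets} there are only $w^{O(1)}=p^{-O(1)}$ droplets of each width, so the total number of hierarchies contributes only a $\exp\big(O(\log(1/p)^2)\big)$-type factor — negligible against the main term provided $h(D)$ is large enough; the (standard) small-droplet base case where $h(D)=O(\log(1/p))$ or so can be handled directly by noting a seed requires $\Omega(h(D)/\log(1/p))$ infected sites and a union bound. For the main term, I would sum the exponents of the crossing probabilities along any root-to-leaves path: each vertical step of size $2$ costs at least $p^{1-O(\eps)\cdot(\textrm{relative height})}$ in probability, and integrating $\tfrac{1}{2}(1-c)\log(1/p)$ over the normalized height $c\in[0,1]$ gives total exponent at least $(1-O(\eps))\,h(D)/4\cdot\log(1/p)$, i.e.\ $\P_p(\ispan(D))\le p^{(1-\eps)h(D)/4}$ after absorbing the (smaller-order) combinatorial and seed factors into the $\eps$.

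The main obstacle will be making the `integration of the crossing costs' rigorous in the hierarchy setting, i.e.\ showing that \emph{however} the growth is organized into a hierarchy, the sum over any path of the logarithmic crossing costs is at least $(1-O(\eps))\,h(D)/4\cdot\log(1/p)$. This is where the non-polygonal (curved) droplets earn their keep: because the droplet size is governed by the single parameter $h$ via the convex function $f$, the cost of growing from height $h_1$ to height $h_2$ is a clean function of $h_1,h_2$ alone, and convexity of $f$ (Observation~\ref{obs:f}(b)) together with Observation~\ref{obs:edges} lets one argue that splitting a growth step into sub-steps cannot decrease the total cost — so the worst case is a single monotone climb, whose cost is exactly the Riemann sum approximating $\int_0^1\frac{1-c}{2}\,\textrm{d}c=\frac14$. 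The secondary difficulties are: (i) correctly handling the seeds at the leaves of the hierarchy (their total `size' must be charged, but the constraint $h(D)\le\frac{1-\eps}{p}\log\frac1p$ ensures seeds are cheap relative to the target bound, since a seed of height $h'$ costs about $p^{h'/2}$, comfortably below $p^{(1-\eps)h'/4}$ only once one also accounts for the horizontal growth that a seed cannot supply — this needs the Section~\ref{sec:crossings} bounds, not just the trivial seed bound); and (ii) verifying that the number of admissible hierarchies really is at most $\exp\big(o(h(D)\log(1/p))\big)$, which uses Lemma~\ref{lem:numdroplets} crucially and is the reason that lemma (and the bi-chain/Sauer–Shelah argument behind it) was set up. Once these pieces are in place, the proof is a bookkeeping exercise in collecting exponents and absorbing all lower-order terms into a single $\eps$.
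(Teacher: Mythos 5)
Your high-level framework is right: you correctly identify the chain IH(0) $\Rightarrow$ bound for small droplets (via iterated hierarchies) $\Rightarrow$ Proposition~\ref{prop:lower}, the central role of the crossing bounds from Section~\ref{sec:crossings}, the need for Lemma~\ref{lem:numdroplets} to make the union bound over hierarchies manageable, and the observation that the seed exponent ($\approx h'/2$) is twice as strong as the target exponent. The difficulty you flag --- ``making the integration of the crossing costs rigorous however the growth is organized into a hierarchy'' --- is indeed where the real work lies. But the mechanism you propose for resolving it does not work as stated, and this is a genuine gap rather than a matter of bookkeeping.

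First, the ``integration'' effect does not come from convexity of $f$. The relevant source is the term $\log\frac1p - p\,h(D')$ in the exponent of Lemma~\ref{lem:delta}: the marginal cost of a vertical step decreases linearly in the current droplet height $h^*$, which is what produces the factor $\int_0^1\frac{1-c}{2}\,\mathrm dc = \frac14$. Convexity of $f$ enters much earlier (Lemmas~\ref{lem:subadd} and~\ref{lem:numdroplets}) but plays no role in this accounting. Second, ``the sum over any path of the logarithmic crossing costs'' is the wrong object: a hierarchy is a tree, and you need to account for \emph{all} edges simultaneously, with the heights at a split vertex adding (via Lemma~\ref{lem:subadd}) rather than chaining. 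Your claim that ``splitting a growth step into sub-steps cannot decrease the total cost, so the worst case is a single monotone climb'' is also incoherent: a single crossing from $D'$ to $D$ has cost $\approx(1-h^*(D))\cdot h^*(D)/2$, which is \emph{smaller} than the Riemann sum $\int_0^{h^*(D)}\frac{1-c}{2}\,\mathrm dc$, so this ``worst case'' would give a bound that is too weak and would not match your claimed integral. What rescues the bound is precisely the seed contribution at exponent $h(\hier)/2$, traded off against the crossing contribution via the algebraic identity
\[
\left(\frac{2 - h^*(D) - h^*(\hier)}{4}\right)\big(h(D) - h(\hier)\big) + \frac{1}{2}\,h(\hier) \;\ge\; \frac{2 - h^*(D)}{4}\,h(D),
\]
where $h(\hier)$ is the total height of the seeds. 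Your proposal treats the seeds as a harmless lower-order correction, when in fact they are load-bearing. The missing ingredient is a pod lemma in the style of Holroyd (the paper's Lemma~\ref{lem:pod}): one defines a pod height $h(\hier) := \min\{h(D),\sum_{u\in L(\hier)}h(D_u)\}$ and a functional $\mu(\hier) := \frac{2-h^*(D)-h^*(\hier)}{4}$, and proves by induction on $|V(G_\hier)|$ that the product of crossing probabilities is at most $\exp_p\big((\mu(\hier)-O(\eps^2))(h(D)-h(\hier))\big)$, splitting into the case of a sideways step (using the monotonicity of $(1-a)/2$ in $a$, as in Observation~\ref{obs:pod1}) and the case of a split vertex (using $h(D)\lesssim h(D_1)+h(D_2)$ and the super-additivity fact in Observation~\ref{obs:pod2}). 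Without some such inductive device that tracks both the current height \emph{and} the pod height across the tree, the ``integration'' cannot be made to close, and the $1/4$ cannot be recovered.

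A smaller point: your base-case bound ``a seed requires $\Omega(h(D)/\log(1/p))$ infected sites'' understates the truth. The extremal Lemma~\ref{lem:extremal} gives $|D\cap A|\ge (h(D)+1)/2$, a factor $\log\frac1p$ stronger, and this stronger bound is what makes the base case of the iterated-hierarchy induction (Lemma~\ref{lem:basecase}) work for droplets of height up to $p^{-(2/3)^k}(\log 1/p)^{-1}$ rather than only $O(\log\frac1p)$.
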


In order to deduce the theorem from this result, we will show (see Lemma~\ref{lem:critical:droplet}) that if $A$ percolates then there exists a pair $(D_1,D_2)$ of disjointly internally spanned droplets, satisfying
\[
\max\big\{ h(D_1), h(D_2) \big\} \leq \frac{1-\eps}{p} \log \frac{1}{p} \quad \text{and} \quad h(D_1) + h(D_2) \geq \frac{1-\eps}{p} \log \frac{1}{p} - 1,
\]
with $d(D_1,D_2) \le 2$. The theorem then follows from Proposition~\ref{prop:lower} by using the van den Berg--Kesten inequality and taking the union bound over all such pairs. %We will actually prove a stronger bound for smaller droplets, see Proposition~\ref{prop:largedroplets}.

Our proof of Proposition~\ref{prop:lower} uses the framework of `hierarchies' (see Section~\ref{sec:hierarchies}), which have become a standard tool in the study of bootstrap percolation since their introduction by Holroyd~\cite{Hol} (see e.g.~\cite{BBDM,DE,GHM}). However, in order to limit the number of possible hierarchies (which is needed, since we will use the union bound), the `seeds' of our hierarchies must have size roughly $1/p$. This is a problem, because (unlike in the 2-neighbour setting) there is no easy way to prove a sufficiently strong bound on the probability that such a seed is internally spanned.\footnote{This is, roughly speaking, because a droplet of this height is too long.} Moreover, we shall need a similar bound in order to control the probability of vertical growth, due to the (potential) existence of `saver' droplets (see Definition~\ref{def:partition}). 

We resolve this problem by using the `method of iterated hierarchies'. This technique, which was introduced by the authors in~\cite{BDMS}, allows one to prove upper bounds on the probability that a droplet is internally spanned by induction on its height. It is specifically designed to overcome the issue of there being too many droplets for the union bound to work. The inductive step itself is proved using hierarchies.

Our induction hypothesis is as follows.

\begin{definition}\label{def:ih}
For each $k \ge 0$, let $\ih(k)$ denote the following statement:
\begin{equation}\label{eq:ihbound}
\P_p\big( \ispan(D) \big) \leq p^{(1 - \eps_k) h(D)/2}
\end{equation}
for every droplet $D$ with $h(D) \leq p^{-(2/3)^k} (\log 1/p)^{-1}$, where
\begin{equation}\label{eq:epsk}
\eps_k = \eps^2 \cdot (3/4)^k.
\end{equation}
\end{definition}

It is no accident that the factor of $1/4$ in the exponent in~\eqref{eq:lower} has become a factor of $1/2$ in~\eqref{eq:ihbound}: this has to do with the transition, as a droplet reaches height $1/p$, to it being likely that the droplet grows one more step to the right (see Proposition~\ref{prop:largedroplets}, and also compare with Lemma~\ref{lem:upper:Rzero}).

The statement we need for the proof of Proposition~\ref{prop:lower} is $\ih(0)$; we will prove that this holds in two steps. First, we will prove that $\ih(k)$ holds for all sufficiently large~$k$ (see Lemma~\ref{lem:basecase}); then we will show that $\ih(k) \Rightarrow \ih(k-1)$ for every $k \ge 1$ (see Lemma~\ref{lem:indstep}). The first step will follow relatively easily from the fact (see Lemma~\ref{lem:extremal}) that if $D$ is internally spanned, then $|D \cap A| \ge h(D)/2$. To prove the second step, we will apply the method of hierarchies, using the induction hypothesis to bound the probability that smaller droplets are internally spanned.

\subsection{Spanning and extremal properties of droplets}\label{sec:spanning}

In this section we will recall from~\cite{BDMS} the `spanning algorithm', and deduce some of its key consequences. In particular we will prove that critical droplets exist and, in the next section, we will show that they have `good and satisfied' hierarchies. In order to get started,
we need a way of saying that two sets of sites are sufficiently close to interact in the Duarte model.

% we need to make the following observation. %prove the following lemma.
% The followin lemma is false! It's not unique, it just has a unique width.
% \begin{lemma}\label{lem:DS:def}
% Let $S \subset \Z^2$ be a finite set. Then there exists a unique smallest droplet containing $S$.
% \end{lemma}

%\begin{proof}
%Let $\ell$ be minimal such that there exists a translate of $S$, say $x+S$, entirely contained in the droplet $D$ with source $\0$ and width $\ell$. Then $-x+D$ contains $S$ and is the unique such droplet of minimal width.
%Let $(a,b)$ be the right-most point such that there exists a droplet containing $S$ whose source is $(a,b)$. 
%Simply `push' $S$ into the cone defined by $\pm f$, and draw a vertical line through the right-most point of $S$. %[Is this enough of a proof?]
%Suppose $D_1$ and $D_2$ are both minimal droplets that contain $S$, but neither is contained in the other. First note that, by minimality, there is a point on each of the three sides of $D_1$ and $D_2$ (where the endpoints are included in both sides). 
%\end{proof}

%From now on, we shall write $D(S)$ for the smallest droplet containing $S$.
%Next we need a way of saying that two sets of sites are sufficiently close to interact in the Duarte model.

\begin{definition}\label{def:strongconn}
Define a graph $G_{\text{strong}}$ with vertex set $\Z^2$ and edge set $E$, where $\big\{(a_1,b_1),(a_2,b_2)\big\}\in E$ if and only if
% by saying that sites  are \emph{strongly connected}, written $x\sim y$, if
\[
|a_1-a_2| \leq 1 \qquad \text{and} \qquad |a_1-a_2| + |b_1-b_2| \leq 2.
\]
We say that a set of vertices $K\subset \Z^2$ is \emph{strongly connected} if the subgraph of $G_{\text{strong}}$ induced by $K$ is connected.
\end{definition}

We are ready to recall the spanning algorithm of~\cite[Section~6]{BDMS}, modified in accordance with Definitions~\ref{def:droplet} and~\ref{def:strongconn}.

\begin{spanning}\label{def:spanalg}
Let $K = \{x_1,\ldots,x_{k_0}\}$ be a finite set of sites. Set $\K^0 := \{K_1^0,\ldots,K_{k_0}^0 \}$, where $K_j^0 := \{x_j\}$ for each $1\leq j \leq k_0$. Set $t := 0$, and repeat the following steps until STOP: 
\begin{itemize}
\item[1.] If there are two sets $K_i^t,K_j^t \in \K^t$ such that the set
\begin{equation}\label{eq:mergespanningsets}
\big[ K_i^t \cup K_j^t \big]
\end{equation}
%is connected in the graph $\Z^2$, then set
is strongly connected, then set
\[
\K^{t+1} := \big( \K^t \setminus \{K_i^t,K_j^t \} \big) \cup \big\{ K_i^t \cup K_j^t \big\},
\]
and set $t := t + 1$. 
\item[2.] Otherwise set $T := t$ and STOP.
\end{itemize}
The output of the algorithm is the \emph{span of $K$},
\[
\< K \> := \big\{ D\big( [K_1^T] \big),\ldots,D\big( [K_k^T] \big) \big\},
\]
where $k = k_0 - T$. Finally, we say that a droplet $D$ is \emph{spanned} by a set $K$ if there exists $K'\subset K$ such that $D\in\<K'\>$.
\end{spanning}

We will need a few more-or-less standard consequences of the algorithm above. We begin with a basic but key lemma (cf.~\cite[Lemma~6.8]{BDMS}).

\begin{lemma}\label{lem:span}
A droplet $D$ is internally spanned if and only if $D \in \< D \cap A \>$.
\end{lemma}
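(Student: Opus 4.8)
\textbf{Proof proposal for Lemma~\ref{lem:span}.}

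The plan is to show both implications by relating the abstract spanning algorithm to the dynamics of the $\D$-bootstrap process. For the ``if'' direction, suppose $D \in \< D \cap A \>$. By the definition of the span, this means that when we run the spanning algorithm on $K := D \cap A$, one of the final clusters $K_j^T$ satisfies $D = D\big([K_j^T]\big)$. I would first argue, by induction on the number of merge steps $t$, that each cluster $K_i^t$ produced by the algorithm has the property that $[K_i^t]$ is connected in $\Z^2$: a singleton $\{x_j\}$ trivially has connected closure, and if $[K_i^t \cup K_j^t]$ is strongly connected (the merge condition), then since $[K_i^t]$ and $[K_j^t]$ are each connected in $\Z^2$ and strong connectivity of their union forces them to be within $\ell^\infty$-distance $1$ and $\ell^1$-distance $2$ of each other — which is exactly the interaction range that lets a $\D$-update bridge the two — the set $[K_i^t \cup K_j^t] = [[K_i^t] \cup [K_j^t]]$ is connected in $\Z^2$. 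Here I use the idempotence of the closure operator, $[[X]] = [X]$, and monotonicity. Taking $L := [K_j^T]$, we have that $L$ is connected in $\Z^2$, that $L \subset [D \cap A]$ (since $K_j^T \subset D \cap A \subset D$, so $[K_j^T] \subset [D\cap A]$, and one should check the closure stays inside $D$ — this follows because the relevant clusters were all extracted from within $D$ and $D$ is a droplet, using Observation~\ref{obs:edges} or the fact that $D([K_j^T]) = D$), and $D(L) = D$ by hypothesis. Hence $D$ is internally spanned.

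For the ``only if'' direction, suppose $D$ is internally spanned, witnessed by a set $L \subset [D \cap A]$ that is connected in $\Z^2$ with $D(L) = D$. I want to deduce $D \in \< D \cap A \>$. The idea is that since $L \subset [D \cap A]$, every site of $L$ becomes infected by the $\D$-process started from $D \cap A$; tracing back the infection, each site $x \in L$ has a ``witness tree'' of sites of $D \cap A$ whose closure contains $x$, and these witnesses are linked by $\D$-updates, which respect the strong-connectivity range. One then shows that the subset $K' \subset D \cap A$ consisting of all such witnesses, when fed to the spanning algorithm, produces a single cluster whose closure contains $L$ — because the strong-connectivity merge condition is satisfied along every $\D$-update used in generating $L$, and $L$ being connected in $\Z^2$ glues everything into one piece. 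This requires a short lemma of the form: if $Y \subset \Z^2$ and $x$ becomes infected from $Y$ via a single $\D$-rule (i.e.\ $x + X \subset [Y]$ for some $X \in \D$), then $[Y \cup \{x\}]$ is strongly connected provided $[Y]$ was — this is immediate from the geometry of $\D$ (the sets in $\D$ have $\ell^\infty$-diameter at most $2$ and lie in the union of strong-connectivity edges at $x$). Running the algorithm on $K'$, the output cluster $K^T$ satisfies $L \subset [K^T] \subset [D \cap A]$, so $D = D(L) \subset D([K^T]) \subset D$, giving $D([K^T]) = D$, i.e.\ $D \in \< K' \> \subset \< D \cap A \>$ after possibly absorbing the remaining sites of $D \cap A$ (which can only enlarge nothing, since $D([K^T])$ is already all of $D$; one should note that adding sites to $K'$ and rerunning can merge this cluster with others but will not shrink the droplet it spans, so $D$ remains in the span of $D \cap A$ — this monotonicity of spans under adding sites is itself a standard consequence of the algorithm, cf.\ \cite[Section~6]{BDMS}).

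\textbf{Main obstacle.} The routine parts are the closure-algebra manipulations ($[[X]] = [X]$, monotonicity). The genuinely delicate point is the bookkeeping in the ``only if'' direction: I must verify that the witness set $K'$ I extract from the internal-spanning certificate $L$ actually produces, under the \emph{specific} merge rule of the spanning algorithm (merge when $[K_i^t \cup K_j^t]$ is strongly connected), a \emph{single} cluster spanning $D$, rather than several clusters that each span smaller droplets. This hinges on the precise matching between the strong-connectivity graph $G_{\text{strong}}$ (Definition~\ref{def:strongconn}) and the interaction range of the update family $\D$ — namely that every $\D$-update at a site $x$ connects $x$ to the already-infected sites it uses within a single edge of $G_{\text{strong}}$ — together with the hypothesis that $L$ is connected in the \emph{ordinary} $\Z^2$ graph (which is coarser than $G_{\text{strong}}$, so connectivity in $\Z^2$ does give connectivity for the merge rule here). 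Getting this alignment exactly right, and confirming that the droplet-closure $D(\cdot)$ of the resulting cluster is all of $D$ and not more (it cannot be more, since everything lives inside $D$), is where the argument needs care; it is essentially the content of \cite[Lemma~6.8]{BDMS} adapted to Definitions~\ref{def:droplet} and~\ref{def:strongconn}, so I would follow that template closely.
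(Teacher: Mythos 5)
Your proposal goes for an operational, step-by-step analysis of the spanning algorithm; the paper instead uses a single structural fact, namely that for any finite $K$,
\[
\< K \> = \big\{ D(K_1), \ldots, D(K_k) \big\},
\]
where $K_1,\ldots,K_k$ are the strongly connected components of $[K]$, together with the elementary observation that a subset of $\Z^2$ is strongly connected and closed if and only if it is connected in $\Z^2$ and closed. With these two facts, both directions of the lemma follow in a few lines (using $[D \cap A] \subset D$ and $D(\cdot)$-monotonicity). You do not state either of them explicitly, and this is what leaves your argument incomplete.

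Concretely: in your ``if'' direction, the intermediate claim that strong connectivity of $[K_i^t \cup K_j^t]$ ``forces [the two pieces] to be within $\ell^\infty$-distance $1$ and $\ell^1$-distance $2$ of each other'' is not correct as stated. The closure $[K_i^t \cup K_j^t]$ can contain many new bridging sites not in $[K_i^t] \cup [K_j^t]$, so strong connectivity of the closure says nothing direct about the distance between $[K_i^t]$ and $[K_j^t]$. The correct and much cleaner route to your conclusion is simply that $[K_i^t \cup K_j^t]$ is closed and strongly connected, hence connected in $\Z^2$ by the equivalence above (which one proves by filling in each strong edge $\{(a,b),(a,b+2)\}$ or $\{(a,b),(a\pm 1,b\pm 1)\}$ with the intermediate site, which is infected by a single $\D$-rule and hence lies in the closed set). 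Your ``only if'' direction is the more serious gap, as you yourself flag: the witness-tree sketch does not establish that the spanning algorithm actually terminates with a \emph{single} cluster whose minimal droplet is $D$, rather than fragmenting $D \cap A$ into several clusters whose droplets are each proper subdroplets. The appeal to ``monotonicity of spans under adding sites'' is also not an off-the-shelf property of the algorithm. What closes both holes is the structural identity for $\< K \>$: since any $L$ connected in $\Z^2$ is a fortiori strongly connected, it lies inside a single strongly connected component $\tilde L$ of $[D \cap A]$, and then $D = D(L) \subset D(\tilde L) \subset D(D) = D$ forces $D(\tilde L) = D$, giving $D \in \< D \cap A \>$ directly. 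Without that identity (or an equivalent analysis of the algorithm's invariants --- disjointness of the final closures, their union being all of $[K]$, and non-mergeability being equivalent to non-adjacency of strongly connected components), your proof does not go through.
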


\begin{proof}
For every finite set $K$, we have 
%\begin{equation}\label{eq:spanofK}
\[
\< K \> = \big\{D(K_1),\ldots,D(K_k)\big\},
\]
%\end{equation}
where $K_1,\ldots,K_k$ are the strongly connected components of $[K]$. Applying this to $K = D \cap A$, we see that $D \in \< D \cap A \>$ if and only if $D(L) = D$ for some strongly connected component $L$ of $[D \cap A]$. But $[D \cap A] \subset D$, and so this is equivalent to the event that $D$ is internally spanned, since a subset of $\Z^2$ is strongly connected and closed if and only if it is connected in the graph $\Z^2$ and closed.
\end{proof}

The second lemma is an approximate sub-additivity property for strongly connected droplets. This lemma, and the extremal lemma which follows (Lemma~\ref{lem:extremal}), are the main reasons for defining Duarte regions, and for defining the width and height of a droplet in the `continuous' way via Duarte regions. % Indeed, once one has passed from droplets to Duarte regions, Lemma~\ref{lem:subadd} is essentially a triviality, the only subtlety being the the interaction between the Duarte regions and the necessarily discrete definition of strongly connected.

\begin{lemma}\label{lem:subadd}
%Let $K_1$ and $K_2$ be finite subsets of $\Z^2$, and suppose that $[K_1]$, $[K_2]$ and $[K_1\cup K_2]$ are strongly connected. Then
%\[
%h\big( D([K_1\cup K_2]) \big) \leq h\big( D([K_1]) \big) + h\big( D([K_2]) \big) + 1.
%\]
Let $D_1$ and $D_2$ be droplets such that $D_1\cup D_2$ is strongly connected. Then
\[
h\big( D(D_1\cup D_2) \big) \leq h(D_1) + h(D_2) + 1.
\]
\end{lemma}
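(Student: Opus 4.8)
The plan is to reduce the statement to an analytic inequality about the function $f$. Write $D^* = D^*(D_1 \cup D_2)$ for the minimal Duarte region containing $D_1 \cup D_2$, with width $w$ and height $h(D(D_1 \cup D_2)) = 2f(w)+1$, and similarly let $w_1, w_2$ be the widths of $D_1, D_2$, so $h(D_i) = 2f(w_i)+1$. The inequality to be proved is then
\[
2f(w) + 1 \le \big( 2f(w_1) + 1 \big) + \big( 2f(w_2) + 1 \big) + 1,
\]
i.e.\ $f(w) \le f(w_1) + f(w_2) + 1$. First I would observe that since $D_1 \cup D_2$ is strongly connected, the two Duarte regions $D_1^*$ and $D_2^*$ are within bounded distance of each other: there are points of $D_1$ and $D_2$ at $\ell^1$-distance at most $2$. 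This lets me bound $w$ in terms of $w_1$ and $w_2$. If the two droplets are `horizontally nested' (one lies within the $x$-range of the other), then $w \le \max\{w_1,w_2\} + O(1)$ and the bound is immediate. Otherwise their $x$-ranges overlap in at most $O(1)$ columns, so $w \le w_1 + w_2 + O(1)$; the key point is that the source (the left-most, central point) of $D^*$ is essentially that of whichever of $D_1, D_2$ extends furthest left.

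The heart of the matter is therefore the following: if $w \le w_1 + w_2 + c_0$ for an absolute constant $c_0$, and the source of $D^*$ is (up to a bounded shift) the source of, say, $D_1$, then $f(w) \le f(w_1) + f(w_2) + 1$. Here I would use concavity of $f$ (Observation~\ref{obs:f}(b) says $f$ is \emph{convex}, so I must be slightly careful) — actually the relevant fact is that $f$ grows only logarithmically, so $f(w_1 + w_2 + c_0) - f(w_1) = \frac{1}{2p}\log\!\big(\frac{\log 1/p + \eps^3 p(w_1+w_2+c_0)}{\log 1/p + \eps^3 p w_1}\big) \le \frac{1}{2p}\log\!\big(1 + \frac{\eps^3 p(w_2+c_0)}{\log 1/p}\big)$, and the right-hand side is at most $f(w_2) + \frac{1}{2p}\log(1 + \eps^3 p c_0 / \log(1/p)) \le f(w_2) + 1$ once $p$ is small (the last term is $O(p/\log(1/p))$). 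The bounded horizontal shift of the source only helps, since $f$ is increasing and the shift moves the relevant region leftward or contributes at most another $O(1)$ to the effective width, absorbed into $c_0$. I would also need to handle the vertical alignment: strong connectivity forces the vertical offset between the sources to be $O(1)$, and since the half-heights $f(w_i)$ of the two droplets are compared at overlapping or adjacent $x$-coordinates, the $+1$ slack (together with the three $+1$'s from the $2f+1$ normalisation) comfortably absorbs it.

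I expect the main obstacle to be the bookkeeping in the first step: precisely controlling the source and width of $D^*(D_1 \cup D_2)$ from strong connectivity, in particular ruling out the possibility that combining the two curved regions produces a minimal Duarte region whose source sits noticeably below both sources (which would inflate the height). This is where the specific shape of $f$ — the fact that $f(0) = 0$ and $f$ is increasing and concave-like in its dependence on the parameters — is used: the minimal region containing a union of two `trumpet' shapes cannot dip far below either, because doing so would force a much larger width, contradicting $w = O(w_1 + w_2)$. Making this quantitative, and checking that all the resulting $O(1)$ error terms fit inside the single unit of slack in the statement, is the crux; the rest is the short logarithmic estimate above. Once that is in place the lemma follows.
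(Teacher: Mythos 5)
There is a genuine gap, and it is in the step you yourself flagged as the "crux." Your plan hinges on the bound $w := w\big(D^*(D_1\cup D_2)\big) \le w_1 + w_2 + O(1)$ (or $\le \max\{w_1,w_2\}+O(1)$ in the nested case), deduced from the fact that strong connectivity puts $D_1$ and $D_2$ within $\ell^1$-distance 2. But this bound on the width of the \emph{minimal} Duarte region is false, essentially because the source of that region is forced to retreat leftward whenever there is any vertical offset between the two droplets — and the retreat is not $O(1)$. Concretely, take $D_1=\{(0,0)\}$ and $D_2=\{(0,2)\}$, which are strongly connected; here $w_1=w_2=0$, yet the minimal Duarte region containing both must have vertical extent at least $2$ at $x=0$, so $2f(w)\geq 2$, i.e.\ $w \geq f^{-1}(1) = \tfrac{\log 1/p}{\eps^3 p}(e^{2p}-1) \approx \tfrac{2\log 1/p}{\eps^3}$, which is not $O(1)$ as $p\to 0$. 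More generally, when $D_2$ overlaps the $x$-range of $D_1$ but is offset vertically, the minimal region's source drops by roughly $\log(1/p)$ (or worse, by a term like $\eps^3 p\, w_1 w_2/\log(1/p)$ for large $w_i$). Your later attempt to rule out the "source dips below both" scenario is circular: you exclude it because it would "contradict $w=O(w_1+w_2)$", but that is precisely the unproven claim.

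The lemma is still true — note that in the example above one still has $2f(w)+1 = 3 = h(D_1)+h(D_2)+1$, with equality — but the route must be different. The paper avoids bounding the minimal region at all. Instead it \emph{constructs} a (typically non-minimal) Duarte region $D^*$ with width defined by $f(w)=f(w_1)+f(w_2)+1$ and a carefully chosen source $(w_1-w,\,h_0)$ with $h_0=(h_t-h_b)/2$ depending on the vertical offset, and then proves by case analysis (using Observation~\ref{obs:edges} to reduce to checking $\edge(D_1^*)\cup\edge(D_2^*)\subset D^*$, and using the strong-connectivity bound $b-\lfloor f(w_2)\rfloor-\lfloor f(w_2+a)\rfloor\leq 2$) that $D^*\supset D_1^*\cup D_2^*$. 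Since $D^*(D_1\cup D_2)\subset D^*$ and $h(D^*)=h(D_1)+h(D_2)+1$ holds by construction, the lemma follows; the minimal region's width never needs to be controlled additively. If you want to salvage your approach, the calculation $f(w_1+w_2+c_0)\leq f(w_1)+f(w_2)+1$ is fine, but the first step must be replaced by the constructive containment argument rather than by a direct width bound.
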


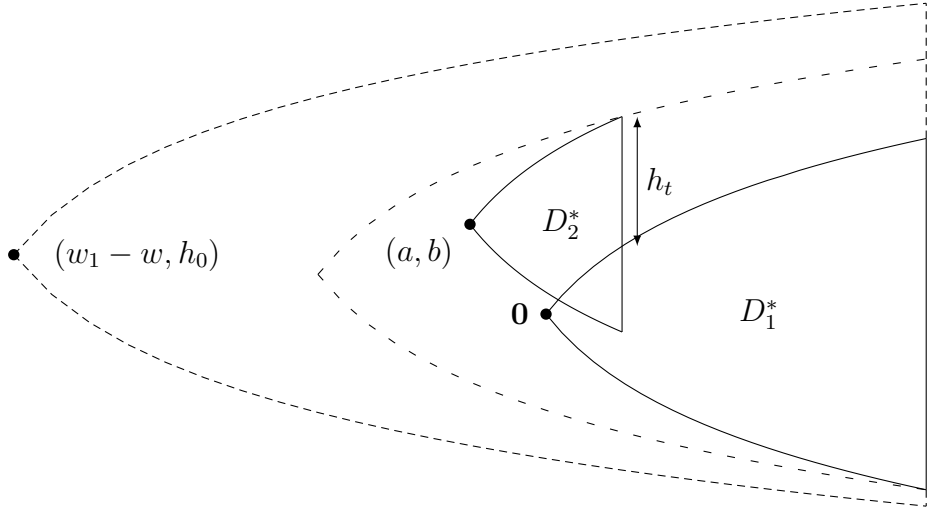
\begin{figure}
  \centering
  \begin{tikzpicture}[>=latex]
    % D^*(union)
    \draw [domain=0:8,loosely dashed] plot (\x,{1.3*ln(1+\x)});
    \draw [domain=0:8,loosely dashed] plot (\x,{-1.3*ln(1+\x)});
%    \draw [domain=0:1,variable=\t,densely dashed] plot (8,{1.3*ln(4)+1.3*ln(9/4)*\t});
    % D^*
%    \draw [domain=0:12,densely dashed] plot (\x-4,{1.3*ln(13/9)+1.3*ln(1+\x)});
%    \draw [domain=0:12,densely dashed] plot (\x-4,{1.3*ln(13/9)-1.3*ln(1+\x)});
    \draw [domain=0:12,densely dashed] plot (\x-4,{1.3*ln(11/9)+1.3*ln(1+\x)});
    \draw [domain=0:12,densely dashed] plot (\x-4,{1.3*ln(11/9)-1.3*ln(1+\x)});
    \draw [domain=0:1,variable=\t,densely dashed] plot (8,{1.3*ln(4)+1.3*ln((13*11)/(9*4))*\t});
    \draw [domain=0:1,variable=\t,densely dashed] plot (8,{-1.3*ln(9)-1.3*ln((13*9)/(9*11))*\t});
    % D_1^*
    \draw [domain=0:5,variable=\t] plot ({\t+3},{1.3*ln((\t+1)/1.5)});
    \draw [domain=0:5,variable=\t] plot ({\t+3},{-1.3*ln((\t+1)*1.5)});
    \draw [domain=-1:1,variable=\t] plot (8,{1.3*ln(6)*\t-1.3*ln(1.5)});
    % D_2^*
    \draw [domain=0:2,variable=\t] plot ({\t+2},{1.3*ln((\t+1)*5/3)});
    \draw [domain=0:2,variable=\t] plot ({\t+2},{-1.3*ln((\t+1)*3/5)});
    \draw [domain=-1:1,variable=\t] plot (4,{1.3*ln(5/3)+1.3*ln(3)*\t});
    % arrows
    \draw [<->,domain=0:1,variable=\t] plot (4.2,{1.3*ln(4/3)+1.3*ln(15/4)*\t});
%    \draw [<->,domain=0:1,variable=\t] plot (8.2,{1.3*ln(4)+1.3*ln(9/4)*\t});
    % labels
    \pgfmathparse{-1.3*ln(1.5)};
    \node at (5.8,\pgfmathresult) {$D_1^*$};
    \pgfmathparse{1.3*ln(5/3)};
    \node at (3.2,\pgfmathresult) {$D_2^*$};
    \pgfmathparse{1.3*ln(4/3)+0.65*ln(15/4)};
    \node at (4.5,\pgfmathresult) {$h_t$};
%    \pgfmathparse{1.3*ln(4)+0.65*ln(9/4)};
%    \node at (10,\pgfmathresult) {$h_0+f(w)-f(w_1)$};
    % new
    \node [fill,circle,inner sep=1.5pt,label=180:$\0$] at (3,{-1.3*ln(1.5)}) {};
    \node [fill,circle,inner sep=1.5pt,label={-160:$(a,b)$}] at (2,{1.3*ln(5/3)}) {};
    \node [fill,circle,inner sep=1.5pt,label={[label distance=0.3cm]0:$(w_1-w,h_0)$}] at (-4,{1.3*ln(11/9)}) {};
  
  \end{tikzpicture}
  \caption{The Duarte regions from the proof of Lemma~\ref{lem:subadd}. The inner dashed region is $D^*(D_1^* \cup D_2^*)$ and the outer dashed region is $D^*$. In this example $h_b=0$. Note that $D^*$ is much larger than $D^*(D_1^* \cup D_2^*)$. Since $D^*$ is defined so that $h(D^*) = h(D_1^*) + h(D_2^*) + 1$ by~\eqref{eq:defw}, this discrepancy occurs whenever there is a large overlap between the droplets.}
  \label{fig:subadd}
\end{figure}

\begin{proof}
It will be convenient to pass to the continuous setting, so let $D_i^*:=D^*(D_i)$ for $i=1,2$. We shall prove that
\[
h(D^*) \leq h(D_1^*) + h(D_2^*) + 1
\]
for some Duarte region $D^*$ containing $D_1^*\cup D_2^*$. Since $h(D)$ is defined to be $h\big(D^*(D)\big)$ for any droplet $D$, and since $D^* \supset D^*(D_1^*\cup D_2^*) \supset D^*(D_1\cup D_2)$, this would imply the result.

We may suppose that $\edge(D_1^*)$ lies to the right of or in line with $\edge(D_2^*)$, that $D_1^*$ has source $\0$ and width $w_1$, and that $D_2^*$ has source $(a,b)$ and width $w_2$. (Assuming $\0$ for the source of $D_1^*$ is permissible because we shall not assume anything about the location of lattice points inside the Duarte regions.) Define the new Duarte region $D^*$ as follows. Let $D^*$ have width $w$, where
\begin{equation}\label{eq:defw}
f(w) = f(w_1) + f(w_2) + 1,
\end{equation}
and source $(w_1-w,h_0)$, where $h_0 := (h_t - h_b)/2$, and
\[
h_t := \max\big\{ b + f(w_2) - f(w_2 + a), \, 0 \big\}
\]
and
\[
h_b := \max\big\{ -b + f(w_2) - f(w_2 + a), \, 0 \big\}.
\]
(By convention, we set $f(x)=0$ if $x<0$.) Thus, $h_t$ is the distance between the top of $\edge(D_2^*)$ and the top-most point of $D_1^*$, provided the former point lies above the latter point, and similarly for $h_b$ with `top' replaced by `bottom'. Moreover, $\edge(D_1^*)$ and $\edge(D^*)$ lie on the same vertical line in $\R^2$. An example is shown in Figure~\ref{fig:subadd}.

Since the height condition $h(D^*) \leq h(D_1^*) + h(D_2^*) + 1$ follows immediately from~\eqref{eq:defw} (in fact, with equality), to prove the lemma it is enough to show that $D_1^* \cup D_2^* \subset D^*$, and therefore by Observation~\ref{obs:edges}, it suffices to show that $\edge(D_1^*) \cup \edge(D_2^*) \subset D^*$. We may assume that $\max\{h_t,h_b\} > 0$, since otherwise $\edge(D_2^*) \subset D_1^*$, which implies $D_2^* \subset D_1^*$ by Observation~\ref{obs:edges}, and in this case the lemma is a triviality.

Beginning with $D_1^*$, we shall show that in fact $\edge(D_1^*) \subset \edge(D^*)$. Without loss of generality let $h_0 \geq 0$, and observe that the vertical coordinates of the bottom-most points of $\edge(D_1^*)$ and $\edge(D^*)$ are $-f(w_1)$ and $h_0 - f(w)$ respectively. Since the source of $D^*$ is defined so that $\edge(D_1^*)$ and $\edge(D^*)$ lie in the same vertical line, it is enough to show that $h_0 - f(w) \leq -f(w_1)$. By~\eqref{eq:defw}, this is equivalent to $h_0 \leq f(w_2) + 1$. Now, since $D_1 \subset D_1^*$ is strongly connected to $D_2 \subset D_2^*$, we have
\[
b - \lfloor f(w_2) \rfloor - \lfloor f(w_2 + a) \rfloor \leq 2,
\]
by comparing the bottom-most point of $\edge(D_2^*)$ with the boundary of $D_1^*$. (Note that if $D_2^*$ lies entirely to the left of $D_1^*$ then we actually have the stronger inequality $b - \lfloor f(w_2) \rfloor \leq 1$.) Therefore,
\[
h_t = b + f(w_2) - f(w_2 + a) \leq 2 f(w_2) + 2.
\]
Thus if $h_b=0$ then $h_0 = h_t/2 \leq f(w_2) +1$ as required. If $h_b>0$ then
\[
h_0 = b < f(w_2) - f(w_2 + a) < f(w_2) + 1,
\]
so we are again done.

Now we move on to $D_2^*$. Once again, by Observation~\ref{obs:edges} it is enough to prove that $\edge(D_2^*) \subset D^*$, and so by symmetry (we are no longer assuming $h_0 \geq 0$) we only have to show that
\[
b + f(w_2) \leq h_0 + f(w_2 + a - w_1 + w);
\]
that is, we have to show that the vertical coordinate of the top-most point of $\edge(D_2^*)$ is at most that of the upper boundary point of $D^*$ in the same vertical line. If $h_t>0$ and $h_b>0$ then $h_0=b$ and we are done by the monotonicity of $f$. Here we are using the fact that $w + a \geq w_1$, which is obtained by observing that $a \geq -w_2 - 1$, since $D_1$ and $D_2$ are strongly connected, and then by observing that $w \geq w_1 + w_2 + 1$ follows from $f(w) = f(w_1) + f(w_2) + 1$ by the convexity of $f$. If $h_b=0$ then $h_0 = b + f(w_2) - f(w_2 + a)$, so are we again easily done by the monotonicity of $f$. Finally, if $h_t=0$ then $b\leq 0$ and we are done as before.
\end{proof}

We can now deduce the following extremal result for internally spanned droplets.

\begin{lemma}\label{lem:extremal}
Let $K \subset \Z^2$ be a finite set such that $[K]$ is strongly connected. Then
\[
h\big( D([K]) \big) \leq 2|K| - 1.
\]
In particular, if $D$ is an internally spanned droplet, then
\[
|D\cap A| \geq \frac{h(D) + 1}{2}.
\]
\end{lemma}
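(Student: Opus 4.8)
The plan is to prove the first inequality by induction on $|K|$ using the approximate sub-additivity from Lemma~\ref{lem:subadd}, and then to deduce the second inequality by combining the first with Lemma~\ref{lem:span}.

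\medskip

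\noindent\textbf{Proof of the extremal bound.} First I would establish $h\big(D([K])\big) \le 2|K| - 1$ by induction on $|K|$. For the base case $|K| = 1$, the set $[K] = K$ is a single site, so $D([K])$ is the minimal droplet containing a point; since $f(0) = 0$, its height is $h = 2f(0) + 1 = 1 = 2 \cdot 1 - 1$, as required. For the inductive step, suppose $|K| \ge 2$ and that the bound holds for all smaller sets. Run the spanning algorithm on $K$ (or, more precisely, observe that $[K]$ being strongly connected means the algorithm merges everything into a single component). The key structural fact is that we can split $K = K_1 \cup K_2$ into two disjoint nonempty pieces such that $[K_1]$ and $[K_2]$ are each strongly connected and $[K_1] \cup [K_2]$ is strongly connected: this follows by looking at the last merge step of the spanning algorithm run on $K$, whose two inputs $K_i^T, K_j^T$ are closures of strongly connected sets with $[K_i^T \cup K_j^T]$ strongly connected (alternatively, one peels off a single site along a connecting path). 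Then $D([K]) = D\big(D([K_1]) \cup D([K_2])\big)$, so by Lemma~\ref{lem:subadd} and the induction hypothesis,
\[
h\big(D([K])\big) \le h\big(D([K_1])\big) + h\big(D([K_2])\big) + 1 \le \big(2|K_1| - 1\big) + \big(2|K_2| - 1\big) + 1 = 2|K| - 1,
\]
since $|K_1| + |K_2| = |K|$. This completes the induction.

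\medskip

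\noindent\textbf{Deduction of the second inequality.} Now suppose $D$ is an internally spanned droplet. By Definition~\ref{def:ispan} there is a set $L \subset [D \cap A]$, connected in $\Z^2$, with $D = D(L)$. Since $L$ is connected in $\Z^2$ it is in particular strongly connected, and $[L] \subset [D \cap A] \subset D$ is strongly connected and closed, hence (by the remark in the proof of Lemma~\ref{lem:span}) connected in $\Z^2$; in any case $L \subset [L]$ gives $D = D(L) \subset D([L]) \subset D(D) = D$, so $D = D([L])$. Let $K := L \cap A \subset D \cap A$; then $L \subset [K]$ (since $L$ is obtained by closing a subset of $A$ inside $D$, and one checks $L \subset [L \cap A]$), so $D = D([L]) \subset D([K])$, whence $D = D([K])$ as $[K] \subset [L] \subset D$. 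Applying the first inequality to $K$ gives $h(D) = h\big(D([K])\big) \le 2|K| - 1 \le 2|D \cap A| - 1$, i.e.\ $|D \cap A| \ge (h(D)+1)/2$.

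\medskip

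\noindent\textbf{Main obstacle.} The routine calculations are trivial; the one point requiring genuine care is the splitting step in the induction, namely producing $K_1, K_2$ with both pieces \emph{and} their union strongly connected. I expect to handle this cleanly by referring to the last iteration of the spanning algorithm (Spanning algorithm, step~1), which by construction furnishes exactly such a decomposition — the two merged sets $K_i^T$ and $K_j^T$ are the closures of the strongly connected sets built so far, and the merge condition says $[K_i^T \cup K_j^T]$ is strongly connected. A secondary subtlety is making sure that when passing from $L$ to $K = L \cap A$ one still has $D = D([K])$; this is where it matters that $L \subset [L \cap A]$, which holds because $L$ lies in the closure of $D \cap A$ and closures are built up from sites of $A$.
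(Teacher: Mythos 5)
Your proof of the first inequality follows the paper exactly: the same induction on $|K|$, the same appeal to the spanning algorithm to extract the last merge $\K^{T-1} = \{K_1,K_2\}$, and the same application of Lemma~\ref{lem:subadd}. That part is fine.

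The deduction of the second inequality has a genuine gap. You take the set $L$ from Definition~\ref{def:ispan} — which is merely \emph{some} connected subset of $[D\cap A]$ with $D(L)=D$ — and set $K:=L\cap A$, asserting $L\subset[L\cap A]$. This is false in general. The justification offered (``$L$ is obtained by closing a subset of $A$ inside $D$, and closures are built up from sites of $A$'') misreads the definition: $L$ is \emph{not} produced by any closure process; it is an arbitrary connected subset of $[D\cap A]$, and a point of $L$ may well have been infected by sites of $A$ lying outside $L$. In the extreme case one could even have $L\cap A=\emptyset$. As a result $[K]$ need not contain $L$, need not be strongly connected, and $D([K])$ need not equal $D$, so the first inequality cannot be applied to $K$. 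The paper avoids this entirely by invoking Lemma~\ref{lem:span}: if $D$ is internally spanned then $D\in\<D\cap A\>$, and unwinding the definition of $\<\cdot\>$ (the output of the spanning algorithm) directly produces a set $K\subset D\cap A$ with $[K]$ strongly connected and $D([K])=D$, which is exactly what the first inequality needs. Alternatively one can repair your argument by replacing $L$ by $L'$, the strongly connected component of $[D\cap A]$ containing $L$, and taking $K:=L'\cap A$: since every Duarte update infects a site from sites strongly connected to it, infection cannot cross between strongly connected components, so $L'=[L'\cap A]$ and the argument goes through — but this requires a separate lemma, whereas citing Lemma~\ref{lem:span} is cleaner.
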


\begin{proof}
The first assertion follows by induction on $|K|$ from Lemma~\ref{lem:subadd} and the spanning algorithm. Indeed, if $|K| = 1$ then $h\big( D([K]) \big) = 1$, as required, so assume that $|K| \ge 2$ and assume that the result holds for all proper subsets of $K$.

Run the spanning algorithm with initial set $K$, and observe that, since $[K]$ is strongly connected, we have $\< K \> = \{ D([K] \}$. Let $\K^{T-1} = \{K_1, K_2\}$, and observe that $[K_1]$, $[K_2]$ and $[K_1 \cup K_2]$ are strongly connected and $|K_1| + |K_2| = |K|$. Therefore, by the induction hypothesis and Lemma~\ref{lem:subadd}, we have
\begin{align*}
h\big( D([K]) \big) & = h\big( D([K_1\cup K_2]) \big) \le h\big( D([K_1]) \big) + h\big( D([K_2]) \big) + 1\\
& \le \big( 2|K_1| - 1 \big) + \big( 2|K_1| - 1 \big) + 1 = 2|K| - 1,
\end{align*}
as required.

The second assertion of the lemma follows from the first after noting that if $D$ is internally spanned then there exists a set $K \subset D\cap A$ such that $[K]$ is strongly connected and $D\big([K]\big) = D$, by Lemma~\ref{lem:span}.
\end{proof}

We will use Lemma~\ref{lem:extremal} in Section~\ref{sec:small} to deduce a non-trivial bound on the probability that a very small droplet is internally spanned, and hence prove the base case in our application of the method of iterated hierarchies. 

Our next lemma implies that critical droplets exist, and is based on a fundamental observation of Aizenman and Lebowitz~\cite{AL}, which has become a standard tool in the study of bootstrap percolation. In order to obtain a sharp threshold for the Duarte model, we will need the following, slightly stronger variant of their result.

\begin{lemma}\label{lem:critical:droplet}
If $[A] = \Z_n^2$, then there exists a pair $(D_1,D_2)$ of disjointly internally spanned droplets such that
\[
\max\big\{ h(D_1), h(D_2) \big\} \le \frac{1-\eps}{p} \log \frac{1}{p} \quad \text{and} \quad h(D_1) + h(D_2) \ge \frac{1-\eps}{p} \log \frac{1}{p} - 1,
\]
and $d(D_1,D_2) \le 2$. 
\end{lemma}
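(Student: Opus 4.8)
The plan is to run the spanning algorithm on the set $A \cap [A]$ (or equivalently, to exploit Lemma~\ref{lem:span}) and track the sizes of the spanned droplets as they merge, combining the Aizenman--Lebowitz-type argument with the subadditivity of heights from Lemma~\ref{lem:subadd}. Since $[A] = \Z_n^2$ and the torus is large, the span of $A$ must contain a droplet $D$ whose closure is all of $\Z_n^2$, and in particular $D$ has height at least $\frac{1-\eps}{p}\log\frac1p$ (indeed, much larger). I would run the spanning algorithm, and look at the final merge that produces (a droplet whose height exceeds) the threshold value $\frac{1-\eps}{p}\log\frac1p - 1$.

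\smallskip
More precisely, here is how I would organise it. First I would note that by Lemma~\ref{lem:span} (applied on the torus, or on a large box), there is a strongly connected component $L$ of $[A]$ with $h(D(L))$ at least the threshold. Then I would run the spanning algorithm on $L \cap A$; since $D(L) = D([L \cap A])$, the algorithm terminates with a single droplet of this large height. Tracking the merges backwards, consider the first time $t$ at which some set $K_i^t$ in the partition satisfies $h(D([K_i^t])) \ge \frac{1-\eps}{p}\log\frac1p - 1$. Just before this happens, two sets $K_1, K_2$ with $[K_1]$, $[K_2]$, $[K_1 \cup K_2]$ all strongly connected get merged, and by minimality of $t$ both $D([K_1])$ and $D([K_2])$ have height strictly less than $\frac{1-\eps}{p}\log\frac1p - 1$. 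Set $D_1 := D([K_1])$ and $D_2 := D([K_2])$. By Lemma~\ref{lem:subadd}, $h(D(D_1 \cup D_2)) \le h(D_1) + h(D_2) + 1$, and since the merged droplet $D([K_1 \cup K_2]) = D(D_1 \cup D_2)$ has height at least the threshold, we get $h(D_1) + h(D_2) \ge \frac{1-\eps}{p}\log\frac1p - 2$. This is slightly off from the claimed $-1$; to fix the constant I would instead define $t$ to be the first time the height reaches $\frac{1-\eps}{p}\log\frac1p$ exactly (so $h(D(D_1 \cup D_2)) \ge \frac{1-\eps}{p}\log\frac1p$, giving $h(D_1)+h(D_2) \ge \frac{1-\eps}{p}\log\frac1p - 1$), while both $D_i$ have height $< \frac{1-\eps}{p}\log\frac1p$, which is the stated $\max$ bound. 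One has to be slightly careful that heights can jump by more than $1$ in a single merge (a merge of $D_1$ and $D_2$ can produce something of height $h(D_1)+h(D_2)+1$), so "the first time the height reaches the threshold" is the right formulation rather than "equals".

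\smallskip
Disjointness and the distance bound $d(D_1, D_2) \le 2$ come essentially for free from the spanning algorithm. The sets $K_1$ and $K_2$ are disjoint subsets of $A$ (they are distinct blocks of a partition of $A$ refined along the way), so the droplets $D_1, D_2$ are \emph{disjointly} internally spanned: $K_1 \subset D_1 \cap A$ is strongly connected and spans $D_1$, likewise $K_2 \subset D_2 \cap A$ spans $D_2$, and $K_1 \cap K_2 = \varnothing$. For the distance: the merge was triggered because $[K_1 \cup K_2]$ is strongly connected, which forces $[K_1]$ and $[K_2]$ to be within strong-connectivity distance of each other, and since $[K_i] \subset D_i$ and the $D_i$ contain $[K_i]$, the droplets are at distance at most $2$ (the strong-connectivity graph $G_{\text{strong}}$ has edges spanning $\ell_\infty$-distance at most $2$ in one coordinate direction).

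\smallskip
I expect the main obstacle to be purely bookkeeping around the constants: matching exactly the $-1$ in the lower bound and the absence of a $-1$ (i.e. the clean $\frac{1-\eps}{p}\log\frac1p$) in the $\max$ bound. This requires choosing the stopping rule for the spanning algorithm carefully, and being honest about the fact that a single merge can increase the height by as much as the sum of the two heights plus one, so one should stop at the first merge \emph{after} which the height is at least the threshold, take the two pre-merge droplets, and verify that the pre-merge heights are each below threshold (by minimality) while their sum is at least threshold minus one (by Lemma~\ref{lem:subadd} applied to the merge that crossed it). The rest — disjointness, strong connectivity of the witnessing sets, and the distance bound — follows directly from the structure of the spanning algorithm exactly as in the analogous arguments in \cite{BDMS} and \cite{AL}.
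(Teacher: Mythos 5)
Your proposal is correct and takes essentially the same approach as the paper: run the spanning algorithm on $A$, stop at the first step at which a droplet of height exceeding $\tfrac{1-\eps}{p}\log\tfrac1p$ is spanned, take the two pre-merge sets $K_1,K_2 \in \K^{t-1}$ and the droplets $D_i = D([K_i])$, and apply Lemma~\ref{lem:subadd} to the merged droplet to obtain the lower bound on $h(D_1)+h(D_2)$. Your self-correction about the stopping threshold is exactly the paper's formulation, and your remarks on disjointness and the distance bound match what the paper leaves implicit.
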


\begin{proof}
Run the spanning algorithm, starting with $S = A$, until the first time $t$ at which there exists a set $K \in \K^t$ that spans a droplet $D(K)$ of height larger than $(1-\eps)p^{-1} \log 1/p$. Since $K$ was created in step $t$, it follows that $K = K_1 \cup K_2$, where $K_1, K_2 \in \K^{t-1}$ are disjoint subsets of $A$ such that $[K_1]$ and $[K_2]$ are both strongly connected. Setting $D_1 = D\big( [K_1] \big)$ and $D_2 = D\big( [K_2] \big)$, we have 
\[
\max\big\{ h(D_1), h(D_2) \big\} \le \frac{1-\eps}{p} \log \frac{1}{p} \quad \text{and} \quad h\big(D(K) \big) > \frac{1-\eps}{p} \log \frac{1}{p}
\]
by our choice of $t$, and $D_1$ and $D_2$ are disjointly internally spanned by $K_1$ and $K_2$, respectively. By Lemma~\ref{lem:subadd}, it follows that
\[
h(D_1) + h(D_2) \ge \frac{1-\eps}{p} \log \frac{1}{p} - 1,
\]
as required.
\end{proof}

We will also need the following variant of Lemma~\ref{lem:critical:droplet}, which is closer to the original lemma of Aizenman and Lebowitz. Since the proof is so similar to that of Lemma~\ref{lem:critical:droplet}, it is omitted.

\begin{lemma}\label{lem:AL}
Let $D$ be an internally spanned droplet. Then for any $1\leq k\leq h(D)$, there exists an internally spanned droplet $D'\subset D$ such that $k\leq h(D') \leq 2k$. \qedhere
\end{lemma}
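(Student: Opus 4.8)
The plan is to run the spanning algorithm with initial set equal to the connected (hence strongly connected) witness for the internal spanning of $D$, and to stop it at the first moment a droplet of height at least $k$ appears, exactly as in the proof of Lemma~\ref{lem:critical:droplet}. Concretely, since $D$ is internally spanned there is a set $L \subset [D \cap A]$, connected in $\Z^2$, with $D(L) = D$; by Lemma~\ref{lem:span} (or directly) we may instead take $K \subset D \cap A$ with $[K]$ strongly connected and $D([K]) = D$. We then run the spanning algorithm on $K$.

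First I would observe that at time $0$ every $K_j^0$ is a singleton, so spans a droplet of height $1 \le k$ (using $k \ge 1$), while at the final time $T$ the unique surviving set spans $D$, which has height $h(D) \ge k$. Hence there is a first time $t$ at which some $K \in \K^t$ spans a droplet $D' := D([K])$ with $h(D') \ge k$. Since $K$ was just created at step $t$, we have $K = K_1 \cup K_2$ with $K_1, K_2 \in \K^{t-1}$, so by minimality of $t$ both $h(D([K_1])) < k$ and $h(D([K_2])) < k$. Because $[K_1 \cup K_2]$ is strongly connected (this is the merge condition in step~1 of the algorithm), Lemma~\ref{lem:subadd} gives
\[
h(D') = h\big(D([K_1 \cup K_2])\big) \le h\big(D([K_1])\big) + h\big(D([K_2])\big) + 1 \le (k-1) + (k-1) + 1 = 2k - 1 < 2k,
\]
so $k \le h(D') \le 2k$. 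Moreover $K \subset K^{(0)} \subset D \cap A$, and $[K]$ is strongly connected with $D([K]) = D'$, so $D'$ is internally spanned; finally $D' = D(K) \subset D(K^{(0)}) = D$ since $D$ is a droplet containing $K$ and $D(\cdot)$ is monotone. This is all of it.

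There is really no hard step here: the only point requiring any care is that $D' \subset D$, which follows because $K \subset D$ forces $D(K) \subset D(D) = D$ by minimality of the containing droplet (Observation~\ref{obs:DofK}) together with the fact that $D$ is the identity on droplets. The one thing worth double-checking is the edge case $h(D) = k$, where $t = T$ is permitted and $D' = D$ works trivially, and the case $k = 1$, where a singleton already suffices; both are consistent with the statement. Since the argument is a verbatim adaptation of the proof of Lemma~\ref{lem:critical:droplet} with the threshold $(1-\eps)p^{-1}\log(1/p)$ replaced by $k$, the details are omitted in the paper.
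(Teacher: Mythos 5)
Your plan is exactly the one the paper intends: run the spanning algorithm on a spanning witness $K$ for $D$, stop at the first time a droplet of height at least $k$ is produced, and apply Lemma~\ref{lem:subadd} to the two pieces merged at that step. The bookkeeping about $D'\subset D$ (via monotonicity of $D(\cdot)$) and about $D'$ being internally spanned (via $[K]$ being strongly connected and contained in $[D'\cap A]$) is correct, and the edge cases $k=1$ and $h(D)=k$ are handled sensibly.

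There is one genuine slip, though, in the displayed arithmetic. You pass from $h\bigl(D([K_i])\bigr) < k$ to $h\bigl(D([K_i])\bigr) \leq k-1$, i.e.\ you treat heights (and $k$) as integers. But in this paper the height of a droplet is \emph{real-valued}: by definition $h(U) = 2f(w)+1$, where $f$ is a smooth function and $w$ is the (real) width of the minimal Duarte region $D^*(U)$ — this is exactly why the authors go to the trouble of defining Duarte regions in $\R^2$ rather than working purely discretely (see the remarks after Observation~\ref{obs:edges}). So from $h\bigl(D([K_1])\bigr), h\bigl(D([K_2])\bigr) < k$ and Lemma~\ref{lem:subadd} you only get $h(D') < 2k+1$, not $h(D') \le 2k-1 \le 2k$. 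The conclusion you reach is therefore a constant weaker than what the lemma claims. This off-by-one slack is harmless in the paper's only application (Lemma~\ref{lem:calcsaver}, where $k$ is huge and the constants $1.01\eps_k$, etc.\ absorb such losses), and since the paper omits its own proof one cannot tell how the authors meant to sharpen it; but as written your display $\le (k-1)+(k-1)+1 = 2k-1$ is not justified, and you should either weaken the target to $h(D')< 2k+1$ (noting that this suffices where the lemma is used) or say explicitly that you are tacitly replacing $2k$ by $2k+1$.
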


\subsection{Hierarchies}\label{sec:hierarchies}

In this section we will recall the definition and some basic properties of \emph{hierarchies}, which were introduced in~\cite{Hol} and subsequently used and developed by many authors, for example in \cite{BBDM,BBM3d,BDMS,DE,DH,GHM}. We will be quite brief, and refer the reader to~\cite{BDMS} for more details.

\begin{definition}\label{def:hierarchy}
Let $D$ be a droplet. A \emph{hierarchy} $\hier$ for $D$ is an ordered pair $\hier=(G_\hier,D_\hier)$, where $G_\hier$ is a directed rooted tree such that all of its edges are directed away from the root $\root$, and $D_\hier \colon V(G_\hier) \to \mathcal{P}(\Z^2)$ is a function that assigns
%\footnote{Note that a droplet can be specified by its height and left endpoint.}
% -- this is not true: height is an interger (width and left endpoint would work, but why go to the trouble?)
to each vertex of $G_\hier$ a droplet, such that the following conditions are satisfied:
\begin{enumerate}
\item the root vertex corresponds to $D$, so $D_\hier(\root) = D$;
\item each vertex has out-degree at most 2;
\item if $v \in N_{G_\hier}^\to(u)$ then $D_\hier(v) \subset D_\hier(u)$;
\item if $N_{G_\hier}^\to(u) = \{v,w\}$ then $D_\hier(u) \in \< D_\hier(v)\cup D_\hier(w) \>$.
\end{enumerate}
\end{definition}
Condition (4) is equivalent to the statement that $D_\hier(v) \cup D_\hier(w)$ is strongly connected and that $D_\hier(u)$ is the smallest droplet containing their union. We usually abbreviate $D_\hier(u)$ to $D_u$. 

%The next definition controls the absolute and relative sizes of the droplets corresponding to vertices of $G_\hier$, which in turn allows us to control the number of hierarchies. 

\begin{definition}\label{def:hier2}
Let $t > 0$. A hierarchy $\hier$ for a droplet $D$ is \emph{$t$-good} if it satisfies the following conditions for each $u \in V(G_\hier)$:
\begin{enumerate}
\setcounter{enumi}{4}
\item $u$ is a leaf if and only if $t \le h(D_u) \le 2t$;
\item if $N_{G_\hier}^\to(u) = \{v\}$ and $|N_{G_\hier}^\to(v)| = 1$ then
\[
t \leq h(D_u) - h(D_v) \leq 2t;
\]
\item if $N_{G_\hier}^\to(u)=\{v\}$ and $|N_{G_\hier}^\to(v)| \ne 1$ then $h(D_u) - h(D_v) \leq 2t$;
\item if $N_{G_\hier}^\to(u)=\{v,w\}$ then $h(D_u) - h(D_v) \geq t$.
\end{enumerate}
\end{definition}

%Next we relate the abstract family of good hierarchies defined above to the initial set $A$ of infected sites and to the $\U$-bootstrap process. %Note that $\Delta(D,D')$ is an increasing event.
%If $s = t$ (as it will in our applications), then we will simply say that $\hier$ is \emph{$t$-good}. 

The final two conditions, which we define next, ensure that a good hierarchy for an internally spanned droplet $D$ accurately represents the growth of the initial sites $D \cap A$. Given nested droplets $D \subset D'$, we define
\[
%begin{equation}\label{eq:defDelta}
\Delta(D,D') := \big\{ D' \in \< D \cup (D' \cap A) \> \big\}. 
\]
%end{equation}

\begin{definition}
A hierarchy $\hier$ for $D$ is \emph{satisfied} by $A$ if the following events all occur \emph{disjointly}:
\begin{enumerate}
\setcounter{enumi}{8}
\item if $v$ is a leaf then $D_v$ is internally spanned by $A$;
\item if $N_{G_\hier}^\to(u) = \{v\}$ then $\Delta(D_v,D_u)$ occurs.
\end{enumerate}
\end{definition}

Let us also make an easy observation about the event $\Delta(D,D')$, which will be useful in the next section.
\begin{obs}\label{obs:Delta}
Let $D \subset D_1 \subset D'$ be droplets. Then $\Delta(D,D')$ implies $\Delta(D_1,D')$.
\end{obs}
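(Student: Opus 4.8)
\textbf{Proof proposal for Observation~\ref{obs:Delta}.}

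The plan is to unfold both sides of the desired implication directly from the definition of $\Delta$ and the spanning algorithm, exploiting the fact that the spanning algorithm is monotone with respect to adding sites. Recall that $\Delta(D,D')$ is the event that $D' \in \< D \cup (D' \cap A) \>$, i.e.\ that when we run the spanning algorithm on the set $K := D \cup (D' \cap A)$, one of the output droplets is exactly $D'$ (equivalently, by the characterisation in the proof of Lemma~\ref{lem:span}, $D'$ arises as $D(L)$ for some strongly connected component $L$ of $[K]$). Since $D \subset D_1 \subset D'$, we have the inclusion of ``input'' sets
\[
D_1 \cup (D' \cap A) \supseteq \big( D \cup (D' \cap A) \big) \cap \big( D_1 \cup (D' \cap A) \big),
\]
but more to the point, $D \cup (D' \cap A) \subseteq D_1 \cup (D' \cap A)$ (since $D \subset D_1$), while both sets are contained in $D'$ (since $D_1 \subset D'$ and $D' \cap A \subset D'$). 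So the two inputs differ only by sites lying inside $D'$.

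The key step is therefore a monotonicity observation about the span: if $K \subseteq K' \subseteq D'$ and $D' \in \< K \>$, then $D' \in \< K' \>$. Indeed, $D' \in \< K \>$ means (by the proof of Lemma~\ref{lem:span}) that $[K]$ has a strongly connected component $L$ with $D(L) = D'$; since $L \subseteq [K] \subseteq [K'] \subseteq [D'] \subseteq D'$, and $D$ is the identity on droplets, $L$ lies inside the droplet $D'$ and hence $D([K']) \subseteq D'$ wherever $[K']$ meets $D'$. Because $[K'] \supseteq [K] \supseteq L$, the component of $[K']$ containing $L$ is a strongly connected set $L'$ with $L \subseteq L' \subseteq D'$, so $D' = D(L) \subseteq D(L') \subseteq D(D') = D'$, forcing $D(L') = D'$ and hence $D' \in \< K' \>$. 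Applying this with $K = D \cup (D' \cap A)$ and $K' = D_1 \cup (D' \cap A)$ (both contained in $D'$, with $K \subseteq K'$) gives exactly the implication $\Delta(D,D') \Rightarrow \Delta(D_1,D')$.

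The only mild subtlety — and the step I would be most careful about — is checking that $K' := D_1 \cup (D' \cap A)$ really is contained in $D'$; this is immediate since $D_1 \subseteq D'$ by hypothesis and $D' \cap A \subseteq D'$ trivially, so the ``confinement to $D'$'' that makes the monotonicity argument work is automatic. One should also note that we do not need the disjoint-occurrence refinement here: $\Delta$ as defined is a single increasing event in the configuration $A$ restricted to $D'$, and adding the deterministic set of sites $D_1 \setminus D$ to the seed can only help, so the argument is genuinely just monotonicity of the closure and of the spanning algorithm. I would keep the write-up to two or three sentences, citing Lemma~\ref{lem:span} for the component characterisation and the convexity/identity facts about $D(\cdot)$ recorded after Observation~\ref{obs:DofK}.
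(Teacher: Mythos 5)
The paper states Observation~\ref{obs:Delta} without proof, and your argument is exactly the natural one and is correct: since $D\cup(D'\cap A)\subseteq D_1\cup(D'\cap A)\subseteq D'$, the monotonicity of $\langle\cdot\rangle$ on subsets of $D'$ (enlarging $K$ inside $D'$ can only grow the strongly connected component $L$ of $[K]$ with $D(L)=D'$ to some $L'$ with $L\subseteq L'\subseteq D'$, forcing $D(L')=D'$) gives the implication, and you rightly note that confinement to $D'$ is the one thing that must be checked. The only step worth making explicit is the inclusion $[D']\subseteq D'$, i.e.\ that droplets are closed under the Duarte update, which you use silently in the chain $[K']\subseteq[D']\subseteq D'$; this holds because Duarte regions are convex, and the same fact is already used implicitly in the proof of Lemma~\ref{lem:span} (where it is asserted that $[D\cap A]\subset D$).
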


Next we recall some standard properties of hierarchies. Our first lemma motivates the definitions above by showing that every internally spanned droplet has at least one good and satisfied hierarchy. The proof is almost identical to Lemma~8.8 of~\cite{BDMS} (see also Propositions~31 and~33 of~\cite{Hol}), and so we omit it.

\begin{lemma}\label{lem:goodsat}
Let $t > 0$, and let $D$ be a droplet with $h(D) \ge t$ that is internally spanned by $A$. Then there exists a $t$-good and satisfied hierarchy for $D$. \qed
\end{lemma}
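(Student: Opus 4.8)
The statement to prove is Lemma~\ref{lem:goodsat}: every internally spanned droplet $D$ with $h(D) \ge t$ admits a $t$-good and satisfied hierarchy. The excerpt explicitly says the proof is almost identical to \cite[Lemma~8.8]{BDMS} and is omitted, so my job is to sketch how that proof goes and flag the (minor) differences forced by the curved-droplet setting.

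\medskip

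\textbf{Plan.} The construction is a standard top-down recursive procedure driven by the spanning algorithm (The spanning algorithm~\ref{def:spanalg}) together with the Aizenman--Lebowitz-type splitting in Lemma~\ref{lem:AL}. First I would run the spanning algorithm on $D \cap A$: by Lemma~\ref{lem:span}, since $D$ is internally spanned we have $D \in \langle D \cap A\rangle$, i.e.\ $D = D([L])$ for a strongly connected component $L$ of $[D\cap A]$. I would build the tree $G_\hier$ by recursion on droplets, starting with the root $\root \mapsto D$. At a generic vertex $u$ with assigned droplet $D_u$ (internally spanned, of height $\ge t$), do the following. If $h(D_u) \le 2t$, declare $u$ a leaf; condition (5) of Definition~\ref{def:hier2} then holds, and condition (9) holds because $D_u$ is internally spanned. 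Otherwise $h(D_u) > 2t$, and we use the spanning algorithm on $D_u \cap A$ to find the ``last merge'' producing a droplet of height exceeding a suitable threshold: concretely, examining the run of the algorithm there is a first time a set appears spanning a droplet of height in some target window, and either (a) this droplet is obtained by merging two strongly connected pieces each of smaller height, in which case we give $u$ two children $v,w$ with $D_v, D_w$ the droplets spanned by those pieces (condition (4) holds by construction, and condition (8) — the height drop is at least $t$ — is arranged by the choice of window), or (b) we instead take a single ``nested'' internally spanned sub-droplet $D_v \subset D_u$ with $t \le h(D_u) - h(D_v) \le 2t$ when the growth from $D_v$ to $D_u$ was ``gradual'', using Lemma~\ref{lem:AL} to extract $D_v$ of the right height, giving $u$ one child $v$ (conditions (6) or (7)). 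In case (b), $\Delta(D_v, D_u)$ holds because the spanning algorithm starting from $D_v \cup (D_u \cap A)$ produces $D_u$ — this is where condition (10) is satisfied.

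\medskip

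\textbf{Disjointness and good-ness.} The key points to check are (i) that the events asserted in conditions (9) and (10) over all vertices occur \emph{disjointly} (in the van den Berg--Kesten sense), and (ii) that the recursion terminates with all height-gap conditions (5)--(8) satisfied. For (i), the standard argument is that the spanning algorithm assigns each site of $D \cap A$ to exactly one merge step, so the witnesses for the various leaf-spanning events and $\Delta$-events are supported on disjoint subsets of $A$; one has to be slightly careful that when a vertex has one child the growth step $\Delta(D_v,D_u)$ and the sub-hierarchy rooted at $v$ use disjoint randomness, which follows because the portion of $A$ used to grow from $D_v$ to $D_u$ is disjoint from $D_v \cap A$. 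For (ii), termination is immediate since heights strictly decrease and we stop once a droplet has height $\le 2t$; the window thresholds in the ``two children'' and ``one child, gradual'' cases must be chosen so that every non-leaf vertex falls into exactly one of the cases of Definition~\ref{def:hier2} — this is the only place where one bookkeeps the constants, and it is routine.

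\medskip

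\textbf{Main obstacle.} The one genuine (though still minor) difference from \cite[Lemma~8.8]{BDMS} is that here droplets are not polygons: the height is defined via Duarte regions (Definition~\ref{def:droplet}), so the sub-additivity one needs when merging — namely that the merged droplet has height at most the sum plus one — is exactly Lemma~\ref{lem:subadd}, and the Aizenman--Lebowitz extraction of an intermediate droplet of prescribed height is Lemma~\ref{lem:AL}. Both are already available, so the argument goes through essentially verbatim. The hard part, such as it is, is simply to verify that with the curved height function the case analysis in Definition~\ref{def:hier2} is exhaustive — i.e.\ that ``gradual growth'' always permits extracting a child with height gap in $[t,2t]$ — which again reduces to Lemma~\ref{lem:AL}. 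Since none of this deviates materially from the cited argument, I would, as the authors do, present the construction and then refer to \cite{BDMS} and \cite{Hol} for the verification of disjointness.
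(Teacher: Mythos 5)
Your sketch is correct and reproduces the standard top-down recursive construction from Holroyd and \cite{BDMS} to which the paper defers, invoking Lemma~\ref{lem:span}, Lemma~\ref{lem:subadd} and Lemma~\ref{lem:AL} in the appropriate places. The one subtlety you gloss over is disjointness at a vertex with two children: the droplets $D_v$ and $D_w$ may geometrically overlap, so the recursion must carry along the disjoint witness sets $K_v, K_w \subset A$ produced by the spanning algorithm rather than simply restricting $A$ to $D_v$ and $D_w$ — but this is exactly the bookkeeping that \cite{BDMS} and \cite{Hol} supply, and you correctly defer to them.
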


The next lemma allows us to bound $\P_p\big( \ispan(D) \big)$ in terms of the good and satisfied hierarchies of $D$. Let us write $\hier_D(t)$ for the set of all $t$-good hierarchies for $D$, and $L(\hier)$ for the set of leaves of $G_\hier$. We write $\prod_{u \to v}$ for the product over all pairs $\{u,v\} \subset V(G_\hier)$ such that $N_{G_\hier}^\to(u) = \{v\}$.

\begin{lemma}\label{lem:boundoverH}
Let $t > 0$, and let $D$ be a droplet. Then
%\begin{equation}\label{eq:boundoverH}
\[
\P_p\big(\ispan(D)\big) \leq \sum_{\hier \in \hier_D(t)} \bigg( \prod_{u \in L(\hier)} \P_p\big(\ispan(D_u)\big) \bigg)\bigg( \prod_{u \to v} \P_p\big(\Delta(D_v,D_u)\big) \bigg).
\]
%\end{equation}
\end{lemma}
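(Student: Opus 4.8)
\textbf{Proof strategy for Lemma~\ref{lem:boundoverH}.}

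The plan is to combine Lemma~\ref{lem:goodsat} (existence of a $t$-good and satisfied hierarchy for any internally spanned droplet of height at least $t$) with the van den Berg--Kesten inequality, exactly as in the analogous arguments in \cite{Hol} and \cite{BDMS}. First I would dispose of the trivial case $h(D) < t$: here the claimed right-hand side is a sum over all $t$-good hierarchies of $D$, and I would note that it suffices for the bound to be non-trivial only when this sum is nonempty, but in any case if $h(D) < t$ then the statement is vacuous or follows from the convention that the empty sum bounds nothing — more carefully, one checks that for such small $D$ one can take the single-vertex hierarchy and the inequality reads $\P_p(\ispan(D)) \le \P_p(\ispan(D))$. (In the intended application $t$ will always be at most $h(D)$, so this case is a formality.)

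The main step: assume $h(D) \ge t$. If $\ispan(D)$ occurs, then by Lemma~\ref{lem:goodsat} there is some $t$-good hierarchy $\hier$ for $D$ that is satisfied by $A$. Fixing such an $\hier$, ``satisfied'' means precisely that the events $\{\ispan(D_v) : v \in L(\hier)\}$ together with $\{\Delta(D_v,D_u) : N^\to_{G_\hier}(u) = \{v\}\}$ all occur \emph{disjointly} (in the sense of the BK inequality, i.e.\ witnessed by disjoint subsets of $A$). Therefore
\[
\ispan(D) \subset \bigcup_{\hier \in \hier_D(t)} \Bigg( \bigcap_{u \in L(\hier)} \ispan(D_u) \Big\square \bigcap_{u \to v} \Delta(D_v, D_u) \Bigg),
\]
where $\square$ denotes the disjoint occurrence of events. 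Applying the union bound over the (finite) set $\hier_D(t)$ and then the van den Berg--Kesten inequality to each term — all the events $\ispan(D_u)$ and $\Delta(D_v,D_u)$ are increasing, so BK applies — gives
\[
\P_p\big(\ispan(D)\big) \le \sum_{\hier \in \hier_D(t)} \bigg( \prod_{u \in L(\hier)} \P_p\big(\ispan(D_u)\big) \bigg) \bigg( \prod_{u \to v} \P_p\big(\Delta(D_v,D_u)\big) \bigg),
\]
which is the desired inequality.

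The only genuine point requiring care — and the step I would flag as the main obstacle — is verifying that $\hier_D(t)$ is finite, so that the union bound is legitimate; this is where Lemma~\ref{lem:numdroplets} enters. A $t$-good hierarchy for $D$ is determined by its underlying tree $G_\hier$ together with the droplet labels $D_u$; the tree has bounded size because each step down decreases the height by at least a constant amount (conditions (5)--(8) of Definition~\ref{def:hier2}), and each label $D_u$ is a droplet contained in $D$, of which there are only polynomially many in $w(D)$ by Lemma~\ref{lem:numdroplets} (after translating so that sources lie in a unit square, which one may do up to the finitely many integer translates that keep $D_u \subset D$). Hence $|\hier_D(t)| < \infty$ and the union bound is valid. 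Everything else is bookkeeping: the decomposition of the ``satisfied'' event into a disjoint occurrence is immediate from the definition, and BK for increasing events on a product space is standard.
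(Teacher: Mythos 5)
Your proof matches the paper's, which is precisely the one-liner you flesh out: Lemma~\ref{lem:goodsat} shows that if $\ispan(D)$ occurs then some $t$-good satisfied hierarchy $\hier$ for $D$ exists, so $\ispan(D)$ is contained in the union over $\hier \in \hier_D(t)$ of the disjoint occurrence of the increasing events $\ispan(D_u)$ ($u \in L(\hier)$) and $\Delta(D_v,D_u)$ ($u \to v$); the union bound followed by the van den Berg--Kesten inequality gives the claimed inequality term by term.

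Two small corrections to the surrounding remarks, neither of which affects the main argument. First, your $h(D)<t$ aside is wrong as stated: the single-vertex hierarchy for $D$ is \emph{not} $t$-good when $h(D)<t$, since condition~(5) of Definition~\ref{def:hier2} forces the leaf-root to satisfy $t \le h(D) \le 2t$. In fact $\hier_D(t)$ is empty in that regime and the displayed inequality would read $\P_p(\ispan(D)) \le 0$, which is false. The honest reading is that the lemma carries an implicit hypothesis $h(D)\ge t$, which, as you correctly observe, holds in every application. Second, the finiteness of $\hier_D(t)$ does not require Lemma~\ref{lem:numdroplets}: by condition~(3) every droplet appearing in a hierarchy for $D$ is contained in the finite set $D$, so there are only finitely many possible labels, and conditions~(5)--(8) bound the tree height, giving finiteness directly. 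Lemma~\ref{lem:numdroplets} is needed later, when one needs the quantitative bound $p^{-O(1)}$ per droplet to make the union bound sufficiently cheap (Lemma~\ref{lem:numberofH}), not for the qualitative legitimacy of the union bound here.
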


\begin{proof}[Proof of Lemma~\ref{lem:boundoverH}]
Since the events $\ispan(D_u)$ for $u\in L(\hier)$ and $\Delta(D_v,D_u)$ for $u\to v$ are increasing and occur disjointly, this is an immediate consequence of Lemma~\ref{lem:goodsat} and the van den Berg--Kesten inequality.
\end{proof}

The following is little more than an observation, but we record it here for completeness.

\begin{lemma}\label{lem:sumofheights}
Let $\hier \in \hier_D(t)$. Then
\begin{equation}\label{eq:sumofheights}
\sum_{u \in L(\hier)} h(D_u) + \sum_{u\to v} \big( h(D_u)-h(D_v) \big) \geq h(D) - v(\hier).
\end{equation}
\end{lemma}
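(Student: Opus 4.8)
The plan is to prove Lemma~\ref{lem:sumofheights} by a straightforward structural induction on the tree $G_\hier$, tracking the telescoping nature of the height increments along every root-to-leaf path. The key observation is that the left-hand side of~\eqref{eq:sumofheights} is an accounting of all the height gains: each leaf contributes its full height $h(D_u)$, and each degree-one edge $u \to v$ contributes the increment $h(D_u) - h(D_v)$. What is potentially "lost" is the height absorbed at each branching vertex (where $N^\to_{G_\hier}(u) = \{v,w\}$), since at such a vertex we only know $D_u \in \< D_v \cup D_w \>$, hence by Lemma~\ref{lem:subadd} that $h(D_u) \leq h(D_v) + h(D_w) + 1$. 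So each branching vertex costs us at most $1$, and there are at most $v(\hier)$ vertices in total, giving the $-v(\hier)$ slack.

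Concretely, I would prove by induction on $|V(G_\hier)|$ the statement that for the sub-hierarchy rooted at any vertex $x$,
\[
\sum_{u \in L(\hier_x)} h(D_u) + \sum_{u \to v \text{ in } G_{\hier_x}} \big( h(D_u) - h(D_v) \big) \geq h(D_x) - v(\hier_x),
\]
where $\hier_x$ denotes the hierarchy restricted to the subtree rooted at $x$ and $v(\hier_x) = |V(G_{\hier_x})|$. The base case is $x$ a leaf: the left side is $h(D_x)$ and the right is $h(D_x) - 1$, so it holds. For the inductive step, split into two cases. If $x$ has a single child $v$, apply the induction hypothesis to $\hier_v$ and add $h(D_x) - h(D_v)$ to both the left side (it is exactly the term for the edge $x \to v$) and implicitly to the right, using $v(\hier_x) = v(\hier_v) + 1$: we get left side $\geq h(D_v) - v(\hier_v) + (h(D_x) - h(D_v)) = h(D_x) - v(\hier_v) = h(D_x) - v(\hier_x) + 1 \geq h(D_x) - v(\hier_x)$. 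If $x$ has two children $v, w$, apply the induction hypothesis to $\hier_v$ and $\hier_w$ separately, sum the two inequalities, and use Lemma~\ref{lem:subadd} in the form $h(D_v) + h(D_w) \geq h(D_x) - 1$ (valid since $D_x \in \< D_v \cup D_w \>$ by condition~(4) of Definition~\ref{def:hierarchy}, so $D_v \cup D_w$ is strongly connected and $D_x = D(D_v \cup D_w)$): this gives left side $\geq (h(D_v) - v(\hier_v)) + (h(D_w) - v(\hier_w)) \geq h(D_x) - 1 - v(\hier_v) - v(\hier_w) = h(D_x) - v(\hier_x)$, since $v(\hier_x) = v(\hier_v) + v(\hier_w) + 1$. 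Taking $x = \root$ yields the lemma.

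I do not expect any genuine obstacle here — the lemma is, as the authors say, "little more than an observation." The only points requiring a modicum of care are: (i) making sure the bookkeeping of the vertex counts $v(\hier_x)$ is consistent across the branching (the $+1$ for $x$ itself), and (ii) invoking Lemma~\ref{lem:subadd} correctly, which needs $D_v \cup D_w$ strongly connected and $D_x$ to be the minimal droplet containing the union — both guaranteed by hierarchy condition~(4). One should also note that this argument does not use any of the $t$-goodness conditions (Definition~\ref{def:hier2}) or the satisfied conditions; it relies purely on the combinatorial structure in Definition~\ref{def:hierarchy} plus sub-additivity, so it is robust.
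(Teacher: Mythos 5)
Your proof is correct and follows the same structural induction that the paper intends (the paper's own proof is just a terse statement of the observation, but a fully written-out version essentially identical to yours appears in the source, commented out). One small point in your favor: the paper's proof attributes the $+1$ error at each branching vertex to Lemma~\ref{lem:extremal}, which appears to be a typo for Lemma~\ref{lem:subadd}, and you correctly invoke the latter.
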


\begin{proof}
%The proof is by induction on $v(\hier)$.
Each vertex of out-degree $2$ in $G_\hier$ contributes an additive `error' of $1$ to the difference between $h(D)$ and the left-hand side of~\eqref{eq:sumofheights}, because of the application of Lemma~\ref{lem:extremal}. Vertices of out-degree $1$ in $G_\hier$ do not contribute any error. Thus~\eqref{eq:sumofheights} holds (and one could in fact replace $v(\hier)$ on the right-hand side of~\eqref{eq:sumofheights} with the number of vertices in $G_\hier$ of out-degree $2$).
\end{proof}

If $\hier \in \hier_D(t)$ is a hierarchy and $v \in L(\hier)$, then we say that $D_v$ is a \emph{seed} of $\hier$. We finish the section with the following easy lemma, cf.~\cite[Lemma~8.11]{BDMS}. 

\begin{lemma}\label{lem:numberofH}
Let $D$ be a droplet with $h(D) \leq p^{-1} \log 1/p$. Then there are at most
\begin{equation}\label{eq:numberofhierarchies}
\exp\Bigg( O\bigg( \frac{\ell \cdot h(D)}{t} \log \frac{1}{p} \bigg) \Bigg)
\end{equation}
$t$-good hierarchies for $D$ that have exactly $\ell$ seeds. Moreover, if $\hier$ is a $t$-good hierarchy for $D$ with $\ell$ seeds, then
\begin{equation}\label{eq:numberofvertices}
\big|V(\hier)\big| = O\bigg( \frac{\ell \cdot h(D)}{t} \bigg).
\end{equation}
\end{lemma}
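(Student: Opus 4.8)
The plan is to bound the two quantities separately by counting how the tree $G_\hier$ and the droplet-assignment $D_\hier$ can be chosen, given that the heights are essentially forced to decrease in controlled steps of size $\Theta(t)$.

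First I would establish the bound \eqref{eq:numberofvertices} on $|V(\hier)|$. The key point is that along any root-to-leaf path in $G_\hier$, consecutive droplets have heights differing by at least $t$ between branch vertices (by conditions~(6) and~(8)) — more precisely, every vertex $u$ with a single child $v$ where $v$ is itself unary satisfies $h(D_u) - h(D_v) \ge t$, every branch vertex drops by $\ge t$, and leaves have height $\ge t$ — so any root-to-leaf path has $O(h(D)/t)$ vertices. Since $G_\hier$ is a binary tree with exactly $\ell$ leaves, it has $\le 2\ell - 1$ vertices of out-degree $\ne 1$, and between consecutive such vertices (or between one and a leaf) the number of degree-$1$ vertices is $O(h(D)/t)$ by the same height-gap argument applied to the chain (using condition~(6)). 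Hence $|V(\hier)| = O(\ell \cdot h(D)/t)$, with the slight care that a maximal chain of unary vertices ending at a branch or a vertex whose child is not unary could in principle be long, but condition~(6) forces each such step except possibly the last to consume $\ge t$ of height; I would spell out this last-step exception carefully since it is the one place the naive argument is slightly off.

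Next, for the count \eqref{eq:numberofhierarchies}, I would describe a hierarchy by: (a) the abstract rooted binary tree shape on $V := |V(\hier)| = O(\ell h(D)/t)$ vertices, of which there are at most $4^{|V|} = \exp(O(\ell h(D)/t))$; and (b) the droplet $D_u$ assigned to each non-root vertex. For (b), the decisive input is Lemma~\ref{lem:numdroplets}: a droplet with $x$-coordinate of $\edge$ equal to $w$ and source in a fixed unit square is one of only $w^{O(1)}$ droplets. Since $h(D) \le p^{-1}\log(1/p)$ forces the width $w(D)$ of every droplet in the hierarchy to be at most $\exp(O(p \cdot h(D))) = \mathrm{poly}(1/p)$ (inverting $f$, using $D_u \subset D$), and since the source of each $D_u$ lies in $D$, which is covered by $O(w(D) \cdot h(D)) = \mathrm{poly}(1/p)$ unit squares, there are only $\mathrm{poly}(1/p) = \exp(O(\log(1/p)))$ choices for each $D_u$. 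Multiplying over the $O(\ell h(D)/t)$ vertices gives $\exp\big(O(\ell h(D) t^{-1} \log(1/p))\big)$, and combining with the $\exp(O(\ell h(D)/t))$ tree shapes (absorbed into the same bound since $\log(1/p) \ge 1$) yields \eqref{eq:numberofhierarchies}.

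The main obstacle I anticipate is not any single estimate but making the height-gap bookkeeping for \eqref{eq:numberofvertices} fully rigorous: conditions~(6)--(8) of Definition~\ref{def:hier2} treat unary vertices whose child is unary differently from unary vertices whose child is a leaf or a branch vertex, so a long chain of unary vertices could a priori fail to shrink the height quickly near its bottom end. The fix is that such a chain can contain at most one vertex of the exceptional type before terminating at a branch or leaf, so each maximal unary chain still has length $O(h(D)/t) + O(1)$, and there are only $O(\ell)$ maximal chains; I would present this as a short case analysis. Everything else — the width bound via convexity of $f$, the unit-square count, Lemma~\ref{lem:numdroplets} — is routine once the vertex count is in hand.
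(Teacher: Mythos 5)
Your proposal is correct and follows essentially the same route as the paper: the paper bounds the depth of $G_\hier$ by $2h(D)/t$ (the factor $2$ absorbing the same last-step exception you handle via chain decomposition), obtains $|V(\hier)| = O(\ell h(D)/t)$, and then counts droplets per vertex via Lemma~\ref{lem:numdroplets} after observing that the source of each $D_u$ lies in one of $p^{-O(1)}$ unit squares and its width is $p^{-O(1)}$. The tree-shape count is implicit in the paper but, as you note, it is dominated by the per-vertex droplet count.
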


\begin{proof}
The height of a $t$-good hierarchy for $D$ is at most $2 h(D) / t$, so the bound~\eqref{eq:numberofvertices} is straightforward. To count the number of choices of the droplet $D_u$ associated with the vertex $u$, we use Lemma~\ref{lem:numdroplets}. Thus, given integers $a$ and $b$ such that the source of $D_u$ lies in the square $(a,a+1] \times (b,b+1]$, and given $\lfloor w(D_u) \rfloor = w$, we have at most $w^{O(1)}$ choices for $D_u$, by Lemma~\ref{lem:numdroplets}. Summing over $a$, $b$ and $w$ gives at most $p^{-O(1)}$ choices in total for $D_u$, since there are at most $p^{-O(1)}$ choices for each of $a$, $b$ and $w$ by the condition on $h(D)$. The bound~\eqref{eq:numberofhierarchies} now follows.
\end{proof}

%We denote by $\ell(\hier)$ the number of seeds in a hierarchy $\hier$, and by $\hier_R^\ell(t)$ the set of all $t$-good hierarchies for $D$ that have exactly $\ell$ seeds. 

\section{Crossings}\label{sec:crossings}

Our aim in this section is to derive bounds on the probabilities of \emph{crossing events}, a phrase that we use informally to mean events of the form $\Delta(D,D')$, for droplets $D\subset D'$. The bounds we obtain will be used both to prove the inductive step $\ih(k)\Rightarrow\ih(k-1)$, for each $k\geq 1$, in Section~\ref{sec:small}, and the deduction of Proposition~\ref{prop:lower} from $\ih(0)$, in Section~\ref{sec:large}. The culmination of this section is the following lemma. Recall that $\eps_k = \eps^2 \cdot (3/4)^k$, where $\eps > 0$ is sufficiently small.

\begin{lemma}\label{lem:delta}
Let $k\geq 0$ and let $D\subset D'$ be droplets such that $h(D) \geq \eps_k^{-5}$,
\[
\eps_k^{-6} \leq h(D')-h(D) \leq \frac{p^{-(2/3)^k}}{2\log 1/p},
\]
and
\[
h(D') \leq \begin{cases} p^{-(2/3)^{(k-1)}}(\log 1/p)^{-1} & \text{if } k\geq 1, \\ (1-\eps)p^{-1}\log 1/p & \text{if } k=0. \end{cases}
\]
Suppose also that $\ih(k)$ holds. Then
\begin{equation}\label{eq:delta}
\P_p\big(\Delta(D,D')\big) \leq \exp \Bigg( -\bigg(\frac{1-1.1\eps_k}{2}\bigg)\left(\log\frac{1}{p} - p h(D')\right)\Big(h(D')-h(D)\Big) \Bigg).
\end{equation}
\end{lemma}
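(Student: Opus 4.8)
The plan is to bound $\P_p\big(\Delta(D,D')\big)$ by analysing how the infection crosses the `vertical gap' of height $h(D')-h(D)$ between $D$ and $D'$. The event $\Delta(D,D')$ says that $D' \in \langle D \cup (D'\cap A)\rangle$, so by the spanning algorithm there is a strongly connected (after closure) family of sites of $D\cap A$ together with $D$ that spans $D'$. The key structural point is that, because our droplets grow upward \emph{deterministically} as a function of their width, increasing the height from $h(D)$ to $h(D')$ forces the width to increase as well, and in each `row' of the new region one needs infected sites to drive the horizontal growth. First I would set up a \emph{partition} of the gap into $\Theta\big((h(D')-h(D))/\eps_k^{-6}\big)$ sub-droplets of height $\approx \eps_k^{-6}$ (this is where Definition~\ref{def:partition}, and the notion of `saver' droplets referred to in the text, will enter), so that it suffices to bound the probability of crossing each sub-gap and multiply.

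Second, for a single sub-gap I would split the crossing event into two contributions according to whether the growth across that sub-gap is `cheap' (driven by a relatively small internally spanned seed droplet of height $O(\eps_k^{-6})$ appearing somewhere along the new portion of the boundary, whose probability is controlled by $\ih(k)$ applied at the relevant small scale) or `expensive' (driven directly by isolated sites of $A$ filling in the columns). For the expensive part, the cost of advancing the top boundary by one lattice step at width $w$ where the current height is $\approx f(w)$ is governed by Observation~\ref{obs:f}: one needs a site of $A$ among roughly $1/f'(x)$ columns, and $f'(x) = \eps^3(2\log 1/p)^{-1}e^{-2pf(x)}$, so this cost is $\asymp \big(f'(x)\big) = \tfrac{\eps^3}{2\log 1/p} e^{-2pf(x)}$ per step, i.e.\ roughly $p^{\,1 - 2pf(x)/\log(1/p)\cdot(\dots)}$ — the exponent being what produces, after summing $-\log$ of these probabilities over the $\approx h(D')-h(D)$ steps and using $h(D') \le $ the stated bound so that $ph(D')$ is the relevant correction, the factor $\tfrac12\big(\log\tfrac1p - ph(D')\big)$ in~\eqref{eq:delta}. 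Concretely, each of the $(h(D')-h(D))/2$ vertical steps of size $2$ costs about $\exp\big(-(1-o(1))(\log\tfrac1p - ph(D'))\big)$, giving the product in~\eqref{eq:delta}; the $\eps_k$-slack absorbs the errors from the partition, from the $f(x)\le$ vs $f(w)$ approximations over a sub-droplet, and from the seed contributions.

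Third, I would handle the cheap/seed contribution: a good and satisfied hierarchy restricted to the gap has its seeds bounded in height by $O(t) = O(\eps_k^{-6})$, and Lemma~\ref{lem:numberofH} bounds the number of hierarchies while $\ih(k)$ bounds $\P_p(\ispan(D_u)) \le p^{(1-\eps_k)h(D_u)/2}$ for each seed; combined with Lemma~\ref{lem:sumofheights} this shows the total $-\log$-probability contributed by seeds and by the one-child `$\Delta$' steps is at least $(1-1.1\eps_k)\cdot\tfrac12(\log\tfrac1p - ph(D'))(h(D')-h(D))$ up to the entropy term $\exp(O(\ell h(D')/t \cdot \log 1/p))$, which is negligible because $t$ is a large constant times $\eps_k^{-1}$ powers while the main term carries a full $\log(1/p)$ per unit height. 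The hypothesis $h(D)\ge\eps_k^{-5}$ ensures the starting width is already large enough that $f'$ is in the regime of Observation~\ref{obs:f}(d), and $h(D')-h(D)\ge\eps_k^{-6}$ guarantees there is enough height for the law-of-large-numbers-type concentration in the per-step costs to kick in.

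The main obstacle I expect is the bookkeeping in converting the \emph{width-dependent, hence height-dependent} per-step cost $f'$ into the clean expression $\tfrac12(\log\tfrac1p - ph(D'))$: one must show that replacing the integral $\int_{h(D)}^{h(D')}\big(\log\tfrac1p - 2p\cdot(\text{height})\big)\,\mathrm d(\text{height})$ (roughly $2pf \approx 2p\cdot\text{height}$) by its value at the \emph{upper} endpoint $h(D')$ only loses a factor $1+O(\eps_k)$, which uses crucially that $p(h(D')-h(D))$ and $ph(D')$ are both $O(1)$ under the stated bounds, and that the droplet's curvature is shallow enough ($f' \le \eps^3/(2\log1/p) \ll 1$) that horizontal growth is never the bottleneck. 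Carefully controlling the interaction between `saver' droplets (which can cheaply supply a long stretch of vertical growth) and the deterministic curvature — making sure a saver cannot do better than the honest per-step cost by more than an $\eps_k$ factor — is the delicate point, and is presumably where the bulk of Section~\ref{sec:crossings} is spent; I would isolate it as a separate sublemma about crossing a single height-$\eps_k^{-6}$ sub-droplet and then assemble the pieces by independence across sub-droplets via van den Berg--Kesten.
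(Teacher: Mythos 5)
Your outline correctly identifies the satisfied-partition machinery (Definition~\ref{def:partition}), the role of saver droplets, the use of $\ih(k)$ to price a saver, and the fact that $f'$ governs the per-step cost, and you are right that the $\eps_k$-slack is what absorbs the various approximation errors.  However there are two genuine gaps.

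First, you treat the crossing from $D$ to $D'$ as a purely vertical problem from the outset, but Lemma~\ref{lem:delta} is stated for $D\subset D'$, not $D\sqsubset D'$, and these are very different.  The paper's Lemma~\ref{lem:crossings} (the engine that your satisfied-partition argument is actually proving) only applies when $\edge\big(D^*(D)\big)\subset\edge\big(D^*(D')\big)$, i.e.\ when the two droplets share a right edge.  The actual content of the proof of Lemma~\ref{lem:delta} is the reduction: one defines $D_v$ (smallest droplet with $D\subset D_v\sqsubset D'$) and $D_h$ (largest droplet with $D\sqsubset D_h\subset D'$), observes $\Delta(D,D')\Rightarrow\Delta(D_v,D')\wedge\Delta(D_h,D')$, and then splits into two cases according to whether $w(D_v)-w(D)$ is a substantial fraction of $w(D')-w(D)$.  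In the first case the crossing is dominated by horizontal growth and one bounds $\P_p(\Delta(D_h,D'))$ directly by requiring every column of $D'\setminus D_h$ to meet $A$ (this gives an even stronger bound $p^y$); in the second case one shows $h(D')-h(D_v)\ge(1-\eps_k/40)(h(D')-h(D))$ and hands off to Lemma~\ref{lem:crossings}.  Without this dichotomy your per-step accounting only covers the case in which $D$ and $D'$ are already aligned, and the $O(\eps_k)$ loss in passing from $y$ to $y'$ is precisely what needs the Case~1/Case~2 split to control.

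Second, in your third step you invoke the hierarchy machinery (Lemmas~\ref{lem:numberofH} and~\ref{lem:sumofheights}) inside the proof of Lemma~\ref{lem:delta}.  That machinery is designed for the event $\ispan(D)$, not for the crossing event $\Delta(D,D')$, and the paper does not use it here.  Hierarchies are applied one level up (in Lemma~\ref{lem:indstep} and Proposition~\ref{prop:largedroplets}), where Lemma~\ref{lem:delta} serves as a black-box bound on the sideways steps.  Inside the crossing argument the correct bookkeeping tool is Lemma~\ref{lem:numdroplets} (counting droplets/savers, yielding the $w^{O(1)}$ entropy factors), and $\ih(k)$ is applied directly to the saver droplet (together with Lemma~\ref{lem:AL} to cut it down to a height where $\ih(k)$ is valid), as in Lemma~\ref{lem:calcsaver}.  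Replacing this by a hierarchy count would require re-deriving the hierarchy lemmas for a qualitatively different event.  Relatedly, your cheap/expensive labelling is inverted relative to the paper's intent: the whole point of Definition~\ref{def:partition}~(2) and the width lower bound $w(S_i)\ge\eps_k^{-6}-1$ is to show that a saver droplet is \emph{not} a cheap way to advance, and also the independence of the per-step events $\EE_i$ across disjoint slabs is plain product independence rather than a van den Berg--Kesten argument.
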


Observe that, while $k\geq 1$ and $h(D')\ll p^{-1}\log 1/p$, which will be the case throughout Section~\ref{sec:small}, the bound~\eqref{eq:delta} says (roughly) that
\[
\P_p\big(\Delta(D,D')\big) \apprle p^{(1-1.1\eps_k)(h(D')-h(D))/2}.
\]
The contribution from $-ph(D')$ in the exponent in~\eqref{eq:delta} only starts to matter when $k=0$ and the droplet approaches the critical size. However, it then plays a very important role: it is the reason why the exponents in~\eqref{eq:lower} and~\eqref{eq:ihbound} differ by a factor of $2$ (see the discussion after Definition~\ref{def:ih}). 

% is the source of one of the three factors of $1/2$ in the critical probability, Theorem~\ref{thm:Duarte}.

Lemma~\ref{lem:delta} is a relatively straightforward consequence of the following lemma about `vertical crossings'. Recall that
\[
f(x) := \frac{1}{2p}\log\left(1+\frac{\eps^3 px}{\log 1/p}\right)
\]
and that $\edge(D)$ denotes the right-hand side of a droplet. We will write $D_1 \sqsubset D_2$ to denote that $\edge\big(D^*(D_1)\big) \subset \edge\big(D^*(D_2)\big)$ holds.\footnote{Note that $\edge(D_1) \subset \edge(D_2)$ does not imply $\edge\big(D^*(D_1)\big) \subset \edge\big(D^*(D_2)\big)$, but that $\edge(D_1) \subset \edge(D_2)$ and $D_1 \subset D_2$ does.}

\begin{lemma}\label{lem:crossings}
Let $k\geq 0$ and let $D\sqsubset D'$ be droplets such that $h(D) \geq \eps_k^{-5}$ and
\[
\eps_k^{-5} \leq y := h(D') - h(D) \leq \frac{p^{-(2/3)^k}}{2\log 1/p}.
\]
Suppose also that $\ih(k)$ holds. Then
\begin{equation}\label{eq:crossings}
\P_p\big( \Delta(D,D') \big) \leq w(D')^{O(\eps_k^3 y)} \cdot \bigg(\frac{p}{f'\big(w(D')\big)}\bigg)^{\left(1-1.01\eps_k\right)y/2}.
\end{equation}
\end{lemma}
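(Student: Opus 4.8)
The aim is to bound the probability of the vertical crossing event $\Delta(D,D')$ when $D \sqsubset D'$, i.e.\ when the right-hand edges of the two Duarte regions coincide, so that all of the growth from $D$ to $D'$ is (essentially) vertical, happening within a vertical strip of width $w(D')$. The strategy is to slice the vertical growth into $\Theta(\eps_k^{-3} y)$ horizontal ``layers,'' each of vertical extent roughly $\eps_k^{-3}$, and to argue that crossing each layer costs a factor comparable to $\big(p/f'(w(D'))\big)^{(1-O(\eps_k))\cdot(\text{layer height})/2}$. Multiplying these layer-costs over all $\Theta(\eps_k^{-3} y)$ layers produces the factor $\big(p/f'(w(D'))\big)^{(1-1.01\eps_k)y/2}$, and the accumulated ``slack'' — one union bound / combinatorial factor per layer, plus the additive errors from subadditivity (Lemma~\ref{lem:subadd}) and from the number of droplets per layer (Lemma~\ref{lem:numdroplets}, giving $w^{O(1)}$ each) — contributes the prefactor $w(D')^{O(\eps_k^3 y)}$.

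First I would set up the layering. Since $D \sqsubset D'$, and $f$ is convex with $f'$ decreasing (Observation~\ref{obs:f}), the derivative $f'$ along the common edge is essentially constant, comparable to $f'(w(D'))$, over the whole crossing (one should check $h(D') \le 1/(2p)$ or so, which is guaranteed by the upper bound on $y$ together with $h(D) \geq \eps_k^{-5}$ and the hypotheses on $h(D')$ inherited through Lemma~\ref{lem:delta}; if not, one restricts to the regime where $f(w) \le 1/4p$ to invoke Observation~\ref{obs:f}$(d)$). Partition the interval of heights $[h(D), h(D')]$ into $m := \lceil \eps_k^3 y \rceil$ blocks $t = \eps_k^{-3}$, giving droplets $D = E_0 \subset E_1 \subset \cdots \subset E_m = D'$ with $h(E_{i}) - h(E_{i-1}) \approx \eps_k^{-3}$ and each $E_i \sqsubset D'$. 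By Observation~\ref{obs:Delta}, the event $\Delta(D,D')$ implies that for each $i$, the event $\Delta(E_{i-1}, E_i)$ occurs, and crucially these should be arranged to hold disjointly (as in the hierarchy machinery), so the BK inequality lets us bound $\P_p(\Delta(D,D')) \le \prod_i \P_p(\Delta(E_{i-1}, E_i))$ after summing over the $\le w(D')^{O(1)}$ choices (Lemma~\ref{lem:numdroplets}) of each intermediate droplet $E_i$.

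The heart of the matter is the single-layer estimate: for $D \sqsubset D''$ with $h(D'')-h(D)$ of order $\eps_k^{-3}$, show $\P_p(\Delta(D,D'')) \le w(D'')^{O(1)} \big(p/f'(w(D''))\big)^{(1-1.01\eps_k)(h(D'')-h(D))/2}$. Here is where $\ih(k)$ enters: inside the layer the growth proceeds via small droplets whose internal spanning probability is controlled by $\ih(k)$ (valid since the layer heights are well below $p^{-(2/3)^k}(\log 1/p)^{-1}$), and the combinatorial structure is that of a short $t$-good hierarchy, so Lemma~\ref{lem:boundoverH}, Lemma~\ref{lem:sumofheights}, and Lemma~\ref{lem:numberofH} apply with $t \approx \eps_k^{-5}$ or so. The width $w(D'')$ of the droplet translates ``one step up'' into a horizontal extent: to grow the infection up by a small amount across a strip of width $w(D'')$, one needs roughly one new infected site per $1/f'(w(D'')) \approx (\log 1/p)/\eps_k^3$ columns (this is exactly the mechanism in the upper bound, Lemma~\ref{lem:upper:basicstep}), each present with probability $\approx p \cdot f'(w(D''))^{-1} \cdot$(width)$/$(width) — more precisely, a single site of $A$ in a block of $1/f'(w(D''))$ columns appears with probability $\approx 1-(1-p)^{1/f'(w(D''))} \approx p/f'(w(D''))$ — so growing up by $2$ costs $\approx p/f'(w(D''))$ and growing up by $h(D'')-h(D)$ costs the $((h(D'')-h(D))/2)$-th power. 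The $\ih(k)$ bound $p^{(1-\eps_k)h/2}$ on seeds is exactly the right order when $f'(w(D'')) \approx 1$, and is weaker (hence fine) otherwise; the factor $1.01\eps_k$ versus $\eps_k$ absorbs lower-order errors.

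\textbf{Main obstacle.} The hardest part is the single-layer estimate, specifically showing that the cost of vertical growth is governed by $p/f'(w(D'))$ rather than just $p$, and doing so with a loss of only $1.01\eps_k$ in the exponent. This requires carefully tracking how the horizontal width is leveraged during vertical growth — a droplet of width $w$ has $\sim w \cdot f'(w)$ sites along its curved boundary per unit height, so a single new infected site can ``fill in'' a block of $\sim 1/f'(w)$ columns by lateral spreading before the next upward step is needed — and then confirming that the relevant hierarchies cannot do appreciably better. One must also be careful that $D\sqsubst D'$ is genuinely used: without it the droplet could grow rightward (which is cheap) and the bound would fail; the condition $D \sqsubst D'$ forces the edges to align so that the only way to increase height is the expensive vertical mechanism. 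Handling the accumulation of $w(D')^{O(1)}$ factors over $\Theta(\eps_k^3 y)$ layers to land exactly at $w(D')^{O(\eps_k^3 y)}$, and checking that $y \geq \eps_k^{-5}$ makes the number of layers large enough for the per-layer errors to be genuinely lower order relative to the main term, are the remaining bookkeeping points.
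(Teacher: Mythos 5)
Your high-level intuition (vertical growth costs $\approx p/f'(w)$ per upward step of~2, accumulated over $y/2$ steps, with polynomial prefactors from union bounds over intermediate droplets) is the right heuristic, but the proof as proposed has a genuine gap in the place you yourself flagged as the hard part.

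The concrete problem is the claim that Observation~\ref{obs:Delta} gives $\Delta(D,D')\Rightarrow \Delta(E_{i-1},E_i)$ for a uniform chain $D=E_0\sqsubset\cdots\sqsubset E_m=D'$. Observation~\ref{obs:Delta} only gives $\Delta(D,D')\Rightarrow\Delta(E_{i-1},D')$, i.e.\ crossing from an intermediate droplet to the \emph{top}; it says nothing about crossing from one intermediate droplet to the next. In fact $\Delta(D,D')\Rightarrow\Delta(E_{i-1},E_i)$ is false in general: the growth from $D$ to $D'$ may reach the height band $E_i\setminus E_{i-1}$ only by first recruiting sites of $A$ outside $E_i$, so $[E_{i-1}\cup(E_i\cap A)]$ need not span $E_i$ even though $\Delta(D,D')$ holds. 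Consequently the decomposition $\P_p(\Delta(D,D'))\le\prod_i\P_p(\Delta(E_{i-1},E_i))$ does not follow, and the BK inequality is never in a position to be applied. This is precisely the obstacle the paper's proof is built to overcome: instead of a fixed uniform slicing, it proves a deterministic structure lemma (Lemma~\ref{lem:partition}) which extracts an \emph{adaptive} chain $D_0\sqsubset\cdots\sqsubset D_m$ (a ``satisfied partition,'' Definition~\ref{def:partition}) together with, for each band $D_i\setminus D_{i-1}$, a witness: either a \emph{weakly connected $D_{i-1}$-rooted} set of size $\gamma=\lfloor\eps_k^{-3}/2\rfloor$ (Definition~\ref{def:weak}) or a \emph{saver droplet} $S_i$ spanned inside the band. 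The weakly-connected/rooted machinery is what makes the leverage-by-width argument rigorous: it shows that if the droplet grows by $\approx 2\gamma$ rows without a large spanned sub-droplet appearing, then there must be $\gamma$ sites of $A$ forming a short chain in which each new site lies within horizontal distance $\eps_k^{-6}$ of the previous one, and the greedy count of such chains produces exactly the factor $(c_k/f'(w))^{\gamma-1}p^{\gamma}$. The saver-droplet alternative is where $\ih(k)$ is applied (via Lemma~\ref{lem:calcsaver}), not via a short hierarchy for the layer as you suggest. Finally, the per-band events $\EE_i$ are genuinely independent (they depend on $A$ restricted to the disjoint sets $D_i\setminus D_{i-1}$), so no BK is needed; the prefactor $w^{O(\eps_k^3 y)}$ comes from the union bound over choices of the $D_i$ and the $S_i$ using Lemma~\ref{lem:numdroplets}.

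In short: the ``main obstacle'' you identify at the end of your proposal is indeed the real content of the lemma, and it is not addressed by the layering-plus-BK scheme, which already breaks at the first step. You would need something like Lemma~\ref{lem:partition} (and the supporting Definition~\ref{def:weak}, Lemma~\ref{lem:atleastgamma}, Lemma~\ref{lem:lessthangamma}) to make the decomposition legitimate.
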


We reiterate at this point that the constant implied by the $O(\cdot)$ notation in the statement of the lemma is absolute: that is, it does not depend on any other parameter (in particular, it does not depend on $\eps$ or $k$). (In fact, one could take the constant to be 10, but we choose not to keep track of this.)

In order to prove Lemma~\ref{lem:crossings} we shall examine how growth from $D$ to $D'$ could occur. To do this, we shall show inductively that there exists a sequence of nested droplets $D=D_0\sqsubset\dots\sqsubset D_m=D'$ such that, for each $1\leq i\leq m-1$, either $(D_i\setminus D_{i-1})\cap A$ contains a large constant number of relatively `densely spaced' sites (an event which we think of, informally, as corresponding to the droplet growing row-by-row), or it spans a `saver' droplet of at least a large constant size. These alternatives are defined precisely in Definition~\ref{def:partition}.

In order to state that definition, we will need a weaker notion of connectivity than the strong connectivity used in conjunction with spanning, which will enable us to say what we meant by `relatively densely spaced' in the previous paragraph. Very roughly speaking, we say that a small set of sites is `weakly connected and $D$-rooted', for some droplet $D$, if the sites (might) help $D$ to grow vertically `faster than it should'.
% The parameter $w$ is a measure of how sparcely spaced the sites can be.

Henceforth in this section let us fix $k \geq 0$ and let $p>0$ (and hence $f'(0)$) be sufficiently small. %Recall that $\eps_k = \eps \cdot (2/3)^k$.

\begin{definition}\phantomsection\label{def:weak}
\begin{itemize}
\item[$(a)$] Define a relation $\prec$ on $\Z^2$, called the \emph{weak relation}, as follows. Given sites $x=(a_1,b_1)$ and $y=(a_2,b_2)$, we say that $x \prec y$ if
\[
a_2 - a_1 \geq -\eps_k^{-6} \qquad \text{and} \qquad |b_2 - b_1| \leq 2.
\]
%or, $(b_1-b_2)(b_2-b_0)>0$ and $-2\eps_k^{-6} \leq a_2-a_1 \leq 0$.\smallskip
\item[$(b)$] We say that a finite set $Y\subset\Z^2$ is \emph{weakly connected} if the graph on $Y$ with edge set $\big\{xy \in Y^{(2)} : x \prec y \text{ or } y \prec x \big\}$ is connected. \smallskip
\item[$(c)$] Now let $D$ be a droplet, with width $w$ and source $\big(a_0, b_0\big)$, and let $Z_D:=\big\{(a,b)\in\Z^2\setminus D : a\leq a_0 + w \big\}$. A weakly connected set $Y \subset Z_D$ is \emph{$D$-rooted} if for every $y\in Y$ there exist $y_1,\dots,y_j\in Y$ (for some $j\geq 0$) and $x\in D$ such that
\[
x \prec y_1 \prec y_2 \prec \ldots \prec y_j \prec y.
\]
%and $x_0$ is strongly connected to $D$.
The site $x$ is called a \emph{root} for $y$ with respect to $D$.
\end{itemize}
\end{definition}

The following lemma elucidates the key property of the definition above. The somewhat verbose statement (in terms of the numbers $h_1$ and $h_2$) is needed because in the applications we do not want the final bound in~\eqref{eq:gamma} to depend on $|Y|$, which may be much larger than $h_1 + h_2$.

\begin{lemma}\label{lem:atleastgamma}
Let $h_1,h_2 \ge 0$ and let $p > 0$ be sufficiently small. Now let $D$ be a droplet with width $w$ and source $(a_0,b_0)$, let $Y\subset\Z^2\setminus D$ be a finite set, and partition $Y$ into $Y^{(1)}\cup Y^{(2)}$, where $Y^{(1)} := \big\{ (a,b)\in Y : b\geq b_0 \big\}$ and $Y^{(2)} := Y\setminus Y^{(1)}$. Suppose that for each $y\in Y$ there exists a weakly connected and $D$-rooted set $Y'\subset Y$ containing $y$, such that $|Y' \cap Y^{(i)}| \leq h_i$ for $i=1,2$. Then
\begin{equation}\label{eq:gamma}
h\big( D(D\cup Y) \big) \leq h(D) + 2h_1 + 2h_2 + 2.
\end{equation}
\end{lemma}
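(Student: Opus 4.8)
The idea is to bound the height of $D(D \cup Y)$ by a greedy, layer-by-layer argument that follows the weak relation outwards from $D$. First I would reduce to the following combinatorial statement about a single side, say the top: if $Y^{(1)}$ has the property that every $y \in Y^{(1)}$ lies in a weakly connected, $D$-rooted set $Y' \subset Y$ with $|Y' \cap Y^{(1)}| \le h_1$, then the topmost site of $D(D \cup Y^{(1)})$ in any given column lies at most $2h_1 + 1$ rows above the top boundary of $D^*$ in that column. By symmetry the same bound with $h_2$ holds below, and then combining the two (using sub-additivity of heights in the form already encoded in Lemma~\ref{lem:subadd}, or just directly from the definition of a Duarte region) gives $h(D(D \cup Y)) \le h(D) + (2h_1+1) + (2h_2+1)$, which is exactly \eqref{eq:gamma}.

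\textbf{The main step.} For the single-sided claim, fix a site $y = (a,b) \in Y^{(1)}$ and a weakly connected, $D$-rooted witness $Y' \ni y$ with $|Y' \cap Y^{(1)}| \le h_1$. Take a weak path $x \prec y_1 \prec \cdots \prec y_j \prec y$ inside $Y' \cup D$ with $x \in D$; since the path can be taken to be a geodesic in the weak graph on $Y'$, all of $y_1, \dots, y_j, y$ lie in $Y^{(1)}$ (they are above $b_0$ by definition once we are on the top side — more carefully, one discards any prefix of the path that dips below $b_0$ and re-roots, which only shortens it), so $j + 1 \le h_1$. Along the path, each weak step increases the $y$-coordinate by at most $2$ and decreases the $x$-coordinate by at most $\eps_k^{-6}$ — wait, this is the wrong direction for a clean bound, so instead I would argue about vertical displacement only: the $y$-coordinate of $y$ is at most (the $y$-coordinate of $x$) $+ 2(j+1) \le (y$-coordinate of $x) + 2h_1$. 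Since $x \in D$, its $y$-coordinate is at most $b_0 + f(w) \le b_0 + f(w(D \cup Y))$ (the top boundary of $D^*$ at $x$'s column, using monotonicity of $f$ and $a \le a_0 + w$ which holds on $Z_D$). Hence the $y$-coordinate of every $y \in Y^{(1)}$ is at most $\big(\text{top of } D^* \text{ at that column}\big) + 2h_1$; taking the minimal Duarte region through all these points, its top boundary is raised by at most $2h_1$ relative to that of $D^*$, and likewise the bottom is lowered by at most $2h_2$, giving height increase at most $2h_1 + 2h_2$, hence $h(D(D \cup Y)) \le h(D) + 2h_1 + 2h_2$; the additive $+2$ in \eqref{eq:gamma} absorbs the discretization (the `$+1$' on each side from passing between $\R^2$ and $\Z^2$, as in the remark before Lemma~\ref{lem:numdroplets}).

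\textbf{Expected obstacle.} The delicate point is the interplay between weak connectivity and the sign of the horizontal displacement: the weak relation $x \prec y$ allows $y$ to be much further \emph{left} than $x$ (up to $\eps_k^{-6}$), so a weak path could in principle wander left and then a later site could be re-rooted via the convexity/monotonicity of $f$ — I need $a \le a_0 + w$ for every $y \in Y$, which is exactly guaranteed by $Y \subset Z_D$ in the $D$-rooted definition, so vertical comparison against the boundary of $D^*$ (not of a translate) is legitimate. The second subtlety is making rigorous the claim that the witness path can be chosen to stay on the correct side ($Y^{(1)}$ for top, $Y^{(2)}$ for bottom): this needs the crossing-side re-rooting argument, and is where one must be careful that re-rooting does not increase the count of same-side vertices beyond $h_1$ (resp. $h_2$) — it does not, since a sub-path of a witness is again weakly connected and $D$-rooted, and a shorter path has no more vertices. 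Getting the constants to land on exactly $2h_1 + 2h_2 + 2$ rather than something weaker (which would, per the remark after Observation~\ref{obs:edges}, break a downstream lemma) is the part that requires genuine care rather than routine estimation.
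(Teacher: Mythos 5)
Your plan correctly identifies the shape of the argument — re-root a weak path to stay on one side, bound vertical displacement by $2(j+1)\le 2h_1$, and then package the estimate into a Duarte region that is $2h_1+2h_2+2$ taller. But there is a genuine gap at the step where you pass from the vertical-displacement bound to the claim that ``the $y$-coordinate of every $y\in Y^{(1)}$ is at most $\big(\text{top of }D^*\text{ at that column}\big)+2h_1$.'' What your chain of inequalities actually gives is $b_y\le b_x+2h_1\le \big(\text{top of }D^*\text{ at }x\text{'s column}\big)+2h_1$; to replace the column of $x$ by the column of $y$ you would need $c_y\ge c_x$, i.e.\ the weak path to move weakly rightward. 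You notice yourself that the weak relation allows leftward drift of up to $\eps_k^{-6}$ per step, flag it as ``the wrong direction for a clean bound,'' and then switch to ``vertical displacement only'' — but that switch never closes the hole. (Observing $Y\subset Z_D$, hence $a\le a_0+w$, is a different and orthogonal constraint: it controls the right edge, not the leftward drift.) The paper closes this exact gap in Claim~\ref{clm:yinD1} by invoking Observation~\ref{obs:f}(d): because $p$ (hence $f'(0)$) is sufficiently small, moving $O(1)$ columns to the left changes the boundary of a Duarte region by $o(1)$, and this is absorbed by building in a unit of slack, $f(w_1)=f(w)+h_1+h_2+1$.

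A second, related issue: ``taking the minimal Duarte region through all these points, its top boundary is raised by at most $2h_1$ and the bottom lowered by at most $2h_2$'' treats the top and bottom envelopes as independently adjustable, but a Duarte region is rigid: its boundary is $\pm f$ reflected about a single horizontal line through the source. To raise the top by $2h_1$ and drop the bottom by $2h_2$ one must simultaneously enlarge $w$ so that $f(w_1)=f(w)+h_1+h_2+1$ \emph{and} shift the source vertically to $h_0=h_1-h_2$, which is exactly the construction of $D_1^*$ in the paper; the height increase of $2h_1+2h_2+2$ then falls out of $h(D_1^*)=2f(w_1)+1$, not from discretization (your attribution of the $+2$ to the $\R^2\!\leftrightarrow\!\Z^2$ passage is not how it arises). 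These two points — the smallness of $f'$ to beat the $O(1)$ leftward drift, and the explicit construction of $D_1^*$ rather than an informal ``raise the top, lower the bottom'' — are the heart of the lemma and are both missing from the proposal.
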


\begin{proof}
Let us in fact set $\0$ to be the source of $D$. As in Lemma~\ref{lem:subadd}, this is permissible because we shall not need to assume that the lattice points inside $D$ have integer coordinates, neither shall we need to assume this about the elements of $Y$. Let $D^* := D^*(D)$, and let $D_1^*$ be the Duarte region with width $w_1$, where $f(w_1) = f(w) + h_1 + h_2 + 1$, and source $(w - w_1, h_0)$, where $h_0 := h_1 - h_2$. We claim that
\begin{equation}\label{eq:gammaclaim}
D^*\cup Y\subset D_1^*.
\end{equation}
Once we have this the lemma will follow, since
\[
h(D_1^*) = 2f(w_1) + 1 = 2f(w) + 2h_1 + 2h_2 + 3 = h(D^*) + 2h_1 + 2h_2 + 2.
\]

To show that $D^* \subset D_1^*$ it is enough to have $\edge(D^*) \subset D_1^*$, by Observation~\ref{obs:edges}. This containment would hold if $f(w_1) - f(w) \geq |h_0|$, since $\edge(D^*)$ is contained in the same vertical line in $\R^2$ as $\edge(D_1^*)$. But this inequality is immediate from the definitions of $w_1$ and $h_0$, so $D^* \subset D_1^*$ holds. The more substantive task is to show that $Y\subset D_1^*$, and for this the key observation is as follows.

\begin{claim}\label{clm:yinD1}
If $x=(a_1,b_1)\in D^*$ and $y=(a,b)\in\R^2\setminus D^*$ are such that
\[
a_1 - O(1) \leq a \leq w \qquad \text{and} \qquad -2h_2 \leq b - b_1 \leq 2h_1,
\]
then $y\in D_1^*$.
\end{claim}

\begin{figure}
  \centering
  \begin{tikzpicture}[>=latex,scale=1.3]
    % main (two) droplets
    \fill [gray!20!white] plot [domain=1:8] (\x,{ln(1+\x/10)}) -- plot [domain=8:1] (\x,2);
    \draw [domain=0:8] plot (\x,{ln(1+\x/10)}) node [right] {$D^*$};
    \draw [domain=0:8] plot (\x,{ln(10+\x/2)}) node [right] {$D_1^*$};
    \begin{scope}[every node/.style={fill,circle,inner sep=1pt}]
      \node [label=below:$x$] at (3,{ln(1.3)}) {};
      \node at (3,2) {};
      \node at (3,{ln(11.5)}) {};
      \node at (1,2) {};
    \end{scope}
    \draw [loosely dashed] (3,{ln(1.3)}) -- (3,{ln(11.5)});
    \draw [<->] (3.2,{ln(1.3)}) -- node [right] {$2h_1$} (3.2,2);
    \draw [<->] (3.2,2) -- node [right] {$\geq 1$} (3.2,{ln(11.5)});
%    \draw [loosely dashed] (3,2) -- (1,2);
    \draw [<->] (3,1.5) -- node [below] {$O(1)$} (1,1.5);
    \draw [densely dashed] (1,{ln(1.1)}) -- (1,2) -- (8,2);
%    \draw [domain=-1:1,variable=\t] plot (8,{1.3*ln(9)*\t});
%    \node at (6.5,0) {$D=D_0$};
%    \pgfmathsetmacro{\x}{exp(1/1.3)-1};
%    \draw (\x,-1) -- (\x,-2.4) node [below] {$D'=D_4$};   
  \end{tikzpicture}
  \caption{Claim~\ref{clm:yinD1} asserts that the shaded region is contained in $D_1^*$. The essence of the proof is that the vertical distance between the boundaries of $D^*$ and $D_1^*$ is always at least $2h_1 + 1$, and $p$ (and hence $f'(0)$) can be taken sufficiently small to beat the $O(1)$ distance the region extends to the left of $x$.}
  \label{fig:yinD1}
\end{figure}
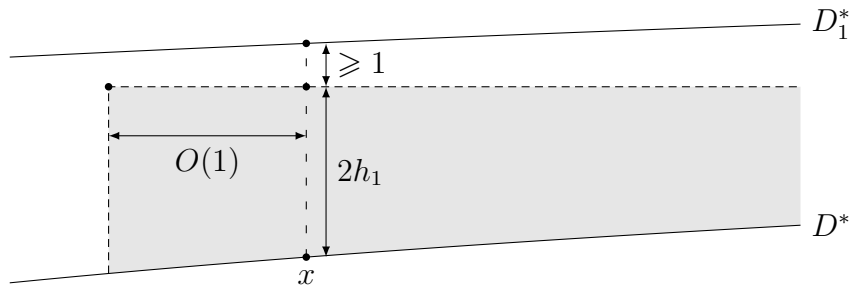

\begin{clmproof}{clm:yinD1}
This follows essentially from the convexity of $f$ and the fact that $p$ (and hence $f'(0)$) is sufficiently small. The key is that the top of $D_1^*$ always passes at least distance $2h_1 + 1$ above $x$.

To spell out the details, first let us assume by symmetry that $b \geq 0$, and observe that for each $t \in [0,w]$ we have
\[
h_0 + f(t - w + w_1) - f(t) \geq h_0 + f(w_1) - f(w) = 2h_1 + 1,
\]
where we used the convexity of $f$ for the inequality. But the left-hand side is the difference between the vertical coordinates of the top-most points in $D_1^*$ and $D^*$, intersected with the column with horizontal coordinate $t$. Thus we are done if $a=a_1$. If $a>a_1$ then we are also done, since $f$ is increasing. Finally, if $a_1 - O(1) \leq a < a_1$ then we are again done, this time since $p$ is sufficiently small and hence $f'(t)$ is sufficiently small for all $t>0$.
\end{clmproof}

To complete the proof of the lemma, recall that we wish to show $Y \subset D_1^*$. Let $y=(a,b)\in Y$ and without loss of generality let us assume $y\in Y^{(1)}$. We know by the condition of the lemma that there exists a weakly connected and $D$-rooted set $Y' \subset Y$ containing $y$, such that $|Y' \cap Y^{(i)}| \leq h_i$ for $i=1,2$. Now take a path of sites
\[
x \prec y_1 \prec \ldots \prec y_j \prec y,
\]
with $j\geq 0$, such that $\{y_1,\dots,y_j,y\} \subset Y' \cap Y^{(1)}$, and such that either $x\in D$ or $b_1 \in [-2,0)$, where $x=(a_1,b_1)$. To construct such a path, first allow the $y_i$ to belong to $Y'$, then, starting at $y$, truncate the path if necessary at the first element having negative vertical coordinate. It follows that $j + 1 \leq |Y' \cap Y^{(1)}| \leq h_1$.

If $x\in D$ then, by the definition of $\prec$, we have $b \leq b_1 + 2(j + 1) \leq b_1 + 2h_1$ and $a \geq a_1 - O(j) = a_1 - O(1)$. Hence $x$ and $y$ satisfy the conditions of Claim~\ref{clm:yinD1}. On the other hand, if $b_1 \in [-2,0)$ then $b \leq 2(j+1) \leq 2h_1$ and $a \geq - O(1)$. Hence in this case $\0$ and $y$ satisfy the conditions of the claim. In either case it follows that $y \in D_1^*$, and the proof is complete.
\end{proof}

We are now ready to make the key definition of the section, that of a \emph{satisfied partition} of a pair of droplets $D\sqsubset D'$. Let us fix $\gamma := \lfloor \eps_k^{-3} / 2 \rfloor$. 

\begin{definition}\label{def:partition}
Let $D\sqsubset D'$ be droplets. A \emph{satisfied partition $\PP$ of $(D,D')$} is a sequence of droplets $\PP=(D_i)_{i=0}^m$, for some $m\geq 1$, such that
\[
D=D_0 \sqsubset D_1 \sqsubset \cdots \sqsubset D_m=D',
\]
$h(D_m)-h(D_{m-1}) \leq 5\gamma$, and for each $1\leq i\leq m-1$, we have $h(D_i)-h(D_{i-1})\geq 2\gamma$ and (at least) one of the following events occurs:
\begin{itemize}
\item[(1)] $h(D_i)-h(D_{i-1}) \leq 2\gamma + 2$ and $\big(D_i\setminus D_{i-1}\big)\cap A$ contains a weakly connected $D_{i-1}$-rooted set of size at least $\gamma$.\smallskip
\item[(2)] There exists a droplet $S_i$ spanned\footnote{Recall that $S_i$ is spanned by a set $K$ if there exists $K' \subset K$ such that $S_i \in \<K'\>$. Note that here it need not necessarily be the case that $S_i \subset D_i\setminus D_{i-1}$.} by $(D_i\setminus D_{i-1})\cap A$, with
\begin{equation}\label{eq:satisfied}
w(S_i) \geq \eps_k^{-6}-1 \qquad \text{and} \qquad h(S_i) \geq h(D_i)-h(D_{i-1}) - \eps_k^{-3},
\end{equation}
and such that either $h(S_i) \geq \eps_k^{-5}$ or the rightmost $\eps_k^{-6}-1$ columns of $S_i$ all contain an element of $(D_i\setminus D_{i-1})\cap A$.
%\footnote{Note that the definition of the width of a droplet is such that a droplet having width $w$ may intersect $w+1$ columns of $\Z^2$.}
(We call $S_i$ a \emph{saver droplet}.)
\end{itemize}
\end{definition}

\begin{figure}
  \centering
  \begin{tikzpicture}[>=latex,scale=1.3]
    % main (two) droplets
    \draw [domain=0:8] plot (\x,{1.3*ln(1+\x)});
    \draw [domain=0:8] plot (\x,{-1.3*ln(1+\x)});
    \draw [domain=-1:1,variable=\t] plot (8,{1.3*ln(9)*\t});
    \draw [domain=0:5] plot ({\x+3},{1.3*ln(1+\x)});
    \draw [domain=0:5] plot ({\x+3},{-1.3*ln(1+\x)});
    % intermediate droplets
    \draw [domain=0:5.7,densely dashed] plot ({\x+2.3},{1.3*ln(6.7/6)+1.3*ln(1+\x)});
    \draw [domain=0:5.7,densely dashed] plot ({\x+2.3},{1.3*ln(6.7/6)-1.3*ln(1+\x)});
    \draw [domain=0:6.6,densely dashed] plot ({\x+1.4},{1.3*ln(6.7*6.7/(6*7.6))+1.3*ln(1+\x)});
    \draw [domain=0:6.6,densely dashed] plot ({\x+1.4},{1.3*ln(6.7*6.7/(6*7.6))-1.3*ln(1+\x)});
    \draw [domain=0:7.2,densely dashed] plot ({\x+0.8},{1.3*ln(8.2/9)+1.3*ln(1+\x)});
    \draw [domain=0:7.2,densely dashed] plot ({\x+0.8},{1.3*ln(8.2/9)-1.3*ln(1+\x)});
    % savers
    \draw [domain=0:0.3] plot ({\x+3.05},{-1.1+1.3*ln(1+\x)});
    \draw [domain=0:0.3] plot ({\x+3.05},{-1.1-1.3*ln(1+\x)});
    \draw [domain=-1:1,variable=\t] plot (3.35,{-1.1+1.3*ln(1.3)*\t});
%    \draw [domain=0:0.15,very thick] plot ({\x+4},{2+1.3*ln(1+\x)});
%    \draw [domain=0:0.15,very thick] plot ({\x+4},{2-1.3*ln(1+\x)});
%    \draw [domain=-1:1,variable=\t,very thick] plot (4.15,{2+1.3*ln(1.15)*\t});
    % gamma sets
    \begin{scope}[every node/.style={cross out,draw,inner sep=0.05cm}]
    \foreach \x in {0,1,...,4}
      \node at ({6+0.2*\x},{1.9+0.1*\x}) {};
    \foreach \x in {0,1,...,4}
      \node at ({6.5+0.2*\x},{-2.4-0.06*\x}) {};
    \end{scope}
    \node at (6.5,0) {$D=D_0$};
%    \pgfmathsetmacro{\x}{exp(1.56/1.3)+1.3};
%    \draw (\x,-1.5) -- (\x,-2.4) node [below] {$D_1$};
%    \pgfmathsetmacro{\x}{exp(1.47/1.3)+0.4};
%    \draw (\x,-1.5) -- (\x,-2.4) node [below] {$D_2$};
%    \pgfmathsetmacro{\x}{exp(1.46/1.3)-0.2};
%    \draw (\x,-1.5) -- (\x,-2.4) node [below] {$D_3$};
    \pgfmathsetmacro{\x}{exp(1/1.3)-1};
    \draw (\x,-1) -- (\x,-2.4) node [below] {$D'=D_4$};   
  \end{tikzpicture}
  \caption{An example of a satisfied partition $\PP=(D_i)_{i=0}^4$ of $(D,D')$. The small droplet is a saver droplet and the clusters of five crosses are weakly connected sets, each $D_i$-rooted for some $i$. Thus, with $\gamma=5$, condition~(1) of Definition~\ref{def:partition} is satisfied when $i=1$ and $3$, and condition~(2) is satisfied when $i=2$.} % (although neither condition of the definition \emph{need} be satisfied when $i=4$).}
  \label{fig:partition}
\end{figure}
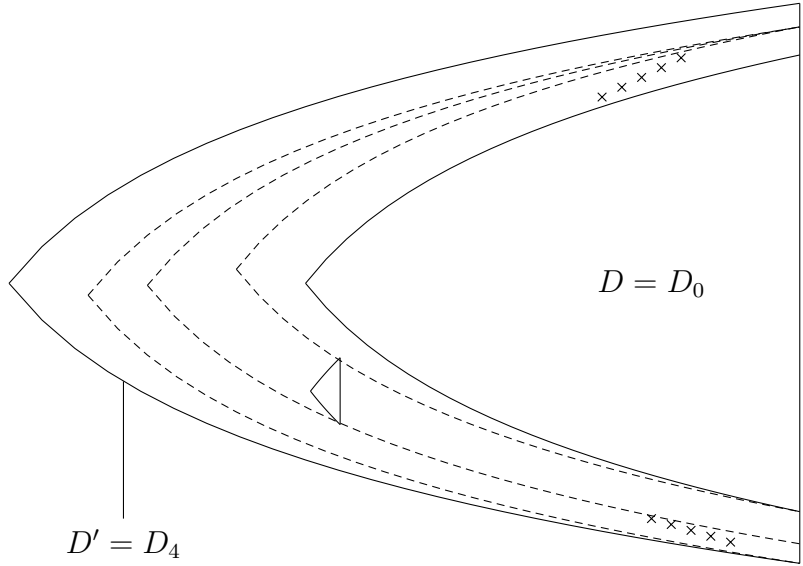

The next lemma, which states that the crossing event for droplets $D \sqsubset D'$ implies the existence of a satisfied partition for $(D,D')$, is the heart of the proof of Lemma~\ref{lem:delta}, and is the key deterministic tool in the proof of Theorem~\ref{thm:Duarte}.

\begin{lemma}\label{lem:partition}
Let $D \sqsubset D'$ be droplets with $h(D) \geq \eps_k^{-5}$. If $\Delta(D,D')$ holds then there exists a satisfied partition of $(D,D')$.
\end{lemma}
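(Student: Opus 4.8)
The plan is to prove Lemma~\ref{lem:partition} by running the spanning algorithm on $D \cap A$ restricted to the growth from $D$ to $D'$, and extracting the partition from the sequence of droplets that appears. More precisely, since $\Delta(D,D')$ holds, we have $D' \in \langle D \cup (D' \cap A) \rangle$, so there is a sequence of strongly connected merges taking $D$ (together with sites of $D' \cap A$ lying in $D' \setminus D$) up to a set whose minimal droplet is $D'$. I would record, after each merge that strictly increases the height of the ``current'' droplet containing (a translate of) $D$, the new minimal droplet, obtaining a chain $D = E_0 \sqsubset E_1 \sqsubset \cdots \sqsubset E_N = D'$ in which consecutive heights differ by $O(1)$ (using Lemma~\ref{lem:subadd}, each single merge adds at most a bounded amount to the height, since the newly added piece is a single site or a small strongly connected cluster). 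The relation $\sqsubset$ (rather than just $\subset$) is what one gets naturally here because the right edge only moves rightwards as growth proceeds; I would need Observation~\ref{obs:edges} to see the droplets are genuinely nested.

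Next I would coarsen this fine chain into the desired partition $\PP = (D_i)_{i=0}^m$ by a greedy grouping: walk up the chain $E_0, E_1, \dots$ and cut to form $D_i$ as soon as the accumulated height increase since $D_{i-1}$ first reaches $2\gamma$; because single steps change the height by $O(1) \ll \gamma$, each block has height increase between $2\gamma$ and $2\gamma + O(1)$, and a final block of height increase at most $5\gamma$. For each resulting block $D_{i-1} \sqsubset D_i$, I must now verify that condition~(1) or condition~(2) of Definition~\ref{def:partition} holds. The mechanism for growth within the block is that $D_i$ is obtained from $D_{i-1}$ by absorbing the strongly connected components of $(D_i \setminus D_{i-1}) \cap A$ together with $D_{i-1}$; let $S$ be a component (spanned droplet) of largest height among those that merged into the block. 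If $h(S)$ is ``large'' — at least $\eps_k^{-5}$, say, or more precisely comparable to the block height $h(D_i) - h(D_{i-1})$ up to $\eps_k^{-3}$ — then this $S$ serves as the saver droplet of case~(2), and I would check the width bound $w(S) \geq \eps_k^{-6} - 1$ and the rightmost-columns alternative from the geometry of how $S$ had to reach across the block. If instead every component that merged in is ``small'', then the block's height increase of $2\gamma$ must have been produced by many small clusters stacked vertically along a weakly connected, $D_{i-1}$-rooted chain; here I would invoke the contrapositive of Lemma~\ref{lem:atleastgamma}: if no weakly connected $D_{i-1}$-rooted subset of $(D_i \setminus D_{i-1}) \cap A$ had size $\geq \gamma$, then the height could have increased by at most $h(D_{i-1}) + O(\gamma) < h(D_i)$ — forcing case~(1) with its height bound $h(D_i) - h(D_{i-1}) \leq 2\gamma + 2$, since in the ``no large saver'' regime the jump was $O(1)$ per step and we cut as soon as we hit $2\gamma$.

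The main obstacle I anticipate is making the dichotomy ``large saver versus many small clusters'' genuinely exhaustive and getting the bookkeeping of the constants ($\gamma$, $\eps_k^{-3}$, $\eps_k^{-5}$, $\eps_k^{-6}$) to line up exactly with~\eqref{eq:satisfied} and with the two sub-cases of case~(2) (the $h(S_i) \geq \eps_k^{-5}$ alternative versus the rightmost-columns alternative). In particular, a saver droplet $S$ need not lie inside $D_i \setminus D_{i-1}$ — it is merely \emph{spanned} by sites there — so I must be careful that its contribution to the height increase of the block is what forces the bound $h(S) \geq h(D_i) - h(D_{i-1}) - \eps_k^{-3}$; this should come from applying Lemma~\ref{lem:subadd} to the merge that created $D_i$, bounding the height of everything \emph{other} than $S$ that merged in by $O(\gamma) = O(\eps_k^{-3})$ when no other large component is present. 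The other delicate point is ensuring the ``densely spaced'' sites in case~(1) are captured by the weak relation $\prec$ with its asymmetric horizontal tolerance $-\eps_k^{-6}$: a cluster that helps $D_{i-1}$ grow vertically can sit slightly to the left of the current edge, which is exactly why $\prec$ is defined asymmetrically and why $Z_D$ in Definition~\ref{def:weak}(c) allows $a \leq a_0 + w$. I would need to check that the components produced by the spanning algorithm, when small, are automatically weakly connected and $D_{i-1}$-rooted — strong connectivity implies weak connectivity, and rootedness follows because the component had to merge with $D_{i-1}$ (directly or through a chain of earlier merges) to be absorbed into the block.
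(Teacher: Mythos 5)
Your high-level intuition is in the right direction, but the execution has several genuine gaps that depart from what actually makes the paper's proof work, and I don't think they are repairable within your framework.

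First, the chain $E_0 \sqsubset E_1 \sqsubset \cdots \sqsubset E_N$ you want to extract from the spanning algorithm does not have $O(1)$ height jumps. A single merge step joins the cluster containing $D$ with another cluster $K_j^t$, and $K_j^t$ may already be very tall (it is itself a closure, and there is no a priori bound on its height). So a single merge can increase $h$ by an amount comparable to $h(D') - h(D)$, and your greedy coarsening "cut as soon as the height increase hits $2\gamma$" breaks down: you cannot guarantee that each block's increment lands in $[2\gamma, 2\gamma + 2]$. The paper avoids this by inducting on $\lfloor h(D') - h(D) \rfloor$ and constructing a single intermediate droplet $D_1$ of the right size \emph{by hand} in each case (either $D(D \cup W)$ for a well-chosen $W \subset Y_i$ of size exactly $\gamma$, or $D(D \cup S)$), rather than trying to read the blocks off the spanning algorithm's trajectory.

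Second, your dichotomy "large saver component versus many small clusters that are automatically weakly connected and $D_{i-1}$-rooted" is not exhaustive, and the last claim is false. A small strongly connected component of $(D_i \setminus D_{i-1}) \cap A$ that eventually merges into the growing cluster is \emph{not} automatically weakly connected to $D_{i-1}$: the merge happens via strong connectivity of \emph{closures}, which can bridge long horizontal distances to the right, while the weak relation $\prec$ only tolerates horizontal distance $\eps_k^{-6}$ to the left and vertical distance $2$. This is exactly the missing case, and it is the heart of the paper's argument: if no large weakly connected $D$-rooted set exists, then all of them are smaller than $\gamma$, Lemma~\ref{lem:atleastgamma} caps the closure $[D \cup Y]$ at height $h(D) + 4\gamma + 2$, and therefore the growth that pushes the height past that must pass through a strongly connected component of $[(D' \setminus D) \cap (A \setminus Y)]$ touching $D \cup Z$. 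The width bound $w(S) \geq \eps_k^{-6} - 1$ then comes precisely from the separation property proved in Lemma~\ref{lem:lessthangamma}: the touching site of $D \cup Z$ cannot be in the weak relation $\prec$ with any site of $A \setminus (D \cup Z)$, so the component must span horizontal distance at least $\eps_k^{-6}$ to reach back. Your proposal never establishes this separation, and the contrapositive of Lemma~\ref{lem:atleastgamma} alone does not give it, because that lemma only controls growth coming from the $Y_i$'s, not from sites of $A$ that are far to the right of $D$.

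Finally, the paper's definition of the $Y_i$ as \emph{maximal} weakly connected $D$-rooted subsets of the whole strip $(D' \setminus D) \cap A$ --- not as components produced by the spanning algorithm --- is what makes Observation~\ref{obs:yi} and Lemma~\ref{lem:lessthangamma} available, and these are the tools that turn "no big $Y_i$" into "there is a wide saver." Without that reorganisation, the proof does not close.
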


From here until the end of the proof of Lemma~\ref{lem:partition}, let us fix droplets $D\sqsubset D'$. %, and set $w:=w(D)$.
%Despite Definition~\ref{def:partition}~(1) mentioning $(D,w(D'))$-weakly connected sets, for convenience we shall only consider $(D,w)$-weakly connected sets: since $w(D') \geq w$ and $f'$ is decreasing, such sets are automatically $(D,w(D'))$-connected, so what we shall prove will actually be stronger.
Let $Y_1,\dots,Y_s$ be the collection of maximal weakly connected and $D$-rooted sets in $(D'\setminus D)\cap A$. (These sets are disjoint, since if $Y_i \cap Y_j \neq \emptyset$ then $Y_i \cup Y_j$ is weakly connected and $D$-rooted.\footnote{This is because the elements of a $D$-rooted set do not all have to have the same root.})
%\footnote{So for each $x_0\in(D'\setminus D)\cap A$ strongly connected to $D$, the maximal rooted $(D,w)$-weakly connected set containing $x_0$ is one of the $Y_i$.} 
Finally, let
\begin{equation}\label{eq:YZ}
Y:= Y_1\cup\dots\cup Y_s \qquad \text{and} \qquad Z:= [D\cup Y] \setminus D.
\end{equation}

% Before proving Lemma~\ref{lem:partition}, we need the following fact about weakly connected sets.

The first preliminary we need in the build-up to the proof of Lemma~\ref{lem:partition} is the following easy observation about elements of $Z$. %The observation holds by induction on $\big| [D\cup Y_i] \setminus D \big|$, for example.

\begin{obs}\label{obs:yi}
Let $z = (c,d) \in [D\cup Y_i] \setminus D$ for some $1\leq i\leq s$. Then one of the following holds, in each case with $a \leq c$ (and $a' \leq c$ in case $(b)$):
\begin{enumerate}
\item[$(a)$] there exists a site $y := (a,d) \in Y_i$;
\item[$(b)$] there exist sites $y := (a,d-1) \in Y_i$ and $y' := (a',d+1) \in Y_i$;
\item[$(c)$] $(c,d-1) \in D$ and there exists a site $y := (a,d+1) \in Y_i$;
\item[$(d)$] $(c,d+1) \in D$ and there exists a site $y := (a,d-1) \in Y_i$.
\end{enumerate}
\end{obs}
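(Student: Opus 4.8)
The plan is to unwind the definition of $Z = [D\cup Y]\setminus D$ by examining how the site $z=(c,d)$ comes to be infected in the closure $[D\cup Y_i]$. Since $z\notin D$ but $z\in[D\cup Y_i]$, either $z$ lies in the initial set $D\cup Y_i$ — in which case $z\in Y_i$, and taking $a=c$, $y=(c,d)=z$ puts us in case $(a)$ — or $z$ becomes infected at some finite time $t\ge 1$. In the latter case, by the definition of the Duarte update family $\D$, there is some $X\in\D$ with $z+X\subset A_{t-1}$; that is, the two sites of $z+X$ are already infected. The three elements of $\D$ are $\{(-1,0),(0,1)\}$, $\{(-1,0),(0,-1)\}$ and $\{(0,1),(0,-1)\}$, so the pair of already-infected sites is either $\{(c-1,d),(c,d+1)\}$, or $\{(c-1,d),(c,d-1)\}$, or $\{(c,d+1),(c,d-1)\}$. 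Thus every infection of $z$ is ``witnessed'' either by the site directly to its left together with one of its vertical neighbours, or by both of its vertical neighbours.

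First I would set up an induction on the time $t$ at which a site of $[D\cup Y_i]\setminus D$ first becomes infected, proving the slightly stronger statement that records, in cases $(a)$--$(b)$, a site $y\in Y_i$ (and in case $(b)$ also $y'\in Y_i$) whose $x$-coordinate is $\le c$ and whose $y$-coordinate differs from $d$ by at most $1$ as stated, and in cases $(c)$--$(d)$ the presence of the relevant site of $D$ directly above or below $z$. The base case $t=0$ is case $(a)$ with $y=z$, as noted. For the inductive step, suppose $z=(c,d)$ is infected at time $t\ge 1$ via one of the three update rules above; I would apply the induction hypothesis to whichever of the three witnessing sites lie in $[D\cup Y_i]\setminus D$, and read off $y$ (and $y'$) by following the chain of weak-relation steps $x\prec y_1\prec\cdots$ implicit in the $D$-rootedness of $Y_i$, or directly from $D$ when the witnessing site is in $D$. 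In each of the three rules one of the two witnessing sites has $x$-coordinate $c-1<c$ or $c$ and lies in the column of $z$ or one to its left, so the bound $a\le c$ (and $a'\le c$) propagates; the key point is that horizontal growth in the Duarte model only ever moves rightwards, so the $x$-coordinate of any ancestor site is at most $c$.

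The main obstacle I anticipate is purely bookkeeping: one must carefully match up the six combinatorial possibilities (three update rules $\times$ which witnessing sites lie in $D$ versus in $[D\cup Y_i]\setminus D$) with the four stated cases $(a)$--$(d)$, checking for instance that when $z$ is infected via $\{(0,1),(0,-1)\}$ with both vertical neighbours outside $D$ one lands in case $(b)$, whereas if one vertical neighbour lies in $D$ one lands in case $(c)$ or $(d)$, and that when $z$ is infected via a rule involving $(-1,0)$ the left neighbour, if in $[D\cup Y_i]\setminus D$, supplies by induction a site $y\in Y_i$ to the left of $z$, which then serves for case $(a)$. A minor subtlety is that the site $y$ produced by the induction may not be in $Y_i$ itself but only connected to it via $\prec$; since $D$-rootedness only asserts a path of weak-relation steps from $D$ through $Y_i$, and since any site of $Y_i$ in the relevant position already has $x$-coordinate $\le c$, this is resolved by simply taking $y$ to be the nearest element of $Y_i$ along the chain — no case analysis beyond that is needed. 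I expect the full argument to run to under a page.
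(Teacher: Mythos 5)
Your overall strategy --- induct on the first time a site of $[D\cup Y_i]\setminus D$ becomes infected, then run through the three Duarte update rules --- is the right skeleton, and the sub-case where $z$ is infected via $\{(0,1),(0,-1)\}$ does close as you describe (a little routine case-splitting according to which of (a)--(d) the inductive hypothesis returns for each of $(c,d\pm1)$, plus the column-convexity of $D$ to rule out both vertical witnesses lying in $D$). However, your assessment that the rest is ``purely bookkeeping'' overlooks the part of the argument that actually has content, and one of your stated ``minor subtleties'' is a red herring.

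The genuine gap: when $z$ is infected via a rule containing $(-1,0)$, the witness $(c-1,d)$ may lie in $D$; alternatively, even if $(c-1,d)\notin D$, the inductive hypothesis applied to it may return case $(c)$ or $(d)$, which puts a $D$-site in column $c-1$ (at $(c-1,d\pm1)$) without telling you anything about column $c$ of $D$. In these scenarios none of (a)--(d) for $z$ falls out of bookkeeping, because cases (c) and (d) for $z$ demand $(c,d\mp1)\in D$, not $(c-1,d\mp1)\in D$. To close these sub-cases you need the following geometric observation, which you do not mention. Because $D$ is the intersection of a Duarte region $D^*$ with $\Z^2$ and the boundary function $f$ is increasing, the columns of $D$ are intervals whose heights weakly increase to the right; hence if $(c-1,d')\in D$ and $(c,d')\notin D$, column $c$ of $D$ must be entirely empty, i.e.\ $c$ lies strictly to the right of $D^*$. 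But then $D\cup Y_i$ has \emph{no} site in column $c$: $D$ has none, and by Definition~\ref{def:weak}(c) the set $Y_i$ is contained in $Z_D$, which constrains its $x$-coordinates to be at most the right edge of $D^*$. Finally, every Duarte update rule for a site contains a vertical neighbour, so the closure of a set can never reach a column in which the set has no initial site: consequently $[D\cup Y_i]$ has no site in column $c$, contradicting $z\in[D\cup Y_i]$. So all of these troublesome sub-cases are in fact void. Without the chain ``column-monotonicity of droplets $\Rightarrow$ $c$ lies right of $D^*$'', ``$Y_i\subset Z_D$ lies left of the right edge'', ``the closure cannot leave the occupied columns'', the induction does not close, and it is this chain --- not bookkeeping --- that is the content of the proof.

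On your ``minor subtlety'' about $y$ possibly not being in $Y_i$: this worry does not arise. The inductive hypothesis is the observation itself applied to a site with earlier infection time, and the observation asserts $y\in Y_i$ outright; so the IH hands you membership in $Y_i$ directly. The $D$-rootedness and weak-connectivity of $Y_i$ play no role anywhere in the proof of Observation~\ref{obs:yi} --- they matter later (e.g.\ in Lemma~\ref{lem:lessthangamma} and Lemma~\ref{lem:partition}), but not here. Folding them into this proof obscures where the real work lies.
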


Next we need the following lemma, which says that we may obtain $D\cup Z$ from $D$ by taking the closures with each of the $Y_i$ independently. This will enable us to control the size of $Z$. The lemma also says that there is a good separation between $D\cup Z$ and the elements of $A$ outside of $D\cup Z$.

\begin{lemma}\label{lem:lessthangamma}
%Suppose $|Y_i|\leq\gamma-1$ for all $i \in [s]$. Then
We have
\begin{equation}\label{eq:partZ}
Z = \big( [D\cup Y_1] \cup \dots \cup [D\cup Y_s] \big) \setminus D.
\end{equation}
Moreover, if $x\in A\setminus (D\cup Z)$ then there does not exist $z\in D\cup Z$ such that $z \prec x$.
%\begin{equation}\label{eq:part1}
%a_2-a_1 \leq 2\eps_k^{-6} \qquad \text{and} \qquad |b_2-b_1| \leq 2,
%\end{equation}
%where $x=(a_1,b_1)$ and $z=(a_2,b_2)$.
\end{lemma}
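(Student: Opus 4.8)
The plan is to prove the two assertions in turn, the first by showing both inclusions, and the second by a direct argument using the definition of the weak relation. For the first assertion, the inclusion $\big( [D\cup Y_1] \cup \dots \cup [D\cup Y_s] \big) \setminus D \subset Z$ is immediate from monotonicity of the closure operator, since each $Y_i \subset Y$. The substantive direction is $Z \subset \big( [D\cup Y_1] \cup \dots \cup [D\cup Y_s] \big) \setminus D$. For this I would run the Duarte bootstrap process from $D \cup Y$ step by step and show, by induction on the time $t$, that every site infected by time $t$ outside $D$ already lies in $[D\cup Y_i]$ for some single $i$ depending on the site. The key point is that a newly infected site $x$ has $x + X \subset A_t$ for some $X \in \D$, i.e.\ it acquires two already-infected neighbours among $(-1,0), (0,1), (0,-1)$; each of these two neighbours is either in $D$ or (by the inductive hypothesis) in some $[D \cup Y_{i}]$, and I need to argue that if the two neighbours came from different $Y_i$, $Y_j$ then in fact $Y_i \cup Y_j$ (together with $x$) would be weakly connected and $D$-rooted, contradicting maximality — or else show the configuration is impossible. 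Observation~\ref{obs:yi} is tailored to exactly this bookkeeping: it classifies, for any $z \in [D \cup Y_i]\setminus D$, how $z$ is witnessed by sites of $Y_i$ (possibly together with sites of $D$ directly below or above), always with the witnessing site weakly to the left. Chaining these witnesses back shows that each $z \in Z$ is connected through the weak relation $\prec$ to some $Y_i$ and thence to $D$, so that $z$ together with the relevant portion of $Y_i$ is weakly connected and $D$-rooted; maximality of the $Y_i$ then forces $z \in [D \cup Y_i]$.

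For the second assertion, suppose for contradiction that $x \in A \setminus (D \cup Z)$ and there exists $z \in D \cup Z$ with $z \prec x$. If $z \in D$, then $x$ is within weak-relation distance of $D$, so $\{x\}$ is by itself a weakly connected $D$-rooted set contained in $(D' \setminus D) \cap A$ (note $x \in A$ and, since $x \notin D \cup Z \supseteq D$, and using that $\Delta(D,D')$ gives $[D \cup A] \supseteq D'$ so all relevant sites lie in $D'$), whence $x$ belongs to one of the maximal sets $Y_i$, i.e.\ $x \in Y \subset [D\cup Y] = D \cup Z$, a contradiction. If instead $z \in Z$, then by~\eqref{eq:partZ} we have $z \in [D \cup Y_i]\setminus D$ for some $i$, and Observation~\ref{obs:yi} supplies a site $y \in Y_i$ with $y$ weakly to the left of $z$ and vertically within distance $1$ of $z$ (or two such sites straddling $z$); combined with $z \prec x$ and the triangle-type inequalities defining $\prec$, one checks that $y \prec x$ (the horizontal slack $-\eps_k^{-6}$ and vertical slack $2$ in the definition of $\prec$ are generous enough to absorb the $O(1)$ shift from $z$ to $y$). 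But then $Y_i \cup \{x\}$ is weakly connected and $D$-rooted, contradicting the maximality of $Y_i$ unless $x \in Y_i \subset Y$, and $x \in Y$ again contradicts $x \notin D \cup Z$.

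The main obstacle I anticipate is the inductive step in the first assertion: carefully verifying that a site infected from two neighbours belonging to two \emph{different} closures $[D\cup Y_i]$ and $[D\cup Y_j]$ cannot occur without the two families already being weakly connected through that site (and $D$-rooted), so that maximality is contradicted. This requires a somewhat fiddly case analysis driven by Observation~\ref{obs:yi}, keeping track of whether each of the two neighbours is itself in $D$ or is witnessed by a site of some $Y_i$ lying weakly to the left, and checking in each case that the weak relation $\prec$ propagates correctly (the asymmetry of $\prec$ — large horizontal slack to the left, small vertical slack — matches the geometry of the Duarte rules, where growth is leftward-and-vertical, so this should go through, but the verification is the crux). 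The rest of the argument is essentially bookkeeping with the definitions.
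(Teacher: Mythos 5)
Your overall plan is sound and close in spirit to the paper's proof. For~\eqref{eq:partZ}, the paper argues directly that $D \cup Z$ is obtained by taking each closure $[D\cup Y_i]$ separately, by showing that no $z_1 \in [D\cup Y_1]\setminus D$ is strongly connected to any $z_2 \in [D\cup Y_2]\setminus D$ except when $z_1,z_2$ lie directly above and below an element of $D$; your induction on bootstrap time is an equivalent packaging of the same argument. The one thing to be clear about in the key case (two infecting neighbours from different closures): the correct resolution is not always a contradiction with maximality. When the two neighbours straddle an element of $D$ vertically, the newly infected site \emph{is} that element of $D$, so nothing new is produced. If you interpret your ``or else show the configuration is impossible'' as ``the site wasn't actually new,'' this is fine; just be aware that this benign case must be allowed, and Observation~\ref{obs:yi} is what rules out all other cross-closure configurations (by producing, via $(\ast)$-type separation, a weak connection between $Y_i$ and $Y_j$ contradicting maximality).

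For the second assertion, there is a genuine gap. Your claim that ``the horizontal slack $-\eps_k^{-6}$ and vertical slack $2$ in the definition of $\prec$ are generous enough to absorb the $O(1)$ shift from $z$ to $y$'' is false: the vertical tolerance in $\prec$ is exactly $|b_2-b_1|\le 2$, so $\prec$ is not transitive in the vertical direction. If $y$ lies one unit below $z$ and $z \prec x$ with $b_x - b_z = 2$, then $b_x - b_y = 3$ and $y \not\prec x$. The paper avoids this by using Observation~\ref{obs:yi} to \emph{select} the witness depending on the sign of $b_x - b_z$ (in option~$(b)$ take the upper or lower of $y,y'$ accordingly), and, crucially, in options~$(c)$/$(d)$ it must first reduce to the case that $x$ is \emph{not} weakly connected to any element of $D$ (otherwise $x \in Y_i$ already by the $z\in D$ sub-case). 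Without that reduction the estimate $|b_x - b_y| \le 2$ can fail in option~$(c)$/$(d)$, so the step cannot be waved away as ``bookkeeping'': it is precisely where the careful case selection is needed. Add the reduction ``assume $x$ is not weakly connected to $D$'' explicitly and then perform the case analysis of Observation~\ref{obs:yi} with the choice of $y$ depending on $b_x - b_z$, and the argument closes.
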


\begin{proof}
To prove~\eqref{eq:partZ}, we shall show that no site $z_1 \in [D\cup Y_1] \setminus D$ is strongly connected to a site in $z_2 \in [D\cup Y_2] \setminus D$, unless $z_1$ and $z_2$ lie either side (vertically) of an element of $D$. This will establish the claim, since it would imply that the set $[D\cup Y_1] \cup [D\cup Y_2]$ is closed, and since the ordering of the $Y_i$ was arbitrary.

First, we make the following observation, which follows immediately from the definition of $\prec$:
\begin{enumerate}
\item[$(\ast)$] If $y_1 = (a_1,b_1) \in Y_1$ and $y_2 = (a_2,b_2) \in Y_2$, then, since neither $y_1 \prec y_2$ nor $y_2 \prec y_1$ holds, we must have $|b_1 - b_2| \geq 3$.
\end{enumerate}
Since $Y_1$ and $Y_2$ are each weakly connected, it follows (without loss of generality) that $\max\{ b: (a,b) \in Y_1 \} \le \min\{b : (a,b) \in Y_2 \} - 3$. Let $z_1 = (c_1,d_1)$ and $z_2 = (c_2,d_2)$, and suppose first that $d_1 \le \max\{ b: (a,b) \in Y_1 \}$. Then, since $z_1$ and $z_2$ are strongly connected, it follows that $d_2 < \min\{b : (a,b) \in Y_2 \}$. Now, by Observation~\ref{obs:yi}, it follows that $(c_2,d_2-1) \in D$ and $d_2 = \min\{b : (a,b) \in Y_2 \} - 1$. But since $z_1$ and $z_2$ are strongly connected, this implies that $c_1 = c_2$ and $d_1 = d_2 - 2$, and hence $z_1$ and $z_2$ lie either side (vertically) of an element of $D$, as claimed. The proof in the case $d_1 > \max\{ b: (a,b) \in Y_1 \}$ is identical. 

To see the second part of the lemma, let $x \in A \setminus (D \cup Z)$, and suppose that $z \prec x$ for some $z \in D\cup Z$. Observe that $z$ cannot be in $D$, because then $x$ would belong to one of the $Y_i$. So in fact we have $z\in Z$ and we may assume further that $x$ is not weakly connected to any element of $D$. By the first part of the lemma, we may also assume that $z \in [D \cup Y_1] \setminus D$. We shall show that there exists $y \in Y_1$ such that $y \prec x$, which would imply that $x$ belongs to $Y_1$, a contradiction.

Let $x = (c_0,d_0)$ and let $z = (c,d)$, and let $y$ (and possibly also $y'$) be the sites obtained from Observation~\ref{obs:yi} applied to $z$. If option~$(a)$ holds then we immediately have $y \prec x$. If option~$(b)$ holds then we take $y = (a,d+1)$ if $d_0 \geq d$, to obtain $y \prec x$, and we take $y' = (a,d-1)$ if $d_0 < d$, to obtain $y' \prec x$. Finally, if option~$(c)$ holds (say), then since $(c,d-1) \in D$ and $z \prec x$, we must have $d_0 \geq d-1$, and therefore we have $y \prec x$. (Here we have used the assumption that $x$ is not weakly connected to any element of $D$: if $z$ is near to the left-hand end of $D$, then there do exist sites in $\Z^2 \setminus D$ within horizontal distance $\eps_k^{-6}$ to the left of $z$, and having vertical coordinate 2 less than that of $z$. However, any such site is weakly connected to $D$.) This completes the proof of the second part of the lemma.
\end{proof}

We are now ready to prove Lemma~\ref{lem:partition}. The basic idea is as follows: if none of the sets $Y_i$ has size at least $\gamma$, then since (by~\eqref{eq:partZ}) we can obtain $D\cup Z$ from $D$ by taking the closure of $D$ with each of the $Y_i$ independently, we can control the size of each (strongly) connected component of $Z$. Since Definition~\ref{def:weak} ensures that there is a large region disjoint from $A$ around any maximal weakly connected component, the event $\Delta(D,D')$ allows us to deduce the existence of a saver droplet sufficiently large to penetrate through this region; see Claim~\ref{clm:saver} below.

\begin{proof}[Proof of Lemma~\ref{lem:partition}]
The proof is by induction on $\lfloor h \rfloor$, where $h := h(D') - h(D)$. When $h \leq 5\gamma$ there is nothing to prove: we may take $m=1$, $D_0=D$ and $D_1=D'$, so that $\PP=(D_0,D_1)$ trivially satisfies Definition~\ref{def:partition}. Thus we shall assume that $h > 5\gamma$ and that the result holds for smaller non-negative values of $\lfloor h \rfloor$.

Suppose first that $|Y_i| \geq \gamma$ for some $i$. In this case we will show that there exists a droplet $D \sqsubset D_1 \sqsubset D'$ with $1 \le h(D_1) - h(D) \le 2\gamma + 2$ and such that $\big(D_1\setminus D \big) \cap Y_i$ contains a weakly connected $D$-rooted set of size at least $\gamma$, as in Definition~\ref{def:partition}~$(1)$. In order to define $D_1$, we will first show that there exists a subset $W \subset Y_i$ with $|W| = \gamma$ that satisfies the conditions of Lemma~\ref{lem:atleastgamma}. Indeed, this follows by greedily adding points of $Y_i$ to $W$ one by one (starting from the empty set), maintaining the property that $W$ is $D$-rooted. (So a point $y \in Y_i$ may be added to $W$ if there exists $u \in D \cup W$ such that $u \prec y$.) It is easy to see that for each $u \in W$ there exists a set $W' \subset W$ with $u \in W'$ such that $W'$ is weakly connected and $D$-rooted (simply take the oriented path leading to $u$). Moreover, since $W' \subset W$ and $|W| = \gamma$, the conditions of Lemma~\ref{lem:atleastgamma} are satisfied for some $h_1,h_2 \ge 0$ with $h_1 + h_2 = \gamma$, and thus % Question: is 0 \in \N??
\[
h\big( D(D\cup W) \big) - h(D) \, \le \, 2\gamma + 2.
\]
If $h\big( D(D\cup W) \big) - h(D) \geq 2\gamma$, then set $D_1 = D(D\cup W)$; if not, then choose instead for $D_1$ any droplet such that $2\gamma \leq h(D_1) - h(D) \leq 2\gamma + 1$ and $D(D\cup W) \sqsubset D_1 \sqsubset D'$. In either case, the droplet $D_1$ satisfies the conditions of Definition~\ref{def:partition} with $i=1$ and $D_0 = D$. We may therefore apply induction to the pair $(D_1,D')$, noting that the event $\Delta(D_1,D')$ occurs by Observation~\ref{obs:Delta}, and, for the purpose of the induction on $\lfloor h \rfloor$, that we have ensured that $h(D') - h(D_1) \leq h(D') - h(D) - 1$.

Henceforth we shall assume that $|Y_i|\leq\gamma-1$ for each $1\leq i\leq s$. Our task is to show, using Lemma~\ref{lem:lessthangamma}, that there exists a saver droplet satisfying condition~(2) of Definition~\ref{def:partition}. In order to find the saver droplet, we begin by showing that either $[(D'\setminus D)\cap (A\setminus Y)]$ is strongly connected to $D \cup Z$, or we can take the whole of $D'$ to be the saver droplet.

\begin{claim}\label{clm:xz}
Either there exist sites $z \in D \cup Z$ and $x \in \big[ (D'\setminus D)\cap (A\setminus Y) \big]$ such that $z$ and $x$ are strongly connected, or we have
\begin{equation}\label{eq:supersaver}
D' \in \big\< (D'\setminus D)\cap (A\setminus Y) \big\>.
\end{equation}
\end{claim}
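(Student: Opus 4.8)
The plan is to argue by contradiction: suppose that no site of $\big[ (D'\setminus D)\cap (A\setminus Y) \big]$ is strongly connected to any site of $D\cup Z$, and derive~\eqref{eq:supersaver}. The key observation is that, under this assumption, the set $[(D'\setminus D)\cap(A\setminus Y)]$ is a union of strongly connected components that are ``self-contained'' in the sense that none of them can interact with $D\cup Z$; in particular, running the spanning algorithm on $(D'\setminus D)\cap(A\setminus Y)$ produces spans that are entirely unaffected by $D$ and by $Y$. We want to show that among these spanned droplets there is (at least) one equal to $D'$, i.e.\ that $D' \in \big\< (D'\setminus D)\cap (A\setminus Y) \big\>$.

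First I would recall, via Lemma~\ref{lem:span} and the spanning algorithm (and the definition of $\Delta$), what $\Delta(D,D')$ actually gives us: $D' \in \< D \cup (D'\cap A) \>$, so there is a strongly connected (equivalently, $\Z^2$-connected and closed) set $L$ with $L \subset [D\cup(D'\cap A)]$ and $D(L) = D'$. Now decompose $D' \cap A = Y \sqcup \big((D'\setminus D)\cap(A\setminus Y)\big) \sqcup (D\cap A)$ (up to the sites of $A$ in $D$, which are already inside $D$). By~\eqref{eq:partZ} of Lemma~\ref{lem:lessthangamma}, the closure of $D$ together with \emph{all} of $Y$ is exactly $D\cup Z$, and moreover the second part of Lemma~\ref{lem:lessthangamma} tells us that no element of $A\setminus(D\cup Z)$ is even \emph{weakly} connected to $D\cup Z$ — in particular none is strongly connected to it, since strong connectivity is stronger than the weak relation $\prec$. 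So in building up $L$, the sites of $(D'\setminus D)\cap(A\setminus Y)$ can never merge with the $D\cup Z$ part unless two of them lie vertically either side of an element of $D$ (the only exception allowed by the proof of Lemma~\ref{lem:lessthangamma}); this is exactly the configuration that the hypothesis of the claim ``either\dots strongly connected\dots'' is designed to exclude. Hence, under our contradiction hypothesis, when we run the spanning algorithm on all of $D\cup(D'\cap A)$, the component that grows to become $D'$ receives no contribution from $D\cup Z$: it is generated purely by strongly connected sets inside $(D'\setminus D)\cap(A\setminus Y)$. Therefore $D' \in \big\< (D'\setminus D)\cap(A\setminus Y)\big\>$, which is~\eqref{eq:supersaver}.

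The step I expect to be the main obstacle is making precise the assertion that ``the $D'$-component of the span is unaffected by $D\cup Z$''. The subtlety is that the spanning algorithm merges sets whenever their joint closure is strongly connected, so a priori merges could cascade: a merge involving $D\cup Z$ might change the closure of a later union. The clean way to handle this, which I would carry out, is to observe that by the two conclusions of Lemma~\ref{lem:lessthangamma} the set $(D\cup Z)$ is closed \emph{and} has no site of $A$ strongly connected to it other than through a vertical ``straddle'' of an element of $D$; so if no such straddle occurs (our hypothesis), then $(D\cup Z)$ is a union of strongly connected components of $[D\cup(D'\cap A)]$ that are disjoint from, and never merge with, the components generated by $(D'\setminus D)\cap(A\setminus Y)$. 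Since $D(L)=D'$ and $L$ is one strongly connected component, $L$ must be one of the latter, giving the claim. One should also double-check the edge case where $L$ meets $D$ itself: but any element of $L\cap D$ lies in $D\subset D\cup Z$, and again the straddle-free hypothesis forbids $L$ from simultaneously containing such a point and a point of $(D'\setminus D)\cap(A\setminus Y)$ strongly connected to it, so this case is subsumed. I would phrase the final write-up around running the spanning algorithm of Section~\ref{sec:spanning} with initial set $(D'\setminus D)\cap(A\setminus Y)$ and comparing it, component by component, with the run on $D\cup(D'\cap A)$.
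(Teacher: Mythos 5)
Your approach — arguing the contrapositive, assuming no pair $z \in D\cup Z$, $x \in [(D'\setminus D)\cap(A\setminus Y)]$ is strongly connected and then deducing~\eqref{eq:supersaver} — is the mirror image of the paper's direct argument (the paper assumes~\eqref{eq:supersaver} fails and deduces the existence of such a pair), and the central observation you use (that if the union of two closed sets is not closed, then a merge forces a strongly connected pair across the two sets) is the same one the paper relies on. Both proofs also rest on the rewriting~\eqref{eq:xz}.

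However, there is a genuine gap. You conclude that, because of the no-strong-connection hypothesis, the strongly connected component $L$ of $[D\cup(D'\cap A)]$ with $D(L) = D'$ ``must be one of'' the components generated by $(D'\setminus D)\cap(A\setminus Y)$, and hence~\eqref{eq:supersaver} holds. But nothing in your argument excludes the possibility that $L \subset D\cup Z$; your ``edge case'' discussion only forbids $L$ from straddling both sides, not from lying wholly inside $D\cup Z$, and if $L \subset D\cup Z$ then~\eqref{eq:supersaver} does not follow. This case must be ruled out, and doing so is not automatic: one needs the quantitative bound on $h\big(D(D\cup Z)\big)$. Concretely, the paper applies Lemma~\ref{lem:atleastgamma} to $D$ and $Y$ with $h_1 = h_2 = \gamma$ (this is legitimate because at this stage of the induction every $Y_i$ has $|Y_i| \leq \gamma - 1$), giving
\[
h\big(D(D\cup Z)\big) = h\big(D(D\cup Y)\big) \leq h(D) + 4\gamma + 2,
\]
and then invokes the standing assumption $h(D') - h(D) > 5\gamma$ in force during the inductive step of Lemma~\ref{lem:partition}. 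Together these give $h\big(D(D\cup Z)\big) < h(D')$, so $D' = D(L) \subset D(D\cup Z)$ is impossible and $L \not\subset D\cup Z$. Without this step your proof does not close; adding it would make your argument correct and essentially identical in content to the paper's.

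As a minor point, your invocation of the second part of Lemma~\ref{lem:lessthangamma} and the ``straddle'' discussion is a bit of a red herring here: once you adopt the strong form of the contradiction hypothesis (no site of the \emph{closure} $[(D'\setminus D)\cap(A\setminus Y)]$ is strongly connected to $D\cup Z$), the closedness of the union follows directly from the Duarte update rules without appealing to that lemma, since any newly infected site would require two infected neighbours at pairwise $\ell_\infty$-type distance at most $2$, and hence strongly connected, one in each set.
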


\begin{clmproof}{clm:xz}
Suppose~\eqref{eq:supersaver} does not hold. Firstly, note that
\begin{align}
\big[ D \cup (D'\cap A) \big] &= \Big[ D \cup Y \cup \big(D' \cap (A\setminus Y)\big) \Big] \notag \\
&= \Big[ \big( D \cup Z \big) \cup \big[ (D'\setminus D)\cap (A\setminus Y) \big] \Big], \label{eq:xz}
\end{align}
since $Y \subset D' \cap A$ and $D \cup Z = [D \cup Y]$. Secondly, the event $\Delta(D,D')$ implies that $[D \cup (D'\cap A)]$ contains a strongly connected set $L$ such that $D'=D(L)$. However, we cannot have $L \subset D \cup Z$, because if we apply Lemma~\ref{lem:atleastgamma} to the droplet $D$ and the set $Y$, with $h_1=h_2=\gamma$, then we obtain
\[
h\big(D(D\cup Z)\big) = h\big(D(D\cup Y)\big) \leq h(D) + 4\gamma + 2 < h(D'),
\]
where we have used the fact that $h(D') - h(D) > 5\gamma$. We also cannot have $L \subset \big[ (D'\setminus D)\cap (A\setminus Y) \big]$, because~\eqref{eq:supersaver} does not hold. Now, if the union of $D\cup Z$ and $\big[ (D'\setminus D)\cap (A\setminus Y) \big]$ is not closed, then we are done: this would immediately imply the existence of sites $x$ and $z$ as in the statement of the claim. If the union of the two sets is closed, then by~\eqref{eq:xz} we would have
\[
L \subset \big[ D \cup (D'\cap A) \big] = \big( D \cup Z \big) \cup \big[ (D'\setminus D)\cap (A\setminus Y) \big].
\]
Hence, since the strongly connected set $L$ is contained in neither $D\cup Z$ nor $\big[ (D'\setminus D)\cap (A\setminus Y) \big]$, it must intersect both, and therefore these sets must themselves be strongly connected, as required.
\end{clmproof}

We now have everything we need to find the saver droplet.

\begin{claim}\label{clm:saver}
There exists a droplet $S$ spanned by $(D'\setminus D) \cap A$ such that
\begin{equation}\label{eq:partD*}
w(S) \geq \eps_k^{-6} - 1 \qquad \text{and} \qquad h(S) \geq h\big(D(D\cup S)\big) - h(D) - \eps_k^{-3}.
\end{equation}
Moreover, either $h(S) \geq \eps_k^{-5}$, or the rightmost $\eps_k^{-6}-1$ columns of $S$ all contain an element of $(D'\setminus D) \cap A$.
\end{claim}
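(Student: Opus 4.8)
The plan is to work from the dichotomy provided by Claim~\ref{clm:xz}. If~\eqref{eq:supersaver} holds, then we simply set $S = D'$: it is spanned by $(D' \setminus D) \cap (A \setminus Y) \subset (D' \setminus D) \cap A$, and since $h(D') - h(D) > 5\gamma \ge \eps_k^{-3}$ the height condition in~\eqref{eq:partD*} is immediate, while the width condition $w(S) = w(D') \ge \eps_k^{-6}-1$ follows from the fact that $D' \sqsupset D$ and $h(D) \ge \eps_k^{-5}$ forces $w(D')$ to be large (recall $f$ is bounded by $O((1/p)\log\cdots)$, so height $\eps_k^{-5}$ already requires polynomial-in-$1/\eps_k$ width — in any case $p$ is tiny). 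Finally $h(S) = h(D') - 1 \ge \eps_k^{-5}$, so the last clause is satisfied. So the interesting case is when there exist strongly connected sites $z \in D \cup Z$ and $x \in [(D'\setminus D) \cap (A \setminus Y)]$.

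In that case, the second part of Lemma~\ref{lem:lessthangamma} is the crucial input: it says that no element of $A \setminus (D \cup Z)$ is even \emph{weakly} related to any element of $D \cup Z$, hence in particular the site $x$, lying in the closure of $(D'\setminus D)\cap(A\setminus Y)$ but strongly connected to $D \cup Z$, cannot itself be a member of $A \setminus (D \cup Z)$; tracing back through the spanning process, $x$ must have been produced by a strongly connected component $K \subset (D' \setminus D) \cap (A \setminus Y)$ of $[(D'\setminus D)\cap(A\setminus Y)]$ whose closure reaches within strong-connection distance of $D \cup Z$ but whose own sites are all `far' (in the $\prec$ sense, i.e.\ horizontal distance exceeding $\eps_k^{-6}$ or vertical distance exceeding $2$) from $D$. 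Set $S := D(K)$, a droplet spanned by $(D'\setminus D) \cap A$. The point is that $[K]$ extends from a point near $D$ across the `forbidden gap' of width $\eps_k^{-6}$ guaranteed by the definition of $Z_D$ and $D$-rootedness (Definition~\ref{def:weak}): because $K \cap Y = \emptyset$ and $K$ is strongly connected to $D \cup Z$ but contains no site $\prec$-related to $D$, the horizontal extent of $[K]$ to the left past the relevant column of $D \cup Z$ must be at least $\eps_k^{-6}-1$, giving $w(S) \ge \eps_k^{-6}-1$. The height bound $h(S) \ge h(D(D\cup S)) - h(D) - \eps_k^{-3}$ then follows from Lemma~\ref{lem:subadd} applied to $D$ and $S$ together with the fact that $D(D \cup S) \subset D'$, rearranging $h(D(D\cup S)) \le h(D) + h(S) + 1$ and absorbing the additive constant into $\eps_k^{-3}$ (which is $\gg 1$).

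It remains to establish the final dichotomy: either $h(S) \ge \eps_k^{-5}$, or the rightmost $\eps_k^{-6}-1$ columns of $S$ all contain a site of $(D'\setminus D)\cap A$. Here I would argue that if $h(S) < \eps_k^{-5}$ then $[K]$ is a `thin' strongly connected set, and since strong connectivity only allows horizontal steps of size $\le 1$, the closure $[K]$ cannot create new occupied columns far from $K$ itself in the horizontal direction without help — more precisely, in a strongly connected set of bounded height, each column of $[K]$ within the rightmost stretch is within bounded vertical distance of an actual element of $K \subset (D'\setminus D)\cap A$, and by choosing the orientation of $S$ so that its right edge aligns with the part of $[K]$ abutting $D \cup Z$, the rightmost $\eps_k^{-6}-1$ columns are exactly those bridging the forbidden gap, each of which must therefore contain an original site of $K$. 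The main obstacle I expect is this last step: making precise the claim that a short strongly connected set that crosses a long horizontal gap must actually contain original sites in (essentially) every column of that gap, and reconciling the geometry of $[K]$ (which need not be a droplet, and whose closure in the Duarte model drifts) with the shape of the droplet $S = D(K)$ and its $\edge$. This requires a careful look at which columns of $S$ correspond to columns of $[K]$, and using that the droplet function $f$ has derivative $\ll 1$ so that $S$ is `almost flat' on the scale $\eps_k^{-6}$.
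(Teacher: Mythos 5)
The overall strategy — dichotomy from Claim~\ref{clm:xz}, taking $S = D'$ in the degenerate case, and otherwise setting $S = D(K)$ where $K$ is the strongly connected component of $[(D'\setminus D)\cap(A\setminus Y)]$ containing $x$ — matches the paper. However, there are two genuine gaps.

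First, and most seriously, the height bound: you invoke Lemma~\ref{lem:subadd} directly on $D$ and $S$, writing $h(D(D\cup S)) \le h(D) + h(S) + 1$. But Lemma~\ref{lem:subadd} needs $D \cup S$ to be strongly connected, and that is not guaranteed here. What you know is that $S$ is strongly connected to $z$, and $z \in D \cup Z$. If $z \in Z \setminus D$, then $z$ is a derived site, possibly several rows above or below $D$, and $S$ may fail to touch $D$ at all. The paper's proof handles this by first bounding how far $z$ can be from $D$: since $z \in [D \cup Y_i]$ for some $i$ with $|Y_i| \le \gamma - 1$, the column $C_z$ connecting $z$ to $D$ has length at most $2(\gamma-1)$, so $h(D(D\cup\{z\})) \le h(D) + 2\gamma - 1$; only then is Lemma~\ref{lem:subadd} applied to the intermediate droplet $D(D\cup\{z\})$ together with $S$, yielding an additive error of $2\gamma$ rather than $1$. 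This is where the constant $\eps_k^{-3}$ in~\eqref{eq:partD*} is actually used: $2\gamma \le \eps_k^{-3}$ is tight, not slack, so your plan to ``absorb the additive constant into $\eps_k^{-3}$'' is not just informal but factually relies on an estimate you haven't derived.

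Second, the width bound: the weak relation $\prec$ is not a purely horizontal gap condition — it also requires $|b_1 - b_2| \le 2$. So the fact that no site of $K$ is $\prec$-related to $D \cup Z$ forces a horizontal gap only for sites of $K$ that lie in a \emph{nearby row} to $z$. Your argument jumps from ``$K$ contains no site $\prec$-related to $D\cup Z$ but $[K]$ reaches $z$'' to ``$[K]$ extends horizontally at least $\eps_k^{-6}-1$'', but this needs the extra step (present in the paper) of producing a specific site $x' \in K \cap A$ with $|b_{x'} - b_z| \le 2$. That step comes from examining how $x$ was infected: since $x$ is in the closure $[K\cap A]$, either $K\cap A$ meets the row of $x$, or it meets both adjacent rows, and in all cases one can choose $x'$ within vertical distance $2$ of $z$. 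Only then does Lemma~\ref{lem:lessthangamma} give $a_z - a_{x'} > \eps_k^{-6}$. Incidentally, the obstacle you flag at the end (reconciling columns of $[K]$ with columns of $S = D(K)$) is not a real issue: the $x$-projection of a strongly connected set is an interval, closure under the Duarte rules never adds a new nonempty column, and the rightmost column of $D(K)$ coincides with the rightmost column of $K$. The genuine work lies in the two places above.
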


We will complete the proof of Lemma~\ref{lem:partition} after the proof of Claim~\ref{clm:saver}.

\begin{figure}
  \centering
  \begin{tikzpicture}[>=latex]
    \draw (0,0) -- (3,0) -- (3,0.5) -- (6,0.5) -- (6,1) -- (9,1) -- (9,1.5) -- (12,1.5); % -- (11,2) -- (12,2);
%    \draw (0,3) -- (2,3) -- (2,3.5) -- (5,3.5) -- (5,4) -- (8,4) -- (8,4.5) -- (11,4.5) -- (11,5) -- (12,5);
    \draw (0.5,0) rectangle (1,0.5) (1.5,1) rectangle (2,1.5) (3.5,1.5) rectangle (4,2) (8,2.5) rectangle (8.5,3);
    \draw [densely dashed] (1,0.5) -- (3,0.5) (2,1.5) -- (9,1.5) (4,2) -- (12,2) (9,2.5) (8.5,3) -- (12,3) (1.5,1) -- (1.5,0.5) (8,2.5) -- (8,2);
    \draw [densely dotted] (-1,0) -- (-1,1.5) -- (0,1.5) -- (0,2.5) -- (2,2.5) -- (2,3) -- (6.5,3) -- (6.5,4) -- (12,4);
    \draw [densely dashed] (8,3.5) rectangle (8.5,4);
    \draw (5.5,3.5) rectangle (6,4);
    \draw (4,3.5) -- (8.5,3.5) -- (8.5,5);
    \node at (10,0.75) {$D$};
    \node at (4.5,4) {$S$};
    \draw (5.75,3.75) -- (5.75,4.25) node [above] {$x'$};
    \draw (8.25,3.75) -- (8.25,4.25) node [above] {$x$};
    \draw (8.25,2.75) -- (8.75,2.75) node [right] {$z$};
  \end{tikzpicture}
  \caption{The setup in Claim~\ref{clm:saver}. The region below the solid line at the bottom of the figure is $D$; that above and to the left of the solid line at the top of the figure is $S$. Solid boxes are elements of $A$. The dashed lines bound the elements of the closure $Z = [D\cup Y]$. The dotted line bounds the set of sites weakly connected to $Y_i$. The sites $x$, $x'$ and $z$ are as in the claim. (Note that $x$ is not in $A$, so it is indicated by a dashed box. In this example we have $z\in A$, but that need not be the case; similarly, $x$ is shown as the bottom-right-hand element of $S$, which it need not be.)}
  \label{fig:claim}
\end{figure}
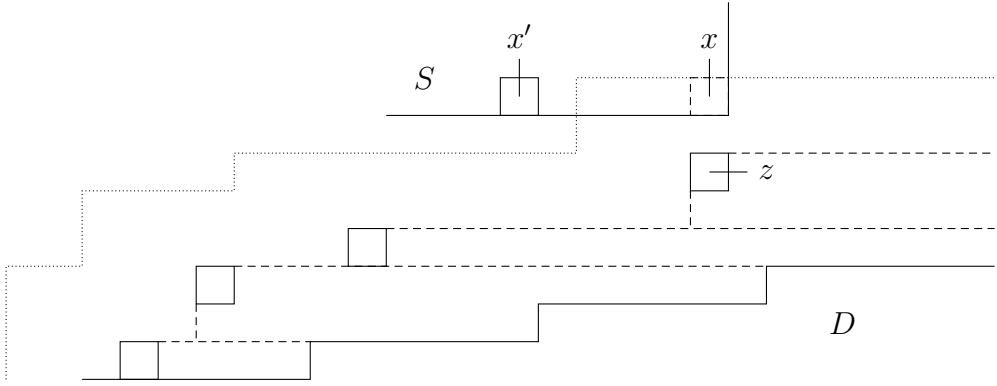

%\begin{proof}[Proof of Claim~\ref{clm:saver}]\let\qednow\qedsymbol\renewcommand{\qedsymbol}{}
\begin{clmproof}{clm:saver}
To begin, note that if~\eqref{eq:supersaver} holds then we may take $S=D'$, since then $h(S) = h(D') \geq \eps_k^{-5}$ by the assumption of Lemma~\ref{lem:partition}. So let us assume~\eqref{eq:supersaver} does not hold, and that therefore, by Claim~\ref{clm:xz}, there exist sites $z\in D\cup Z$ and $x\in [(D'\setminus D)\cap (A\setminus Y)]$ such that $z$ and $x$ are strongly connected. Without loss of generality, let us assume that in fact $z \in [D\cup Y_1]$, by~\eqref{eq:partZ}. Let $S \in \< (D'\setminus D)\cap (A\setminus Y) \>$ be the droplet spanned by the strongly connected component of $[(D'\setminus D)\cap (A\setminus Y)]$ containing $x$. We will show that $S$ is the desired droplet, i.e., that it has all of the claimed properties.

%Let $\D$ be the output of the spanning algorithm with input $(D'\setminus D)\cap (A\setminus Y)$, and let $S\in\D$ be the droplet spanned by the strongly connected component of $[(D'\setminus D)\cap (A\setminus Y)]$ containing $x$.

First we must show that the dimensions of $S$ satisfy the conditions of~\eqref{eq:partD*}. We begin with the height condition. If $z \in D$ then $S$ and $D$ are strongly connected, in which case
$$h\big( D( D \cup S ) \big) \, \le \, h(D) + h(S) +1,$$
by Lemma~\ref{lem:subadd}. So assume that $z \in [D\cup Y_1]\setminus D$, and let $D_z = D(C_z)$, where $C_z$ is the minimal column of (consecutive) sites containing $z$ and strongly connected to $D$. By the definition of the weak relation and the bound $|Y_1|\leq\gamma-1$, and since $p$ (and therefore $f'(0)$) is sufficiently small, it follows that $|C_z| \leq 2(\gamma - 1)$, and therefore $h\big( D(C_z) \big) \leq 2(\gamma - 1)$. Hence, by Lemma~\ref{lem:subadd} we have
\[
h\big( D(D\cup\{z\}) \big) \leq h(D) + h\big( D(C_z) \big) + 1 \leq h(D) + 2\gamma - 1.
\]
Now, since $z$ and $x$ are strongly connected, it follows again from Lemma~\ref{lem:subadd}, this time applied to droplets $D(D \cup \{z\})$ and $S$, that
\[ 
h\big( D(D\cup S) \big) \leq h\big( D(D\cup\{z\}) \big) + h(S) + 1 \leq h(D) + h(S) + 2\gamma.
\]
Since $2\gamma \le \eps_k^{-3}$, it follows that the height condition in~\eqref{eq:partD*} holds.

For the width condition in~\eqref{eq:partD*}, notice that since $x\in[S\cap A]$ (but $x\notin A$), at least one of the following must hold:
\begin{itemize}
\item $S\cap A$ has non-empty intersection with the row containing $x$;
\item $S\cap A$ has non-empty intersection with the row immediately above $x$ and the row immediately below $x$.
\end{itemize}
In either case, since $x$ and $z$ are strongly connected, there exists $x'\in S\cap A$ differing from $z$ in its vertical coordinate by at most $2$. Note moreover that we can choose $x'$ to be in the same strongly connected component of $[S\cap A]$ as $x$. Now since $x'\in A\setminus(D\cup Z)$, we cannot have $z\prec x'$, by Lemma~\ref{lem:lessthangamma}. Hence, writing $x'=(a_1,b_1)$ and $z=(a_3,b_3)$, it follows that $w(S) \ge a_3 - a_1 - 1 > \eps_k^{-6}$, which implies the claimed bound on $w(S)$.

Finally, we must show that the rightmost $\eps_k^{-6}-1$ columns of $S$ all contain an element of $(D'\setminus D)\cap A$. But this follows from the fact that $x$ and $x'$ lie in the same strongly connected component of $[S \cap A]$, using the bound $a_3-a_1 > \eps_k^{-6}$.
%\end{proof}
\end{clmproof}

We now finish the proof of Lemma~\ref{lem:partition}. Let $S$ be the (saver) droplet whose existence is guaranteed by Claim~\ref{clm:saver}. Set $D_1$ to be equal to $D(D\cup S)$, unless $h\big(D(D\cup S)\big) - h(D) < 2\gamma$, in which case instead set $D_1$ to be any droplet such that $D(D\cup S) \sqsubset D_1 \sqsubset D'$ and $2\gamma \leq h(D_1) - h(D) \leq 2\gamma + 1$ (cf.~the second paragraph of the proof of the lemma). Then we have $w(S) \geq \eps_k^{-6} - 1$ by Claim~\ref{clm:saver}, and $h(S) \geq h(D_1) - h(D) - \eps_k^{-3}$ if $D_1 = D(D\cup S)$, also by Claim~\ref{clm:saver}. On the other hand, if $D_1$ is larger than $D(D\cup S)$ then
\[
h(D_1) - h(D) - \eps_k^{-3} \leq 2\gamma + 1 - \eps_k^{-3} \leq 1,
\]
and $h(S) \geq 1$ by the definition of the height of a droplet. Thus in either case $S$ satisfies the conditions of Definition~\ref{def:partition}~(2).

Finally, we note (once again) that $\Delta(D_1,D')$ occurs, by Observation~\ref{obs:Delta} (using the fact that $S$ being spanned by $(D' \setminus D) \cap A$ implies $S$ is also spanned by $(D_1 \setminus D) \cap A$, since $S \subset D_1 \subset D'$), and, for the induction on $\lfloor h \rfloor$, that $h(D') - h(D_1) \leq h(D') - h(D) - 1$. Thus, we are done by induction.
\end{proof}

From here, the proof of Lemma~\ref{lem:crossings} is no more than a calculation. First, we establish a bound for the probability of the existence of saver droplets.

\begin{lemma}\label{lem:calcsaver}
Let $\PP = (D_i)_{i=0}^m$ be a satisfied partition for $(D,D')$, where $D$ and $D'$ satisfy the conditions of Lemma~\ref{lem:crossings}. Let $w := w(D')$, and suppose that $\ih(k)$ holds. Then, for each $1\leq i\leq m-1$, the probability that $(D_i \setminus D_{i-1}) \cap A$ spans a saver droplet (that is, a droplet satisfying the conditions of Definition~\ref{def:partition}~(2)) is at most
\[
w^{O(1)} \cdot p^{(1-\eps_k)(1-\eps_k^2) y_i/2},
\]
where $y_i := h(D_i) - h(D_{i-1})$.
\end{lemma}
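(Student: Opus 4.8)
The plan is to bound the probability that $(D_i \setminus D_{i-1}) \cap A$ spans a saver droplet $S_i$ by summing, over all admissible droplets $S$, the probability that $S$ is internally spanned (or that its rightmost columns are all occupied), and then to check that the resulting bound is dominated by the stated expression $w^{O(1)} p^{(1-\eps_k)(1-\eps_k^2)y_i/2}$. First I would fix $i$ and write $y_i = h(D_i) - h(D_{i-1})$, noting that by Definition~\ref{def:partition} we have $2\gamma \le y_i \le 5\gamma = O(\eps_k^{-3})$ when $i = m$ and $2\gamma \le y_i$ in general, and that a saver droplet $S$ must satisfy $w(S) \ge \eps_k^{-6} - 1$ and $h(S) \ge y_i - \eps_k^{-3}$. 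Since $S$ is spanned by a subset of $(D_i \setminus D_{i-1}) \cap A \subset D' \cap A$, and $S$ has height at least $y_i - \eps_k^{-3} \ge 2\gamma - \eps_k^{-3} \ge \gamma \ge \eps_k^{-5}$... wait, we must be careful: the definition allows the alternative that $S$ has height possibly as small as... no — if $h(S) < \eps_k^{-5}$ then the rightmost $\eps_k^{-6}-1$ columns of $S$ are all occupied by $A$, which is an event of probability at most $p^{\eps_k^{-6}-1}$, and this is already far smaller than the target bound since $y_i = O(\eps_k^{-3})$ forces $p^{(1-\eps_k)(1-\eps_k^2)y_i/2} \gg p^{\eps_k^{-6}}$. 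So that branch is handled trivially by a union bound over the $w^{O(1)}$ choices of such a droplet (using Lemma~\ref{lem:numdroplets}, with the width at most $w = w(D')$).

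The main case is therefore $h(S) \ge \eps_k^{-5}$, and here I would use the induction hypothesis $\ih(k)$, which applies because $h(S) \le h(D') \le p^{-(2/3)^k}(\log 1/p)^{-1}$ by the hypotheses of Lemma~\ref{lem:crossings} (and the containment $S \subset D'$ — or at least $S$ has width at most that of $D'$, so its height is controlled). Thus $\P_p(\ispan(S)) \le p^{(1-\eps_k)h(S)/2} \le p^{(1-\eps_k)(y_i - \eps_k^{-3})/2}$. I would then take the union bound over all droplets $S$ with $w(S) \le w(D')$ and source in a bounded region relative to $D_{i-1}$: by Lemma~\ref{lem:numdroplets} there are $w^{O(1)}$ possibilities for the shape of $S$ at each source location, and $O(w \cdot h(D')) = w^{O(1)}$ source locations to consider (using $h(D') \le p^{-1}\log 1/p = w^{O(1)}$, or more directly that everything of interest lies within $D'$). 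This yields the total bound
\[
w^{O(1)} \cdot p^{(1-\eps_k)(y_i - \eps_k^{-3})/2} = w^{O(1)} \cdot p^{-(1-\eps_k)\eps_k^{-3}/2} \cdot p^{(1-\eps_k)y_i/2}.
\]
The final arithmetic step is to absorb the factor $p^{-(1-\eps_k)\eps_k^{-3}/2}$: since $y_i \ge 2\gamma = 2\lfloor \eps_k^{-3}/2 \rfloor \ge \eps_k^{-3} - 1$, we have $\eps_k^2 y_i / 2 \ge (\eps_k^{-1} - \eps_k^2)/2 \ge \eps_k^{-3} \cdot \eps_k^{2}/2 \cdot(\text{something})$ — more carefully, $(1-\eps_k)\eps_k^{-3}/2 \le (1-\eps_k)\eps_k^2 y_i / 2 \cdot (1 + o(1))$ provided $\eps_k^{-3} \le \eps_k^2 y_i$, i.e. $y_i \ge \eps_k^{-5}$, which need NOT hold. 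So instead I would argue: $p^{-(1-\eps_k)\eps_k^{-3}/2}$ is a constant (depending on $\eps_k$, hence on $\eps$, $k$), and is absorbed into $w^{O(1)}$ only if the $O(1)$ is allowed to depend on these parameters — which the lemma statement does permit, since it does not claim the $O(1)$ is absolute. Alternatively, and cleanly, I would note that $p^{-(1-\eps_k)\eps_k^{-3}/2} \le p^{-\eps_k^{-3}/2} \le p^{-\eps_k^2 y_i / 2}$ whenever $\eps_k^{-3}/2 \le \eps_k^2 y_i/2$; when this fails we have $y_i < \eps_k^{-5}$ and the constant factor $p^{-\eps_k^{-3}/2} \le p^{-\eps_k^{-3}/2}$ can simply be bounded by a constant and absorbed, giving $w^{O(1)} p^{(1-\eps_k)(1-\eps_k^2)y_i/2}$ as claimed after noting $(1-\eps_k)(1-\eps_k^2) \le (1-\eps_k)$.

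I expect the main obstacle to be bookkeeping the parameters correctly: one must verify that $\ih(k)$ genuinely applies to $S$ (checking $h(S)$ lies in the valid range, using $S$'s width being bounded by $w(D')$ and the height bounds from Lemma~\ref{lem:crossings}), and one must carefully handle the degenerate case $h(S) < \eps_k^{-5}$ via the "occupied columns" condition rather than via $\ih(k)$. The arithmetic identifying the error term $\eps_k^{-3}/2$ with an $\eps_k^2 y_i/2$ correction (using $y_i \ge 2\gamma \gtrsim \eps_k^{-3}$) is the only genuinely quantitative point, and it goes through precisely because $\gamma$ was chosen to be $\lfloor \eps_k^{-3}/2 \rfloor$; the remaining losses are all either constant (absorbed into $w^{O(1)}$) or of the form $p^{O(\eps_k^2 y_i)}$, which is exactly the slack built into the exponent $(1-\eps_k)(1-\eps_k^2)$ versus the naive $(1-\eps_k)$.
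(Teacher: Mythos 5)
Your sketch has the right overall structure (union bound over droplet shapes via Lemma~\ref{lem:numdroplets}, $\ih(k)$ to bound the probability of spanning in the main case, and the ``rightmost columns all occupied'' alternative for the degenerate case $h(S) < \eps_k^{-5}$), but there are two genuine gaps.

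The first and more serious gap is your justification for applying $\ih(k)$ to the saver droplet $S$. You claim $h(S) \le h(D') \le p^{-(2/3)^k}(\log 1/p)^{-1}$ ``by the hypotheses of Lemma~\ref{lem:crossings}''. But Lemma~\ref{lem:crossings} only imposes an upper bound on the \emph{difference} $h(D') - h(D)$, not on $h(D')$ itself; and even importing the hypotheses of Lemma~\ref{lem:delta} only gives $h(D') \le p^{-(2/3)^{k-1}}(\log 1/p)^{-1}$, which is strictly weaker than the bound $\ih(k)$ requires (since $(2/3)^{k-1} > (2/3)^k$). So $h(S)$ may be too large for $\ih(k)$ to apply, and you cannot repair this by tightening the hypotheses. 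The paper handles this by passing to a sub-droplet $S' \subset S$, still spanned by $(D_i \setminus D_{i-1}) \cap A$, obtained from Lemma~\ref{lem:AL} (the Aizenman--Lebowitz-type lemma) so that $p^{-(2/3)^k}/(2\log 1/p) \le h(S') \le p^{-(2/3)^k}/\log 1/p$; then $\ih(k)$ applies to $S'$, and one checks that the height lower bound transfers (indeed $h(S') \ge h(D') - h(D) > y_i - \eps_k^{-3}$). This sub-droplet step is an essential idea your proposal is missing.

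The second gap is in the final arithmetic. You correctly notice that $p^{(1-\eps_k)(y_i - \eps_k^{-3})/2} \le p^{(1-\eps_k)(1-\eps_k^2)y_i/2}$ requires $y_i \ge \eps_k^{-5}$, which need not hold, but both of your proposed workarounds fail. Absorbing $p^{-(1-\eps_k)\eps_k^{-3}/2}$ into $w^{O(1)}$ with a constant depending on $\eps_k$ contradicts the paper's explicit convention that $O(\cdot)$ constants are absolute; and $p^{-\eps_k^{-3}/2}$ is not a constant --- it diverges as $p \to 0$ --- so it cannot simply be bounded and absorbed. The correct repair is to avoid going through $y_i - \eps_k^{-3}$ at all: in the main case you have \emph{both} $h(S') \ge y_i - \eps_k^{-3}$ \emph{and} $h(S') \ge \eps_k^{-5}$, and these together directly give $h(S') \ge (1-\eps_k^2)y_i$ (if $y_i \le \eps_k^{-5}$ the second inequality already gives $h(S') \ge y_i$; if $y_i > \eps_k^{-5}$ then $\eps_k^2 y_i > \eps_k^{-3}$ and the first inequality does the work), whence $p^{(1-\eps_k)h(S')/2} \le p^{(1-\eps_k)(1-\eps_k^2)y_i/2}$. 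This is precisely what the paper's argument does, phrased via $y_i \le h(S') + \eps_k^{-3} \le (1+\eps_k^2)h(S') \le h(S')/(1-\eps_k^2)$.
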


\begin{proof}
First we apply Lemma~\ref{lem:numdroplets} to count the number of choices for the saver $S_i$. Indeed, if the integer parts of the coordinates of the source of $S_i$ are fixed, and if $\lfloor w(S_i) \rfloor = a$, then the lemma implies that there are at most $a^{O(1)}$ distinct choices for $S_i$. Now, $S_i$ is spanned by $(D_i \setminus D_{i-1}) \cap A$, and therefore we have the inclusions $S_i \subset D_i \subset D'$, since $D_i$ and $D'$ are droplets. So the number of choices for the integer part of the source of $S_i$ is at most $w^2$. Hence, the total number of choices for $S_i$ is at most $w^{O(1)}$, independently of $h(S_i)$ and $y_i$. It only remains to show that the probability a given droplet $S_i$ satisfies the conditions of a saver droplet in Definition~\ref{def:partition}~(2) is at most
\begin{equation}\label{eq:saverprob}
p^{(1-\eps_k)(1-\eps_k^2) y_i/2}.
\end{equation}

Let $S_i$ be a droplet spanned by $(D_i \setminus D_{i-1}) \cap A$, such that the width and height of $S_i$ satisfy the conditions in~\eqref{eq:satisfied}, which we recall again here:
\begin{equation}\label{eq:satisfied2}
w(S_i) \geq \eps_k^{-6}-1 \qquad \text{and} \qquad h(S_i) \geq h(D_i)-h(D_{i-1}) - \eps_k^{-3}.
\end{equation}
 Note that it is possible that $h(S_i)$ is large: indeed it is possible that it is much larger than $y_i = h(D_i)-h(D_{i-1})$. If that is the case, then we may pass to a sub-droplet $S_i' \subset S_i$ as follows: if $h(S_i) \leq p^{-(2/3)^k}/(\log 1/p)$ then we set $S_i' := S_i$; otherwise, by Lemma~\ref{lem:AL}, we may choose a droplet $S_i' \subset S_i$ spanned by $(D_i\setminus D_{i-1})\cap A$ such that
\begin{equation}\label{eq:calcDstar}
%h(D_i)-h(D_{i-1})-\eps_k^{-3} \leq
\frac{p^{-(2/3)^k}}{2\log 1/p} \leq h(S_i') \leq \frac{p^{-(2/3)^k}}{\log 1/p}.
\end{equation}
In either case we have
\begin{equation}\label{eq:calcheight}
h(S_i') \geq h(D_i) - h(D_{i-1}) - \eps_k^{-3},
\end{equation}
because if $S_i' = S_i$ then this is just the second part of~\eqref{eq:satisfied2}, and if $S_i' \subsetneq S_i$ then
\[
h(S_i') \geq \frac{p^{-(2/3)^k}}{2\log 1/p} \geq h(D') - h(D) > h(D_i) - h(D_{i-1}) - \eps_k^{-3}.
\]
The probability $S_i'$ is spanned by $(D_i \setminus D_{i-1}) \cap A$ is at most the probability it is internally spanned, since if $S_i'$ is spanned by $(D_i \setminus D_{i-1}) \cap A$, then it is also spanned by $(D_i \setminus D_{i-1}) \cap A \cap S_i'$. Therefore, applying $\ih(k)$ (using the upper bound on $h(S_i')$ from~\eqref{eq:calcDstar}), we obtain
\[
%begin{equation}\label{eq:saverih}
\P_p\big(\ispan(S_i')\big) \leq p^{(1-\eps_k)h(S_i')/2}.
\]
%end{equation}
For droplets $S_i'$ with $h(S_i') \geq \eps_k^{-5}$, this bound will be sufficient. Indeed, in such cases we have $\eps_k^2 \cdot h(S_i') \geq \eps_k^{-3}$, and hence, by~\eqref{eq:calcheight},
\[
y_i \leq h(S_i') + \eps_k^{-3} \leq (1 + \eps_k^2) \cdot h(S_i') \leq \frac{h(S_i')}{1-\eps_k^2},
\]
so~\eqref{eq:saverprob} holds. For smaller saver droplets we need a better bound, because in these cases the error of $\eps_k^{-3}$ in the height bound in~\eqref{eq:satisfied2} is significant relative to $h(S_i)$.\footnote{We have returned to using the original saver droplet because if $h(S_i)$ is small then we do not need to pass to a sub-droplet $S_i'$.}  We obtain this by using the final condition of a saver droplet in Definition~\ref{def:partition}~(2): that if $h(S_i) < \eps_k^{-5}$ then the rightmost $\eps_k^{-6}-1$ columns of $S_i$ all contain an element of $(D_i \setminus D_{i-1}) \cap A$. The probability that this occurs is at most
\[
\big(ph(S_i)\big)^{\eps_k^{-6}-1} \leq p^{2\eps_k^{-5}},
\]
if $p$ is sufficiently small, since we are assuming $h(S_i) \leq \eps_k^{-5}$, and we have used the (easy) fact that $|\partial(S_i)| \leq h(S_i)$. The bound in~\eqref{eq:saverprob} now follows, since $h(S_i) < \eps_k^{-5}$ implies $y_i \leq 2\eps_k^{-5}$.
\end{proof}

We can now complete the proof of Lemma~\ref{lem:crossings}.

\begin{proof}[Proof of Lemma~\ref{lem:crossings}]
%Let us set $C=\eps_k^{-2}$ in Definitions~\ref{def:partition} and~\ref{def:partition}.
We shall show that the probability that $(D,D')$ admits a satisfied partition is at most the bound claimed in~\eqref{eq:crossings}; the lemma will then follow from Lemma~\ref{lem:partition}.

Thus, suppose $\PP=(D_i)_{i=0}^m$ is a satisfied partition for $(D,D')$, and let $w:=w(D')$. To start, we claim that for each $1\leq i\leq m-1$, the probability that $(D_i\setminus D_{i-1})\cap A$ contains a weakly connected $D_{i-1}$-rooted set $Y_i$ of size $\gamma = \lfloor \eps_k^{-3}/2 \rfloor$, given that $y_i:=h(D_i)-h(D_{i-1})\leq 2\gamma+2$, is at most
\begin{equation}\label{eq:calcstart}
2w \cdot \bigg(\frac{c_k}{f'(w)}\bigg)^{\eps_k^{-3}/2-1} \cdot p^{\eps_k^{-3}/2},
\end{equation}
where $c_k$ depends only on $\eps_k$. To see this, first note that each $y \in Y_i$ lies within vertical distance $2\gamma + 1$ of $D_{i-1}$, because $|Y_i|=\gamma$ and $Y_i$ is $D_{i-1}$-rooted. Then for each $y \in Y_i$, there are at most $O(\gamma) / f'(w)$ sites $y'$ such that $y \prec y'$ (here we have used that $f'$ is decreasing). Hence, when searching for elements of $Y_i$ greedily, there are only $c_k/f'(w)$ choices for each new site. Now if $p$ is sufficiently small then~\eqref{eq:calcstart} is at most
\begin{equation}\label{eq:calcgamma}
w \cdot \bigg(\frac{p}{f'(w)}\bigg)^{(1-\eps_k)y_i/2},
\end{equation}
since $2\gamma \leq 	y_i \leq 2\gamma + 2 \leq \eps_k^{-3} + 2$ (and $\eps_k$ being sufficiently small) implies $\eps_k^{-3}/2-1 \geq (1-\eps_k)y_i/2$, and since $2p \cdot c_k^{\eps_k^{-3}/2} \leq 1$, because $p$ is sufficiently small.

On the other hand, for each $1\leq i\leq m-1$, the probability that $(D_i\setminus D_{i-1})\cap A$ spans a saver droplet $S_i$ (that is, $S_i$ satisfies the conditions of Definition~\ref{def:partition}~(2)) is at most
\begin{equation}\label{eq:calcsaver}
w^{O(1)} \cdot p^{(1-\eps_k)(1-\eps_k^2)y_i/2},
\end{equation}
by Lemma~\ref{lem:calcsaver}, where as usual $y_i := h(D_i)-h(D_{i-1})$.

Next we combine the bound for weakly connected sets from~\eqref{eq:calcgamma} with the bound for saver droplets from~\eqref{eq:calcsaver}. If one defines for each $1 \leq i \leq m-1$ the event $\EE_i$ to be that $(D_i \setminus D_{i-1}) \cap A$ either contains a weakly connected set of size $\gamma$ or spans a saver droplet, then the events $\EE_i$ are independent as $i$ varies, even though the saver droplet spanned by $(D_i \setminus D_{i-1}) \cap A$ may not be fully contained in $D_i \setminus D_{i-1}$. This is because $\EE_i$ only depends on the intersection of $D_i \setminus D_{i-1}$ with $A$, and the sets $D_i \setminus D_{i-1}$ are disjoint for different values of $i$. Moreover, by~\eqref{eq:calcgamma} and~\eqref{eq:calcsaver},
\begin{equation}\label{eq:calceither}
\P_p(\EE_i) \leq w^{O(1)} \cdot \bigg(\frac{p}{f'(w)}\bigg)^{(1-\eps_k)(1-\eps_k^2)y_i/2}
\end{equation}
for each $1 \leq i \leq m-1$. Observe also that
\begin{equation}\label{eq:telescope}
\sum_{i=1}^{m-1} y_i = h(D_{m-1}) - h(D) = y - \big(h(D') - h(D_{m-1})\big) \geq y - 3\eps_k^{-3},
\end{equation}
by Definition~\ref{def:partition}. Noting that we always have
\begin{equation}\label{eq:mchoices}
m = O(\eps_k^3 y),
\end{equation}
since $h(D_i) - h(D_{i-1}) \geq 2\gamma \geq 2\eps_k^{-3}/3$ for each $1 \leq i \leq m-1$, it follows from~\eqref{eq:calceither} and~\eqref{eq:telescope} that
\begin{equation}\label{eq:calcprod}
\prod_{i=1}^{m-1} \P_p(\EE_i) \leq w^{O(\eps_k^3 y)} \cdot \left(\frac{p}{f'(w)}\right)^{(1-\eps_k)(1-\eps_k^2)(y-3\eps_k^{-3})/2}.
\end{equation}
In order to bound the probability that there is a satisfied partition for $(D,D')$, we take the union bound over the choices of $m$ and $D_1,\dots,D_{m-1}$. By Lemma~\ref{lem:numdroplets}, the number of choices for each $D_i$ is $w^{O(1)}$ (cf.~the proof of Lemma~\ref{lem:calcsaver}), so the total number of choices for $m$ and $D_1,\dots,D_{m-1}$ is at most $w^{O(\eps_k^3 y)}$, by~\eqref{eq:mchoices}. Hence, by~\eqref{eq:calcprod}, the probability there is a satisfied partition for $(D,D')$ is at most
\[
w^{O(\eps_k^3 y)} \cdot \left(\frac{p}{f'(w)}\right)^{(1-\eps_k)(1-\eps_k^2)(y-3\eps_k^{-3})/2}.
\]
We are given that $y \geq \eps_k^{-5}$, and therefore $y-3\eps_k^{-3} \geq y(1-3\eps_k^2)$. Hence, the preceeding probability is at most
\[
w^{O(\eps_k^3 y)} \cdot \left(\frac{p}{f'(w)}\right)^{(1-1.01\eps_k)y/2},
\]
and so as noted earlier, we are done by Lemma~\ref{lem:partition}.
\end{proof}

The deduction of Lemma~\ref{lem:delta} from Lemma~\ref{lem:crossings} proceeds as follows. Given $D\subset D'$, let $D_v$ be the minimal droplet such that $D\subset D_v\sqsubset D'$, and let $D_h$ be the maximal droplet such that $D\sqsubset D_h\subset D'$. Observe that
\begin{equation}\label{eq:vertorhoriz}
\Delta(D,D') \Rightarrow \Delta(D_v,D') \wedge \Delta(D_h,D').
\end{equation}
Now, either $h(D')-h(D_v)$ is large, in which case we bound the probability of the event $\Delta(D_v,D')$ using Lemma~\ref{lem:crossings}, or $w(D')-w(D_h)$ is large, in which case we bound the probability of the event $\Delta(D_h,D')$ directly by noting that every column of $D'\setminus D_h$ must intersect $A$ (see Figure~\ref{fig:delta}). We now give the details.

\begin{figure}
 \begin{minipage}{.45\textwidth}
  \centering
  \begin{tikzpicture}[>=latex,scale=0.7]
    \draw [domain=0:8] plot (\x,{1.4*ln(1+\x)});
    \draw [domain=0:8] plot (\x,{-1.4*ln(1+\x)});
    \draw [domain=-1:1,variable=\t] plot (8,{1.4*ln(9)*\t});
    \draw [domain=0:5,densely dashed] plot ({\x+2},{-0.2+1.4*ln(1+\x)});
    \draw [domain=0:5,densely dashed] plot ({\x+2},{-0.2-1.4*ln(1+\x)});
    \draw [domain=-1:1,variable=\t] plot (7,{1.4*ln(8)*\t});
%    \draw [domain=0:1,variable=\t] plot (7,{-0.2+1.4*ln(6) + (1.4*ln(8/6)+0.2)*\t});
%    \draw [domain=0:1,variable=\t] plot (7,{-0.2-1.4*ln(6) - (1.4*ln(8/6)-0.2)*\t});
    \draw [->] (7.2,2) -- (7.8,2);
    \draw [->] (7.2,0) -- (7.8,0);
    \draw [->] (7.2,-2) -- (7.8,-2);
    \node at (1.5,0.4) {$D_h$};
  \end{tikzpicture}
 \end{minipage}
 \begin{minipage}{.45\textwidth}
  \centering
  \begin{tikzpicture}[>=latex,scale=0.7]
    \draw [domain=0:8] plot (\x,{1.4*ln(1+\x)});
    \draw [domain=0:8] plot (\x,{-1.4*ln(1+\x)});
    \draw [domain=-1:1,variable=\t] plot (8,{1.4*ln(9)*\t});
    \draw [domain=0:5.5] plot ({\x+2.5},{-0.2+1.4*ln(1+\x)});
    \draw [domain=0:5.5] plot ({\x+2.5},{-0.2-1.4*ln(1+\x)});
    \draw [domain=-1:1,variable=\t,densely dashed] plot (7.5,{-0.2+1.4*ln(6)*\t});
%    \draw [domain=5:5.5,densely dotted,thick] plot ({\x+2.5},{-0.2+1.4*ln(1+\x)});
%    \draw [domain=5:5.5,densely dotted,thick] plot ({\x+2.5},{-0.2-1.4*ln(1+\x)});
    \draw [->] (4,1.2) -- (4,2.1);
    \draw [->] (6,2) -- (6,2.6);
    \draw [->] (4,-1.6) -- (4,-2.2);
    \draw [->] (6,-2.4) -- (6,-2.7);
%    \draw (7.75,0) -- (7,0) node [left] {$D_v$};
    \node at (6,0) {$D_v$};
  \end{tikzpicture}
 \end{minipage}
  \caption{The two cases of the proof of Lemma~\ref{lem:delta}. Both figures show the inner droplet $D$ and the outer droplet $D'$. On the left (Case~1), $w(D') - w(D_h)$ is large, the intermediate droplet shown is $D_h$, and in the proof we bound $\P_p\big(\Delta(D,D')\big)$ directly by noting that every column of $D' \setminus D_h$ must intersect $A$. On the right (Case~2), $h(D') - h(D_v)$ is large, the intermediate droplet shown is $D_v$, and in the proof we bound $\P_p\big(\Delta(D,D')\big)$ using Lemma~\ref{lem:crossings}.}
  \label{fig:delta}
\end{figure}
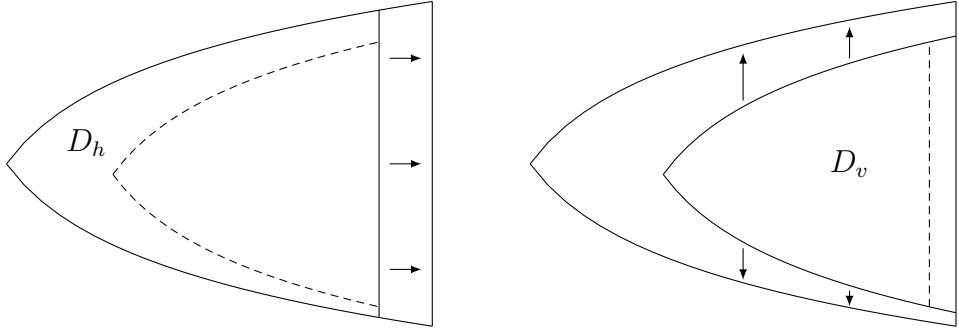

\begin{proof}[Proof of Lemma~\ref{lem:delta}]
Suppose that $\Delta(D,D')$ occurs, and let $D_v$ and $D_h$ be as above. By~\eqref{eq:vertorhoriz}, we have
\begin{equation}\label{eq:minupright}
\P_p\big(\Delta(D,D')\big) \leq \min \Big\{ \P_p\big(\Delta(D_v,D')\big), \, \P_p\big(\Delta(D_h,D')\big) \Big\}.
\end{equation}
To prove the lemma, we shall show that at least one term inside the minimum is at most the right-hand side of~\eqref{eq:delta}.

Let
\[
x_v := w(D_v)-w(D), \quad x_h := w(D_h)-w(D), \quad \text{and} \quad x:=w(D')-w(D),
\]
and note that $x_v+x_h=x$, because $w(D_v)-w(D)=w(D')-w(D_h)$. Note also that $x_v \in \Z$, since $D$ and $D_v$ have the same source. Let $y := h(D') - h(D)$, so that we have
\begin{equation}\label{eq:yoverx}
\frac{y}{x} = \frac{h(D') - h(D)}{w(D') - w(D)} = 2 \cdot \frac{f\big(w(D')\big) - f\big(w(D)\big)}{w(D') - w(D)}.
\end{equation}
Thus, using Observation~\ref{obs:f}~$(d)$ and the mean value theorem, we have that if $k\geq 1$ (and hence $f\big(w(D')\big) < h(D') \leq 1/4p$), then
\begin{equation}\label{eq:fprimeineq}
\frac{\log 1/p}{\eps^3} \cdot y \leq x \leq \frac{2\log 1/p}{\eps^3} \cdot y.
\end{equation}
%(The extra factor of $1/2$ comes from the two contributions to $y$: the contribution from the part of $D'\setminus D$ that lies above $D$, and the part that lies below $D$.)
\smallskip

\noindent {\bf Case 1.} First suppose that $x_v \geq \eps_k x / 50$. In this case we shall show that the probability $\P_p\big(\Delta(D_h,D')\big)$ of `crossing horizontally' is small: in fact we shall show that it is at most $p^y$, which is more than sufficient for the lemma.

The event $\Delta(D_h,D')$ implies that every column of $D'\setminus D_h$ is non-empty. If $k\geq 1$ then, since $x_v$ is an integer, it follows that
\[
\P_p\big(\Delta(D_h,D')\big) \leq \big( p \cdot h(D') \big)^{\eps_k x/50} \leq e^{-x},
%\big( 1-(1-p)^{h(D')} \big)^{\eps_k x/50} \leq \big( p \cdot h(D') \big)^{\eps_k x/50} \leq \big( p \cdot h(D') \big)^{\eps_k x/50}.
\]
where for the first inequality we have used the fact that $|\partial(D')| \leq h(D')$, and for the second inequality we have used the fact that $p\cdot h(D') = o(1)$ (which is true since $k\geq 1$). Combining this with~\eqref{eq:minupright} and~\eqref{eq:fprimeineq} we have
\begin{equation}\label{eq:delta1}
\P_p\big(\Delta(D,D')\big) \leq \exp\left( - \frac{\log 1/p}{\eps^3} \cdot y \right) \leq p^y.
\end{equation}
On the other hand, if $k=0$ then the probability that every column of $D'\setminus D_h$ is non-empty is at most
\[
\big( 1-(1-p)^{h(D')} \big)^{\eps_0 x/50} \leq \exp \left( - (1-p)^{h(D')} \cdot \frac{\eps^2}{50} \cdot x \right),
\]
where we have again used $|\partial(D')| \leq h(D')$, and we have also substituted $\eps_0 = \eps^2$. Thus, using the inequality $1-p \geq e^{-p-p^2}$ (since $p$ is sufficiently small), we have
\begin{equation}\label{eq:delta3}
\P_p\big(\Delta(D_h,D')\big) \leq \exp \left( - e^{-ph(D')} \cdot \frac{\eps^2}{100} \cdot x \right),
\end{equation}
since $e^{-p^2 h(D')} = 1 - o(1)$. Now observe that
\[
\frac{y}{x} \leq 2 \cdot f'\big(w(D)\big),
\]
by~\eqref{eq:yoverx}, the mean value theorem, and the fact that $f'$ is decreasing (Observation~\ref{obs:f}~$(b)$). Hence,
\[
x \geq \frac{\log 1/p}{\eps^3} \cdot e^{2pf(w(D))} \cdot y \geq \frac{\log 1/p}{2\eps^3} \cdot e^{ph(D)} \cdot y,
\]
by Observation~\ref{obs:f}~$(c)$, the definition of $h(D)$, and the fact that $e^{-p} \geq 1/2$. Inserting this into~\eqref{eq:delta3} and using the bound from~\eqref{eq:minupright} gives
\[
\P_p\big(\Delta(D,D')\big) \leq \exp\left( - e^{-p(h(D')-h(D))} \cdot \frac{\log 1/p}{200\eps} \cdot y \right) \leq \exp\left( - \frac{\log 1/p}{300\eps} \cdot y \right),
\]
since $p\big(h(D')-h(D)\big) = o(1)$. Thus, it follows that
\begin{equation}\label{eq:delta2}
\P_p\big(\Delta(D,D')\big) \leq p^y,
\end{equation}
since $\eps$ is sufficiently small. This together with~\eqref{eq:delta1} establishes the lemma in the case $x_v \geq \eps_k x / 50$. \smallskip

\noindent {\bf Case 2.} So suppose instead that $x_v \leq \eps_k x/50$. First we would like to show that $y':=h(D')-h(D_v)$ is not much smaller than $y$. To that end, note that
\begin{align*}
h(D_v)-h(D) &= 2f\big(w(D_v)\big) - 2f\big(w(D)\big) \\
&\leq 2f'\big( w(D) \big) \cdot x_v \\
&\leq 2f'\big( w(D) \big) \cdot \eps_k x / 50,
\end{align*}
by the mean value theorem and since $f'$ is decreasing. By a similar justification, and using~\eqref{eq:yoverx}, we have
\[
x \leq \frac{1}{2f'\big(w(D')\big)} \cdot y.
\]
Hence,
\[
h(D_v)-h(D) \leq \frac{f'\big(w(D)\big)}{f'\big(w(D')\big)} \cdot \frac{\eps_k}{50} \cdot y = e^{p(h(D')-h(D))} \cdot \frac{\eps_k}{50} \cdot y \leq \frac{\eps_k}{40} \cdot y,
\]
by Observation~\ref{obs:f}~$(c)$, the definition of the height of a droplet, and since $p \big( h(D') - h(D) \big) = o(1)$. Thus
\begin{equation}\label{eq:y'}
y'=h(D')-h(D_v) \geq (1-\eps_k/40)y.
\end{equation}

Note that the conditions of Lemma~\ref{lem:crossings} hold when applied to droplets $D_v$ and $D'$. Indeed, $D_v \sqsubset D'$ by construction; $h(D_v) \geq h(D) \geq \eps_k^{-5}$ by assumption;
\[
h(D') - h(D_v) \geq (1-\eps_k/40) \big( h(D') - h(D) \big) \geq (1-\eps_k/40) \cdot \eps_k^{-6} \geq \eps_k^{-5}
\]
by~\eqref{eq:y'} and assumption; and $h(D') - h(D_v) \leq p^{-(2/3)^k} \big(2\log 1/p\big)^{-1}$ again by assumption. Thus, applying Lemma~\ref{lem:crossings} gives
\[
%\P_p\big( \Delta(D_v,D') \big) \leq w(D')^{O(\eps_k^3 y')} \cdot \left( p \cdot \frac{\log 1/p}{\eps^2} \cdot e^{ph(D')} \right)^{(1-1.01\eps_k)y'/2}.
\P_p\big( \Delta(D_v,D') \big) \leq w(D')^{O(\eps_k^3 y')} \cdot \bigg( \frac{p}{f'\big(w(D')\big)} \bigg)^{(1-1.01\eps_k)y'/2}.
\]
We always have $h(D')\leq p^{-1}\log 1/p$ (regardless of $k$), which implies that
\[
w(D') = \frac{\log 1/p}{\eps^3 p} \Big( e^{p(h(D')-1)} - 1 \Big) \leq p^{-O(1)},
\]
by inverting the function $f$ and using the fact that $h(D') = 2f\big(w(D')\big) + 1$. Hence, also inserting the expression for $f'$ from Observation~\ref{obs:f}~$(c)$,
\[
\P_p\big( \Delta(D_v,D') \big) \leq p^{-O(\eps_k^3 y')} \cdot \left( p \cdot \frac{2\log 1/p}{\eps^3} \cdot e^{ph(D')} \right)^{(1-1.01\eps_k)y'/2}.
\]
Hence, using the (crude) bound
\[
\bigg(\frac{2\log 1/p}{\eps^3}\bigg)^{(1-1.01\eps_k)/2} \leq p^{-O(\eps_k^3)},
\]
we deduce that $\P_p\big( \Delta(D_v,D') \big)$ is at most
\begin{equation}\label{eq:horrible}
\exp\Bigg( - \Big(1-1.01\eps_k\Big) \bigg(\log\frac{1}{p} - ph(D') \bigg) \frac{y'}{2} + O\bigg( \eps_k^3 \log\frac{1}{p} \bigg) y' \Bigg).
\end{equation}

To deal with the final error term in~\eqref{eq:horrible}, we use the fact that $\log 1/p - ph(D') \geq \eps\log 1/p$. Together with~\eqref{eq:y'}, this gives us finally that
\[
\P_p\big( \Delta(D_v,D') \big) \leq \exp\Bigg( - \Big(1-1.1\eps_k\Big) \bigg(\log\frac{1}{p} - ph(D') \bigg) \frac{y}{2} \Bigg).
\]
We are now done by~\eqref{eq:minupright}.
\end{proof}

\section{Small droplets}\label{sec:small}

In this section we will bound the probability that a droplet of height at most $( p \log 1/p )^{-1}$ is internally spanned. Recall from Definition~\ref{def:ih} that, for each $k \geq 0$, we denote the following statement by $\ih(k)$:
\begin{itemize}
\item [] Let $D$ be a droplet of height at most $p^{-(2/3)^k} (\log 1/p)^{-1}$. Then
% IH(k) already has an eq ref: it's eq:ihbound (from section 2)
\[
\P_p\big(\ispan(D)\big) \leq p^{(1-\eps_k) h(D)/2},
\]
where $\eps_k = \eps^2 \cdot (3/4)^k$.
\end{itemize}
Our aim is to prove that $\ih(0)$ holds. This is an immediate consequence of the following two lemmas.

\begin{lemma}\label{lem:basecase}
$\ih(k)$ holds for all sufficiently large $k$.
\end{lemma}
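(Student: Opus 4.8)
The plan is to obtain $\ih(k)$ for all large $k$ by a crude first-moment bound, using only the extremal estimate of Lemma~\ref{lem:extremal}; hierarchies play no role in this step. Recall that if $D$ is internally spanned then $|D\cap A|\ge (h(D)+1)/2\ge h(D)/2$, so
\[
\P_p\big(\ispan(D)\big)\ \le\ \P_p\Big(|D\cap A|\ge \tfrac{h(D)}{2}\Big)\ \le\ \sum_{j\ge h(D)/2}\binom{|D|}{j}p^{\,j}\ \le\ 2\big(|D|\,p\big)^{h(D)/2},
\]
the last step being valid once $|D|\,p\le 1/2$. Thus it is enough to prove that every droplet $D$ with $h(D)\le p^{-(2/3)^k}(\log 1/p)^{-1}$ satisfies $|D|\le \tfrac14 p^{-\eps_k}$: indeed this gives $|D|p\le \tfrac14 p^{\,1-\eps_k}$, whence $2(|D|p)^{h(D)/2}\le 2\cdot 4^{-h(D)/2}\,p^{(1-\eps_k)h(D)/2}\le p^{(1-\eps_k)h(D)/2}$ because $h(D)\ge 1$.

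First I would bound $|D|$ in terms of $h(D)$. Since $f\big(w(D)\big)=\tfrac12(h(D)-1)\le \tfrac{1}{4p}$ for $p$ small (using $h(D)\le p^{-1}(\log 1/p)^{-1}\le \tfrac{1}{2p}$, as $(2/3)^k\le 1$), Observation~\ref{obs:f}$(d)$ gives $f'(x)\ge \eps^3\big(4\log 1/p\big)^{-1}$ for every $x\in[0,w(D)]$, and the mean value theorem (with $f(0)=0$) then yields $w(D)\le \tfrac{4\log 1/p}{\eps^3}f\big(w(D)\big)\le \tfrac{2\log 1/p}{\eps^3}h(D)$. Hence $|D|\le \big(w(D)+1\big)\big(h(D)+1\big)\le \tfrac{9\log 1/p}{\eps^3}h(D)^2$, and substituting the hypothesis on $h(D)$,
\[
|D|\ \le\ \frac{9}{\eps^3}\cdot\frac{p^{-2(2/3)^k}}{\log 1/p}.
\]
In particular $|D|p\le \tfrac{9}{\eps^3}p^{\,1-2(2/3)^k}(\log 1/p)^{-1}\to 0$ as $p\to 0$ whenever $2(2/3)^k<1$, which justifies the hypothesis $|D|p\le 1/2$ used above.

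Finally I would compare the two exponents $2(2/3)^k$ and $\eps_k=\eps^2(3/4)^k$ (see~\eqref{eq:epsk}). Writing $2(2/3)^k=2(8/9)^k(3/4)^k$, one sees that $2(2/3)^k\le \eps^2(3/4)^k=\eps_k$ precisely when $(8/9)^k\le \eps^2/2$, i.e.\ when $k\ge k_0(\eps):=\big\lceil \log_{9/8}(2/\eps^2)\big\rceil$. For such $k$ the displayed bound gives $|D|\le \tfrac{9}{\eps^3}p^{-\eps_k}(\log 1/p)^{-1}\le \tfrac14 p^{-\eps_k}$ for all sufficiently small $p$ (depending only on $\eps$, through $k_0$), which is exactly what was required; when $2(2/3)^k<\eps_k$ strictly there is in fact an entire power $p^{-(\eps_k-2(2/3)^k)}$ to spare. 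This establishes $\ih(k)$ for every $k\ge k_0(\eps)$, as claimed.

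The only real content is this last comparison, and it is less an obstacle than an observation: the bound on $h(D)$ imposed by $\ih(k)$ tightens at rate $2/3$ per level (the ``cost exponent'' being $2(2/3)^k$), whereas the slack $\eps_k$ in the target exponent decays only at the slower rate $3/4$; since $2/3<3/4$, for large $k$ the admissible droplets are so small that the trivial union bound over $\{|D\cap A|\ge h(D)/2\}$ already beats $p^{(1-\eps_k)h(D)/2}$. All the estimates are routine; the one point to watch is keeping the restriction $h(D)\le p^{-1}(\log 1/p)^{-1}$ in play, so that Observation~\ref{obs:f}$(d)$ applies and $|D|$ is controlled polynomially in $p^{-1}$.
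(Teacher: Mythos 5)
Your proposal is correct and takes essentially the same approach as the paper: both use Lemma~\ref{lem:extremal} to get $|D\cap A|\ge h(D)/2$, a first-moment bound on $\P_p(|D\cap A|\ge h(D)/2)$, a width estimate $w(D)=O(\eps^{-3}\log(1/p)\,h(D))$ from Observation~\ref{obs:f}, and then the key comparison that the resulting exponent in $p$ decays like $(2/3)^k$ while $\eps_k=\eps^2(3/4)^k$ decays more slowly. The only minor difference is that you bound $\binom{N}{m}\le N^m$ rather than $\binom{N}{m}\le (eN/m)^m$, so your ``cost'' exponent is $2(2/3)^k$ instead of the paper's $(2/3)^k$ and your threshold $k_0(\eps)$ is slightly larger, which is immaterial for the conclusion.
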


\begin{lemma}\label{lem:indstep}
Let $k \geq 1$. Then $\ih(k) \Rightarrow \ih(k-1)$.
\end{lemma}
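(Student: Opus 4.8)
The plan is to prove Lemma~\ref{lem:indstep} by running the `method of iterated hierarchies' inductive step: assuming $\ih(k)$, we must bound $\P_p(\ispan(D))$ for any droplet $D$ with $h(D) \le p^{-(2/3)^{k-1}}(\log 1/p)^{-1}$, and we want the improved exponent $(1-\eps_{k-1})h(D)/2$. Since $\eps_{k-1} = (4/3)\eps_k$, the gain we need to extract is roughly $(\eps_{k-1}-\eps_k)h(D)/2 = (\eps_k/3)h(D)/2$; this must come from the fact that a $t$-good hierarchy for $D$ genuinely decomposes the growth into many crossing events, each of which, by Lemma~\ref{lem:delta}, loses only a $(1-1.1\eps_k)$-fraction of the ideal exponent rather than the worse constant we are carrying in the induction. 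The scale separation $p^{-(2/3)^{k-1}}$ versus $p^{-(2/3)^k}$ is exactly what makes the seeds (of height $\asymp t$) fall under the regime where $\ih(k)$ applies.

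First I would fix the parameter $t$ of the hierarchy — the natural choice is something like $t = \eps_k^{-6}$ or, more likely, a small power of $p^{-(2/3)^k}$, chosen so that (a) seeds have height in $[t,2t]$ small enough that $\ih(k)$ gives them probability at most $p^{(1-\eps_k)h(D_v)/2}$, and (b) the number of hierarchies, which by Lemma~\ref{lem:numberofH} is $\exp(O(\ell h(D) t^{-1} \log(1/p)))$ with $\ell$ seeds, is negligible compared to the probability bound. Then I would apply Lemma~\ref{lem:boundoverH}: $\P_p(\ispan(D))$ is at most the sum over $t$-good hierarchies $\hier$ of $\prod_{u \in L(\hier)} \P_p(\ispan(D_u)) \cdot \prod_{u \to v} \P_p(\Delta(D_v,D_u))$. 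For each factor I substitute: $\ih(k)$ for the seed terms, and Lemma~\ref{lem:delta} for each $\Delta(D_v,D_u)$ term — here I need to check the hypotheses of Lemma~\ref{lem:delta} hold, namely $h(D_v) \ge \eps_k^{-5}$, the height-gap $h(D_u)-h(D_v)$ lies in $[\eps_k^{-6}, p^{-(2/3)^k}(2\log 1/p)^{-1}]$ (this is where $t$-goodness conditions (6)--(8) are used, possibly after merging long chains of out-degree-one vertices into single $\Delta$ steps), and $h(D_u) \le p^{-(2/3)^{k-1}}(\log 1/p)^{-1}$. Since throughout this section $h(D') \ll p^{-1}\log 1/p$, the term $ph(D')$ in the exponent of Lemma~\ref{lem:delta} is $o(1)$ and can be discarded, so each $\Delta$ step contributes at most $p^{(1-1.1\eps_k)(h(D_u)-h(D_v))/2}$.

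Next, combining via Lemma~\ref{lem:sumofheights}: $\sum_{u \in L(\hier)} h(D_u) + \sum_{u\to v}(h(D_u)-h(D_v)) \ge h(D) - v(\hier)$, and $v(\hier) = O(\ell h(D)/t)$ by Lemma~\ref{lem:numberofH}, which is $o(\eps_k h(D))$ for our choice of $t$. Hence the product of all the probability factors is at most $p^{(1-1.1\eps_k)(h(D)-v(\hier))/2} \le p^{(1-1.2\eps_k)h(D)/2}$, say. Finally I multiply by the number of hierarchies: summing over $\ell$ and over the $\exp(O(\ell h(D) t^{-1}\log 1/p))$ hierarchies with $\ell$ seeds, the entropy factor is $\exp(O(\eps_k^2 h(D)\log 1/p)) = p^{-O(\eps_k^2)h(D)}$, which is absorbed into the slack between $1.2\eps_k$ and $\eps_{k-1}=(4/3)\eps_k$ since $\eps_k$ is small (and we also need $\ell$ to be controlled — bounded by $O(h(D)/t)$, contributing only a further geometric-series factor). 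This yields $\P_p(\ispan(D)) \le p^{(1-\eps_{k-1})h(D)/2}$, which is $\ih(k-1)$.

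The main obstacle I anticipate is the bookkeeping around the hierarchy's chain-edges and the choice of $t$: one must ensure simultaneously that seeds are small enough for $\ih(k)$ to bite with the right exponent, that each consolidated $\Delta$-step has height-gap at least $\eps_k^{-6}$ (so Lemma~\ref{lem:delta} applies — short gaps from conditions (7) and the leaf/branch-vertex interface need separate, cruder handling, contributing only a bounded additive error per branching vertex, which is absorbed as in Lemma~\ref{lem:sumofheights}), and that the total entropy $\exp(O(\ell h(D) t^{-1}\log 1/p))$ stays below $p^{-\eps_k^2 h(D)}$. The constant-chasing — verifying that $1.1\eps_k$ plus the $v(\hier)$-error plus the entropy exponent all together stay below $\eps_{k-1} = (4/3)\eps_k$ — is routine given that $\eps$ (hence every $\eps_k$) is sufficiently small, but it is the step where care is genuinely required, and it is precisely the reason the induction is set up with the geometric sequence $\eps_k = \eps^2(3/4)^k$ rather than a fixed $\eps$.
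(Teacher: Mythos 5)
Your proposal follows the paper's proof of Lemma~\ref{lem:indstep} essentially step for step: apply Lemma~\ref{lem:boundoverH} with a suitably scaled $t$-good hierarchy, bound seeds with $\ih(k)$ and crossing steps with Lemma~\ref{lem:delta}, combine via Lemma~\ref{lem:sumofheights}, control the hierarchy count via Lemma~\ref{lem:numberofH}, and split the sum according to the number of seeds $\ell$ so that the entropy factor is dominated either by the seed-probability product (large $\ell$) or by the slack between $1.1\eps_k$ and $\eps_{k-1}=(4/3)\eps_k$ (small $\ell$). One substantive correction to your hedged guesses for $t$: neither $\eps_k^{-6}$ nor a small power of $p^{-(2/3)^k}$ would work, since your own constraint (b) — that the entropy $\exp(O(\ell\, h(D)\, t^{-1}\log 1/p))$ be beaten by the seed cost $\approx p^{\ell t/3}$ once $\ell$ is large — forces $t^2 \gg h(D)\log 1/p$, hence $t$ must be at least of order $p^{-(3/4)(2/3)^{k}}$; the paper takes the near-maximal choice $t = p^{-(2/3)^k}/(4\log 1/p)$ compatible with constraint (a), and formalizes your parenthetical control of $\ell$ by splitting the sum at $\ell_0 = t(\log 1/p)^{-1}$.
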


The proof of Lemma~\ref{lem:basecase} is easy, so the main task of this section will be to prove Lemma~\ref{lem:indstep}. We begin, however, with the more straightforward task. 

\begin{proof}[Proof of Lemma~\ref{lem:basecase}]
Let $k \in \N$ be sufficiently large, and let $D$ be a droplet with $h(D) \leq p^{-(2/3)^k} (\log 1/p)^{-1}$. By Lemma~\ref{lem:extremal}, if $D$ is internally spanned then
\[
|D\cap A| \geq \frac{h(D)+1}{2}.
\]
Noting that Observation~\ref{obs:f} implies that the volume of $D$ (rather crudely) satisfies
\[
|D| \leq (\log 1/p)^2 \cdot h(D)^2,
\]
it follows that
\[
\P_p\big(\ispan(D)\big) \le \binom{ |D| }{ \big(h(D)+1\big)/2 } p^{(h(D)+1)/2} = O\Big( h(D) \cdot p (\log 1/p)^2 \Big)^{(h(D)+1)/2}.
\]
But if $k$ is sufficiently large so that $\eps_k = \eps^2 \cdot (3/4)^k > (2/3)^k$, then $h(D) \cdot (\log 1/p)^2 \le p^{-(2/3)^k} \log 1/p \ll p^{-\eps_k}$, and hence this is at most $p^{(1-\eps_k)h(D)/2}$, as required.
\end{proof}

In order to prove Lemma~\ref{lem:indstep} we will use the method of hierarchies. In particular, we will use Lemmas~\ref{lem:boundoverH},~\ref{lem:numberofH} and~\ref{lem:delta}. 

In this section and the next, for the clearer display of expressions involving exponentials, we shall use the notation $\exp_p(x):=p^x$.

\begin{proof}[Proof of Lemma~\ref{lem:indstep}]
Let $k \ge 1$ and suppose that $\ih(k)$ holds. Let $D$ be a droplet with\footnote{If $h(D)$ is smaller than this, then the desired bound follows immediately from $\ih(k)$.} 
\[
p^{-(2/3)^k} (\log 1/p)^{-1} \le h(D) \le p^{-(2/3)^{k-1}} (\log 1/p)^{-1},
\]
and apply Lemma~\ref{lem:boundoverH} to $D$ with $t = p^{-(2/3)^k} / \big(4 \log 1/p\big)$. We obtain
\begin{equation}\label{eq:indbound}
\P_p\big(\ispan(D)\big) \leq \sum_{\hier \in \hier_D(t)} \bigg( \prod_{u \in L(\hier)} \P_p\big(\ispan(D_u)\big) \bigg)\bigg( \prod_{u \to v} \P_p\big(\Delta(D_v,D_u)\big) \bigg).
\end{equation}
To deduce the desired bound from~\eqref{eq:indbound}, we shall use $\ih(k)$ and Lemmas~\ref{lem:numberofH} and~\ref{lem:delta}. 

Let $\hier \in \hier_D(t)$, and note first that $t \leq h(D_u) \leq 2t = p^{-(2/3)^k} / \big(2\log 1/p\big)$ for every $u \in L(\hier)$, so by $\ih(k)$ we have
\begin{equation}\label{eq:indseed}
\P_p\big(\ispan(D_u)\big) \leq p^{(1-\eps_k)h(D_u)/2} \leq p^{t/3}.
\end{equation}
Next, note that if $u\to v$ then $h(D_u) - h(D_v) \le 2t = p^{-(2/3)^k} / \big(2\log 1/p\big)$. 
%Need to sort out what happens if this difference is in fact less than $\eps_k^{-4}$. 
If we also have $h(D_u)-h(D_v)\geq \eps_k^{-6}$, then by Lemma~\ref{lem:delta} we have
\begin{equation}\label{eq:inddelta}
\P_p\big(\Delta(D_v,D_u)\big) \leq \exp_p \bigg( \frac{(1-1.1\eps_k)(1-\eps_k^2)}{2} \Big( h(D_u)-h(D_v) \Big) \bigg),
\end{equation}
since $ph(D_u)\leq (\log 1/p)^{-1} \leq \eps_k^2\cdot\log 1/p$. Therefore we have
\begin{equation}\label{eq:indprod}
\prod_{u\to v} \P_p\big(\Delta(D_v,D_u)\big) \leq \exp_p \Bigg( \frac{1-\eps_k'}{2} \bigg( \sum_{u\to v} \Big( h(D_u)-h(D_v) \Big) - v(\hier) \cdot \eps_k^{-6} \bigg) \Bigg),
\end{equation}
where $1-\eps_k':=(1-1.1\eps_k)(1-\eps_k^2)$, and the second term in the exponential takes account of the fact that~\eqref{eq:inddelta} requires $h(D_u)-h(D_v) \geq \eps_k^{-6}$.

With foresight, let us split the sum in~\eqref{eq:indbound} into two parts, depending on the number of seeds in $\hier$. To that end, set $\ell_0 := t \cdot (\log 1/p)^{-1}$, and let
\[
\hier^{(1)} = \big\{ \hier \in \hier_D(t) : \ell(\hier) \le \ell_0 \big\} \quad \textup{and} \quad \hier^{(2)} = \hier_D(t) \setminus \hier^{(1)}.
\]
Bounding the sum over $\hier \in \hier^{(2)}$ is easy: by Lemma~\ref{lem:numberofH} and~\eqref{eq:indseed} we have  
\[
\sum_{\hier \in \hier^{(2)}} \prod_{u \in L(\hier)} \P_p\big(\ispan(D_u)\big) \le \sum_{\ell\geq\ell_0} \exp_p\Big( \ell \cdot t / 3 - O\big( \ell \cdot h(D) / t \big) \Big) < p^{h(D)},
\]
where the last inequality holds since $h(D) / t \ll t$ and $\ell_0 \cdot t \gg h(D)$.

%\footnote{Obtaining the bound $h(D)/t\ll t$ is the only point in the proof of Theorem~\ref{thm:Duarte} where we use the `$2/3$' in the exponent in the bound on $h(D)$ in the induction hypothesis: were it not for this, we could use $1/2$ instead.}

Thus, combining~\eqref{eq:indbound} with~\eqref{eq:indseed} and~\eqref{eq:indprod}, and noting that $\eps_k' > \eps_k$, it will suffice to bound
\begin{equation}\label{eq:indbound2}
\sum_{\hier \in \hier^{(1)}} \exp_p \Bigg( \frac{1-\eps_k'}{2} \bigg( \sum_{u \in L(\hier)} h(D_u) + \sum_{u\to v} \big( h(D_u)-h(D_v) \big) - v(\hier)\cdot \eps_k^{-6} \bigg) \Bigg).
\end{equation}
To do so, let $\hier \in \hier^{(1)}$, and recall that
\begin{equation}\label{eq:smallb1}
\sum_{u \in L(\hier)} h(D_u) + \sum_{u\to v} \big( h(D_u)-h(D_v) \big) \geq h(D) - v(\hier),
\end{equation}
by Lemma~\ref{lem:sumofheights}, and that 
\[
v(\hier) = O\bigg( \frac{\ell \cdot h(D)}{t} \bigg) = o\big( h(D) \big),
\]
by Lemma~\ref{lem:numberofH}, and since $\ell \leq \ell_0 = o(t)$. Thus, using Lemma~\ref{lem:numberofH} to bound $|\hier^{(1)}|$, it follows that
\[
\P_p\big(\ispan(D_u)\big) \leq \exp_p \Bigg( \bigg(\frac{1-\eps_k'}{2}\bigg)h(D) - o\big(h(D)\big) \Bigg) + p^{h(D)},
\]
where the $o\big(h(D)\big)$ in the exponent counts the size of $\hier^{(1)}$ and also the error of $O\big(v(\hier)\big)$. Since $\eps_{k-1} = (4/3)\cdot\eps_k$, this is at most $p^{(1 - \eps_{k-1}) h(D) / 2}$, as required.
\end{proof}

\section{Large droplets, and the proof of Theorem~\ref{thm:Duarte}}\label{sec:large}

In this section we shall prove Proposition~\ref{prop:lower}, and deduce Theorem~\ref{thm:Duarte}. The spirit of this section is similar to that of the previous section, in that we are proving an upper bound on the probability that a droplet is internally spanned assuming that we already have a corresponding bound for smaller droplets. This time, however, the larger droplets will be critical droplets and the smaller droplets will be those which we can bound using $\ih(0)$. Another important difference is that, as we reach the critical size, we gain an additional factor of $1/2$ in the exponent in the bound for $\P_p\big(\ispan(D)\big)$. Indeed, as one can see below in Proposition~\ref{prop:largedroplets}, the factor of $1/2$ decreases to $1/4$ linearly in the height of the droplet as the droplet reaches the critical size.

Given a droplet $D$, let
\begin{equation}\label{eq:hstar}
h^*(D):= \frac{p}{\log 1/p} \cdot h(D)
\end{equation}
denote the renormalized height of $D$. Proposition~\ref{prop:lower} is an immediate consequence of the following bound. %(Recall that $\eps > 0$ was fixed earlier.) 

\begin{prop}\label{prop:largedroplets}
For every $\eps > 0$, there exists $p_0(\eps) > 0$ such that the following holds. If $0 < p \leq p_0(\eps)$ and $D$ is a droplet with $h^*(D) \leq 1 - \eps$, then
\begin{equation}\label{eq:largedroplets}
\P_p\big(\ispan(D)\big) \leq \exp_p \Bigg( \left( \frac{2 - h^*(D)}{4} - \eps \right) h(D) \Bigg).
\end{equation}
\end{prop}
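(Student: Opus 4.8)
The plan is to prove Proposition~\ref{prop:largedroplets} by the method of hierarchies, following the structure of the proof of Lemma~\ref{lem:indstep} but now carefully tracking the renormalised height $h^*$, which may approach~$1$. Set $t := p^{-1}\big(4\log 1/p\big)^{-1}$, so that seeds of $t$-good hierarchies have height at most $2t \le p^{-1}(\log 1/p)^{-1}$ (and so can be controlled by $\ih(0)$, which was established in Section~\ref{sec:small}), while crossing steps have size at most $2t = p^{-1}\big(2\log 1/p\big)^{-1}$ and so fall within the scope of Lemma~\ref{lem:delta} with $k=0$. If $h(D) \le p^{-1}(\log 1/p)^{-1}$ then the bound is immediate from $\ih(0)$, since then $h^*(D) = o(1)$ and $(1-\eps_0)/2 \ge (2-h^*(D))/4 - \eps$; so assume $h(D) > p^{-1}(\log 1/p)^{-1}$ (hence $h(D) > 4t$) and apply Lemma~\ref{lem:boundoverH} to $D$ with this~$t$.

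The key device is the potential function
\[
\Psi(E) := \frac12\int_0^{h(E)}\Big(1 - \frac{ps}{\log 1/p}\Big)\,ds = \frac{\big(2-h^*(E)\big)h(E)}{4},
\]
so that the target bound is $\P_p\big(\ispan(D)\big) \le \exp_p\big((1-O(\eps))\Psi(D)\big)$, which after relabelling $\eps$ gives the statement since $\Psi(D) \le h(D)/2$. Note that $\Psi$ is concave on $[0,p^{-1}\log 1/p]$ with $\Psi(0)=0$ and $\Psi' \le \tfrac12$, so $\Psi(x)/x$ is non-increasing; note also that $1-h^*(E) \ge \eps$ for every droplet $E \subset D$. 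Fix a $t$-good hierarchy $\hier$ with $\ell$ seeds. For a leaf $u$, $\ih(0)$ gives $\P_p(\ispan(D_u)) \le \exp_p\big((1-\eps_0)h(D_u)/2\big) \le \exp_p\big((1-\eps_0)\Psi(D_u)\big)$ by $\Psi(D_u) \le h(D_u)/2$. For an edge $u\to v$ with $h(D_u)-h(D_v) \ge \eps_0^{-6}$, Lemma~\ref{lem:delta} gives $\P_p(\Delta(D_v,D_u)) \le \exp_p(c_{uv})$ with $c_{uv} = \tfrac{1-1.1\eps_0}{2}\big(1-h^*(D_u)\big)\big(h(D_u)-h(D_v)\big)$; since this step has size at most $2t$ we have $h^*(D_u)-h^*(s) = O\big((\log 1/p)^{-2}\big)$ for all $s \in [h(D_v),h(D_u)]$, and combined with $1-h^*(D_u) \ge \eps$ this yields $1-h^*(D_u) \ge (1-\eps)\big(1-h^*(s)\big)$ throughout the step, whence $c_{uv} \ge (1-O(\eps))\big(\Psi(D_u)-\Psi(D_v)\big)$. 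It is exactly here that the sharp $-ph(D')$ term of Lemma~\ref{lem:delta} is used, and it is what makes the rate of growth interpolate from $\tfrac12$ down to $\tfrac14$. For the $O(\ell)$ edges with $h(D_u)-h(D_v) < \eps_0^{-6}$ we simply bound $\P_p(\Delta(D_v,D_u)) \le 1$.

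Now telescope up the tree: by induction on the subtree rooted at a vertex $u$, the product of the relevant probabilities over that subtree is at most $\exp_p\big((1-O(\eps))\Psi(D_u) - E_u\big)$, where $E_u = 0$ for a leaf, $E_u = E_v + O(\eps_0^{-6})$ for a single-child vertex with child $v$, and $E_u = E_v + E_w + O(1)$ for a two-child vertex with children $v,w$ (this last $O(1)$ coming from $\Psi(D_v)+\Psi(D_w) \ge \tfrac{h(D_v)+h(D_w)}{h(D_u)}\Psi(D_u) \ge \Psi(D_u) - \tfrac12$, using that $\Psi(x)/x$ is non-increasing, that $h(D_v)+h(D_w) \ge h(D_u)-1$ by Lemma~\ref{lem:subadd}, and that $\Psi(D_u)/h(D_u) \le \tfrac12$). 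Hence $E_{\root} = O(\ell\,\eps_0^{-6})$, so the contribution of $\hier$ to the sum in Lemma~\ref{lem:boundoverH} is at most $\exp_p\big((1-O(\eps))\Psi(D) - O(\ell\,\eps_0^{-6})\big)$. Finally, split the sum over $\hier \in \hier_D(t)$ at the threshold $\ell^* := c\,\eps^{13}t$ for a suitable absolute constant $c$. For $\ell \le \ell^*$: Lemma~\ref{lem:numberofH} gives at most $\exp_p\big(-O(\ell\,h(D)/t)\big)$ such hierarchies, and since $h(D)/t = O\big((\log 1/p)^2\big)$ and $\Psi(D) \ge h(D)/4$, the total loss $O(\ell\,h(D)/t) + O(\ell\,\eps_0^{-6})$ is $O(\eps)\Psi(D)$, so this part of the sum is at most $\exp_p\big((1-O(\eps))\Psi(D)\big)$. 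For $\ell > \ell^*$: bounding by the seed factors alone, $\prod_{u\in L(\hier)}\P_p(\ispan(D_u)) \le \exp_p(\ell t/3)$, and since $t^2 \gg h(D) \ge \Psi(D)$ and $\ell t/3 - O(\ell h(D)/t) \ge \ell t/4$, this part is at most $\sum_{\ell > \ell^*}\exp_p(\ell t/4) \le 2\exp_p(\ell^* t/4) \ll \exp_p(\Psi(D))$. Adding the two parts proves the proposition. I expect the main obstacle to be this final bookkeeping — calibrating $\ell^*$ so that the telescoped errors are negligible when there are few seeds while the crude seed bound already beats $\exp_p(\Psi(D))$ when there are many — together with the verification that the step size $2t$ is small enough, relative to the scale $p^{-1}\log 1/p$ on which $h^*$ varies, for the conversion $1-h^*(D_u) \ge (1-\eps)(1-h^*(s))$ to hold.
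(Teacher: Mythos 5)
Your proof is correct and reaches the same bound as the paper, using the same outer framework — Lemma~\ref{lem:boundoverH} with $t = (4p\log 1/p)^{-1}$, $\ih(0)$ for the seeds, Lemma~\ref{lem:delta} for the crossings, and a split of $\hier_D(t)$ by seed count — but the heart of the argument is organized along genuinely different lines. The paper first proves a standalone pod lemma (Lemma~\ref{lem:pod}), which bounds the product of the crossing probabilities alone by induction on the number of hierarchy vertices, in terms of the pod height $h(\hier) = \min\{h(D), \sum_{u\in L(\hier)} h(D_u)\}$ and $\mu(\hier) = \big(2 - h^*(D) - h^*(\hier)\big)/4$, using the two algebraic Observations~\ref{obs:pod1} and~\ref{obs:pod2}; it then multiplies in the seed contribution and rearranges the exponent via the identity $h(D)h^*(\hier) = h(\hier)h^*(D)$. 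You instead work with the potential $\Psi(E) = \big(2-h^*(E)\big)h(E)/4 = \tfrac12\int_0^{h(E)}\big(1 - ps/\log(1/p)\big)\,ds$, express both the seed bound and each crossing bound as $\exp_p$ of (approximately) the corresponding increment of $\Psi$ — for crossings, noting that the step height $\le 2t$ forces $h^*$ to vary by only $O\big((\log 1/p)^{-2}\big)$ over the step, so that the endpoint value $1-h^*(D_u)$ appearing in Lemma~\ref{lem:delta} may be replaced by the running average at the cost of a $1-\eps$ factor — and telescope in one pass up the tree. Concavity of $\Psi$ (equivalently, $\Psi(E)/h(E)$ is non-increasing in $h(E)$) does the job of Observation~\ref{obs:pod2} at split vertices. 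This folds the paper's two-stage argument into a single inductive statement over subtrees and is a valid, arguably streamlined alternative; your seed-count threshold $\ell^* = c\eps^{13}t$ differs from the paper's $p^{-1/2}$, but the bookkeeping works, using $t < h(D)/4 \le \Psi(D)$ and $t^2 \gg h(D)$.

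One imprecision worth tightening: as written, the recursion ``$E_u = E_v + O(\eps_0^{-6})$ for a single-child vertex with child $v$'' reads as though an additive $O(\eps_0^{-6})$ loss is accrued at \emph{every} single-child vertex; since a $t$-good hierarchy may have $\Theta\big(\ell\,h(D)/t\big)$ such vertices, that would give $E_{\root}$ of order $\ell\,(h(D)/t)\,\eps_0^{-6}$, far too large for your calibration against $O(\eps)\Psi(D)$. As your own preceding case analysis makes clear, for the large-step edges (those with $h(D_u) - h(D_v) \ge \eps_0^{-6}$) Lemma~\ref{lem:delta} already provides the full $(1-O(\eps))\big(\Psi(D_u)-\Psi(D_v)\big)$ drop with no additive loss, so the recursion should be $E_u = E_v$ there; the $O(\eps_0^{-6})$ term is only needed at the single-child vertices whose step is small, and these are necessarily the ones whose child is a leaf or a split — the only single-child edges permitted to be short by conditions~(6) and~(7) of Definition~\ref{def:hier2} — of which there are $O(\ell)$. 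Stating the case distinction explicitly is what makes $E_{\root} = O(\ell\,\eps_0^{-6})$ transparent.
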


We will prove Proposition~\ref{prop:largedroplets} by taking a union bound over good and satisfied hierarchies for $D$. In order to do so, we will need one additional lemma, which bounds the product of the probabilities of the `sideways steps' of such a hierarchy. Define the \emph{pod height}\footnote{This terminology is a reference to the `pod' droplets first introduced in~\cite{Hol}. In our setting it will be more convenient to work with the pod height function directly.} of a hierarchy $\hier$ for a droplet $D$ to be
\begin{equation}\label{eq:poddef}
h(\hier) := \min\bigg\{ h(D), \sum_{u\in L(\hier)} h(D_u) \bigg\},
\end{equation}
and let $h^*(\hier) := p(\log 1/p)^{-1} \cdot h(\hier)$ be the renormalized pod height. Let us write $\ell(\hier)$ for $\big| L(\hier) \big|$, and set 
\[
t := \frac{1}{4p\log 1/p}.
\]
Finally, we will need a function $\mu$, defined by
\begin{equation}\label{eq:mu}
\mu(\hier) := \frac{2 - h^*(D) - h^*(\hier)}{4}.
\end{equation}
Note that if $h^*(\hier) \le h^*(D) \le 1 - \eps$, which will always be the case in this section, then $\mu(\hier) \ge \eps/2$. The following bound is a variant of~\cite[Lemma~38]{Hol}. We remark that such `pod lemmas' have since become a standard tool in the area; see e.g.~\cite{Hol,BBDM,DH,Msharp}. The proof follows (as usual) by adapting the argument of~\cite{Hol}, but since in our setting there are some slightly subtle complications to deal with, we will give the details in full.

\begin{lemma}\label{lem:pod}
Let $D$ be a droplet with $h^*(D) \le 1 - \eps$, and let $\hier$ be a $t$-good and satisfied hierarchy for $D$. Then
\begin{equation}\label{eq:sidewaysprod}
\prod_{u \to v} \P_p\big( \Delta(D_v,D_u) \big) \leq \exp_p \Big( \big(\mu(\hier)-2\eps^2\big)\big(h(D)-h(\hier)\big) - \eps^{-6}\big(3\ell(\hier)-2\big) \Big).
\end{equation}
\end{lemma}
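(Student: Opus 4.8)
My plan is to adapt the classical `pod lemma' argument of Holroyd~\cite{Hol} to the present setting, with the main work going into handling the $-ph(D_u)$ correction term in Lemma~\ref{lem:delta} and the additive errors $\eps_k^{-6}$ it carries. The edges $u\to v$ of $G_\hier$ (meaning $N^\to_{G_\hier}(u)=\{v\}$) come in two types by Definition~\ref{def:hier2}: the `long' edges, where $|N^\to_{G_\hier}(v)|=1$ and hence $t\leq h(D_u)-h(D_v)\leq 2t$ (so Lemma~\ref{lem:delta} applies, since $t=(4p\log 1/p)^{-1}\geq \eps_0^{-6}$ for $p$ small and $t\leq p^{-1}(2\log 1/p)^{-1}$), and the `short' edges, where $|N^\to_{G_\hier}(v)|\neq 1$ and only $h(D_u)-h(D_v)\leq 2t$ is guaranteed. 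For the short edges the height gain may be below $\eps_0^{-6}$, so Lemma~\ref{lem:delta} does not apply directly; but there are at most $O(\ell(\hier))$ such edges (each is adjacent to a leaf or a branching vertex), and for each we simply bound $\P_p(\Delta(D_v,D_u))\leq 1$ if $h(D_u)-h(D_v)<\eps_0^{-6}$ and use Lemma~\ref{lem:delta} otherwise; the cumulative cost of the crude bound is absorbed into the $-\eps^{-6}(3\ell(\hier)-2)$ term on the right of~\eqref{eq:sidewaysprod}.

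The core computation is then over the long edges. First I would apply Lemma~\ref{lem:delta} with $k=0$ to each such edge $u\to v$, giving
\[
\P_p\big(\Delta(D_v,D_u)\big)\leq \exp_p\bigg(\Big(\tfrac{1-1.1\eps_0}{2}\Big)\Big(1-\tfrac{ph(D_u)}{\log 1/p}\Big)\big(h(D_u)-h(D_v)\big)\bigg),
\]
and then I would telescope. Summing $h(D_u)-h(D_v)$ over all edges on the path structure of $G_\hier$ down to the leaves gives $h(D)-\sum_{u\in L(\hier)}h(D_u)$ up to $O(v(\hier))=O(\ell(\hier)h(D)/t)$ errors from branch points (cf.~Lemma~\ref{lem:sumofheights}), and $h(D)-\sum_{u\in L(\hier)}h(D_u)\leq h(D)-h(\hier)$ by the definition~\eqref{eq:poddef} of pod height. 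The delicate part is the factor $\big(1-ph(D_u)/\log 1/p\big)=1-h^*(D_u)$: since $h^*(D_u)$ ranges between $h^*(\hier)$ (near the leaves) and $h^*(D)$ (near the root), one bounds $\sum_{u\to v}(1-h^*(D_u))(h(D_u)-h(D_v))$ from above by integrating — replacing the sum by $\int$ and using that the renormalized heights along a root-to-leaf path increase from $\approx h^*(\hier)$ to $h^*(D)$ — which yields a contribution of roughly $\int_{h^*(\hier)}^{h^*(D)}(1-s)\,\mathrm{d}s$ per unit of $h(D)-h(\hier)$, i.e.\ average value $\tfrac12\big((1-h^*(\hier))+(1-h^*(D))\big)=\tfrac{2-h^*(D)-h^*(\hier)}{2}=2\mu(\hier)$. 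Multiplying by the prefactor $\tfrac{1-1.1\eps_0}{2}$ and absorbing $1.1\eps_0\mu(\hier)\leq 2\eps^2$ (valid since $\eps_0=\eps^2$ and $\mu(\hier)\leq 1$) and the telescoping error $O(\ell(\hier)h(D)/t)$ into the stated error terms produces the exponent $\big(\mu(\hier)-2\eps^2\big)\big(h(D)-h(\hier)\big)$.

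The main obstacle I anticipate is making the `replace the sum by an integral' step rigorous with the right constant: one must order the long edges along each root-to-leaf path and argue that $\sum(1-h^*(D_u))(h(D_u)-h(D_v))\le \big(\sup + \inf\big)/2 \cdot \sum(h(D_u)-h(D_v))$ — i.e.\ that the weighted average of $1-h^*$ over the path is at most its arithmetic mean of endpoint values — which is a convexity/monotonicity fact requiring care about branch points, where $h(D)-1\le h(D_{w_1})+h(D_{w_2})$ (Lemma~\ref{lem:subadd}) rather than an exact split. One handles branching by applying the bound separately to each subtree below a branch vertex (as in the commented-out inductive proof of Lemma~\ref{lem:sumofheights}), noting each such split costs only $O(1)$ in height, charged to the $O(\ell(\hier)h(D)/t)$ error. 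The short-edge bookkeeping and the precise matching of the error terms to $\eps^{-6}(3\ell(\hier)-2)$ is routine once the structure is set up: there are $\le\ell(\hier)$ leaves, $\le\ell(\hier)-1$ branch vertices, hence $O(\ell(\hier))$ short edges, and each contributes at most $\eps_0^{-6}\cdot(1-1.1\eps_0)/2\cdot\,$(its height gain)$\,\le$ a bounded multiple of $\eps^{-6}$ to the exponent.
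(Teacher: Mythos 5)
Your high-level plan — replacing the paper's induction on $|V(G_\hier)|$ by a direct telescoping over the edges of the hierarchy — is a genuinely different route from the one taken in the paper, and several ingredients are in place (separating long from short edges, the count of $O(\ell(\hier))$ short edges, recognizing $\mu(\hier)$ as the trapezoidal average of $1-h^*$). But there is a sign error at the heart of the argument that would make the proof fail as written. After applying Lemma~\ref{lem:delta} to each long edge, you have $\prod_{u\to v}\P_p\big(\Delta(D_v,D_u)\big) \le p^{\,c\,S}$ with $c = (1-1.1\eps_0)/2$ and $S := \sum_{u\to v}(1-h^*(D_u))(h(D_u)-h(D_v))$; since $p<1$ and $\exp_p$ is decreasing, concluding the lemma requires a \emph{lower} bound on $S$, namely $cS \ge (\mu(\hier)-2\eps^2)(h(D)-h(\hier)) - \eps^{-6}(3\ell(\hier)-2)$. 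You instead propose to bound $S$ \emph{from above}: you write ``$\sum(1-h^*(D_u))(h(D_u)-h(D_v))\le (\sup+\inf)/2\cdot\sum(h(D_u)-h(D_v))$''. This inequality is true — $1-h^*$ is decreasing and $D_u$ is always the larger droplet of its edge, so the trapezoidal rule dominates the Riemann sum — but it gives an upper bound on the exponent $cS$ and hence a \emph{lower} bound on $p^{cS}$, which is useless for showing $\prod\P_p$ is small. What is actually needed is the converse estimate $S \ge 2\mu(\hier)\big(h(D)-h(\hier)\big) - O\big((\log 1/p)^{-2}\big)\big(h(D)-h(\hier)\big)$, where the correction comes from the fact that $1-h^*(D_u)$ is the minimum of $1-h^*$ on $[h(D_v),h(D_u)]$, and its size is controlled by the step-size bound $h(D_u)-h(D_v)\le 2t = (2p\log 1/p)^{-1}$. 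The error is small enough to absorb, but the inequality must be argued in the correct direction and the error term carried explicitly.

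The second gap is that your treatment of branch vertices underestimates the algebraic content. You say that each split ``costs only $O(1)$ in height, charged to the $O(\ell(\hier)h(D)/t)$ error'', but the issue at a branch vertex $w$ with children $v_1,v_2$ is not an additive $O(1)$ discrepancy. When you telescope $\int(1-h^*)\,\mathrm{d}h$ segment by segment, the change from a single integral $\int_{h(\hier)}^{h(D)}$ to the sum over segments introduces, at each branch, a term $G(h(D_{v_1}))+G(h(D_{v_2}))-G(h(D_w))$ where $G$ is the (concave) antiderivative of $1-h^*$, and this must be compared against the global defect $\sum_{u\in L(\hier)}G(h(D_u)) - G\big(\sum h(D_u)\big)$. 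The key fact needed is that $h(D_{v_1})h(D_{v_2}) \ge s_1 s_2$ where $s_i$ is the total leaf-height in the subtree below $v_i$ — a multiplicative comparison, not an additive error of size $O(1)$. This is exactly what the paper packages into Observation~\ref{obs:pod2} (the condition $a_1a_2\ge s_1s_2$), and it is verified there cleanly by the induction on $|V(G_\hier)|$; a direct approach would need a global version, namely that the sum over branches of $G(h_1)+G(h_2)-G(h_w)$ dominates $\sum_u G(h_u)-G(\sum_u h_u)$, using that each pair of leaves is separated at a unique branch. Your telescoping strategy is in principle workable, but as written it both argues the central inequality in the wrong direction and mischaracterizes the branch-point contribution as a small additive error.
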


We will use the following easy algebraic facts in the proof of Lemma~\ref{lem:pod}.

\begin{obs}\label{obs:pod1}
Let $a,a',s,s',\delta\in\R$. If $s' \leq s \leq 1 - 2\delta$, $a \geq a'$, and $2\delta(1+a) \geq a-a'$, then
\[
\left(\frac{2-a'-s'}{4}-\delta\right)(a'-s') + (1-\delta)\left(\frac{1-a}{2}\right)(a-a') \geq \left(\frac{2-a-s}{4}-\delta\right)(a-s).
\]
\end{obs}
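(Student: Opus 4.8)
The plan is to prove this by a direct (if slightly careful) algebraic manipulation: move everything to one side, expand, and show the resulting expression is non-negative under the three stated hypotheses $s' \le s \le 1-2\delta$, $a \ge a'$, and $2\delta(1+a) \ge a-a'$. Write $u := a - a' \ge 0$ for the increment in the ``height'' parameter; the hypothesis $2\delta(1+a) \ge a-a'$ becomes $u \le 2\delta(1+a)$. I would treat $s'$ and $s$ as essentially free (subject to $s' \le s$), and the idea is that the left-hand side is \emph{minimised}, for fixed $a,a'$, by taking $s$ and $s'$ as large as the constraints allow — since increasing $s$ decreases the right-hand side term $\left(\frac{2-a-s}{4}-\delta\right)(a-s)$ only when $a-s$ is still in the relevant range, this monotonicity has to be checked, but the cleanest route is to just plug in and compare coefficients.

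Concretely, first I would rewrite the inequality in the equivalent form
\[
\left(\frac{2-a'-s'}{4}-\delta\right)(a'-s') - \left(\frac{2-a-s}{4}-\delta\right)(a-s) \;\ge\; -(1-\delta)\left(\frac{1-a}{2}\right)u.
\]
Note $1-a \ge 2\delta \ge 0$ by the hypothesis $s \le 1 - 2\delta$ and, implicitly, that we may assume $a \le 1$ (otherwise $a - s$ could be dealt with separately; in the intended application $a = h^*(D) \le 1-\eps$ and $s = h^*(\hier)$, so both sides are genuinely in $[0,1]$, and I would note this at the start). The left-hand side difference I would expand as a quadratic form in the variables; writing $g(a,s) := \left(\frac{2-a-s}{4}-\delta\right)(a-s)$ and using $a - s = (a'-s') + (u - (s-s'))$, a short computation gives
\[
g(a',s') - g(a,s) \;=\; \frac{1}{4}\Big( (a'-s')^2 - (a-s)^2 \Big) + \left(\frac{2}{4}-\delta\right)\big((a'-s') - (a-s)\big) - \tfrac14(a'-s') u' ,
\]
wait — I would instead simply substitute directly: since $g(x,y)$ depends only on $x-y$, set $\phi(r) := \left(\frac{2-?}{4}\right)$... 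Here one must be careful because $g$ depends on $a$ and $s$ separately through the term $\frac{2-a-s}{4}$. So I would write $g(a,s) = \frac{(2-a-s)(a-s)}{4} - \delta(a-s)$ and expand $(2-a-s)(a-s) = 2a - 2s - a^2 + s^2$, obtaining $g(a,s) = \frac{2(a-s) - (a^2 - s^2)}{4} - \delta(a-s)$. Then
\[
g(a',s') - g(a,s) = \frac{2\big((a'-s') - (a-s)\big) - \big((a'^2 - s'^2) - (a^2 - s^2)\big)}{4} - \delta\big((a'-s') - (a-s)\big).
\]
Now $(a'-s')-(a-s) = -u + (s - s')$ and $(a'^2-s'^2)-(a^2-s^2) = (a'^2 - a^2) - (s'^2 - s^2) = -u(a+a') + (s-s')(s+s')$. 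Substituting and also expanding the target term $-(1-\delta)\frac{1-a}{2}u$, the whole claimed inequality reduces to a statement that is linear in $(s - s') \ge 0$ with a non-negative coefficient (the coefficient of $s-s'$ works out, using $s+s' \le 2$, to be $\ge \frac{1 - \delta}{2} - \frac{s+s'}{4} \ge 0$), so it suffices to verify it at $s = s'$; and at $s = s'$ it collapses to a one-variable inequality in $u$ that is linear in $u$, which one checks holds at $u = 0$ (trivially, both sides equal a common value) and whose $u$-coefficient is controlled exactly by the hypothesis $u \le 2\delta(1+a)$ together with $1 - a \ge 2\delta$.

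The main obstacle I anticipate is purely bookkeeping: keeping the signs straight through the expansion and correctly identifying that the coefficient of $(s-s')$ and the coefficient of $u$ both land on the ``right'' side of zero precisely because of the two inequalities $s \le 1-2\delta$ and $u \le 2\delta(1+a)$ — neither hypothesis is slack, so the computation has to be done exactly rather than with crude bounds. There is no conceptual difficulty; the lemma is an elementary convexity/monotonicity fact about the quadratic $r \mapsto \frac{(2-\cdot)r}{4} - \delta r$, dressed up for the specific substitution $(a,s) \mapsto (h^*(D), h^*(\hier))$ that appears in the pod estimate. I would present it as a short displayed calculation with the two reductions (linearity in $s-s'$, then linearity in $u$) stated explicitly.
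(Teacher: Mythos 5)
Your approach is essentially the one the paper uses: observe that the claim reduces to the case $s'=s$ (the paper notes the left-hand side is decreasing in $s'$ when $s' \le 1-2\delta$; equivalently, your expansion shows the coefficient of $s-s'$ is non-negative), and then check that at $s'=s$ the inequality rearranges to $2\delta(1+a)(a-a') \ge (a-a')^2$, which is just the hypothesis $2\delta(1+a) \ge a-a'$ multiplied through by $a-a' \ge 0$. Two small corrections to your bookkeeping: the coefficient of $s-s'$ is $\frac{2-(s+s')}{4}-\delta$ rather than $\frac{1-\delta}{2}-\frac{s+s'}{4}$ (still non-negative, since $s' \le s \le 1-2\delta$ gives $s+s' \le 2-4\delta$), and after setting $s'=s$ the difference of the two sides is $\frac{(a-a')\left(2\delta(1+a)-(a-a')\right)}{4}$, which is quadratic rather than linear in $u = a-a'$ but is manifestly non-negative from the stated hypotheses; the auxiliary inequality $1-a \ge 2\delta$ that you invoke is neither assumed nor needed.
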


\begin{proof}
The condition $s' \leq s \leq 1 - 2\delta$ implies that the left-hand side is decreasing in $s'$, so we may assume that $s=s'$. Then the claimed inequality is just a rearrangement of $2\delta(1+a)(a-a') \geq (a-a')^2$.
\end{proof}

\begin{obs}\label{obs:pod2}
Let $\delta,a,a_1,a_2,s,s_1,s_2 \in \R$. If $a,s \le 1 - 2\delta$, $a \leq a_1 + a_2$, $s \ge  s_1 + s_2$, and $a_1 a_2 \geq s_1 s_2$, then
\begin{multline*}
\left(\frac{2-a_1-s_1}{4} - \delta\right)(a_1 - s_1) + \left(\frac{2 - a_2 - s_2}{4} - \delta \right)( a_2 - s_2 ) \\ 
\geq \left( \frac{2 - a - s}{4} - \delta \right) ( a - s ).
\end{multline*}
\end{obs}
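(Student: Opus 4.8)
The statement to prove is Observation~\ref{obs:pod2}, a purely algebraic inequality. The plan is to reduce it to a manipulation in a single ``free'' variable, mirroring the strategy already used in the proof of Observation~\ref{obs:pod1}.

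First I would observe that, just as in Observation~\ref{obs:pod1}, the left-hand side is monotone in the ``slack'' introduced by the hypotheses $a \le a_1 + a_2$ and $s \ge s_1 + s_2$. Concretely, since $a, s \le 1 - 2\delta$, the quantity $\left(\frac{2 - a - s}{4} - \delta\right)(a - s)$ on the right is a product of two nonnegative factors, and increasing $a$ toward $a_1 + a_2$ or decreasing $s$ toward $s_1 + s_2$ only makes the right-hand side larger. (One should check the sign of the relevant partial derivatives: $\partial_a\bigl[(\tfrac{2-a-s}4-\delta)(a-s)\bigr] = \tfrac{2-2a-\delta\cdot 0}{4}\cdots$ — more carefully, writing $g(a,s) = (\tfrac{2-a-s}{4}-\delta)(a-s)$, we have $g$ increasing in $a$ and decreasing in $s$ on the relevant range because $a - s \ge 0$ and $2 - a - s - 4\delta \ge 0$.) Hence it suffices to prove the inequality in the extremal case $a = a_1 + a_2$ and $s = s_1 + s_2$.

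With $a = a_1 + a_2$ and $s = s_1 + s_2$, I would expand both sides and collect terms. Writing $L$ for the left side and $R$ for the right side, the linear-in-$\delta$ contributions are $-\delta\bigl[(a_1 - s_1) + (a_2 - s_2)\bigr]$ on the left and $-\delta(a - s) = -\delta\bigl[(a_1 - s_1)+(a_2-s_2)\bigr]$ on the right, so the $\delta$ terms cancel exactly. It then remains to show
\[
\tfrac{1}{4}\bigl[(2 - a_1 - s_1)(a_1 - s_1) + (2 - a_2 - s_2)(a_2 - s_2)\bigr] \ge \tfrac14 (2 - a - s)(a - s).
\]
Multiplying by $4$ and substituting $a = a_1 + a_2$, $s = s_1 + s_2$, both sides become quadratic forms in $a_1, a_2, s_1, s_2$; after expanding, the difference $L' - R'$ (four times the difference of the two sides) simplifies — the terms $2a_i$, $2s_i$ and the diagonal squares $a_i^2$, $s_i^2$ all cancel against the corresponding pieces of $(a_1+a_2)^2$ etc., leaving precisely
\[
L' - R' = 2\bigl(a_1 a_2 - s_1 s_2\bigr) - 2(\,\text{cross terms that cancel}\,),
\]
and a short check shows the residue is exactly $2(a_1 a_2 - s_1 s_2)$, which is nonnegative by the hypothesis $a_1 a_2 \ge s_1 s_2$. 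This closes the proof.

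The only mild obstacle I anticipate is bookkeeping in the monotonicity reduction: one must verify that decreasing $s$ and increasing $a$ genuinely moves the right-hand side in the claimed direction \emph{throughout} the allowed range, using $a - s \ge 0$ (which follows since $a_1 \ge s_1$, $a_2 \ge s_2$ would give $a \ge s$ — though that inequality is not literally among the hypotheses, it does follow from $a_1 a_2 \ge s_1 s_2$ together with the sign conventions in the application, or can be assumed WLOG) and $2 - a - s - 4\delta \ge 0$ (from $a, s \le 1 - 2\delta$). Once the reduction to $a = a_1 + a_2$, $s = s_1 + s_2$ is in place, the remaining algebra is a routine expansion that collapses to $a_1 a_2 - s_1 s_2 \ge 0$. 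I would present the proof as: (i) reduce to the extremal case by monotonicity; (ii) cancel the $\delta$-terms; (iii) expand the quadratic and recognize the residue as $2(a_1 a_2 - s_1 s_2) \ge 0$.
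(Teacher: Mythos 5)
Your argument tracks the paper's proof of Observation~\ref{obs:pod2} exactly: reduce to the extremal case $a = a_1 + a_2$, $s = s_1 + s_2$ via monotonicity of the right-hand side, cancel the $\delta$-terms, and expand to find that the difference of the two sides is $\tfrac12(a_1 a_2 - s_1 s_2) \geq 0$. The cancellation and expansion are correct.

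The justification of the monotonicity step, however, is muddled. Writing $g(a,s) = \bigl(\tfrac{2-a-s}{4} - \delta\bigr)(a-s)$ for the right-hand side, a direct computation gives $\partial_a g = \tfrac{1-a-2\delta}{2}$ and $\partial_s g = \tfrac{s-1+2\delta}{2}$. Hence $g$ is increasing in $a$ \emph{because} $a \leq 1 - 2\delta$, and decreasing in $s$ \emph{because} $s \leq 1 - 2\delta$; that is the paper's entire one-line justification and it is all that is needed. The conditions $a - s \geq 0$ and $2 - a - s - 4\delta \geq 0$ that you invoke are not what drives the monotonicity, and the ``product of two nonnegative factors'' picture does not give monotonicity at all (as $a$ increases the two factors move in opposite directions). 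In particular the paragraph in which you worry about deducing $a \geq s$ from $a_1 a_2 \geq s_1 s_2$ is a red herring: that implication is false in general (take $a_1 = a_2 = -1$, $s_1 = s_2 = 0$) and, more to the point, is not needed. One caveat you share with the paper: after replacing $a$ by $a_1 + a_2$, the constraint $a \leq 1 - 2\delta$ is quietly dropped, yet $g$ is only increasing in $a$ up to $1 - 2\delta$. The reduction therefore implicitly uses $a_1 + a_2 \leq 1 - 2\delta$ (a hypothesis not stated in the observation); this is inherited from the source rather than introduced by you, but it is the one step genuinely requiring justification and neither you nor the paper addresses it.
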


\begin{proof}
The right-hand side is increasing in $a$ and decreasing in $s$, since $a,s \le 1 - 2\delta$, so we may assume that $a =  a_1 + a_2$ and $s = s_1 + s_2$, in which case the inequality is equivalent to $a_1a_2 \ge s_1s_2$. 
\end{proof}

\begin{proof}[Proof of Lemma~\ref{lem:pod}]
The proof is by induction on $m := |V(G_\hier)|$. Note that the inequality holds trivially if $h(\hier) = h(D)$, since the right-hand side is at least 1, and that $h(\hier) = h(D)$ if $m = 1$. So let $m \ge 2$, and suppose that $h(\hier) < h(D)$ (so that in fact $h(\hier) = \sum_{u\in L(\hier)} h(D_u)$), and that the lemma holds for all hierarchies with at most $m-1$ vertices. We shall divide the induction step into two cases according to whether or not the first step of the hierarchy is a reasonably large sideways step.

\medskip
\noindent {\bf Case 1:} $N_{G_\hier}^\to(\root) = \{ w \}$ and $h(D) - h(D_w) \geq \eps^{-6}$.
\medskip

In this case the desired bound follows from Lemma~\ref{lem:delta}, $\ih(0)$ and the induction hypothesis on $m$, using Observation~\ref{obs:pod1}. To see this, set $D' = D_w$ and write $\hier'$ for the hierarchy obtained from $\hier$ by removing the vertex (and droplet) corresponding to $\root$, and adding a new root at $w$. Then, trivially,
\begin{equation}\label{eq:pod1}
\prod_{\substack{u \to v \\ u,v\in \hier}} \P_p\big( \Delta(D_v,D_u) \big) = \P_p\big( \Delta(D',D) \big) \prod_{\substack{u \to v \\ u,v\in \hier'}} \P_p\big( \Delta(D_v,D_u) \big).
\end{equation}
Now, observe that $\hier'$ is a $t$-good and satisfied hierarchy for $D'$. Thus, by the induction hypothesis on $m$, we have
\begin{equation}\label{eq:pod3}
\prod_{\substack{u \to v \\ u,v\in \hier'}} \P_p\big( \Delta(D_v,D_u) \big) \leq \exp_p \Big( \big( \mu(\hier') - 2\eps^2 \big) \big( h(D') - h(\hier') \big) - \eps^{-6} \big( 3\ell(\hier) - 2 \big) \Big), % \notag \\
%&\leq \exp_p \Big( \big( \mu(\hier') - 2\eps^2 \big) \big( h(D') - h(\hier) \big) - \eps^{-5} \big( \ell(\hier) - 2 \big) \Big),
\end{equation}
where we have replaced $\ell(\hier')$ by $\ell(\hier)$ since $L(\hier') = L(\hier)$.
%, implying that $\ell(\hier') = \ell(\hier)$, $h(\hier') \le h(\hier)$, and $\mu(\hier') \ge \mu(\hier) \ge 2\eps^2$.
Now, since $\ih(0)$ holds (by Lemmas~\ref{lem:basecase} and~\ref{lem:indstep}), and we have the bounds $\eps^{-6} \le h(D) - h(D') \le 2t$ and $h^*(D) \leq 1 - \eps$, we may apply Lemma~\ref{lem:delta} to give
\begin{equation}\label{eq:pod2}
\P_p\big( \Delta(D',D) \big) \leq \exp_p \bigg( (1-2\eps^2)\left(\frac{1-h^*(D)}{2}\right)\big(h(D)-h(D')\big) \bigg).
\end{equation}
Combining~\eqref{eq:pod3} and~\eqref{eq:pod2} with~\eqref{eq:pod1}, it follows that it is sufficient to show
\begin{multline}\label{eq:pod3.5}
\big(\mu(\hier')-2\eps^2\big)\big(h(D')-h(\hier')\big) + (1-2\eps^2)\left(\frac{1-h^*(D)}{2} \right)\big(h(D)-h(D')\big) \\
\geq \big(\mu(\hier)-2\eps^2\big)\big(h(D)-h(\hier)\big).
\end{multline}
We would like to apply Observation~\ref{obs:pod1} with $a = h^*(D)$, $a' = h^*(D')$, $s = h^*(\hier)$, $s' = h^*(\hier')$, and $\delta=2\eps^2$. If the conditions of the observation are satisfied, then we will be done, since on multiplying through by $p^{-1}\log 1/p$, the conclusion of the observation (with these parameters) is equivalent to~\eqref{eq:pod3.5}. For the conditions, we have: $h^*(D) \geq h^*(D')$ by assumption; $h^*(\hier) \geq h^*(\hier')$ from the previous inequality and since $L(\hier) = L(\hier')$; $h^*(\hier) \leq 1 - 4\eps^2$ since $h^*(\hier) \leq h^*(D)$ and $h^*(D) \leq 1-\eps$; and finally, 
\[
4\eps^2 \big( 1 + h^*(D) \big) \geq h^*(D) - h^*(D')
\]
since $h^*(D)-h^*(D') \leq (\log 1/p)^{-2} \ll 1$, by our choice of $t$. This completes the proof of the lemma in Case 1.

\medskip
\noindent {\bf Case 2:} $N_{G_\hier}^\to(\root)=\{w\}$ and $h(D) - h(D_w) < \eps^{-6}$.
\medskip

By the definition of a $t$-good hierarchy, there are two ways that we could have $h(D) - h(D') < \eps^{-6}$. One is that $w$ is a split vertex (which is why we have not considered separately the case in which $\root$ is a split vertex; see below), and the other is that $w$ is a leaf. If $w$ is a leaf then~\eqref{eq:sidewaysprod} trivially holds, since then $\root$ and $w$ are the only vertices in $\hier$, and the expression inside the exponent in~\eqref{eq:sidewaysprod} is at most
\[
\big( h(D) - h(D_w) \big) / 2 - \eps^{-6} < 0,
\]
so the right-hand side of~\eqref{eq:sidewaysprod} is greater than 1.

Thus we may assume that $w$ is a split vertex. (As mentioned above, we have not considered the case in which $\root$ is a split vertex. However, this
%  The alert reader will have noticed that we have not considered the case in which $\root$ is a split vertex. However, by the definition of a $t$-good hierarchy, the condition $h(D) - h(D_w) < \eps^{-5}$ implies that $w$ is a split vertex or a leaf, and therefore
case is covered by the calculation below, as long as we allow $h(D) - h(D_w)$ to be zero.\footnote{In this case, set $w = \root$ and $\hier' = \hier$ in the definitions in the next paragraph.}) We shall show that the desired bound follows from the induction hypothesis on $m$ directly, using Observation~\ref{obs:pod2}.

Indeed, set $D' = D_w$ and write $\hier'$ for the hierarchy obtained from $\hier$ by removing the vertex (and droplet) corresponding to $\root$, and adding a new root at $w$. Moreover, let $N_{G_\hier}^\to(w)=\{v_1,v_2\}$, and, for each $i \in \{1,2\}$, set $D_i = D_{v_i}$ and let $\hier_i$ be the part of $\hier'$ below and including $v_i$. Note that
\begin{equation}\label{eq:pod4}
\prod_{\substack{u \to v \\ u,v\in \hier'}} \P_p\big( \Delta(D_v,D_u) \big) = \prod_{\substack{u \to v \\ u,v\in \hier_1}} \P_p\big( \Delta(D_v,D_u) \big) \prod_{\substack{u \to v \\ u,v\in \hier_2}} \P_p\big( \Delta(D_v,D_u) \big) .
\end{equation}

Now, observe that $\hier_1$ and $\hier_2$ are $t$-good and satisfied hierarchies for $D_1$ and $D_2$. Therefore, by the induction hypothesis, we have
\begin{equation}\label{eq:pod5}
\prod_{\substack{u \to v \\ u,v\in \hier_i}} \P_p\big( \Delta(D_v,D_u) \big) \leq \exp_p \Big( \big(\mu(\hier_i)-2\eps^2\big)\big(h(D_i)-h(\hier_i)\big) - \eps^{-6}\big(3\ell(\hier_i)-2\big) \Big),
\end{equation}
for each $i \in \{1,2\}$. Moreover, we have
\begin{equation}\label{eq:sineq}
h(\hier) = \sum_{u\in L(\hier)} h(D_u) \ge h(\hier_1) + h(\hier_2)
% \quad \text{and} \quad
\end{equation}
since we assumed $h(\hier) < h(D)$, and we also have
\begin{equation}\label{eq:aineq}
h(D) \leq h(D') + \eps^{-6} \leq h(D_1) + h(D_2) + 1 + \eps^{-6}
\end{equation}
by Lemma~\ref{lem:subadd}. 

Next we shall apply Observation~\ref{obs:pod2} with $a = h^*(D) - (1 + \eps^{-6}) p ( \log 1/p )^{-1}$, $s = h^*(\hier)$, $a_i = h^*(D_i)$ and $s_i = h^*(\hier_i)$ for $i \in \{1,2\}$, and $\delta=2\eps^2$. This is permissible since we have $a \leq a_1 + a_2$ by~\eqref{eq:aineq}, $s \geq s_1 + s_2$ by~\eqref{eq:sineq}, $a_1 a_2 \geq s_1 s_2$ since $a_i \geq s_i$ for $i \in \{1,2\}$ by the definition of $h(\hier_i)$, and finally $a,s \leq 1 - 2\delta$ since $s \leq a + \eps^2$ (say) and $a \leq 1 - \eps$ by the assumption of the lemma. Applying Observation~\ref{obs:pod2} and multiplying through by $p^{-1} \log 1/p$ gives
\begin{multline*}
%\label{eq:combinepods}
\big( \mu(\hier_1) - 2\eps^2 \big) \big( h(D_1) - h(\hier_1) \big) + \big( \mu(\hier_2) - 2\eps^2 \big) \big( h(D_2) - h(\hier_2) \big) \\
\geq \big( \mu(\hier) - 2\eps^2 - (1 + \eps^{-6})p(\log 1/p)^{-1} \big) \big( h(D) - h(\hier) - (1 + \eps^{-6}) \big),
\end{multline*}
%The first bracket is positive since $h^*(\hier) \leq h^*(D) \leq 1 - \eps$, and therefore $\mu(\hier) \geq \eps/2$. Hence
After rearranging, the right-hand side is at least
\[
\big( \mu(\hier) - 2\eps^2 \big) \big( h(D) - h(\hier) \big) - (1 + \eps^{-6}) \big( \mu(\hier) + h^*(D) \big),
\]
so all together we have
\begin{multline}\label{eq:combinepods}
\big( \mu(\hier_1) - 2\eps^2 \big) \big( h(D_1) - h(\hier_1) \big) + \big( \mu(\hier_2) - 2\eps^2 \big) \big( h(D_2) - h(\hier_2) \big) \\
\geq \big( \mu(\hier) - 2\eps^2 \big) \big( h(D) - h(\hier) \big) - 2 \eps^{-6},
\end{multline}
since $\mu(\hier) \leq 1/2$ and $h^*(D) \leq 1$.

Returning to the probability we wish to bound, after combining~\eqref{eq:pod4} and~\eqref{eq:pod5} with~\eqref{eq:combinepods} we have that the left-hand side of~\eqref{eq:pod4} is at most
\[
\exp_p \Big( \big( \mu(\hier) - 2\eps^2 \big) \big( h(D) - h(\hier) \big) - 2\eps^{-6} - \eps^{-6} \big( 3\ell(\hier_1) + 3\ell(\hier_2) - 4 \big) \Big).
\]
The proof of the lemma is now complete, since $\ell(\hier) = \ell(\hier_1) + \ell(\hier_2)$, and we can bound $\P_p\big(\Delta(D',D)\big)$ trivially by $1$, which gives
\[
\prod_{\substack{u \to v \\ u,v \in \hier}} \P_p\big( \Delta(D_v,D_u) \big) \leq \exp_p \Big( \big(\mu(\hier)-2\eps^2\big)\big(h(D)-h(\hier)\big) - \eps^{-6}\big(3\ell(\hier)-2\big) \Big),
\]
as desired.
%Thus, combining~\eqref{eq:pod4} and~\eqref{eq:pod5}, noting that $\ell(\hier) = \ell(\hier_1) + \ell(\hier_2)$, and applying Observation~\ref{obs:pod2} with $a = h^*(D') - p ( \log 1/p )^{-1}$, $s = h^*(\hier)$, $a_i = h^*(D_i)$ and $s_i = h^*(\hier_i)$ for $i\in\{1,2\}$, and $\delta=2\eps^2$, we obtain
%\[
%\prod_{\substack{u \to v \\ u,v\in \hier'}} \P_p\big( \Delta(D_v,D_u) \big) \leq \exp_p \Big( \big(\mu(\hier')-2\eps^2\big)\big(h(D')-h(\hier)-1\big) - \eps^{-5}\big(\ell(\hier)-2\big) - 1 \Big)
%\]
%since $h^*(\hier_i) \le h^*(D_i)$ for $i \in \{1,2\}$ by definition.
%In order to complete the proof of the lemma, it only remains to verify that
%\begin{equation}\label{eq:pod7}
%\big(\mu(\hier)-2\eps^2\big)\big(h(D)-h(\hier)\big) - \big(\mu(\hier')-2\eps^2\big)\big(h(D')-h(\hier) - 1\big) \leq \eps^{-5} - 1.
%\end{equation}
%After rearranging, the left-hand side is equal to
%\[
%\big( h(\hier) - h(D) \big) \big( \mu(\hier') - \mu(\hier) \big) + \big(\mu(\hier')-2\eps^2\big) \big( h(D) -  h(D') + 1 \big),
%\]
%and since $\mu(\hier) \le \mu(\hier') \le 1/2$, $h(\hier) \le h(D)$ and $h(D) -  h(D') \le \eps^{-5}$, it follows that this is at most $(\eps^{-5} + 1)/2 < \eps^{-5}$, as required. This completes the proof in Case~2, and the lemma follows.
\end{proof}

We now have all the tools we need in order to prove Proposition~\ref{prop:largedroplets}. %In particular, we will use the induction hypothesis $\ih(0)$ (which follows from Lemmas~\ref{lem:basecase} and~\ref{lem:indstep}) and Lemmas~\ref{lem:boundoverH},~\ref{lem:numberofH} and~\ref{lem:pod}.

\begin{proof}[Proof of Proposition~\ref{prop:largedroplets}]
Let $D$ be a droplet such that $h^*(D) \le 1 - \eps$, set $t = (4p\log 1/p)^{-1}$, and note that we may assume that $h(D) \ge t$, since otherwise the lemma follows immediately from $\ih(0)$. Applying Lemma~\ref{lem:boundoverH} to $D$, we obtain
\begin{equation}\label{eq:largebound}
\P_p\big(\ispan(D)\big) \leq \sum_{\hier \in \hier_D(t)} \bigg( \prod_{u \in L(\hier)} \P_p\big(\ispan(D_u)\big) \bigg)\bigg( \prod_{u \to v} \P_p\big(\Delta(D_v,D_u)\big) \bigg).
\end{equation}
In order to deduce Proposition~\ref{prop:largedroplets} from~\eqref{eq:largebound}, we shall use $\ih(0)$ and Lemmas~\ref{lem:numberofH} and~\ref{lem:pod}.  

Let $\hier \in \hier_D(t)$, and note that $h(D_u) \leq 2t = (2p\log 1/p)^{-1}$ for every $u \in L(\hier)$. Thus, by $\ih(0)$ (which follows from Lemmas~\ref{lem:basecase} and~\ref{lem:indstep}), we have
\begin{equation}\label{eq:largeseed}
\prod_{u\in L(\hier)} \P_p\big(\ispan(D_u)\big) \leq \prod_{u\in L(\hier)} p^{(1-\eps^2)h(D_u)/2} \leq p^{(1-\eps^2)h(\hier)/2}.
\end{equation}
%for every such seed $u$.
Also, by Lemma~\ref{lem:pod}, we have
\begin{equation}\label{eq:largedelta}
\prod_{u \to v} \P_p\big( \Delta(D_v,D_u) \big) \leq \exp_p \Big( \big(\mu(\hier)-2\eps^2\big)\big(h(D)-h(\hier)\big) - \eps^{-6}\big(3\ell(\hier)-2\big) \Big).
\end{equation}
As in the proof of Lemma~\ref{lem:indstep}, we split the sum in~\eqref{eq:largebound} into two parts, depending on the number of seeds in $\hier$. Thus, let us set 
\[
\hier^{(1)} = \big\{ \hier \in \hier_D(t) : \ell(\hier) \le p^{-1/2} \big\} \quad \textup{and} \quad \hier^{(2)} = \hier_D(t) \setminus \hier^{(1)}.
\]
As before, bounding the sum over $\hier \in \hier^{(2)}$ is easy: by Lemma~\ref{lem:numberofH} and~\eqref{eq:largeseed} we have  
%\begin{equation}\label{eq:largebound:Htwo}
\begin{equation}\label{eq:H2bound}
\sum_{\hier \in \hier^{(2)}} \prod_{u \in L(\hier)} \P_p\big(\ispan(D_u)\big) \le \sum_{\ell \ge p^{-1/2}} \exp_p\Big(  \ell \cdot t / 3 - O\big( \ell \cdot h(D) / t \big) \Big) < e^{-p^{-5/4}},
\end{equation}
%\end{equation}
where the last inequality holds since $h(D) / t =O\big((\log 1/p)^2\big)$ and $t > p^{-3/4}$. 

For the sum over $\hier \in \hier^{(1)}$, we insert the bounds from~\eqref{eq:largeseed} and~\eqref{eq:largedelta} into~\eqref{eq:largebound} to obtain
\begin{multline}\label{eq:H1bound}
\sum_{\hier \in \hier^{(1)}} \bigg( \prod_{u \in L(\hier)} \P_p\big(\ispan(D_u)\big) \bigg)\bigg( \prod_{u \to v} \P_p\big(\Delta(D_v,D_u)\big) \bigg) \\
\leq \sum_{\hier \in \hier^{(1)}} \exp_p \bigg( \big(\mu(\hier) - 2\eps^2\big) \big(h(D) - h(\hier)\big) + \bigg(\frac{1-\eps^2}{2}\bigg) h(\hier) - \eps^{-6}\big(3\ell(\hier)-2\big) \bigg).
\end{multline}
Observe that by rearranging the terms and noting that $h(D) h^*(\hier) = h(\hier) h^*(D)$, we have
\begin{multline*}
\left( \frac{2-h^*(D)-h^*(\hier)}{4} - 2\eps^2 \right) \big( h(D) - h(\hier) \big) + \left( \frac{1-\eps^2}{2} \right) h(\hier) \\
\geq \left( \frac{2-h^*(D)}{4} - 2\eps^2 \right) h(D),
\end{multline*}
and therefore~\eqref{eq:H1bound} is at most
\begin{equation}\label{eq:H1bound2}
\sum_{\hier \in \hier^{(1)}} \exp_p \Bigg( \bigg(\frac{2 - h^*(D)}{4} - 2\eps^2\bigg) h(D) - \eps^{-6}\big(3\ell(\hier)-2\big) \Bigg).
\end{equation}
By Lemma~\ref{lem:numberofH} and the bounds $\ell(\hier) \leq p^{-1/2}$ and $h(D)/t \leq (\log 1/p)^2$, we have 
\begin{equation}\label{eq:H1bound3}
|\hier^{(1)}| \leq p^{-1/2} \cdot \exp\bigg( O\bigg( \frac{h(D) \log 1/p }{t \sqrt{p}} \bigg) \bigg) < e^t.
\end{equation}
Finally, combining~\eqref{eq:H1bound2} with~\eqref{eq:H1bound3} and the bounds $h(D) \ge t \gg p^{-1/2} \ge \ell(\hier)$, which hold for every $\hier \in \hier^{(1)}$, and adding~\eqref{eq:H2bound}, it follows that
\[
\P_p\big( \ispan(D) \big) \leq \exp_p \Bigg( \bigg( \frac{2-h^*(D)}{4} - \eps \bigg) h(D) \Bigg),
\]
as required.
\end{proof}

We are finally ready to complete the proof of Theorem~\ref{thm:Duarte}. 

\begin{proof}[Proof of Theorem~\ref{thm:Duarte}]
The upper bound was proved in Section~\ref{sec:upper}, so fix $\lambda < 1/8$, and set
\[
p = \frac{\lambda (\log\log n)^2}{\log n}.
\]
We will prove that with high probability a $p$-random subset $A \subset \Z_n^2$ does not percolate.

Indeed, if $A$ percolates then, by Lemma~\ref{lem:critical:droplet}, there exists a pair $(D_1,D_2)$ of disjointly internally spanned droplets such that
\[
\max\big\{ h(D_1), h(D_2) \big\} \leq \frac{1-\eps}{p} \log \frac{1}{p} \quad \text{and} \quad h(D_1) + h(D_2) \geq \frac{1-\eps}{p} \log \frac{1}{p} - 1,
\]
and $d(D_1,D_2) \le 2$. By Lemma~\ref{lem:numdroplets}, there are at most $n^2 \cdot p^{-O(1)}$ choices for $D_1$ and $D_2$ satisfying these conditions. Applying Proposition~\ref{prop:largedroplets} to $D_1$ and $D_2$ (which we may do since $h^*(D_i) \leq 1-\eps$ for $i \in \{1,2\}$), and using the BK inequality, it follows that
\begin{equation}\label{eq:lastline}
\P_p\big( [A] = \Z_n^2 \big) \leq n^2 \cdot p^{-O(1)} \cdot \exp\Bigg( - \frac{(1 - 8\eps)}{4p} \bigg( \log \frac{1}{p} \bigg)^2 \Bigg) \leq n^{-\eps}
\end{equation}
if $\eps > 0$ is sufficiently small. This complete the proof of the theorem.
\end{proof}

\section{Further discussion and open problems}\label{sec:open}

\subsection{The modified Duarte model}\label{sec:modifiedDuarte}

The \emph{modified Duarte model} is the monotone cellular automaton whose update family is
\[
\D^{(m)} := \Big\{ \big\{ (-1,0),(0,-1) \big\}, \big\{ (1,0),(0,-1) \big\} \Big\}.
\]
Thus, the modified Duarte model comprises two of the three rules of the (original) Duarte model, has the same stable set, and is also critical and unbalanced with difficulty 1. An interesting feature of the modified Duarte model is its size: it is formed of only two update rules, which is the minimum of any critical update family. The following theorem is the first sharp threshold result for a critical two-dimensional family that is minimal in this sense.

\begin{theorem}\label{thm:modified}
\[
p_c\big(\Z_n^2,\D^{(m)}\big) = \left(\frac{1}{4}+o(1)\right)\frac{(\log\log n)^2}{\log n}
\]
as $n\to\infty$.
\end{theorem}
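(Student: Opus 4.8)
The plan is to run the proof of Theorem~\ref{thm:Duarte} almost verbatim. As remarked above, $\D^{(m)}$ is critical, unbalanced, of difficulty~$1$, with the same stable set and the same \emph{drift} structure ($\alpha(u^*) = \alpha(-u^*) = \infty$) as $\D$, so the machinery of curved droplets, the spanning algorithm, hierarchies and iterated hierarchies all apply; the only essential difference is the absence of the rule $\{(0,1),(0,-1)\}$, and this is precisely what turns the constant $1/8$ into $1/4$. The key point is deterministic: every rule of $\D^{(m)}$ contains $(0,-1)$, so a site $(x,y)$ can become infected only if its southern neighbour \emph{and} one of its horizontal neighbours are already infected --- and that horizontal neighbour lies in row~$y$, so no row can acquire its first infected site via the dynamics. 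Hence every row of $[A]$ that meets the infection already meets $A$, and the extremal Lemma~\ref{lem:extremal} improves to $|D\cap A| \ge h(D)$ (rather than $h(D)/2$) for an internally spanned droplet $D$. In the language of the heuristic following Theorem~\ref{thm:Duarte}, each vertical step now has size~$1$ rather than~$2$ --- a single infected site in the row just above a wide infected block fills that whole row, by the two slanted rules --- so the integral $\int_0^1\tfrac{1-c}{2}\,\mathrm{d}c = 1/4$ is replaced by $\int_0^1(1-c)\,\mathrm{d}c = 1/2$, and the factor of~$2$ from the $\sim n^2$ choices of droplet then gives $1/4$.

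Concretely, for the upper bound I would adapt Section~\ref{sec:upper}, though the purely rectangular construction there no longer transfers directly (a single site grows a flat-topped strip up a whole row, which would be too cheap), so one grows the curved droplets instead: since growing such a droplet up by one row, over a window of width $\sim w/h$, still costs $\sim p^{1-c}$ at height $(c/p)\log(1/p)$, the analogue of Lemma~\ref{lem:upper:basicstep} carries exponent $(1-i\eps+\eps^2)h$ in place of $(1-i\eps+\eps^2)h/2$, the analogue of Lemma~\ref{lem:upper:Rzero} gives $\exp\bigl(-\tfrac{1+2\eps}{2p}(\log 1/p)^2\bigr)$, and the deduction yields $p_c(\Z_n^2,\D^{(m)}) \le (1/4+o(1))(\log\log n)^2/\log n$, using $\exp\bigl(-\tfrac{1+\eps}{2\lambda}\log n\bigr)\ge n^{-2+\eps}$ when $\lambda > 1/4$. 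For the lower bound I would carry Sections~\ref{sec:tools}--\ref{sec:large} through with exponents systematically doubled: $\ih(k)$ becomes $\P_p(\ispan(D)) \le p^{(1-\eps_k)h(D)}$; its base case (Lemma~\ref{lem:basecase}) follows from $|D\cap A|\ge h(D)$ exactly as before; Lemma~\ref{lem:delta} loses its factor $1/2$, reading $\P_p(\Delta(D,D')) \le \exp\bigl(-(1-1.1\eps_k)(\log 1/p - ph(D'))(h(D')-h(D))\bigr)$; Proposition~\ref{prop:largedroplets} becomes $\P_p(\ispan(D)) \le \exp_p\!\bigl(\bigl(\tfrac{2-h^*(D)}{2}-\eps\bigr)h(D)\bigr)$; and feeding the last into Lemma~\ref{lem:critical:droplet} and the union bound over the $n^2\cdot p^{-O(1)}$ critical pairs gives $\P_p([A]=\Z_n^2) \le n^{-\eps}$ whenever $\lambda<1/4$.

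The main obstacle, as for Theorem~\ref{thm:Duarte}, will be re-establishing the ``crossings'' estimates of Section~\ref{sec:crossings}. One must first fix the right notion of droplet for $\D^{(m)}$ --- the curved droplets of Section~\ref{sec:droplets}, or an asymmetric variant reflecting that vertical progress is one-directional --- and re-prove the deterministic input of Sections~\ref{sec:droplets} and~\ref{sec:spanning}: the count of droplets of given width (Lemma~\ref{lem:numdroplets}, via the bi-chain Lemma~\ref{lem:hypercube}, which survives provided two translates of the bounding curve still meet in at most one point), the sub-additivity Lemma~\ref{lem:subadd}, and the Aizenman--Lebowitz-type Lemmas~\ref{lem:critical:droplet} and~\ref{lem:AL}. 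The delicate part is Section~\ref{sec:crossings} proper: weakly connected $D$-rooted sets (Definition~\ref{def:weak}), satisfied partitions and saver droplets (Definition~\ref{def:partition}), and above all the key deterministic Lemma~\ref{lem:partition} together with Claims~\ref{clm:xz}--\ref{clm:saver}, which all need to be redone keeping track of the one-directional growth. I expect no conceptual difficulty --- the bounding curve is still convex with derivative below~$1$, so the ``$+1$'' in the sub-additivity lemma and the Sauer--Shelah argument both survive --- but the bookkeeping will be substantial. Finally, one should check that percolation on the torus still follows from the existence of a critical droplet: such a droplet grows upwards, wrapping around until every row is infected, and then every column, so the one-directional growth is harmless at this last step.
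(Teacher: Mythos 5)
Your proposal is correct and takes essentially the same approach as the paper, which disposes of Theorem~\ref{thm:modified} with a one-paragraph remark deferring to the proof of Theorem~\ref{thm:Duarte} and identifying the same key fact you do: without the purely-vertical rule, vertical growth requires an element of $A$ in \emph{every} row rather than every other row, doubling the relevant exponents (in Lemma~\ref{lem:extremal}, Lemma~\ref{lem:delta}, $\ih(k)$ and Proposition~\ref{prop:largedroplets}) and hence halving the constant to $1/4$. Your more detailed accounting of which parts of Sections~\ref{sec:tools}--\ref{sec:large} need re-verification is consistent with what the paper tersely describes as ``small simplifications in Section~\ref{sec:crossings} and a gain of a factor of $2$ in the exponent in Lemma~\ref{lem:delta}.''
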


The proof of Theorem~\ref{thm:modified} follows that of Theorem~\ref{thm:Duarte} almost exactly. The only differences are that the absence of the rule $\big\{ (0,-1),(0,1) \big\}$ from $\D^{(m)}$ means that, in order for a droplet to grow vertically, there must be an element of $A$ in every row, rather than just every alternate row. This leads to some small simplifications in Section~\ref{sec:crossings} and a gain of a factor of $2$ in the exponent in the bound~\eqref{eq:delta} in Lemma~\ref{lem:delta}, and some similarly minor simplifications in the upper bound.

\subsection{Related two-dimensional models}

In two dimensions, sharp thresholds are now known for the 2-neighbour model~\cite{Hol}, more generally for so-called symmetric balanced threshold models\footnote{That is, models formed by the $r$-element subsets of a centrally symmetric star subset of $\Z^2\setminus\{\0\}$, in the cases where such models are critical and balanced. (Here, `star' means that if $x$ is in the set then the whole of $(\0,x]\cap\Z^2$ is in the set.)}~\cite{DH}, for a single unbalanced non-drift model~\cite{DE}, and for the Duarte model, but remain open in all other cases.\footnote{Strictly speaking, sharp thresholds are also known for some minor variants of these models; in particular, for the modified Duarte model (see above), the modified and `$k$-cross' analogues of the 2-neighbour model~\cite{Hol,HLR}, and a single class of unbalanced non-drift models~\cite{DE}. However, the proof of the sharp threshold for each of these models follows via simple modifications of the proof above, and of the proofs in~\cite{DE,Hol}, respectively.} It might be possible that, using a combination of the techniques from~\cite{Hol,DH} for balanced models, those from~\cite{DE} for unbalanced models without drift, and those introduced in~\cite{BDMS} and the present paper for unbalanced models with drift, one could determine the sharp threshold for any critical family $\U$ whose update rules are contained in the axes (i.e. such that for all $X\in\U$ and for all $(a,b)\in X$, we have $ab=0$). Nevertheless, we expect the following problem to be hard.

\begin{problem}\label{prob:2dsharp}
Determine the sharp threshold for any critical family whose update rules are contained in the axes.
\end{problem}

\subsection{Higher dimensions}

The study of monotone cellular automata in higher dimensions is notoriously difficult. In $\Z^d$ for $d\geq 3$, the only models for which sharp thresholds are known are the $r$-neighbour bootstrap percolation models~\cite{BBM3d,BBDM}, for each $2\leq r\leq d$. These $r$-neighbour models aside, even coarse thresholds (that is, thresholds up to a constant factor) are only known for a certain family of symmetric three-dimensional threshold models, whose rules are contained in the axes~\cite{EF}.

The analogue of Problem~\ref{prob:2dsharp} in dimensions $d\geq 3$ is likely to be out of reach at present, but it may be possible to make progress if `sharp threshold' is replaced by `coarse threshold'. To state the problem formally, we need to say what we mean by `critical' in higher dimensions. The following definition was recently proposed by the authors in~\cite{BDMS}.

Fix an integer $d\geq 2$ and let $\U$ be a $d$-dimensional update family (that is, let $\U$ be a finite collection of finite subsets of $\Z^d \setminus \{\0\}$). Define the stable set $\stab = \stab(\U)$ analogously to how it is defined in two dimensions:
\[
\stab := \big\{ u\in S^{d-1} \,:\, [\H_u^d]=\H_u^d \big\},
\]
where
\[
\H_u^d := \big\{ x\in\Z^d \,:\, \<x,u\> < 0 \big\}
\]
is the discrete half-space in $\Z^d$ with normal $u\in S^{d-1}$. Let $\sigma^{d-1}$ denote the spherical measure on $S^{d-1}$.

\begin{definition}
A $d$-dimensional update family is:
\begin{enumerate}
\item \emph{subcritical} if $\sigma^{d-1}(\stab\cap C)>0$ for every hemisphere $C\subset S^{d-1}$;
\item \emph{critical} if there exists a hemisphere $C\subset S^{d-1}$ such that $\sigma^{d-1}(\stab\cap C)=0$ and if $\stab\cap C\neq\emptyset$ for every open hemisphere $C\subset S^{d-1}$;
\item \emph{supercritical} if $\stab\cap C=\emptyset$ for some open hemisphere $C\subset S^{d-1}$.
\end{enumerate}
\end{definition}

\begin{problem}
For each $d \geq 3$, determine the coarse threshold for any $d$-dimensional critical family whose update rules are contained in the axes.
\end{problem}

This question is already likely to be very difficult, so as a first step one might restrict to the case $d=3$ or to update rules contained in the set of nearest neighbours of the origin.

\bibliographystyle{amsplain}
\bibliography{bprefs}

\end{document}